\documentclass[a4paper,12pt]{amsart}
\usepackage{amssymb,amsmath,amsfonts,amsthm,accents}
\usepackage[hmargin=1in, vmargin=1.2in]{geometry}

\newcommand*{\uwidehat}[1]{\underaccent{\widehat{\hphantom{#1}}}{#1}}

\newcommand{\ds}{\displaystyle}
\newcommand{\NN}{\mathbb N}

\newcommand{\CC}{\mathbb C}
\newcommand{\RR}{\mathbb R}
\newcommand{\ZZ}{\mathbb Z}
\newcommand{\EE}{\mathcal E}
\newcommand{\DD}{\mathcal D}
\newcommand{\SSS}{\mathcal S}

\newcommand{\ssum}{\mbox{$\sum_j\,$}}

\newcommand{\beq}{\begin{eqnarray}}
\newcommand{\eeq}{\end{eqnarray}}

\newcommand{\beqs}{\begin{eqnarray*}}
\newcommand{\eeqs}{\end{eqnarray*}}

\newcommand{\Op}{\mathrm{Op}}

\newtheorem{theorem}{Theorem}[section]
\newtheorem{proposition}[theorem]{Proposition}
\newtheorem{lemma}[theorem]{Lemma}
\newtheorem{corollary}[theorem]{Corollary}

\theoremstyle{remark}
\newtheorem{remark}[theorem]{Remark}

\theoremstyle{definition}
\newtheorem{definition}[theorem]{Definition}

\allowdisplaybreaks

\parindent 0.7cm


\author[S. Pilipovi\' c]{Stevan Pilipovi\' c}
\address{Department of Mathematics and Informatics,
University of Novi Sad, Trg Dositeja Obradovi\'{c}a 4, 21000 Novi Sad, Serbia}
\email{stevan.pilipovic@dmi.uns.ac.rs}

\author[B. Prangoski]{Bojan Prangoski}
\address{Department of Mathematics, Faculty of Mechanical
Engineering-Skopje, Karposh 2 b.b., 1000 Skopje, Macedonia}
\email{bprangoski@yahoo.com}

\title[Complex powers of infinite order operators]{Complex powers for a class of infinite order hypoelliptic operators}

\keywords{Ultradistributions, pseudodifferential operators, complex powers, hypoellipticity, semigroups, heat parametrix}

\subjclass[2010]{35S05, 46F05, 47D03}

\frenchspacing

\begin{document}

\begin{abstract}
We prove the complex powers of a class of infinite order hypoelliptic pseudodifferential operators can always be represented as hypoelliptic pseudodifferential operators modulo ultrasmoothing operators. We apply this result to the study of semigroups generated by square roots of non-negative hypoelliptic infinite order operators. For this purpose, we derive precise estimates of the corresponding heat kernel.
\end{abstract}
\maketitle

\section{Introduction}

Complex powers of  pseudo-differential operators were studied by many distinguished  mathematicians; we mention here only some of their papers and books, \cite{ABP}, \cite{Seeley1}, \cite{Kumanogo}, \cite{Dustermaat},
\cite{Helffer}, \cite{DV}, \cite{Schrohe1}, \cite{Shubin}, \cite{Melrose},  \cite{Loya1},  \cite{Loya2}, \cite{CSS}. Their investigations are related to the index theory, Weyl counting function, regularity properties of solutions and to several other topics in PDE.  H\" ormander's books \cite{Hor} based on his papers and related to his theory of pseudodifferential and Fourier integral operators and the generalisations over manifolds, were the framework of the quoted investigations. Recently Buzano and Nicola \cite{BN} (see also \cite{NR}) studied complex powers of global hypoelliptic pseudodifferential operators over $\RR^d$, whose Weyl symbols belong to H\" ormander's class $S(m,g)$ associated with a tempered weight function $m$ and a slowly varying Riemann metric $g$. They provide interesting applications in the classification of Schatten classes and in the semigroup theory with infinitesimal generators being fractional powers of globally defined symbols.\\
\indent In this article we are analysing a new class of global Shubin type symbols (over $\RR^d$) for which we have had to develop a completely new calculus; the asymptotic behaviour of the symbols is sharply estimated by constants dependent on the derivatives and power functions realised by Gevrey-type sequences, i.e. they are ultradifferential with sub-exponential growth (see \cite{CPP}, \cite{CPP1}, \cite{BojanP}). Hypoellipticity in this setting allows for symbols to decay sub-exponentially at infinity. The prototypical example of such hypoelliptic symbols is realised by $e^{\pm a(x,\xi)^{1/(ms)}}$ for large enough $s>1$, where $a$ is particular globally elliptic and positive Shubin symbol of order $m\geq 1$ (see Remark \ref{kft551133} below; see also \cite{CPP1} for other non-trivial examples). Needless to say, the notion of hypoellipticity and symbols in these classes go beyond the classical Weyl-H\" ormander calculus. Although we are considering problems with already determined steps of proofs, the realisation of these ideas is rather involved. In the infinite order setting, the change of quantisation and the composition formulas always result in additional (ultra)smoothing operators and these require different techniques (often of functional analytic nature) to handle them. As a matter of fact, a part of this article is devoted to proving that the resulting (ultra)smoothing operators are ``well-behaved'' when one applies the symbolic calculus to symbols varying in bounded sets in the symbol classes. This brings a complicated calculus, especially in the estimates of the complex powers as well as in recurrence formulas for the estimates of the derivatives of the heat kernels. Our new technique involves a deep analysis of topological structures related to ultradistribution type spaces and hypoelliptic operators. Even the application in the semigroup analysis involves several new general results which are of independent interest.\\
\indent To motivate our investigations, we note that in \cite{ppv,ppv1} we studied the asymptotic behaviour of the eigenvalue counting function of a class of infinite order $\Psi$DOs. One of the main building blocks for the development of the theory are the results presented in this article; most notably the precise estimates we obtain here on the symbol of the heat parametrix. Furthermore, in \cite{ppv1} we gave interesting examples of infinite order hypoelliptic operators, namely perturbations of smaller order of power series of a classical elliptic Shubin differential operator $a^{w}=\sum_{|\alpha|+|\beta|\leq m} a_{\alpha,\beta}x^\alpha D^\beta$ on $\mathbb{R}^{d}$. More precisely, these are operators of the form
\beqs
P(a^w)+\mbox{``smaller order terms''}=\sum_{n=0}^{\infty} c_n(a^w)^n+\mbox{``smaller order terms''},
\eeqs
where $P$ is an entire function with positive Taylor coefficients $c_n$, $n\in\NN$, having suitable growth order. In \cite{ppv1} we proved that under suitable conditions on $P$, these are indeed hypoelliptic operators in our setting. Of course, the most interesting case is when $a^w=H=|x|^2-\Delta$ is the Harmonic oscillator in which case the operator is of the form $\sum_n c_n H^n$+``smaller order terms''. Also, we mention here our results concerning the hypoellipticity for certain classes of linear and non-linear problems in this context given in \cite{CPP} and \cite{CPP1}.\\
\indent We give a brief outline of our work.\\
\indent In Section \ref{section2}, we recall the definition and some basic facts concerning the symbol classes, denoted here as $\Gamma$-type classes, and the corresponding pseudodifferential operators of infinite order involved in the sequel; we refer to \cite{BojanP} and \cite{CPP} for the complete theory concerning the symbolic calculus that these operators enjoy. To these symbol classes we attach spaces of formal series (asymptotic expansions) which we denote as $FS$-type space of formal series. We introduce in Subsection \ref{sec_ch_qua} certain relations between bounded subsets of the symbol classes and their asymptotic expansions in order to prove that the ambiguity in constructing bounded sets of symbols out of bounded sets of asymptotic expansions is always given by a ``well-behaved'' set of ultrasmoothing operators. Section \ref{section3} is related to the Weyl calculus and the twisted product $\#$ (also referred to as the sharp product) of symbols and the composition of corresponding Weyl operators. For us, it will be particularly important that the spaces of asymptotic expansions have a ring structure with multiplication given by $\#$, which is in fact hypocontinuous with respect to the natural locally convex topology that the $FS$-spaces have. In Section \ref{section4} we recall from \cite{CPP} the construction of the parametrix for a hypoelliptic symbol. We also prove an important result concerning the minimal and maximal realisations of hypoelliptic symbols: they coincide. This fact is well-known in the classical Weyl-H\" ormander calculus, here we proved that it is also valid in our infinite order setting (for example, this result applies to the power series of the Harmonic oscillator discussed above).\\
\indent After these three preparatory sections, Section \ref{section5} is devoted to the complex powers of hypoelliptic operators. We follow the approach of \cite{Fractionalpowersbook,KNonnegative} and state the main result, Theorem \ref{maint}: for a hypoelliptic symbol $a$ satisfying certain conditions, the complex power $\overline{A}^z$ (where $A$ is the closure of the $L^2(\RR^d)$-realisation of the Weyl operator with symbol $a$) is always pseudodifferential with hypoelliptic symbol $a^{\uwidehat{z}}$ modulo ultrasmoothing operator $S^{\uwidehat{z}}$. In the first four parts of the theorem we give the existence and the properties of the asymptotic expansion of the hypoelliptic symbol $a^{\uwidehat{z}}$. Part $(v)$ contains uniform estimates of $a^{\uwidehat{z}}$ with respect to $z$ lying in vertical strips $\{\zeta\in\CC|\, 0<\mathrm{Re}\, \zeta\leq t\}$ and with respect to the derivatives $D^\alpha_w a^{\uwidehat{z}}(w)$. The last, sixth, part of the theorem shows that the domain of the unbounded operator $\overline{A}^z$ is in fact $\big\{v\in L^2(\RR^d)|\, (a^{\uwidehat{z}})^w v\in L^2(\RR^d)\big\}$ and proves the analyticity of the mappings $z\mapsto (a^{\uwidehat{z}})^w v$ and $z\mapsto S^{\uwidehat{z}}v$ (with values in $L^2(\RR^d)$) on vertical strips for each $v\in D(A^{[t]+1})$. The analyticity in the classical setting is obtained for free (from the general theory of complex powers of operators on Banach spaces), but here it is a difficult task because of the additional ultrasmoothing operators $S^{\uwidehat{z}}$. The major part of the article is devoted to the proof of this theorem: it is the content of this entire section.\\
\indent In Section \ref{section6} we apply the theory of complex powers to a semigroup generated by the square root of a non-negative operator given as an $L^2(\RR^d)$-realisation of a hypoelliptic symbol in our $\Gamma$-class. We prove that the semigroup is comprised of pseudodifferential operators with symbols in the $\Gamma$-class. As it turns out, the heat parametrix plays an essential role in our analysis. With our technique, we derive precise estimates on the heat kernel which are of independent interest because, as we mentioned before, we applied them in \cite{ppv,ppv1} to derive asymptotic formulae for the eigenvalue counting function and the analysis of spectral properties of hypoelliptic operators of infinite order. In this context, the result on the complex powers of pseudodifferential operators of infinite order of our class also looks promising for applications in deriving such asymptotic formulae. Furthermore, let us mention that with our semigroup related to $A$ as described in the last section, the unique bounded solution of $u_{tt}-Au=0$, $u(0)\in L^2(\RR^d)$, can be given as an action of a pseudodifferential operators plus a smooth family of ultrasmoothing operators on $u(0)$ (cf. \cite{BN}, \cite[Theorem 6.3.2, p. 165]{Fractionalpowersbook}). This can be particularly applied when $A$ is the closure of $\sum_n c_n H^n$, the operator we mentioned above (see Theorem \ref{res_sqr_rtt1} and \cite[Corollary 3.5]{ppv1}); the operator is non-negative since it is self-adjoint (cf. \cite[Proposition 4.6]{ppv}) and $(Au,u)\geq 0$, $u\in D(A)$ (see \cite[Proposition 1.3.6, p. 21]{Fractionalpowersbook}).\\
\indent In the end, we remark that the theory looks promising to be able to include systems of $\Psi$DOs, that is matrix valued infinite order pseudodifferential operators. We postpone this natural generalisation for future research; it ought to be straightforward (but technical) procedure.

\section{Preliminaries}

The sets of natural, integer, positive integer, real and complex numbers are denoted by $\NN$, $\ZZ$, $\ZZ_+$, $\RR$, $\CC$. The symbol $\RR_+$ stands for the set of positive real numbers and $\CC_+$ for the complex numbers with positive real part, i.e. $\CC_+=\{z\in\CC|\, \mathrm{Re}\,z>0\}$. For $x\in \RR^d$ and $\alpha\in\NN^d$, we use the following notation: $\langle x\rangle =(1+|x|^2)^{1/2}$ and
$D^{\alpha}= D_1^{\alpha_1}\ldots D_d^{\alpha_d}$, where $D_j^
{\alpha_j}={i^{-\alpha_j}}\partial^{\alpha_j}/{\partial x_j}^{\alpha_j}$.\\
\indent Following \cite{Komatsu1}, we denote by $M_{p}$, $p\in\NN$, a sequence of positive numbers such that $M_0=M_1=1$ and satisfies some of the following conditions:
\begin{itemize}
\item[${}$] $(M.1)$ $M_{p}^{2} \leq M_{p-1} M_{p+1}$, $p \in\ZZ_+$;
\item[${}$] $(M.2)$ $M_{p} \leq c_0H^{p} \min_{0\leq q\leq p} \{M_{p-q} M_{q}\}$, $p,q\in \NN$, for some $c_0,H\geq1$;
\item[${}$] $(M.3)$ $\sum^{\infty}_{p=q+1} M_{p-1}/M_{p}\leq c_0q M_{q}/M_{q+1}$, $q\in \ZZ_+$;
\item[${}$] $(M.3)'$ $\sum_{p=1}^{\infty}M_{p-1}/M_p<\infty$;
\item[${}$] $(M.4)$ $M_p^2/p!^2\leq (M_{p-1}/(p-1)!)\cdot (M_{p+1}/(p+1)!)$, $p\in\ZZ_+$.
\end{itemize}
Observe that $(M.4)$ implies $(M.1)$ and $(M.3)$ implies $(M.3)'$. The sequence $M_p=p!^{\sigma}$, $\sigma>1$, satisfies all conditions listed above. For a multi-index $\alpha\in\NN^d$, $M_{\alpha}$ will mean $M_{|\alpha|}$, $|\alpha|=\alpha_1+...+\alpha_d$. Recall (see \cite[Section 3]{Komatsu1}) that $m_p=M_p/M_{p-1}$, $p\in\ZZ_+$, and if $M_p$ satisfies $(M.1)$ and $M_p/C^p\rightarrow \infty$, for any $C>0$ (which obviously holds when $M_p$ satisfies $(M.3)'$), its associated function is defined by $M(\rho)=\sup  _{p\in\NN}\ln_+ \rho^{p}/M_{p}$, $\rho > 0$. It is a non-negative, continuous, monotonically increasing function, which vanishes for sufficiently small $\rho>0$ and increases more rapidly than $\ln \rho^p$ when $\rho$ tends to infinity, for any $p\in\NN$. When $M_p=p!^{\sigma}$, $\sigma>0$, $M(\rho)\asymp \rho^{1/\sigma}$.\\
\indent Let $U$ be an open subset of $\RR^d$ and $K$ a regular compact subset of $U$. For $h>0$, $\EE^{\{M_p\},h}(K)$ is the Banach space (from now on abbreviated as $(B)$-space) of all $\varphi\in \mathcal{C}^{\infty}(\mathrm{int}\, K)$ such that all of their derivatives $D^{\alpha}\varphi$, $\alpha\in\NN^d$, extend to continuous functions on $K$ and satisfy $\sup_{\alpha\in\NN^d}\sup_{x\in K}|D^{\alpha}\varphi(x)|/(h^{\alpha}M_{\alpha})<\infty$. Furthermore, $\DD^{\{M_p\},h}_K$ denotes its subspace of all smooth functions supported by $K$. Following Komatsu \cite{Komatsu1}, we define as locally convex spaces (from now on abbreviated as l.c.s.)
$$
\EE^{(M_p)}(U)=\lim_{\substack{\longleftarrow\\ K\subset\subset U}}\lim_{\substack{\longleftarrow\\ h\rightarrow 0}} \EE^{\{M_p\},h}(K),\,\,\,\,
\EE^{\{M_p\}}(U)=\lim_{\substack{\longleftarrow\\ K\subset\subset U}}
\lim_{\substack{\longrightarrow\\ h\rightarrow \infty}} \EE^{\{M_p\},h}(K),
$$
\beqs
\DD^{(M_p)}(U)=\lim_{\substack{\longrightarrow\\ K\subset\subset U}}\lim_{\substack{\longleftarrow\\ h\rightarrow 0}} \DD^{\{M_p\},h}_K,\,\,\,\,
\DD^{\{M_p\}}(U)=\lim_{\substack{\longrightarrow\\ K\subset\subset U}}\lim_{\substack{\longrightarrow\\ h\rightarrow \infty}} \DD^{\{M_p\},h}_K.
\eeqs
The spaces of ultradistributions and ultradistributions with compact support of Beurling and Roumieu type are defined as the strong duals of $\DD^{(M_p)}(U)$ and $\EE^{(M_p)}(U)$, resp. $\DD^{\{M_p\}}(U)$ and $\EE^{\{M_p\}}(U)$. For the properties of these spaces, we refer to \cite{Komatsu1,Komatsu2,Komatsu3}. The common notation for the symbols $(M_{p})$ and $\{M_{p}\} $ will be $*$.\\
\indent We denote by $\mathfrak{R}$  the set of all positive sequences which monotonically increase to infinity. There is a natural order on $\mathfrak{R}$ defined by $(r_p)\leq (k_p)$ if $r_p\leq k_p$, $\forall p\in\ZZ_+$, and with it $(\mathfrak{R},\leq)$ becomes a directed set.\\
\indent For $(r_p)\in\mathfrak{R}$, consider the sequence $N_0=1$, $N_p=M_p\prod_{j=1}^{p}r_j$, $p\in\ZZ_+$. It is easy to check that this sequence satisfies $(M.1)$ and $(M.3)'$ when $M_p$ does so and its associated function will be denoted by $N_{r_p}(\rho)$, i.e. $N_{r_{p}}(\rho)=\sup_{p\in\NN} \ln_+ \rho^{p}/(M_p\prod_{j=1}^{p}r_j)$,\footnote{here and throughout the rest of the article we use the principle of vacuous (empty) product, i.e. $\prod_{j=1}^0r_j=1$} $\rho > 0$. Note that for $(r_{p})\in\mathfrak{R}$ and $k > 0 $ there is $\rho _{0} > 0$ such that $N_{r_{p}} (\rho ) \leq M(k \rho )$, $\forall \rho > \rho _{0}$.\\
\indent A measurable function $f$ on $\RR^d$ is said to have ultrapolynomial growth of class $(M_p)$ (resp. of class $\{M_p\}$) if $\|e^{-M(h|\cdot|)}f\|_{L^{\infty}(\RR^d)}<\infty$ for some $h>0$ (resp. for every $h>0$). By the same technique as in the proof of \cite[Theorem 4.2 $(\tilde{c})$]{DPPV} we have the following lemma (in fact, the same proof works fine in our case as well).

\begin{lemma}\label{lemulgr117}
Let $B\subseteq \mathcal{C}(\RR^d)$. The following conditions are equivalent.
\begin{itemize}
\item[$(i)$] For every $h>0$ there exists $C>0$ such that $|f(x)|\leq Ce^{M(h|x|)}$, for all $x\in\RR^d$, $f\in B$.
\item[$(ii)$] There exist $(r_p)\in\mathfrak{R}$ and $C>0$ such that $|f(x)|\leq Ce^{N_{r_p}(|x|)}$, for all $x\in\RR^d$, $f\in B$.
\end{itemize}
\end{lemma}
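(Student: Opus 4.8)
The plan is to establish the equivalence between the two descriptions of ultrapolynomial growth by adapting the argument from \cite[Theorem 4.2 $(\tilde c)$]{DPPV}, whose essential mechanism is the interplay between the associated function $M(\rho)$ of the defining sequence and the associated functions $N_{r_p}(\rho)$ of the perturbed sequences $N_p=M_p\prod_{j=1}^p r_j$, $(r_p)\in\mathfrak R$.

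\emph{Proof of $(ii)\Rightarrow(i)$.} This is the easy direction and follows at once from the remark recorded in Section \ref{section2}: for every $(r_p)\in\mathfrak R$ and every $k>0$ there exists $\rho_0>0$ with $N_{r_p}(\rho)\leq M(k\rho)$ for all $\rho>\rho_0$. Fix $h>0$ and apply this with $k=h$ to the given sequence $(r_p)$; then for $|x|>\rho_0$ we have $N_{r_p}(|x|)\leq M(h|x|)$, while on the bounded set $\{|x|\leq\rho_0\}$ both $e^{N_{r_p}(|x|)}$ and $e^{M(h|x|)}$ are bounded, so enlarging the constant $C$ gives $|f(x)|\leq C'e^{M(h|x|)}$ for all $x\in\RR^d$ and all $f\in B$, with $C'$ independent of $f$.

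\emph{Proof of $(i)\Rightarrow(ii)$.} This is the substantive direction and the main obstacle: one must produce a \emph{single} sequence $(r_p)\in\mathfrak R$ that works uniformly over the whole family $B$. The idea is a diagonalization over a discrete exhaustion of the hypothesis. By $(i)$, for each $n\in\ZZ_+$ there is $C_n>0$ such that $|f(x)|\leq C_n e^{M(|x|/n)}$ for all $x$ and all $f\in B$. Writing out the definition of $M$, one has $e^{M(|x|/n)}=\sup_{p}|x|^p/(n^p M_p)$, so the estimate at level $n$ controls $|f(x)|$ in terms of the sequence $(n^p M_p)_p$; increasing $n$ makes the controlling sequence grow faster and hence the bound weaker. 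One then selects an increasing sequence of indices and a sequence $(r_p)\in\mathfrak R$ tending to infinity slowly enough that, on successive dyadic-type shells $\{|x|\sim R_n\}$, the term $\prod_{j=1}^p r_j$ in $N_p$ dominates the appropriate power $n^{-p}$, so that $M(|x|/n)\leq N_{r_p}(|x|)+O(1)$ on the $n$-th shell. The constants $C_n$ are absorbed by choosing the shells $R_n$ large enough (this is where $(r_p)\to\infty$ is used together with the fact that $N_{r_p}(\rho)\to\infty$), and because the $C_n$ and the shell decomposition are chosen independently of $f\in B$, the resulting estimate $|f(x)|\leq Ce^{N_{r_p}(|x|)}$ holds with a uniform constant $C$. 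The key technical point to be checked carefully is that $(r_p)$ can indeed be taken monotonically increasing to infinity while simultaneously satisfying the shell-by-shell domination — this is exactly the standard lemma on $\mathfrak R$ underlying the reconstruction of Roumieu-type conditions from Beurling-type uniform estimates, and the verification that $N_p$ still satisfies $(M.1)$ and $(M.3)'$ has already been recorded above. Since, as the authors note, the proof of \cite[Theorem 4.2 $(\tilde c)$]{DPPV} applies verbatim after these cosmetic changes, I would simply run that argument, citing it for the purely sequence-theoretic core and filling in the shell estimate above.
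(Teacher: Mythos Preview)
Your proposal is correct and aligns with the paper's own treatment: the paper does not give a proof at all but simply remarks that the technique of \cite[Theorem 4.2 $(\tilde c)$]{DPPV} applies verbatim, which is exactly what you do, with the added benefit that you sketch the easy direction explicitly and outline the diagonalization mechanism behind the harder one.
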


This simple result will prove useful throughout the rest of the article and we will often tacitly apply it.\\
\indent We call an entire function $P(z) =\sum _{\alpha \in \NN^d}c_{\alpha } z^{\alpha}$, $z \in \CC^d$, an ultrapolynomial of class $(M_{p})$ (resp. of class $\{M_{p}\}$), whenever the coefficients $c_{\alpha }$ satisfy the estimate $|c_{\alpha }|  \leq C L^{|\alpha| }/M_{\alpha}$, $\alpha \in \NN^d$, for some $L > 0$ and $C>0$ (resp. for every $L > 0 $ and some $C=C(L) > 0$). The corresponding operator $P(D)=\sum_{\alpha} c_{\alpha}D^{\alpha}$ is an ultradifferential operator of the class $(M_{p})$ (resp. of class $\{M_{p}\}$) and, when $M_p$ satisfies $(M.2)$, it acts continuously on $\EE^{(M_p)}(U)$ and $\DD^{(M_p)}(U)$ (resp. on $\EE^{\{M_p\}}(U)$ and $\DD^{\{M_p\}}(U)$) and the corresponding spaces of ultradistributions.\\
\indent The Fourier transform of $f\in L^1(\RR^d)$ is given by $\mathcal{F}f (\xi)=\int_{\RR^d} e^{-i x\xi}f(x)dx$, $\xi\in\RR^d$.\\
\indent If $M_p$ satisfies $(M.1)$ and $(M.3)'$, for each $m>0$ we denote by $\SSS^{M_p,m}_{\infty}(\RR^d)$ the $(B)$-space of all $\varphi\in\mathcal{C}^{\infty}(\RR^d)$ for which the norm
\beqs
\sup_{\alpha\in \NN^d}\frac{m^{|\alpha|}\|e^{M(m|\cdot|)}D^{\alpha}\varphi\|_{L^{\infty}(\RR^d)}}{M_{\alpha}}
\eeqs
is finite. The spaces of sub-exponentially decreasing ultradifferentiable function of Beurling and Roumieu type are defined by
\beqs
\SSS^{(M_{p})}(\RR^d)=\lim_{\substack{\longleftarrow\\ m\rightarrow\infty}}\SSS^{M_{p},m}_{\infty}\left(\RR^d\right)\,\, \mbox{and}\,\, \SSS^{\{M_{p}\}}(\RR^d)=\lim_{\substack{\longrightarrow\\ m\rightarrow 0}}\SSS^{M_{p},m}_{\infty}\left(\RR^d\right),
\eeqs
respectively. Their strong duals $\SSS'^{(M_{p})}(\RR^d)$ and $\SSS'^{\{M_{p}\}}(\RR^d)$ are the spaces of tempered ultradistributions of Beurling and Roumieu type, respectively. When $M_p=p!^{\sigma}$, $\sigma>1$, $\SSS^{\{M_p\}}(\RR^d)$ is just the Gelfand-Shilov space $\SSS^{\sigma}_{\sigma}(\RR^d)$. When $M_p$ satisfies $(M.2)$, the ultradifferential operators of class $*$ act continuously on $\SSS^*(\RR^d)$ and $\SSS'^*(\RR^d)$ and the Fourier transform is a topological isomorphism on these spaces. We refer to \cite{PilipovicK,PilipovicU} for the topological properties of $\SSS^*(\RR^d)$ and $\SSS'^*(\RR^d)$. Here we recall that, when $M_p$ satisfies $(M.2)$, the space $\SSS^{\{M_p\}}(\RR^d)$ is topologically isomorphic to $\ds\lim_{\substack{\longleftarrow\\ (r_p)\in\mathfrak{R}}}\SSS^{M_p, (r_p)}_{\infty}(\RR^d)$, where the projective limit is taken with respect to the natural order on $\mathfrak{R}$ defined above and $\SSS^{M_p, (r_p)}_{\infty}(\RR^d)$ is the $(B)$-space of all $\varphi\in\mathcal{C}^{\infty}(\RR^d)$ for which the norm
\beqs
\sup_{\alpha\in \NN^d}\frac{\|e^{N_{r_p}(|\cdot|)}D^{\alpha}\varphi\|_{L^{\infty}(\RR^d)}}{M_{\alpha}\prod_{j=1}^{|\alpha|}r_j}
\eeqs
is finite.\\
\indent We end this section with a few notations from functional analysis. Given two l.c.s. $E$ and $F$, $\mathcal{L}(E,F)$ stands for the space of continuous linear mappings from $E$ to $F$. When $E=F$ we will often write $\mathcal{L}(E)$ instead of $\mathcal{L}(E,E)$. We write $\mathcal{L}_b(E,F)$ to denote the space $\mathcal{L}(E,F)$ equipped with the topology of bounded convergence and similarly $\mathcal{L}_p(E,F)$ and $\mathcal{L}_{\sigma}(E,F)$ stand for $\mathcal{L}(E,F)$ equipped with the topology of precompact convergence and simple convergence respectively. If $U$ is open in $\RR^d$, $\mathcal{C}^k(U;E)$, $0\leq k\leq \infty$, stands for the space of $k$ times continuously differentiable $E$-valued function and $\mathcal{C}^k(\overline{U};E)$ for its subspace consisting of those functions such that all of their derivatives up to order $k$ can be extended to continuous functions on $\overline{U}$. For $[a,b)\subseteq\RR$, $\mathcal{C}^{k}([a,b);E)$, $0\leq k\leq \infty$, is the vector space of all $k$ times continuously differentiable $E$-valued functions on $[a,b)$, where the derivatives at $a$ are to be understood as right derivatives; we use analogous notation when we consider functions over $(a,b]$ or $[a,b]$.

\section{Shubin type pseudodifferential operators of infinite order on $\SSS^*(\RR^d), \SSS'^*(\RR^d)$}\label{section2}

We recall in this section the definition and some basic facts concerning the symbol classes and the corresponding pseudodifferential operators of infinite order involved in the sequel; we refer to \cite{BojanP} for the complete theory concerning the symbolic calculus that these operators enjoy.\\
\indent Let $A_p$ and $M_p$ be two sequences of positive numbers such that $A_0=A_1=M_0=M_1=1$. We assume that $M_p$ satisfies $(M.1)$, $(M.2)$ and $(M.3)$ and $A_p$ satisfies $(M.1)$, $(M.2)$, $(M.3)'$ and $(M.4)$. Of course, without losing generality, we can assume that the constants $c_0$ and $H$ that appear in $(M.2)$ are the same for both sequences $M_p$ and $A_p$. Moreover, we assume that $A_p\subset M_p$, i.e. there exist $c,L>0$ such that $A_p\leq cL^pM_p$, $\forall p\in\NN$. Let $\rho_0=\inf\{\rho\in\RR_+|\,A_p\subset M_p^{\rho}\}$. Clearly $0<\rho_0\leq 1$. From now on, $\rho$ is a fixed number such that $\rho_0\leq \rho\leq1$, if the infimum can be reached, or, otherwise $\rho_0< \rho\leq1$.\\
\indent For $h,m>0$, define $\Gamma_{A_p,\rho}^{M_p,\infty}(\RR^{2d};h,m)$ to be the $(B)$-space of all $a\in \mathcal{C}^{\infty}(\RR^{2d})$ for which the following norm is finite
\beqs
\sup_{\alpha,\beta}\sup_{(x,\xi)\in\RR^{2d}}\frac{\left|D^{\alpha}_{\xi}D^{\beta}_x a(x,\xi)\right|
\langle (x,\xi)\rangle^{\rho|\alpha|+\rho|\beta|}e^{-M(m|\xi|)}e^{-M(m|x|)}} {h^{|\alpha|+|\beta|}A_{\alpha}A_{\beta}}.
\eeqs
As l.c.s., we define
\beqs
\Gamma_{A_p,\rho}^{(M_p),\infty}(\RR^{2d};m)&=& \lim_{\substack{\longleftarrow\\h\rightarrow 0}}
\Gamma_{A_p,\rho}^{M_p,\infty}(\RR^{2d};h,m),\\
\Gamma_{A_p,\rho}^{(M_p),\infty}(\RR^{2d})&=& \lim_{\substack{\longrightarrow\\m\rightarrow\infty}}
\Gamma_{A_p,\rho}^{(M_p),\infty}(\RR^{2d};m),\\
\Gamma_{A_p,\rho}^{\{M_p\},\infty}(\RR^{2d};h)&=& \lim_{\substack{\longleftarrow\\m\rightarrow 0}}
\Gamma_{A_p,\rho}^{M_p,\infty}(\RR^{2d};h,m),\\
\Gamma_{A_p,\rho}^{\{M_p\},\infty}(\RR^{2d})&=& \lim_{\substack{\longrightarrow\\h\rightarrow\infty}}
\Gamma_{A_p,\rho}^{\{M_p\},\infty}(\RR^{2d};h).
\eeqs
Then, $\Gamma_{A_p,\rho}^{(M_p),\infty}(\RR^{2d};m)$ and $\Gamma_{A_p,\rho}^{\{M_p\},\infty}(\RR^{2d};h)$ are $(F)$-spaces. The spaces $\Gamma_{A_p,\rho}^{*,\infty}(\RR^{2d})$ are barrelled and bornological. As a direct consequence of $(M.2)$ for $A_p$, for each fixed $m>0$, the norms
\beqs
\|a\|_{\Gamma,h,m}=\sup_{\alpha,\beta}\sup_{(x,\xi)\in\RR^{2d}} \frac{\left|D^{\alpha}_{\xi}D^{\beta}_x a(x,\xi)\right|
\langle (x,\xi)\rangle^{\rho|\alpha|+\rho|\beta|} e^{-M(m|\xi|)}e^{-M(m|x|)}}{h^{|\alpha|+|\beta|}A_{\alpha+\beta}},
\eeqs
when $h$ varies in $\RR_+$, generate the topology of the $(F)$-space $\Gamma_{A_p,\rho}^{(M_p),\infty}(\RR^{2d};m)$.\\
\indent Let $\tau\in\RR$ and $a\in\Gamma_{A_p,\rho}^{*,\infty}(\RR^{2d})$. The $\tau$-quantisation of the symbol $a$ is the operator $\Op_{\tau}(a):\SSS^*(\RR^d)\rightarrow \SSS'^*(\RR^d)$ defined by
\beqs
\langle \Op_{\tau}(a)u,v\rangle=\langle \mathcal{F}^{-1}_{\xi\rightarrow x-y}a((1-\tau)x+\tau y,\xi),v(x)\otimes u(y)\rangle,\,\, u,v\in\SSS^*(\RR^d).
\eeqs
We will be particularly interested in the Weyl quantisation (obtained for $\tau=1/2$) and in this case we will often write $a^w$ instead of $\Op_{1/2}(a)$. The operator $\Op_{\tau}(a)$ is continuous as an operator from $\SSS^*(\RR^d)$ into itself and $\Op_{\tau}(a)u$ is given by the following iterated integral (see \cite[Theorem 1]{BojanP}):
\beqs
\Op_{\tau}(a)u(x)=\frac{1}{(2\pi)^d}\int_{\RR^d}\int_{\RR^d}e^{i(x-y)\xi}a((1-\tau)x+\tau y,\xi) u(y)dyd\xi.
\eeqs
As a direct consequence of \cite[Theorem 2]{BojanP} and the discussion after it, for each $\tau\in\RR$, the bilinear mapping $(a,\varphi)\mapsto \Op_{\tau}(a)\varphi$, $\Gamma_{A_p,\rho}^{*,\infty}(\RR^{2d})\times \SSS^*(\RR^d)\rightarrow \SSS^*(\RR^d)$, is hypocontinuous and hence, the mapping $a\mapsto \Op_{\tau}(a)$, $\Gamma_{A_p,\rho}^{*,\infty}(\RR^{2d})\rightarrow \mathcal{L}_b(\SSS^*(\RR^d), \SSS^*(\RR^d))$, is continuous. Moreover, for each $a\in\Gamma_{A_p,\rho}^{*,\infty}(\RR^{2d})$, $\Op_{\tau}(a)$ extends to a continuous mapping from $\SSS'^*(\RR^d)$ to $\SSS'^*(\RR^d)$ and this extension is given by
\beqs
\langle \Op_{\tau}(a)T,\varphi\rangle=\langle T,\Op_{1-\tau}(a(x,-\xi))\varphi\rangle,\,\, T\in\SSS'^*(\RR^d),\, \varphi\in\SSS^*(\RR^d).
\eeqs
Since $(a,\varphi)\mapsto \Op_{\tau}(a)\varphi$ is hypocontinuous as a bilinear mapping
\beqs
\Gamma_{A_p,\rho}^{(M_p),\infty}(\RR^{2d};m)\times \SSS^{(M_p)}(\RR^d)&\rightarrow& \SSS^{(M_p)}(\RR^d)\,\,\, \mbox{and}\\
\Gamma_{A_p,\rho}^{\{M_p\},\infty}(\RR^{2d};h)\times \SSS^{\{M_p\}}(\RR^d)&\rightarrow& \SSS^{\{M_p\}}(\RR^d)
\eeqs
in the Beurling and Roumieu case respectively, we infer that for each $T\in\SSS'^*(\RR^d)$ and for each bounded subset $B$ of $\Gamma_{A_p,\rho}^{(M_p),\infty}(\RR^{2d};m)$ in the Beurling case and of $\Gamma_{A_p,\rho}^{\{M_p\},\infty}(\RR^{2d};h)$ in the Roumieu case respectively, the set $\{\Op_{\tau}(a)T|\, a\in B\}$ is bounded in $\SSS'^*(\RR^d)$. As $\Gamma_{A_p,\rho}^{(M_p),\infty}(\RR^{2d};m)$ and $\Gamma_{A_p,\rho}^{\{M_p\},\infty}(\RR^{2d};h)$ are $(F)$-spaces, $a\mapsto \Op_{\tau}(a)T$ is continuous as a mapping $\Gamma_{A_p,\rho}^{(M_p),\infty}(\RR^{2d};m)\rightarrow \SSS'^{(M_p)}(\RR^d)$ and $\Gamma_{A_p,\rho}^{\{M_p\},\infty}(\RR^{2d};h)\rightarrow \SSS'^{\{M_p\}}(\RR^d)$ in the Beurling and Roumieu case respectively. Since this holds for each $m>0$ and $h>0$ respectively, $a\mapsto \Op_{\tau}(a)T$, $\Gamma_{A_p,\rho}^{*,\infty}(\RR^{2d})\rightarrow \SSS'^*(\RR^d)$, is continuous. We deduce that $(a,T)\mapsto \Op_{\tau}(a)T$, $\Gamma_{A_p,\rho}^{*,\infty}(\RR^{2d})\times \SSS'^*(\RR^d)\rightarrow \SSS'^*(\RR^d)$, is separately continuous. As $\Gamma_{A_p,\rho}^{*,\infty}(\RR^{2d})$ and $\SSS'^*(\RR^d)$ are barrelled, this bilinear mapping is hypocontinuous. As a direct consequence, we conclude that $a\mapsto \Op_{\tau}(a)$, $\Gamma_{A_p,\rho}^{*,\infty}(\RR^{2d})\rightarrow \mathcal{L}_b(\SSS'^*(\RR^d), \SSS'^*(\RR^d))$ is continuous. Thus, we proved the following result.

\begin{proposition}\label{continuity}
For each $\tau\in\RR$, the bilinear mapping
\beqs
(a,\varphi)\mapsto \Op_{\tau}(a)\varphi,\,\,\, \Gamma_{A_p,\rho}^{*,\infty}(\RR^{2d})\times \SSS^*(\RR^d)\rightarrow \SSS^*(\RR^d),
\eeqs
is hypocontinuous and it extends to the hypocontinuous bilinear mapping
\beqs
(a,T)\mapsto \Op_{\tau}(a)T,\,\,\, \Gamma_{A_p,\rho}^{*,\infty}(\RR^{2d})\times \SSS'^*(\RR^d)\rightarrow \SSS'^*(\RR^d).
\eeqs
Consequently, the mappings
\beqs
a\mapsto \Op_{\tau}(a)&,&\,\, \Gamma_{A_p,\rho}^{*,\infty}(\RR^{2d})\rightarrow \mathcal{L}_b(\SSS^*(\RR^d), \SSS^*(\RR^d)),\,\,\,\, \mbox{and}\\
a\mapsto \Op_{\tau}(a)&,&\,\, \Gamma_{A_p,\rho}^{*,\infty}(\RR^{2d})\rightarrow \mathcal{L}_b(\SSS'^*(\RR^d), \SSS'^*(\RR^d)),
\eeqs
are continuous.
\end{proposition}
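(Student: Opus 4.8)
The plan is to reduce everything to the facts already recorded above: (a) for $a\in\Gamma_{A_p,\rho}^{*,\infty}(\RR^{2d})$ the operator $\Op_{\tau}(a)$ maps $\SSS^*(\RR^d)$ continuously into itself and, at the level of the Banach, Fréchet and (DF) building blocks of these spaces, the bilinear map $(a,\varphi)\mapsto\Op_{\tau}(a)\varphi$ is hypocontinuous (this is \cite[Theorem 2]{BojanP} and the ensuing discussion), and (b) the transposition identity $\langle\Op_{\tau}(a)T,\varphi\rangle=\langle T,\Op_{1-\tau}(a(x,-\xi))\varphi\rangle$ defining the $\SSS'^*$-extension. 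Throughout I would write $\check a(x,\xi)=a(x,-\xi)$ and note that $a\mapsto\check a$ is a topological isomorphism of $\Gamma_{A_p,\rho}^{*,\infty}(\RR^{2d})$ (and of each of its building blocks) onto itself.

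First I would settle the $\SSS^*$-statement. In the Beurling case one works with the Fréchet steps $\Gamma_{A_p,\rho}^{(M_p),\infty}(\RR^{2d};m)$ and $\SSS^{(M_p)}(\RR^d)$, in the Roumieu case with $\Gamma_{A_p,\rho}^{\{M_p\},\infty}(\RR^{2d};h)$ and $\SSS^{\{M_p\}}(\RR^d)$; passing to the inductive limit in $m$ (resp. $h$) and using that $\Gamma_{A_p,\rho}^{*,\infty}(\RR^{2d})$ is bornological and barrelled, the hypocontinuity of $(a,\varphi)\mapsto\Op_{\tau}(a)\varphi$ on $\Gamma_{A_p,\rho}^{*,\infty}(\RR^{2d})\times\SSS^*(\RR^d)\to\SSS^*(\RR^d)$ follows from the step-level version. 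The continuity of $a\mapsto\Op_{\tau}(a)$ into $\mathcal{L}_b(\SSS^*(\RR^d),\SSS^*(\RR^d))$ is then a reformulation of hypocontinuity: for a bounded $B\subseteq\SSS^*(\RR^d)$ and a neighbourhood $V$ of $0$ in $\SSS^*(\RR^d)$, hypocontinuity furnishes a neighbourhood of $0$ in $\Gamma_{A_p,\rho}^{*,\infty}(\RR^{2d})$ whose image under $a\mapsto\Op_{\tau}(a)$ lies in the basic $\mathcal{L}_b$-neighbourhood determined by $B$ and $V$.

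Next, the extension to $\SSS'^*$. For fixed $T\in\SSS'^*(\RR^d)$, identity (b) gives $\langle\Op_{\tau}(a)T,\varphi\rangle=\langle T,\Op_{1-\tau}(\check a)\varphi\rangle$. If $B$ is bounded in a Fréchet step of $\Gamma_{A_p,\rho}^{*,\infty}(\RR^{2d})$, then $\{\check a : a\in B\}$ is bounded there as well, so by the hypocontinuity of the $\SSS^*$-valued map the set $\{\Op_{1-\tau}(\check a)\varphi : a\in B\}$ is bounded in $\SSS^*(\RR^d)$ for each $\varphi$; hence $\{\Op_{\tau}(a)T : a\in B\}$ is weakly bounded, thus bounded, in $\SSS'^*(\RR^d)$. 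Since each such step is Fréchet, hence bornological, $a\mapsto\Op_{\tau}(a)T$ is continuous on it, and passing to the inductive limit yields continuity on all of $\Gamma_{A_p,\rho}^{*,\infty}(\RR^{2d})$. For fixed $a$, the map $T\mapsto\Op_{\tau}(a)T$ is the transpose of the continuous map $\varphi\mapsto\Op_{1-\tau}(\check a)\varphi$ on $\SSS^*(\RR^d)$, hence continuous on $\SSS'^*(\RR^d)$. So $(a,T)\mapsto\Op_{\tau}(a)T$ is separately continuous on $\Gamma_{A_p,\rho}^{*,\infty}(\RR^{2d})\times\SSS'^*(\RR^d)$, and, both factors being barrelled, it is in fact hypocontinuous; the continuity of $a\mapsto\Op_{\tau}(a)$ into $\mathcal{L}_b(\SSS'^*(\RR^d),\SSS'^*(\RR^d))$ then follows exactly as before.

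I expect the main difficulty to be bookkeeping rather than substance: one must make sure every hypocontinuity statement is available at the level of the Banach, Fréchet or (DF) building blocks so that the passage to the (LF)- and (DF)-type inductive and projective limits is legitimate, and one must invoke the right permanence properties in the right places (bornologicity of the Fréchet steps to deduce continuity from boundedness, barrelledness of the limit spaces to deduce hypocontinuity from separate continuity). All of this is provided by the cited references together with the structural facts about $\Gamma_{A_p,\rho}^{*,\infty}(\RR^{2d})$, $\SSS^*(\RR^d)$ and $\SSS'^*(\RR^d)$ recalled above.
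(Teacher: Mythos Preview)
Your proposal is correct and follows essentially the same route as the paper: the hypocontinuity on $\SSS^*(\RR^d)$ is taken from \cite[Theorem~2]{BojanP}, the extension to $\SSS'^*(\RR^d)$ is defined by the transposition identity, boundedness of $a\mapsto\Op_\tau(a)T$ on the Fr\'echet building blocks is upgraded to continuity via bornologicity, and finally separate continuity plus barrelledness of both factors yields hypocontinuity. The only cosmetic difference is that the paper argues strong boundedness of $\{\Op_\tau(a)T:a\in B\}$ directly from hypocontinuity (bounded $\times$ bounded $\to$ bounded), whereas you pass through weak-$*$ boundedness and then invoke barrelledness of $\SSS^*(\RR^d)$; both are fine.
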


For $t\geq0$, put $Q_t=\{(x,\xi)\in\RR^{2d}|\,\langle x\rangle<t, \langle \xi\rangle<t\}$ and $Q_t^c=\RR^{2d}\backslash Q_t$. If $0\leq t\leq 1$, then $Q_t=\emptyset$ and $Q_t^c=\RR^{2d}$. Let $B\geq 0$ and $h,m>0$. We denote by $FS_{A_p,\rho}^{M_p,\infty}(\RR^{2d};B,h,m)$ the vector space of all formal series $\sum_{j=0}^{\infty}a_j$ such that $a_j\in \mathcal{C}^{\infty}(\mathrm{int\,}Q^c_{Bm_j})$, $D^{\alpha}_{\xi} D^{\beta}_x a_j(x,\xi)$ can be extended to a continuous function on $Q^c_{Bm_j}$ for all $\alpha,\beta\in\NN^d$ and
\beqs
\sup_{j\in\NN}\sup_{\alpha,\beta}\sup_{(x,\xi)\in Q_{Bm_j}^c}\frac{\left|D^{\alpha}_{\xi}D^{\beta}_x a_j(x,\xi)\right|
\langle (x,\xi)\rangle^{\rho|\alpha|+\rho|\beta|+2j\rho}e^{-M(m|\xi|)}e^{-M(m|x|)}}
{h^{|\alpha|+|\beta|+2j}A_{\alpha}A_{\beta}A_jA_j}<\infty.
\eeqs
In the above, we use the convention $m_0=0$ and hence $Q^c_{Bm_0}=\RR^{2d}$. With this norm, $FS_{A_p,\rho}^{M_p,\infty}\left(\RR^{2d};B,h,m\right)$ becomes a $(B)$-space. As l.c.s., we define
\beqs
FS_{A_p,\rho}^{(M_p),\infty}(\RR^{2d};B,m)&=&\lim_{\substack{\longleftarrow\\h\rightarrow 0}}
FS_{A_p,\rho}^{M_p,\infty}(\RR^{2d};B,h,m),\\
FS_{A_p,\rho}^{(M_p),\infty}(\RR^{2d};B)&=& \lim_{\substack{\longrightarrow\\m\rightarrow\infty}}
FS_{A_p,\rho}^{(M_p),\infty}(\RR^{2d};B,m),\\
FS_{A_p,\rho}^{\{M_p\},\infty}(\RR^{2d};B,h)&=& \lim_{\substack{\longleftarrow\\m\rightarrow 0}}
FS_{A_p,\rho}^{M_p,\infty}(\RR^{2d};B,h,m),\\
FS_{A_p,\rho}^{\{M_p\},\infty}(\RR^{2d};B)&=&\lim_{\substack{\longrightarrow\\h\rightarrow \infty}}
FS_{A_p,\rho}^{\{M_p\},\infty}(\RR^{2d};B,h).
\eeqs
Then, $FS_{A_p,\rho}^{(M_p),\infty}(\RR^{2d};B,m)$ and $FS_{A_p,\rho}^{\{M_p\},\infty}(\RR^{2d};B,h)$ are $(F)$-spaces and $FS_{A_p,\rho}^{*,\infty}(\RR^{2d};B)$ is barrelled and bornological. The inclusion $\Gamma_{A_p,\rho}^{*,\infty}(\RR^{2d})\rightarrow FS_{A_p,\rho}^{*,\infty}(\RR^{2d};B)$ defined by $a\mapsto\sum_{j\in\NN}a_j$, where $a_0=a$ and $a_j=0$, $j\geq 1$, is continuous. We call this inclusion the canonical one. Observe that for $B_1\leq B_2$, the mapping $\sum_j p_j\mapsto \sum_j p_{j|_{Q_{B_2m_j}^c}}$, $FS_{A_p,\rho}^{*,\infty}(\RR^{2d};B_1)\rightarrow FS_{A_p,\rho}^{*,\infty}(\RR^{2d};B_2)$ is continuous. We also call this mapping canonical.\\
\indent Let $\ds FS_{A_p,\rho}^{*,\infty}(\RR^{2d})=\lim_{\substack{\longrightarrow\\ B\rightarrow\infty}}FS_{A_p,\rho}^{*,\infty}(\RR^{2d};B)$ where the inductive limit is taken in an algebraic sense and the linking mappings are the canonical ones described above. Clearly, $FS_{A_p,\rho}^{*,\infty}(\RR^{2d})$ is non-trivial.\\
\indent If $\sum_j a_j\in FS_{A_p,\rho}^{*,\infty}(\RR^{2d};B)$ and $n\in\NN$, $(\sum_j a_j)_n$ will just mean the function $a_n\in\mathcal{C}^{\infty}(Q_{Bm_n}^c)$. For $N\in\ZZ_+$, $(\sum_j a_j)_{<N}$ denotes the function $\sum_{j=0}^{N-1} a_j\in\mathcal{C}^{\infty}(Q_{Bm_{N-1}}^c)$. Furthermore, $\mathbf{1}$ will denote the element $\sum_j a_j\in FS_{A_p,\rho}^{*,\infty}(\RR^{2d};B)$ given by $a_0(x,\xi)=1$ and $a_j(x,\xi)=0$, $j\in\ZZ_+$.

\begin{definition}(\cite[Definition 3]{BojanP})
Two sums, $\sum_{j\in\NN}a_j,\,\sum_{j\in\NN}b_j\in FS_{A_p,\rho}^{*,\infty}(\RR^{2d})$, are said to be equivalent, in notation $\sum_{j\in\NN}a_j\sim\sum_{j\in\NN}b_j$, if there exist $m>0$ and $B>0$ (resp. there exist $h>0$ and $B>0$), such that for every $h>0$ (resp. for every $m>0$),
\beqs
\sup_{N\in\ZZ_+}\sup_{\alpha,\beta}\sup_{(x,\xi)\in Q_{Bm_N}^c}\frac{\left|D^{\alpha}_{\xi}D^{\beta}_x \sum_{j<N}\left(a_j(x,\xi)-b_j(x,\xi)\right)\right|
\langle (x,\xi)\rangle^{\rho|\alpha|+\rho|\beta|+2N\rho}}
{h^{|\alpha|+|\beta|+2N}A_{\alpha}A_{\beta}A_NA_Ne^{M(m|\xi|)}e^{M(m|x|)}}<\infty.
\eeqs
\end{definition}

In the sequel, we will often use the notation $w=(x,\xi)\in\RR^{2d}$.

\subsection{Change of quantisation}\label{sec_ch_qua}

In \cite{BojanP} we proved that the change of quantisation and the corresponding composition of operators with symbols in $\Gamma_{A_p,\rho}^{*,\infty}(\RR^{2d})$ always results in pseudodifferential operators with symbols in the same class plus an operator which maps $\SSS'^*(\RR^d)$ into $\SSS^*(\RR^d)$, which we call $*$-regularising. However, we will need more precise results concerning the resulting operators when one performs these operations when the symbols vary in bounded subsets of $\Gamma_{A_p,\rho}^{(M_p),\infty}(\RR^{2d};m)$, (resp. of $\Gamma_{A_p,\rho}^{\{M_p\},\infty}(\RR^{2d};h)$). Some of these results are trivial generalisation to the corresponding ones in \cite{BojanP}, but some will require additional work.\\
\indent We start by introducing additional terminology. Let $\Lambda$ be an index set and $\{f_{\lambda}|\, \lambda\in\Lambda\}$ be a set of positive continuous functions on $\RR^{2d}$ each with ultrapolynomial growth of class $*$. We say that a set $U^{(\Lambda)}=\left\{\sum_j a^{(\lambda)}_j\big|\, \lambda\in\Lambda\right\}\subseteq FS_{A_p,\rho}^{*,\infty}(\RR^{2d};B')$ is subordinated to $\{f_{\lambda}|\, \lambda\in\Lambda\}$ in $FS_{A_p,\rho}^{*,\infty}(\RR^{2d})$, in notation $U^{(\Lambda)}\precsim \{f_{\lambda}|\, \lambda\in\Lambda\}$, if the following estimate holds: there exists $B\geq B'$ such that for every $h>0$ there exists $C>0$ (resp. there exist $h,C>0$) such that
\beqs
\sup_{\lambda\in\Lambda}\sup_{j\in\NN}\sup_{\alpha\in\NN^{2d}}\sup_{w\in Q_{Bm_j}^c}\frac{\left|D^{\alpha}_w a^{(\lambda)}_j(w)\right|\langle w\rangle^{\rho(|\alpha|+2j)}}{h^{|\alpha|+2j}A_{|\alpha|+2j}f_{\lambda}(w)}\leq C.
\eeqs
When we want to emphasise that this estimate holds for a particular $B\geq B'$, we write $U^{(\Lambda)}\precsim \{f_{\lambda}|\, \lambda\in\Lambda\}$ in $FS_{A_p,\rho}^{*,\infty}(\RR^{2d};B)$. If $f_{\lambda}=f$, $\forall \lambda\in\Lambda$, by abbreviating notations, we say that $U$ is subordinated to $f$, in notation $U\precsim f$. Clearly, for $U\subseteq FS_{A_p,\rho}^{*,\infty}(\RR^{2d};B_1)$ such that $U\precsim f$ there exists $B\geq B_1$ such that the image of $U$ under the canonical mapping $FS_{A_p,\rho}^{*,\infty}(\RR^{2d};B_1)\rightarrow FS_{A_p,\rho}^{*,\infty}(\RR^{2d};B)$ is a bounded subset of $FS_{A_p,\rho}^{(M_p),\infty}(\RR^{2d};B,m)$ for some $m>0$ (resp. a bounded subset of $FS_{A_p,\rho}^{\{M_p\},\infty}(\RR^{2d};B,h)$ for some $h>0$).\\
\indent Given $U\subseteq FS_{A_p,\rho}^{*,\infty}(\RR^{2d};B_1)$ with $U\precsim f$, we say that a bounded subset $V$ of $\Gamma_{A_p,\rho}^{(M_p),\infty}(\RR^{2d};m)$ for some $m>0$ (resp. a bounded subset $V$ of $\Gamma_{A_p,\rho}^{\{M_p\},\infty}(\RR^{2d};h)$ for some $h>0$) is subordinate to $U$ under $f$, in notations $V\precsim_f U$, if there exists a surjective mapping $\Sigma:U\rightarrow V$ such that the following estimate holds: there exists $B\geq B_1$ such that for every $h>0$ there exists $C>0$ (resp. there exist $h,C>0$) such that for all $\sum_j a_j\in U$ and corresponding $\Sigma(\sum_j a_j)=a\in V$
\beqs
\sup_{N\in\ZZ_+}\sup_{\alpha\in\NN^{2d}}\sup_{w\in Q_{Bm_N}^c}\frac{\left|D^{\alpha}_w\left(a(w)- \sum_{j<N}a_j(w)\right)\right|\langle w\rangle^{\rho(|\alpha|+2N)}}{h^{|\alpha|+2N}A_{|\alpha|+2N}f(w)}\leq C.
\eeqs
Again, if we want to emphasise the particular $B$ for which this holds, we write $V\precsim_f U$ in $FS_{A_p,\rho}^{*,\infty}(\RR^{2d};B)$. If $V\precsim_f U$ and if we denote by $\tilde{V}$ the image of $V$ under the canonical inclusion $\Gamma_{A_p,\rho}^{*,\infty}(\RR^{2d})\rightarrow FS_{A_p,\rho}^{*,\infty}(\RR^{2d};0)$, $a\mapsto a+\sum_{j\in\ZZ_+}0$, then particularising the above estimate for $N=1$ together with the fact that $V$ is bounded in $\Gamma_{A_p,\rho}^{(M_p),\infty}(\RR^{2d};m)$ for some $m>0$ (resp. in $\Gamma_{A_p,\rho}^{\{M_p\},\infty}(\RR^{2d};h)$ for some $h>0$) and that $f$ is continuous and positive, imply that $\tilde{V}\precsim f$ in $FS_{A_p,\rho}^{*,\infty}(\RR^{2d};0)$. In this case, by abbreviating notation, we write $V\precsim f$. This estimate also implies $\Sigma(\sum_j a_j)\sim\sum_j a_j$. To see that given such $U\subseteq FS_{A_p,\rho}^{*,\infty}(\RR^{2d};B)$ there always exists $V\precsim_f U$, we make the following observation. Let $\psi\in\DD^{(A_p)}(\RR^{d})$ in the $(M_p)$ case and $\psi\in\DD^{\{A_p\}}(\RR^{d})$ in the $\{M_p\}$ case respectively, such that $0\leq \psi\leq 1$, $\psi(\xi)=1$ when $\langle\xi\rangle\leq 2$ and $\psi(\xi)=0$ when $\langle\xi\rangle\geq 3$. Put $\chi(x,\xi)=\psi(x)\psi(\xi)$, $\chi_{n,R}(w)=\chi(w/(Rm_n))$ for $n\in\ZZ_+$ and $R>0$ and put $\chi_{0,R}(w)=0$. Given $U\subseteq FS_{A_p,\rho}^{*,\infty}(\RR^{2d};B)$ as above, for $\sum_j a_j\in U$ we denote
\beqs
R(\ssum a_j)(w)= \sum_{j=0}^{\infty} (1-\chi_{j,R}(w))a_j(w).
\eeqs
When $R> B$, this is a well defined $\mathcal{C}^{\infty}$ function on $\RR^{2d}$ since the series is locally finite.

\begin{proposition}\label{subordinate}
Let $U=\left\{\sum_j a^{(\lambda)}_j\big|\, \lambda\in\Lambda\right\}\subseteq FS_{A_p,\rho}^{*,\infty}(\RR^{2d};B')$ be subordinated to $\{f_{\lambda}|\, \lambda\in\Lambda\}$ in $FS_{A_p,\rho}^{*,\infty}(\RR^{2d})$. There exists $R_0> B'$ such that for each $R\geq R_0$, $U_R=\left\{R(\sum_j a^{(\lambda)}_j)\big|\, \lambda\in\Lambda\right\}\subseteq \Gamma_{A_p,\rho}^{*,\infty}(\RR^{2d})$ and the following estimate holds: there exists $B=B(R)\geq B'$ such that for every $h>0$ there exists $C>0$ (resp. there exist $h,C>0$) such that
\beqs
\sup_{\lambda\in\Lambda}\sup_{N\in\ZZ_+}\sup_{\alpha\in\NN^{2d}}\sup_{w\in Q_{Bm_N}^c}\frac{\left|D^{\alpha}_w\left(R(\sum_ja^{(\lambda)}_j)(w)- \sum_{j<N}a^{(\lambda)}_j(w)\right)\right|\langle w\rangle^{\rho(|\alpha|+2N)}}{h^{|\alpha|+2N}A_{|\alpha|+2N}f_{\lambda}(w)}\leq C.
\eeqs
If in addition $f_{\lambda}=f$, $\forall \lambda\in\Lambda$, then $U_R$ is bounded in $\Gamma_{A_p,\rho}^{(M_p),\infty}(\RR^{2d};m)$ for some $m>0$ (resp. bounded in $\Gamma_{A_p,\rho}^{\{M_p\},\infty}(\RR^{2d};h)$ for some $h>0$) and hence $U_R\precsim_f U$.
\end{proposition}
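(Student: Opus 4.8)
The plan is to show that the cut-off construction $R(\sum_j a_j)$, for $R$ large, produces a $\mathcal C^\infty$ symbol lying in $\Gamma_{A_p,\rho}^{*,\infty}(\RR^{2d})$ together with the required remainder estimate, uniformly over $\lambda\in\Lambda$. First I would fix $R_0>B'$; since $U\precsim\{f_\lambda\}$ there is $B_1\geq B'$ realising the subordination estimate, and any $R>B_1$ makes each series $\sum_j(1-\chi_{j,R})a_j^{(\lambda)}$ locally finite and hence $\mathcal C^\infty$ on $\RR^{2d}$. For the remainder, write
\beqs
R(\ssum a^{(\lambda)}_j)(w)-\sum_{j<N}a^{(\lambda)}_j(w)
=-\sum_{j<N}\chi_{j,R}(w)a^{(\lambda)}_j(w)+\sum_{j\geq N}(1-\chi_{j,R}(w))a^{(\lambda)}_j(w),
\eeqs
and apply $D^\alpha_w$, distributing derivatives via the Leibniz rule. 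The key point is the localisation of the two sums: $\chi_{j,R}$ is supported where $\langle w\rangle\lesssim Rm_j$, so the $j$-th term of the first sum contributes only on $Q^c_{Bm_N}\cap\{\langle w\rangle\lesssim Rm_j\}$, which forces $j$ to be comparable to $N$ (using $(M.1)$, i.e. monotonicity of $m_j$, and choosing $B=B(R)$ large compared to $R$); similarly $1-\chi_{j,R}$ is supported where $\langle w\rangle\gtrsim Rm_j$, so on $Q^c_{Bm_N}$ the surviving $j\geq N$ terms satisfy $m_j\lesssim\langle w\rangle/R$. In either case one gains a factor from the extra powers of $\langle w\rangle$ in the denominator of the $FS$-norm: the weight $\langle w\rangle^{\rho(|\alpha|+2j)}$ versus the desired $\langle w\rangle^{\rho(|\alpha|+2N)}$ differs by $\langle w\rangle^{2\rho(j-N)}$, and on the relevant support this is controlled by $(Cm_j/m_N)^{2\rho(j-N)}$ or $(C/R)^{2\rho(j-N)}$.

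The main obstacle — and the heart of the argument — is to sum the resulting geometric-type series in $j$ uniformly in $N$, $\alpha$, $w$, and $\lambda$. Here one uses the subordination hypothesis to bound each $|D^\beta_w a^{(\lambda)}_j(w)|$ by $Ch^{|\beta|+2j}A_{|\beta|+2j}f_\lambda(w)\langle w\rangle^{-\rho(|\beta|+2j)}$, the estimates $\|D^\gamma\chi_{j,R}\|_\infty\lesssim (C/(Rm_j))^{|\gamma|}A_\gamma$ coming from $\chi\in\DD^{*}$ (which also introduces, on the support of $D^\gamma\chi_{j,R}$ with $\gamma\neq 0$, a compensating factor $\langle w\rangle\asymp Rm_j$), together with $(M.2)$ for $A_p$ to split $A_{|\alpha|+2j}$ into $A_{|\gamma|}A_{|\alpha-\gamma|+2j}$ and to absorb the ratios $A_{|\alpha|+2N}/A_{|\alpha|+2j}\leq c_0H^{\cdots}A_{2j-2N}^{-1}$-type factors. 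The crucial cancellation is that the factor $m_j^{2j}/A_j^2$ (hidden in $A_{|\alpha|+2j}$ through $(M.2)$) behaves like $m_j^{2j}/A_j^2\lesssim (C)^{2j}$ by $(M.4)$ / the definition of $m_p$; combined with the gain $(C/R)^{2\rho(j-N)}$ one chooses $R$ (hence $R_0$) large enough that $h\cdot(\text{const})/R^{\rho}<1/2$, making the series over $j$ convergent with a bound independent of everything. The Beurling/Roumieu bookkeeping (``for every $h$'' vs. ``for some $h$'') is threaded through in the usual way: in the $(M_p)$ case $R$ and $B$ may depend on the given $h$, in the $\{M_p\}$ case one first fixes $h$ from the subordination data.

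Finally, to conclude the ``in addition'' clause, specialise the displayed estimate to $N=1$: since $\chi_{1,R}a^{(\lambda)}_0$ is supported in a fixed compact set (independent of $\lambda$, as $f=f_\lambda$ is a single continuous positive function bounded below there) and $\sum_{j<1}a^{(\lambda)}_j=a^{(\lambda)}_0$, one gets $|D^\alpha_w R(\sum_j a^{(\lambda)}_j)(w)|\leq C'h^{|\alpha|}A_{|\alpha|}f(w)\langle w\rangle^{-\rho|\alpha|}$ for all $w$, $\alpha$, $\lambda$. Because $f$ has ultrapolynomial growth of class $*$, Lemma \ref{lemulgr117} gives $f(w)\leq Ce^{M(m|w|)}$ (with $m$ depending on $h$ in the Beurling case, $m$ fixed in the Roumieu case), so $f(w)\leq Ce^{M(m|x|)}e^{M(m|\xi|)}$ after splitting, which is exactly the weight defining $\Gamma_{A_p,\rho}^{(M_p),\infty}(\RR^{2d};m)$ (resp. $\Gamma_{A_p,\rho}^{\{M_p\},\infty}(\RR^{2d};h)$). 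Hence $U_R$ is a bounded subset of that space, and by definition, with $\Sigma$ the map $\sum_j a^{(\lambda)}_j\mapsto R(\sum_j a^{(\lambda)}_j)$, we obtain $U_R\precsim_f U$, as claimed.
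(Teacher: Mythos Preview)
Your approach is essentially the paper's: for the main estimate the paper simply invokes ``the same technique as in the proof of \cite[Theorem 4]{BojanP}'', which is precisely the cut-off decomposition and geometric-series summation you describe; for the ``in addition'' clause both you and the paper specialise to $N=1$ and then treat the remaining compact region separately.

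Two points need correction. First, your quantifier bookkeeping is inverted. You write that in the $(M_p)$ case ``$R$ and $B$ may depend on the given $h$'', but the statement fixes $R_0$ and $B=B(R)$ \emph{before} quantifying over $h$. The actual mechanism is the opposite of what you describe: in the Beurling case, for a target $h$ one invokes the subordination estimate with a much smaller $h'$ (which is at our disposal), and the series converges because the ratio $(h'H^2/h)^{\cdots}$ is small --- so any $R_0>B_1$ works uniformly in $h$. In the Roumieu case the subordination $h'$ is given, one takes $R_0$ large relative to $h'$, and the output $h$ is then determined; again $R_0$ does not depend on the (output) $h$. Your remark that ``one chooses $R$ large enough that $h\cdot(\text{const})/R^{\rho}<1/2$'' would force $R_0=R_0(h)$ in the Beurling case, which is not what is claimed.

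Second, your treatment of the compact region in the ``in addition'' clause is incomplete. The displayed estimate with $N=1$ only controls $R(\sum_j a_j^{(\lambda)})-a_0^{(\lambda)}$ on $Q_{Bm_1}^c$; combining with $U\precsim f$ (for the $j=0$ term) gives the bound on $R(\sum_j a_j^{(\lambda)})$ there. Your mention of $\chi_{1,R}a_0^{(\lambda)}$ does not cover $Q_{Bm_1}$. The paper's argument for this region is that on the compact set $Q_{Bm_1}$ only a \emph{fixed finite} number of terms of $\sum_j(1-\chi_{j,R})a_j^{(\lambda)}$ are nonzero (the index set is independent of $\lambda$), and each surviving term is bounded uniformly in $\lambda$ directly from $U\precsim f$, since the subordination estimate for $a_j^{(\lambda)}$ is valid on $Q_{B_1m_j}^c\supseteq\operatorname{supp}(1-\chi_{j,R})$ once $R>B_1$.
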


\begin{proof} The first part can be proved by applying the same technique as in the proof of \cite[Theorem 4]{BojanP}. If $f_{\lambda}=f$, $\forall \lambda\in\Lambda$, particularising the estimate for $N=1$ and using $U\precsim f$ one obtains that for every $h>0$ there exists $C>0$ (resp. there exist $h,C>0$) such that
\beqs
\sup_{\lambda\in\Lambda}\sup_{\alpha\in\NN^{2d}}\sup_{w\in Q_{Bm_1}^c}\frac{\left|D^{\alpha}_w R(\sum_ja^{(\lambda)}_j)(w)\right|\langle w\rangle^{\rho|\alpha|}}{h^{|\alpha|}A_{\alpha}f(w)}\leq C.
\eeqs
Now, observe that on $Q_{Bm_1}$ for all $\lambda\in\Lambda$ only a fixed number of terms of the series $\sum_j (1-\chi_{j,R}(w))a^{(\lambda)}_j$ contribute and they are uniformly estimated since $U\precsim f$. Thus, the above estimate is valid on $\RR^{2d}$ and the conclusion follows.
\end{proof}

We say that this $U_{R}$ is canonically obtained from $U$ by $\{\chi_{n,R}\}_{n\in\NN}$. Of course, in this case, the mapping $\Sigma: U\rightarrow U_R$ is just $\sum_j a_j\mapsto R(\sum_j a_j)$. Before we prove the next result we state the following proposition whose proof is the same as the proof of \cite[Theorem 3]{BojanP}.

\begin{proposition}\label{eqsse}
Let $V$ be a bounded subset of $\Gamma_{A_p,\rho}^{(M_p),\infty}(\RR^{2d};\tilde{m})$ for some $\tilde{m}>0$ (resp. of $\Gamma_{A_p,\rho}^{\{M_p\},\infty}(\RR^{2d};\tilde{h})$ for some $\tilde{h}>0$). Assume that there exist $B,m>0$ such that for every $h>0$ there exists $C>0$ (resp. there exist $B,h>0$ such that for every $m>0$ there exists $C>0$) such that
\beqs
\sup_{a\in V}\sup_{N\in\ZZ_+}\sup_{\alpha\in\NN^{2d}}\sup_{w\in Q_{Bm_N}^c}\frac{\left|D^{\alpha}_w a(w)\right|\langle w\rangle^{\rho(|\alpha|+2N)}}{h^{|\alpha|+2N}A_{|\alpha|+2N}e^{M(m|w|)}}\leq C.
\eeqs
Then, for each $\tau\in\RR$, $\{\mathrm{Op}_{\tau}(a)|\, a\in U\}$ is equicontinuous subset of $\mathcal{L}(\SSS'^*(\RR^d),\SSS^*(\RR^d))$.
\end{proposition}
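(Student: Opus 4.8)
The plan is to reduce the statement to the already-available continuity of the quantisation map combined with the characterisation of $*$-regularising operators. First I would observe that the hypothesis is precisely the statement that $V$, viewed via the canonical inclusion $\Gamma_{A_p,\rho}^{*,\infty}(\RR^{2d})\to FS_{A_p,\rho}^{*,\infty}(\RR^{2d};0)$ inside the $FS$-space, is subordinated to a single ultrapolynomially growing weight, namely $f(w)=e^{M(m|w|)}$ (Beurling) or $f(w)=e^{M(m|w|)}$ for the appropriate fixed $m$ in the Roumieu reading; here one uses Lemma \ref{lemulgr117} to trade the $(r_p)$-type bound for the $M(h|\cdot|)$-type bound and vice versa. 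Since each $a\in V$ has the property that $a\sim \mathbf 0$ in the sense of the equivalence of formal series (take $b_j\equiv 0$ in the Definition), the symbol $a$ has an asymptotic expansion consisting of zeros, and therefore $\mathrm{Op}_\tau(a)$ is a $*$-regularising operator, i.e.\ it maps $\SSS'^*(\RR^d)$ continuously into $\SSS^*(\RR^d)$; this is the content of \cite[Theorem 3]{BojanP}, and the claim is the uniform (equicontinuous) version of it.

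Next I would set up the uniform estimate. Writing $\mathrm{Op}_\tau(a)u(x)=(2\pi)^{-d}\iint e^{i(x-y)\xi}a((1-\tau)x+\tau y,\xi)u(y)\,dy\,d\xi$ as an oscillatory integral, the standard device is to integrate by parts using the operators $\langle x-y\rangle^{-2k}(1-\Delta_\xi)^k$ and $\langle \xi\rangle^{-2k}(1-\Delta_y)^k$ to gain decay in $|x-y|$ and in $|\xi|$; the key point is that, because the uniform estimate in the hypothesis allows $N$ to grow, one may use the full force of the $A_p$-bounds with the sub-exponential weight $e^{M(m|w|)}$ controlled by $e^{N_{r_p}(|w|)}$ to carry out an ultradifferential-operator-type summation over $N$, exactly as in the proof of \cite[Theorem 3]{BojanP}. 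The point is that every constant produced in that argument depends on $a$ only through the single constant $C$ appearing in the hypothesis and through the fixed data $B$, $m$ (resp.\ $B$, $h$), so the resulting bound on the seminorms of $\mathrm{Op}_\tau(a)u$ in terms of the seminorms of $u\in\SSS'^*(\RR^d)$ is uniform over $a\in V$. In the Roumieu case one additionally invokes the description of $\SSS^{\{M_p\}}(\RR^d)$ as the projective limit over $(r_p)\in\mathfrak R$ of the spaces $\SSS^{M_p,(r_p)}_\infty(\RR^d)$ recalled in Section 2, so that equicontinuity into $\SSS^{\{M_p\}}$ is checked one $(r_p)$ at a time.

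Concretely the key steps, in order, are: (1) translate the hypothesis into ``$V\precsim f$ in $FS_{A_p,\rho}^{*,\infty}(\RR^{2d};B)$'' for a fixed weight $f$ of ultrapolynomial growth, using Lemma \ref{lemulgr117}; (2) recall from \cite[Theorem 3]{BojanP} the oscillatory-integral plus integration-by-parts scheme that shows a single such symbol gives a $*$-regularising operator, and isolate where the symbol seminorms enter; (3) re-run that scheme tracking constants, and observe they depend on $a$ only through $C$, $B$, $m$ (resp.\ $C$, $B$, $h$), yielding a single bound valid for all $a\in V$; (4) conclude equicontinuity of $\{\mathrm{Op}_\tau(a)\mid a\in V\}$ in $\mathcal L(\SSS'^*(\RR^d),\SSS^*(\RR^d))$, splitting into the Beurling case and the Roumieu case and in the latter working $(r_p)$ by $(r_p)$. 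The main obstacle is step (3): one must make sure that the summation over the index $N$ (the ``order of the asymptotic expansion'' one cuts at, here with all terms zero) converges with a rate independent of $a\in V$, which is exactly where conditions $(M.2)$–$(M.3)$ on $M_p$ and $(M.4)$ on $A_p$, together with the uniformity of $B$ and $m$, are used; everything else is a routine adaptation of the cited proof with the constant $C$ carried along.
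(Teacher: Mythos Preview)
Your proposal is correct and matches the paper's approach: the paper does not give an independent proof of Proposition \ref{eqsse} but simply states that it ``is the same as the proof of \cite[Theorem 3]{BojanP}'', i.e.\ one reruns that oscillatory-integral/integration-by-parts argument with the symbol constants tracked uniformly over $a\in V$. Your outline of steps (1)--(4), including the use of Lemma \ref{lemulgr117} and the $(r_p)$-by-$(r_p)$ treatment in the Roumieu case, is exactly the intended procedure.
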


In the sequel, we will often use the term ``$*$-regularising set'' for a subset of the space $\mathcal{L}(\SSS'^*(\RR^d),\SSS^*(\RR^d))$.

\begin{proposition}\label{changequa}
Let $U_1\subseteq FS_{A_p,\rho}^{*,\infty}(\RR^{2d};B)$ be such that $U_1\precsim f$, for some continuous positive function $f$ with ultrapolynomial growth of class $*$ and let $\tau,\tau_1\in\RR$. For each $\sum_j a_j\in U_1$ and $j\in\NN$, define
\beqs
p_{j,a}(x,\xi)=\sum_{k+|\beta|=j}\frac{(\tau_1-\tau)^{|\beta|}}{\beta!}\partial^{\beta}_{\xi}D^{\beta}_x a_k(x,\xi),\,\, (x,\xi)\in Q_{Bm_j}^c.
\eeqs
Then, $U=\left\{\sum_j p_{j,a}\big|\, \sum_j a_j\in U_1\right\}$ is subset of $FS_{A_p,\rho}^{*,\infty}(\RR^{2d};B)$ and $U\precsim f$. There exists $R>0$, which can be chosen arbitrarily large, such that
\beqs
\big\{\mathrm{Op}_{\tau_1}(R(\ssum a_j))-\mathrm{Op}_{\tau}(R(\ssum p_{j,a}))\big|\,\ssum a_j\in U_1\big\}
\eeqs
is an equicontinuous subset of $\mathcal{L}(\SSS'^*(\RR^d),\SSS^*(\RR^d))$. Moreover,
\beqs
\left\{R(\ssum a_j)\big|\,\ssum a_j\in U_1\right\}\precsim_f U_1\,\,\, \mbox{and}\,\,\, \left\{R(\ssum p_{j,a})|\,\ssum a_j\in U_1\right\}\precsim_f U.
\eeqs
\end{proposition}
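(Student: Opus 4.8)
The statement has three assertions: (a) $U\subseteq FS_{A_p,\rho}^{*,\infty}(\RR^{2d};B)$ with $U\precsim f$; (b) for suitable large $R$ the family of operator differences $\Op_{\tau_1}(R(\ssum a_j))-\Op_{\tau}(R(\ssum p_{j,a}))$ is $*$-regularising and equicontinuous; (c) the two subordination relations $\{R(\ssum a_j)\}\precsim_f U_1$ and $\{R(\ssum p_{j,a})\}\precsim_f U$. I would treat them in this order, since (c) is essentially Proposition \ref{subordinate} applied twice and (b) will follow from (a) together with Proposition \ref{eqsse}.

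\textbf{Step 1: verifying $U\precsim f$.} The point is to estimate $D^\alpha_w p_{j,a}$ on $Q_{Bm_j}^c$ for $\sum_j a_j$ ranging over $U_1$. From $U_1\precsim f$ we have, for a common $B$ and (in the Beurling case) for every $h>0$ a $C>0$, the bound $|D^\gamma_w a_k(w)|\le C h^{|\gamma|+2k}A_{|\gamma|+2k}f(w)\langle w\rangle^{-\rho(|\gamma|+2k)}$ on $Q_{Bm_k}^c$. Differentiating the defining sum $p_{j,a}=\sum_{k+|\beta|=j}\frac{(\tau_1-\tau)^{|\beta|}}{\beta!}\partial^\beta_\xi D^\beta_x a_k$ and applying this to $D^\alpha_w\partial^\beta_\xi D^\beta_x a_k$ — a derivative of total order $|\alpha|+2|\beta|$ — one gets each term bounded by a constant times $h^{|\alpha|+2|\beta|+2k}A_{|\alpha|+2|\beta|+2k}f(w)\langle w\rangle^{-\rho(|\alpha|+2|\beta|+2k)}$, and since $|\beta|+k=j$ this is exactly $h^{|\alpha|+2j}A_{|\alpha|+2j}f(w)\langle w\rangle^{-\rho(|\alpha|+2j)}$. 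The combinatorial factor $\sum_{k+|\beta|=j}|\tau_1-\tau|^{|\beta|}/\beta!$ is absorbed into the constant (or, in the Roumieu case, into an adjusted $h$), using $(M.2)$ for $A_p$ to handle the splitting of the index and the standard geometric estimate $\sum_{|\beta|=n}1/\beta!\le d^n/n!\le \text{const}\cdot L^n$. One also checks that the number of pairs $(k,\beta)$ with $k+|\beta|=j$ grows only polynomially in $j$, which is again absorbed. Finally, on $Q_{Bm_j}^c$ all the $a_k$ appearing (those with $k\le j$) are defined since $Q_{Bm_k}^c\supseteq Q_{Bm_j}^c$, so $p_{j,a}$ is a well-defined smooth function there. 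This yields $U\subseteq FS_{A_p,\rho}^{*,\infty}(\RR^{2d};B)$ and $U\precsim f$ (possibly enlarging $B$).

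\textbf{Step 2: the regularising estimate.} Here I would follow the classical change-of-quantisation computation (as in \cite[Theorem 4]{BojanP}) but track the dependence on $\sum_j a_j\in U_1$ uniformly. By Proposition \ref{subordinate} applied to $U_1\precsim f$ and to $U\precsim f$, there is $R_0$ so that for $R\ge R_0$ both $\{R(\ssum a_j)\}$ and $\{R(\ssum p_{j,a})\}$ lie in $\Gamma_{A_p,\rho}^{*,\infty}(\RR^{2d})$, are bounded in the appropriate $\Gamma$-space, and satisfy $\precsim_f U_1$, resp.\ $\precsim_f U$ — which gives assertion (c) directly. For (b), the difference $\Op_{\tau_1}(R(\ssum a_j))-\Op_{\tau}(R(\ssum p_{j,a}))$ has, after Taylor expansion in the quantisation parameter with integral remainder, a symbol which modulo the partial sums $\sum_{j<N} p_{j,a}$ (precisely the terms encoded in the definition of $p_{j,a}$) satisfies a bound of the form required in Proposition \ref{eqsse}: namely $|D^\alpha_w(\text{symbol})(w)|\langle w\rangle^{\rho(|\alpha|+2N)}\le C h^{|\alpha|+2N}A_{|\alpha|+2N}e^{M(m|w|)}$ on $Q_{Bm_N}^c$, uniformly in $\sum_j a_j\in U_1$, because $f$ has ultrapolynomial growth of class $*$ (so $f(w)\le C'e^{M(m|w|)}$ for suitable $m$, or in the Beurling case for every $m$ up to a constant) and the subordination bounds from Step 1 supply exactly the $A_{|\alpha|+2N}$ growth. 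Proposition \ref{eqsse} then yields that the family is an equicontinuous subset of $\mathcal{L}(\SSS'^*(\RR^d),\SSS^*(\RR^d))$.

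\textbf{Main obstacle.} The genuinely delicate part is Step 2: controlling the remainder in the change-of-quantisation formula \emph{uniformly} over the bounded/subordinated family $U_1$, keeping the Gevrey-type factorial bounds $A_{|\alpha|+2N}$ intact after repeated application of $(M.2)$ and after summing the Taylor series in the quantisation parameter (which in the infinite-order setting is not a finite sum — one must organise it so that the tail beyond order $N$ is absorbed into the $Q_{Bm_N}^c$ restriction). The single-symbol version of this is \cite[Theorem 4]{BojanP}; the work here is to check that every constant produced there depends only on the subordination data $(B,h,C,f)$ and not on the individual $\sum_j a_j$, which is precisely what the formalism of $\precsim$ and $\precsim_f$ was set up to make bookkeepable. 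The Roumieu case requires the usual additional care of choosing the parameter $h$ (rather than quantifying over all $h$), but introduces no new idea.
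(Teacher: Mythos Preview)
Your overall strategy matches the paper's: Step~1 is exactly what the paper calls ``easy'' and omits, and assertion~(c) is indeed just Proposition~\ref{subordinate} applied to $U_1\precsim f$ and $U\precsim f$. One minor correction: the single-symbol change-of-quantisation result you want is \cite[Theorem~5]{BojanP}, not Theorem~4 (Theorem~4 is the resummation result underlying Proposition~\ref{subordinate}).

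Your Step~2, however, glosses over the step the paper treats most carefully. The Taylor expansion in the quantisation parameter, applied to the \emph{resummed} symbol $a:=R_1(\sum_j a_j)$, yields the expansion $\sum_j q_{j,a}$ with
\[
q_{j,a}=\sum_{|\beta|=j}\frac{(\tau_1-\tau)^{|\beta|}}{\beta!}\partial^\beta_\xi D^\beta_x a,
\]
i.e.\ derivatives of the single function $a$, not of the individual $a_k$. Tracking \cite[Theorem~5]{BojanP} uniformly gives that $\{\Op_{\tau_1}(a)-\Op_\tau(\tilde R(\sum_j q_{j,a}))\}$ is equicontinuous $*$-regularising. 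But $q_{j,a}\neq p_{j,a}$, and the passage from one to the other is the heart of the argument. The paper does it via the algebraic identity
\[
\sum_{j=0}^N q_{j,a}-\sum_{j=0}^N p_{j,a}
=\sum_{j=0}^N\sum_{|\beta|=j}\frac{(\tau_1-\tau)^{|\beta|}}{\beta!}\,\partial^\beta_\xi D^\beta_x\Bigl(a-\sum_{k=0}^{N-j}a_k\Bigr),
\]
which, combined with the subordination $\{a\}\precsim_f U_1$ (so that $a-\sum_{k\le N-j}a_k$ carries the gain $\langle w\rangle^{-2\rho(N-j+1)}$), puts the difference $\tilde R(\sum_j q_{j,a})-R(\sum_j p_{j,a})$ into the scope of Proposition~\ref{eqsse}. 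A final invocation of Proposition~\ref{eqsse} disposes of $\Op_{\tau_1}(R(\sum_j a_j))-\Op_{\tau_1}(a)$, since these are two resummations of the same formal series. Your sketch jumps from ``Taylor expansion with remainder'' directly to ``modulo $\sum_{j<N}p_{j,a}$'', but that substitution of $p$ for $q$ is precisely the content of the displayed identity and does not come for free from the single-symbol theorem.
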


\begin{proof} The easy proof that $U\subseteq FS_{A_p,\rho}^{*,\infty}(\RR^{2d};B)$ and $U\precsim f$ is omitted. By Proposition \ref{subordinate}, there exists $\tilde{R}_1> 2B$ such that for each $R_1\geq \tilde{R}_1$, $U_{1,R_1}=\left\{R_1(\sum_j a_j)\big|\, \sum_j a_j\in U_1\right\}$ is subordinated to $U_1$ under $f$. Fix such $R_1$. For each $a=R_1(\sum_j a_j)$ and $n\in\NN$, denote
\beqs
q_{n,a}(x,\xi)=\sum_{|\beta|=n}\frac{(\tau_1-\tau)^{|\beta|}}{\beta!}\partial^{\beta}_{\xi}D^{\beta}_x a(x,\xi).
\eeqs
Clearly, $\tilde{U}=\left\{\sum_j q_{j,a}\big|\, a\in U_{1,R_1}\right\}\subseteq FS_{A_p,\rho}^{*,\infty}(\RR^{2d};B)$ and $\tilde{U}\precsim f$. Now, by the same technique as in the proof of \cite[Theorem 5]{BojanP} (and employing Proposition \ref{subordinate}), one proves that there exists $\tilde{R}>0$, which depends on $R_1$, such that the kernels of the operators $\tilde{T}_a=\mathrm{Op}_{\tau_1}(a)-\mathrm{Op}_{\tau}\left(\tilde{R}(\sum_j q_{j,a})\right)$ for $a\in U_{1,R_1}$ form a bounded subset of $\SSS^*(\RR^{2d})$. In fact, by carefully examining the proof of the quoted result, it follows that $\tilde{R}$ can be chosen arbitrarily large, in particular, greater than the chosen $R_1$. Thus $\{\tilde{T}_a|\, a\in U_{1,R_1}\}$ is a bounded subset of $\mathcal{L}_b(\SSS'^*(\RR^d),\SSS^*(\RR^d))$, hence also equicontinuous since $\SSS'^*(\RR^d)$ is barrelled. Clearly, $W=\{\tilde{R}(\sum_j q_{j,a})|\, a\in U_{1,R_1}\}$ is a bounded subset of $\Gamma_{A_p,\rho}^{(M_p),\infty}(\RR^{2d};\tilde{m})$ for some $\tilde{m}>0$ (resp. of $\Gamma_{A_p,\rho}^{\{M_p\},\infty}(\RR^{2d};\tilde{h})$ for some $\tilde{h}>0$) and $W\precsim_f \tilde{U}$ ($\tilde{R}$ can be chosen arbitrarily large; $W$ is canonically obtained from $\tilde{U}$ by $\{\chi_{n,\tilde{R}}\}_{n\in\NN}$). For $U$ in the proposition, take $R\geq \max\{R_1,\tilde{R}\}$ as in Proposition \ref{subordinate} and let $U_R$ be the canonically obtained set from $U$ by $\{\chi_{n,R}\}_{n\in\NN}$. For $\sum_j a_j\in U_1$ denote $a=R_1(\sum_j a_j)$ and $\tilde{a}=R(\sum_j p_{j,a})$. Observe
\beq\label{eqn1}
\mathrm{Op}_{\tau_1}(a)-\mathrm{Op}_{\tau}(\tilde{a})=\tilde{T}_a+\mathrm{Op}_{\tau}\left(\tilde{R}(\ssum q_{j,a})-\tilde{a}\right).
\eeq
Note that
\beq\label{eqn2}
\tilde{R}(\ssum q_{j,a})-\tilde{a}=\tilde{R}(\ssum q_{j,a})-\sum_{j=0}^N q_{j,a}+\sum_{j=0}^N q_{j,a}-\sum_{j=0}^N p_{j,a}+\sum_{j=0}^N p_{j,a}-\tilde{a}.
\eeq
For $\beta\in\NN^d$ put $c_{\beta}=(\tau_1-\tau)^{|\beta|}/\beta!$. For $N\in\NN$, we have
\beqs
\sum_{j=0}^N p_{j,a}&=&\sum_{j=0}^N\sum_{k+s=j}\sum_{|\beta|=s}c_{\beta}\partial^{\beta}_{\xi}D^{\beta}_x a_k =\sum_{j=0}^N\sum_{s=0}^j\sum_{|\beta|=s}c_{\beta}\partial^{\beta}_{\xi}D^{\beta}_x a_{j-s}\\
&=&\sum_{s=0}^N\sum_{j=s}^N\sum_{|\beta|=s}c_{\beta}\partial^{\beta}_{\xi}D^{\beta}_x a_{j-s} =\sum_{j=0}^N\sum_{s=j}^N\sum_{|\beta|=j}c_{\beta}\partial^{\beta}_{\xi}D^{\beta}_x a_{s-j}
\eeqs
Hence, we infer
\beqs
\sum_{j=0}^N q_{j,a}-\sum_{j=0}^N p_{j,a}=\sum_{j=0}^N\sum_{|\beta|=j}c_{\beta}\partial^{\beta}_{\xi}D^{\beta}_x \left(a-\sum_{k=0}^{N-j} a_k\right).
\eeqs
Thus, (\ref{eqn2}) together with $U_{1,R_1}\precsim_f U_1$, $W\precsim_f \tilde{U}$, $U_R\precsim_f U$ and Proposition \ref{eqsse} imply that $\left\{\mathrm{Op}_{\tau}\left(\tilde{R}(\sum_j q_{j,a})-\tilde{a}\right)\big|\, \sum_j a_j\in U_1\right\}$ is equicontinuous in $\mathcal{L}(\SSS'^*(\RR^d),\SSS^*(\RR^d))$. Hence, by (\ref{eqn1}), so is the set $\{\mathrm{Op}_{\tau_1}(a)-\mathrm{Op}_{\tau}(\tilde{a})|\, \sum_j a_j\in U_1\}$. Now, observe that for $N\in\NN$,
\beqs
R(\ssum a_j)-a=R(\ssum a_j)-\sum_{j=0}^N a_j+\sum_{j=0}^N a_j-a.
\eeqs
Thus, $\{R(\sum_j a_j)-a|\, \sum_j a_j\in U_1\}$ satisfies the conditions in Proposition \ref{eqsse}. We can conclude that $\left\{\mathrm{Op}_{\tau_1}(R(\sum_j a_j))-\mathrm{Op}_{\tau}(\tilde{a})\big|\, \sum_j a_j\in U_1\right\}$ is an equicontinuous subset of $\mathcal{L}(\SSS'^*(\RR^d),\SSS^*(\RR^d))$.
\end{proof}

\section{Weyl quantisation. The sharp product and the ring structure of $FS_{A_p,\rho}^{*,\infty}(\RR^{2d};B)$}\label{section3}

\subsection{The sharp product in $FS_{A_p,\rho}^{*,\infty}(\RR^{2d};B)$}

For $\sum_j a_j,\sum_j b_j\in FS_{A_p,\rho}^{*,\infty}(\RR^{2d};B)$ we define their sharp product, in notation $\sum_j a_j \# \sum_j b_j$, by $\sum_j c_j=\sum_j a_j \# \sum_j b_j$, where
\beqs
c_j(x,\xi)=\sum_{s+k+l=j}\sum_{|\alpha+\beta|=l}\frac{(-1)^{|\beta|}}{\alpha!\beta!2^l}\partial^{\alpha}_{\xi}D^{\beta}_x a_s(x,\xi)\partial^{\beta}_{\xi} D^{\alpha}_x b_k(x,\xi),\,\, (x,\xi)\in Q^c_{Bm_j}.
\eeqs
One can easily verify that $\sum_j c_j$ is a well defined element of $FS_{A_p,\rho}^{*,\infty}(\RR^{2d};B)$. If $a\in \Gamma_{A_p,\rho}^{*,\infty}(\RR^{2d})$, then $a\#\sum_j b_j$ will denote the $\#$ product of the image of $a$ under the canonical inclusion $\Gamma_{A_p,\rho}^{*,\infty}(\RR^{2d})\rightarrow FS_{A_p,\rho}^{*,\infty}(\RR^{2d};B)$ and $\sum_j b_j$. The same goes if $b\in \Gamma_{A_p,\rho}^{*,\infty}(\RR^{2d})$ or if both $a,b\in \Gamma_{A_p,\rho}^{*,\infty}(\RR^{2d})$.

\begin{lemma}\label{subsharpproduct}
Let $\Lambda$ and $\Omega$ be two index sets and $U^{(\Lambda)}=\left\{\sum_j a^{(\lambda)}_j\big|\, \lambda\in\Lambda\right\}\precsim \{f_{\lambda}|\, \lambda\in\Lambda\}$ in $FS_{A_p,\rho}^{*,\infty}(\RR^{2d};B)$ and $U^{(\Omega)}=\left\{\sum_j b^{(\omega)}_j\big|\, \omega\in\Omega\right\}\precsim \{g_{\omega}|\, \omega\in\Omega\}$ in $FS_{A_p,\rho}^{*,\infty}(\RR^{2d};B)$. Then $U^{(\Lambda)}\# U^{(\Omega)}\precsim \{f_{\lambda}g_{\omega}|\, \lambda\in\Lambda,\, \omega\in\Omega\}$ in $FS_{A_p,\rho}^{*,\infty}(\RR^{2d};B)$.
\end{lemma}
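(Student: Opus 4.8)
The plan is to reduce the claim to a direct multi-index estimate on the coefficients $c^{(\lambda,\omega)}_j$ of the sharp product $\sum_j a^{(\lambda)}_j \# \sum_j b^{(\omega)}_j$, using the explicit formula
\[
c^{(\lambda,\omega)}_j(w)=\sum_{s+k+l=j}\sum_{|\alpha+\beta|=l}\frac{(-1)^{|\beta|}}{\alpha!\beta!2^l}\,\partial^{\alpha}_{\xi}D^{\beta}_x a^{(\lambda)}_s(w)\,\partial^{\beta}_{\xi}D^{\alpha}_x b^{(\omega)}_k(w).
\]
First I would fix the common index $B$ coming from the two subordination hypotheses (after enlarging $B$ if necessary so that both $U^{(\Lambda)}\precsim\{f_\lambda\}$ and $U^{(\Omega)}\precsim\{g_\omega\}$ hold in $FS_{A_p,\rho}^{*,\infty}(\RR^{2d};B)$), and note that on $Q^c_{Bm_j}$ each term in the sum has $s,k\le j$, so that $Q^c_{Bm_j}\subseteq Q^c_{Bm_s}\cap Q^c_{Bm_k}$ and all the factors are defined there. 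Then I would apply $D^\gamma_w$ to $c^{(\lambda,\omega)}_j$, use the Leibniz rule to distribute $D^\gamma_w$ across the product of the two factors, and insert the subordination bounds for $D^{\gamma'}_w\partial^\alpha_\xi D^\beta_x a^{(\lambda)}_s$ and $D^{\gamma''}_w\partial^\beta_\xi D^\alpha_x b^{(\omega)}_k$ — each of which is an estimate of the form $|\cdot|\le C h^{|\text{order}|+2\cdot\text{(index)}}A_{|\text{order}|+2\cdot\text{(index)}}f_\lambda(w)\langle w\rangle^{-\rho(\cdots)}$ (resp. with $g_\omega$), since a derivative of order $\alpha$ (or $\beta$) in $\xi$ or $x$ of $a^{(\lambda)}_s$ counts as a derivative of total order $|\alpha|+|\gamma'|$ hitting the $s$-th coefficient.

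The core of the argument is then a bookkeeping of the exponents. The weight $f_\lambda(w)g_\omega(w)$ factors out immediately, which is exactly the target weight. For the $\langle w\rangle$-power: the $a$-factor contributes $\langle w\rangle^{-\rho(|\alpha|+|\gamma'|+2s)}$ and the $b$-factor $\langle w\rangle^{-\rho(|\beta|+|\gamma''|+2k)}$, and since $|\alpha+\beta|=l$ and $|\gamma'|+|\gamma''|=|\gamma|$ and $s+k+l=j$, these multiply to $\langle w\rangle^{-\rho(|\gamma|+2j)}$, as required. For the $A_p$-factor: one uses $(M.2)$ for $A_p$ repeatedly to absorb $A_{|\alpha|+|\gamma'|+2s}A_{|\beta|+|\gamma''|+2k}$ into a constant times $H^{\cdots}A_{|\gamma|+2j}$, together with the binomial factors $1/(\alpha!\beta!)$ from the sharp-product formula and the $\binom{\gamma}{\gamma'}$ from Leibniz — the factorials in the denominators are controlled against $A_p$ via $(M.4)\Rightarrow(M.1)$ in the standard way, and the $2^{-l}$ helps absorb the combinatorial count of the sum over $|\alpha+\beta|=l$. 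Finally, the number of terms in the triple sum $s+k+l=j$ together with the sum over $|\alpha+\beta|=l$ is at most polynomial in $j$ times $C^j$ for a fixed $C$, and the remaining geometric slack in $h$ (after shrinking $h$ by a fixed factor, in the Beurling case; after the freedom to pick $h$ large in the Roumieu case) absorbs this.

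The main obstacle I anticipate is purely combinatorial rather than conceptual: organizing the Leibniz/sharp-product double sum so that the exponents of $h$, of $\langle w\rangle$, and the arguments of $A_p$ all line up correctly as functions of $j$ and $|\gamma|$ uniformly in $\lambda,\omega$, while keeping track of which constants are allowed to depend on what (in the Beurling case a new $h>0$ is given and one produces a $C$; in the Roumieu case one produces both $h$ and $C$). In particular one must check that the new $B$ — which can be taken to be the same $B$, since $Q^c_{Bm_j}$ already sits inside the intersection of the relevant sets for $s,k\le j$ — genuinely works, so that no further enlargement of $B$ (and hence no shift of the cutoffs) is needed; this is what makes the statement clean. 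Since all of these manipulations are the same ones used in \cite{BojanP} to show $FS_{A_p,\rho}^{*,\infty}(\RR^{2d};B)$ is a ring under $\#$, with the sole modification of carrying the positive weights $f_\lambda,g_\omega$ multiplicatively through every inequality, I would organize the write-up as ``apply the estimates of \cite{BojanP} with $e^{M(m|w|)}$ replaced by $f_\lambda(w)$, resp. $g_\omega(w)$'' and only spell out the exponent bookkeeping in the display above.
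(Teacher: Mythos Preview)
Your proposal is correct and is exactly the straightforward computation the paper has in mind; indeed the paper simply writes ``The proof is straightforward and we omit it.'' Your exponent bookkeeping (the $\langle w\rangle$-powers summing via $s+k+l=j$, the multiplicative separation of $f_\lambda g_\omega$, and the use of $(M.2)$ on $A_p$ with the slack in $h$ absorbing the combinatorial growth) is precisely the intended argument, and your observation that the same $B$ works because $Q^c_{Bm_j}\subseteq Q^c_{Bm_s}\cap Q^c_{Bm_k}$ for $s,k\le j$ is the right reason no enlargement is needed.
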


\begin{proof} The proof is straightforward and we omit it.
\end{proof}

\subsection{Relations between Weyl symbols}

Having in mind the properties of the $\#$ product defined above and the results from Subsection \ref{sec_ch_qua}, we can prove the following result.

\begin{theorem}\label{weylq}
Let $U_1,U_2\subseteq FS_{A_p,\rho}^{*,\infty}(\RR^{2d};B)$ be such that $U_1\precsim f_1$ and $U_2\precsim f_2$ in $FS_{A_p,\rho}^{*,\infty}(\RR^{2d};B)$ for some continuous positive functions $f_1$ and $f_2$ with ultrapolynomial growth of class $*$. Then:
\begin{itemize}
\item[$i)$] $U_1\#U_2\precsim f_1f_2$ in $FS_{A_p,\rho}^{*,\infty}(\RR^{2d};B)$.
\item[$ii)$] Let $V_k\precsim_{f_k} U_k$ with $\Sigma_k: U_k\rightarrow V_k$ the surjective mapping, $k=1,2$. There exists $R>0$, which can be chosen arbitrarily large, such that
    \beqs
    \left\{\Op_{1/2}\left(\Sigma_1(\ssum a_j)\right)\Op_{1/2}\left(\Sigma_2(\ssum b_j)\right)-\Op_{1/2}\left(R(\ssum a_j\# \ssum b_j)\right)\big|\right.\\
    \left.\ssum a_j\in U_1,\, \ssum b_j\in U_2\right\}
    \eeqs
    is an equicontinuous subset of $\mathcal{L}(\SSS'^*(\RR^d),\SSS^*(\RR^d))$ and
    \beq\label{krh1791}
    \left\{R(\ssum a_j\# \ssum b_j)\big|\, \ssum a_j\in U_1,\, \ssum b_j\in U_2\right\}\precsim_{f_1f_2}U_1\#U_2.
    \eeq
\end{itemize}
\end{theorem}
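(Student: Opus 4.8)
The plan is to derive part $i)$ directly from Lemma \ref{subsharpproduct} and then to build part $ii)$ by combining the change-of-quantisation machinery of Proposition \ref{changequa} with the sharp product estimates, reducing the composition of two Weyl operators to a single Weyl operator through the standard Kumano-go--Hörmander technique adapted to the infinite order setting.

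First I would dispose of $i)$: by hypothesis $U_1\precsim f_1$ and $U_2\precsim f_2$ in $FS_{A_p,\rho}^{*,\infty}(\RR^{2d};B)$, so Lemma \ref{subsharpproduct} (with $\Lambda,\Omega$ the relevant index sets and the families of weights being the constant families $f_1$, $f_2$) gives at once $U_1\#U_2\precsim f_1f_2$ in $FS_{A_p,\rho}^{*,\infty}(\RR^{2d};B)$; one should note in passing that $f_1f_2$ is again continuous, positive, and of ultrapolynomial growth of class $*$, which is immediate from the additivity of $M(\cdot)$ up to a constant (or, in the Beurling--Roumieu split language, from Lemma \ref{lemulgr117}). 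Next, for $ii)$, I would use Proposition \ref{subordinate} to pass from $U_1,U_2$ to canonically obtained bounded sets of symbols, and more importantly apply Proposition \ref{eqsse} to control the tails. The heart of the argument is the oscillatory integral representation of the composition $\Op_{1/2}(a)\Op_{1/2}(b)$: its Weyl symbol is the twisted product $a\# b$, and by Taylor-expanding the phase to order $N$ and estimating the remainder (exactly as in \cite[Theorem 5]{BojanP}, but now uniformly over $a\in V_1$, $b\in V_2$) one shows that the kernels of
\[
\Op_{1/2}(a)\Op_{1/2}(b)-\Op_{1/2}\Big(\tilde R\big(\ssum q_{j}\big)\Big),
\]
where $q_j$ is the $j$-th homogeneous component of the formal twisted product of the genuine symbols $a$ and $b$, form a bounded (hence equicontinuous) subset of $\mathcal L(\SSS'^*(\RR^d),\SSS^*(\RR^d))$, with the truncation parameter $\tilde R$ choosable arbitrarily large. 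Then one compares $\tilde R(\ssum q_j)$ against $R(\ssum a_j\#\ssum b_j)$ by the same bookkeeping as in the proof of Proposition \ref{changequa}: write the difference as a telescoping sum of three pieces — the tail of $\tilde R(\ssum q_j)$ past order $N$, the discrepancy between partial sums of the $q_j$'s and partial sums of the $c_j$'s (the components of $\ssum a_j\#\ssum b_j$), and the tail of $R(\ssum a_j\#\ssum b_j)$ past order $N$ — each of which is handled by $V_k\precsim_{f_k} U_k$, by $U_1\#U_2\precsim f_1f_2$, and by Proposition \ref{eqsse}. The relation \eqref{krh1791} is then exactly the content of $R(\ssum a_j\#\ssum b_j)$ being canonically obtained from $U_1\#U_2$ (Proposition \ref{subordinate} applied to $U_1\#U_2$, which is legitimate since $U_1\#U_2\precsim f_1f_2$ by part $i)$).

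The main obstacle will be bookkeeping the dependence of the various truncation radii on each other while keeping all estimates uniform over the two parameter families simultaneously, and verifying that the combinatorial identity which rearranges $\sum_{j\le N} c_{j}$ (the components of the true sharp product) against the ``double truncation'' $\sum_{|\beta|=j}$-type sums produced by the oscillatory integral remainder really telescopes with the correct weights $\langle w\rangle^{\rho(|\alpha|+2N)}/(h^{|\alpha|+2N}A_{|\alpha|+2N})$; this is the point where the definition of $\#$ with its $2^{-l}\alpha!\beta!$ factors and the product sequence conditions $(M.2)$ for $A_p$ must be invoked carefully, since we must absorb the loss coming from $A_{\alpha}A_{\beta}A_sA_k$ into $A_{|\alpha|+2N}$. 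Once these identities are in place — and they are the natural uniform-over-bounded-sets upgrades of \cite[Theorems 3, 4, 5]{BojanP} — the equicontinuity follows formally from Proposition \ref{eqsse}, and $\tilde R$ (hence $R$) being choosable arbitrarily large is inherited from the corresponding freedom in Proposition \ref{changequa} and \cite[Theorem 5]{BojanP}.
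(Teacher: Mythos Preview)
Your handling of part $i)$ matches the paper exactly. For part $ii)$, however, your route and the paper's diverge in a meaningful way.

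You propose to work \emph{directly} with the Weyl--Weyl composition: Taylor-expand the symplectic phase in the oscillatory integral for $\Op_{1/2}(a)\Op_{1/2}(b)$, control the remainder uniformly over $a\in V_1$, $b\in V_2$, and then compare against $R(\ssum a_j\#\ssum b_j)$ via the telescoping trick. The paper instead takes a \emph{detour through mixed quantisations}: it first applies Proposition~\ref{changequa} to write $a^w=\Op_0(\tilde a)+T_a$ and $b^w=\Op_1(\tilde b)+T_b$ (left and right quantisations respectively), then composes $\Op_0(\tilde a)\Op_1(\tilde b)$ via the standard left-right formula $\tilde c_j=\sum_{|\gamma|=j}\frac{1}{\gamma!}\partial_\xi^\gamma(\tilde a\, D_x^\gamma\tilde b)$ (this is \cite[Theorem~7]{BojanP}, not Theorem~5 as you cite --- Theorem~5 is change of quantisation), and finally changes quantisation back from $\Op_0$ to $\Op_{1/2}$ via Proposition~\ref{changequa} once more. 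The Moyal formula is then recovered from a nontrivial differential-operator identity in auxiliary variables $(x,\eta,y,\xi)$, namely that the accumulated operator $\big(\partial_\eta\!\cdot\! D_y+\partial_\xi\!\cdot\! D_y-\tfrac12(\partial_\xi+\partial_\eta)\!\cdot\!(D_x+D_y)+\tfrac12\partial_\eta\!\cdot\! D_x-\tfrac12\partial_\xi\!\cdot\! D_y\big)^t$ collapses algebraically to $2^{-t}(\partial_\eta\!\cdot\! D_y-\partial_\xi\!\cdot\! D_x)^t$. Only after this is in place does the paper do a second telescoping argument to pass from $a\#b$ (symbols) to $\ssum a_j\#\ssum b_j$ (formal series).

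What each buys: the paper's detour is longer and requires the operator identity above, but it never needs a uniform remainder estimate for the Weyl--Weyl oscillatory integral --- it only invokes the already-established left-right result from \cite{BojanP}. Your direct route is cleaner in principle, but the step ``estimating the remainder exactly as in \cite[Theorem~5]{BojanP}'' is not available off the shelf: no such uniform Weyl--Weyl remainder bound is proved in \cite{BojanP} or in this paper, so you would have to supply it yourself. This is certainly doable (the analysis is parallel to the left-right case), but it is a genuine extra lemma rather than a citation, and the ``bookkeeping obstacle'' you flag would sit precisely there.
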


\begin{proof} Observe that $i)$ is a special case of Lemma \ref{subsharpproduct}. We prove $ii)$. Denote $U=U_1\# U_2$. For $\sum_j a_j\in U_1$, $\sum_j b_j\in U_2$ denote $a=\Sigma_1(\sum_j a_j)\in V_1$, $b=\Sigma_2(\sum_j b_j)\in V_2$. For $j\in\NN$, define
\beqs
p_{j,a}(x,\xi)&=&\sum_{|\beta|=j}\frac{1}{2^j\beta!}\partial^{\beta}_{\xi}D^{\beta}_x a(x,\xi),\,\, x,\xi\in\RR^d\\
q_{j,b}(x,\xi)&=&\sum_{|\beta|=j}\frac{(-1)^j}{2^j\beta!}\partial^{\beta}_{\xi}D^{\beta}_x b(x,\xi),\,\, x,\xi\in\RR^d.
\eeqs
Since $V_k\precsim f_k$, $k=1,2$, by Proposition \ref{changequa}, there exists $R>0$ such that
\beqs
&{}&\left\{T_a=a^w-\mathrm{Op}_0\left(R(\ssum p_{j,a})\right)\big|\, \ssum a_j\in U_1\right\}\,\, \mbox{and}\\ &{}&\left\{T_b=b^w-\mathrm{Op}_1\left(R(\ssum q_{j,b})\right)\big|\, \ssum b_j\in U_2\right\}
\eeqs
are equicontinuous $*$-regularising sets (one can make this to hold for the same $R$ by taking it to be large enough; cf. Propositions \ref{subordinate} and \ref{eqsse}). For brevity in notation, for $a=\Sigma_1(\ssum a_j)\in V_1$ and $b=\Sigma_2(\ssum b_j)\in V_2$, denote $\tilde{a}=R(\sum_j p_{j,a})$ and $\tilde{b}=R(\sum_j q_{j,b})$. By Proposition \ref{changequa}, $\{\tilde{a}|\, \sum_j a_j\in U_1\}\precsim f_1$ and $\{\tilde{b}|\, \sum_j b_j\in U_2\}\precsim f_2$, and these are bounded subsets of some $\Gamma_{A_p,\rho}^{(M_p),\infty}(\RR^{2d};m)$ (resp. of some $\Gamma_{A_p,\rho}^{\{M_p\},\infty}(\RR^{2d};h)$). Thus, Proposition \ref{continuity} implies that $\{\tilde{a}(x,D)|\, \sum_j a_j\in U_1\}$ and $\{\mathrm{Op}_1(\tilde{b})|\, \sum_j b_j\in U_2\}$ are equicontinuous subsets of $\mathcal{L}(\SSS^*(\RR^d),\SSS^*(\RR^d))$ and $\mathcal{L}(\SSS'^*(\RR^d),\SSS'^*(\RR^d))$. Hence, $a^wb^w=\tilde{a}(x,D)\mathrm{Op}_1(\tilde{b})+T_{a,b}$, where $\{T_{a,b}|\, \sum_j a_j\in U_1,\, \sum_j b_j\in U_2\}$ is an equicontinuous $*$-regularising set. For $j\in\NN$, define
\beq\label{eqs5}
\tilde{c}_{j,\tilde{a},\tilde{b}}(x,\xi)=\sum_{|\gamma|=j}\frac{1}{\gamma!}\partial^{\gamma}_{\xi}\left(\tilde{a}(x,\xi) D^{\gamma}_x \tilde{b}(x,\xi)\right),\,\, x,\xi\in\RR^d.
\eeq
One easily verifies that $\sum_j \tilde{c}_{j,\tilde{a},\tilde{b}}\in FS_{A_p,\rho}^{*,\infty}(\RR^{2d};0)$ and for the set
\beqs
\tilde{W}= \left\{\ssum \tilde{c}_{j,\tilde{a},\tilde{b}}|\,\ssum a_j\in U_1,\, \ssum b_j\in U_2\right\}
\eeqs
we have that $\tilde{W}\precsim f_1f_2$ in $FS_{A_p,\rho}^{*,\infty}(\RR^{2d};0)$. By the same technique as in the proof of \cite[Theorem 7]{BojanP} (and using Proposition \ref{subordinate}) one obtains that there exists $R_1\geq R$, which can be chosen arbitrarily large, such that the kernels of the operators
\beqs
\tilde{a}(x,D)\mathrm{Op}_1(\tilde{b})-\mathrm{Op}_0\left(R_1(\ssum \tilde{c}_{j,\tilde{a},\tilde{b}})\right)\,\, \mbox{for}\,\, \ssum a_j\in U_1\,\, \mbox{and}\,\, \ssum b_j\in U_2
\eeqs
form a bounded subset of $\SSS^*(\RR^{2d})$. Denoting $\tilde{c}_{\tilde{a},\tilde{b}}=R_1(\sum_j \tilde{c}_{j,\tilde{a},\tilde{b}})$, we conclude that
\beqs
\left\{\tilde{a}(x,D)\mathrm{Op}_1(\tilde{b})-\tilde{c}_{\tilde{a},\tilde{b}}(x,D)\big|\, \ssum a_j\in U_1,\, \ssum b_j\in U_2\right\}
\eeqs
is equicontinuous in $\mathcal{L}(\SSS'^*(\RR^d),\SSS^*(\RR^d))$. Moreover, $\{\tilde{c}_{\tilde{a},\tilde{b}}|\, \sum_j a_j\in U_1,\, \sum_j b_j\in U_2\}$ is a bounded subset of $\Gamma_{A_p,\rho}^{(M_p),\infty}(\RR^{2d};m')$ for some $m'>0$ (resp. of $\Gamma_{A_p,\rho}^{\{M_p\},\infty}(\RR^{2d};h')$ for some $h'>0$) and $\{\tilde{c}_{\tilde{a},\tilde{b}}|\, \sum_j a_j\in U_1,\, \sum_j b_j\in U_2\}\precsim_{f_1f_2} \tilde{W}$ (since we can take $R_1$ arbitrarily large and $\tilde{W}\precsim f_1f_2$; cf. Proposition \ref{subordinate}). We apply Proposition \ref{changequa} with $\tilde{W}\precsim f_1f_2$ and $\tau=1/2$ and $\tau_1=0$ in order to obtain the existence of $R_2$, with $R_2\geq R_1$, such that
\beqs
\left\{\mathrm{Op}_0\left(R_2(\ssum \tilde{c}_{j,\tilde{a},\tilde{b}})\right)-\mathrm{Op}_{1/2}\left(R_2(\ssum c_{j,\tilde{a},\tilde{b}})\right)\big|\, \ssum a_j\in U_1,\, \ssum b_j\in U_2\right\}
\eeqs
is an equicontinuous subset of $\mathcal{L}(\SSS'^*(\RR^d),\SSS^*(\RR^d))$, where
\beqs
c_{j,\tilde{a},\tilde{b}}(x,\xi)=\sum_{k+|\gamma|=j}\frac{(-1)^{|\gamma|}}{\gamma!2^{|\gamma|}}\partial^{\gamma}_{\xi}D^{\gamma}_x \tilde{c}_{k,\tilde{a},\tilde{b}}(x,\xi),\,\, x,\xi\in \RR^d.
\eeqs
By (\ref{eqs5}), we infer that
\beqs
c_{j,\tilde{a},\tilde{b}}(x,\xi)=\sum_{|\alpha+\beta+\gamma|=j}\frac{(-1)^{|\gamma|}}{\alpha!\beta!\gamma!2^{|\gamma|}} \partial^{\gamma}_{\xi} D^{\gamma}_x \left(\partial^{\alpha}_{\xi}\tilde{a}(x,\xi)\partial^{\beta}_{\xi} D^{\alpha+\beta}_x \tilde{b}(x,\xi)\right),\,\, x,\xi\in \RR^d.
\eeqs
As
\beqs
\tilde{c}_{\tilde{a},\tilde{b}}-R_2\left(\ssum \tilde{c}_{j,\tilde{a},\tilde{b}}\right)= \tilde{c}_{\tilde{a},\tilde{b}}-\sum_{j=0}^{N-1}\tilde{c}_{j,\tilde{a},\tilde{b}}+ \sum_{j=0}^{N-1}\tilde{c}_{j,\tilde{a},\tilde{b}} -R_2\left(\ssum \tilde{c}_{j,\tilde{a},\tilde{b}}\right),
\eeqs
we can conclude that the set $\{\tilde{c}_{\tilde{a},\tilde{b}}-R_2(\sum_j \tilde{c}_{j,\tilde{a},\tilde{b}})|\, \sum_j a_j\in U_1,\, \sum_j b_j\in U_2\}$ satisfies the conditions of Proposition \ref{eqsse}. Hence,
\beqs
\left\{\tilde{c}_{\tilde{a},\tilde{b}}(x,D)-\mathrm{Op}_0(R_2(\ssum \tilde{c}_{j,\tilde{a},\tilde{b}}))\big|\, \ssum a_j\in U_1,\, \ssum b_j\in U_2\right\}
\eeqs
is an equicontinuous subset of $\mathcal{L}(\SSS'^*(\RR^d),\SSS^*(\RR^d))$. Thus, by denoting $c_{a,b}=R_2(\sum_j c_{j,\tilde{a},\tilde{b}})$, we conclude that $a^w b^w=c^w_{a,b}+T'_{a,b}$, where $\{T'_{a,b}|\, \sum_j a_j\in U_1,\, \sum_j b_j\in U_2\}$ is an  equicontinuous subset of $\mathcal{L}(\SSS'^*(\RR^d),\SSS^*(\RR^d))$. Next we prove
\beq\label{trs1793}
\left\{c_{a,b}\big|\, \ssum a_j\in U_1,\, \ssum b_j\in U_2\right\}\precsim_{f_1f_2} \left\{a\#b\big|\, \ssum a_j\in U_1,\, \ssum b_j\in U_2\right\},
\eeq
where the surjective mapping is given by $a\#b\mapsto c_{a,b}$. For $N\in\ZZ_+$, we have
\begin{align*}
\sum_{j=0}^{N-1}&\sum_{|\alpha+\beta+\gamma|=j}\frac{(-1)^{|\gamma|}}{\alpha!\beta!\gamma!2^{|\gamma|}} \partial^{\gamma}_{\xi} D^{\gamma}_x \left(\partial^{\alpha}_{\xi} \tilde{a}\cdot \partial^{\beta}_{\xi} D^{\alpha+\beta}_x \tilde{b}\right)\\
&=\sum_{j=0}^{N-1}\sum_{|\alpha+\beta+\gamma|=j}\frac{(-1)^{|\gamma|}}{\alpha!\beta!\gamma!2^{|\gamma|}} \partial^{\gamma}_{\xi} D^{\gamma}_x \left(\partial^{\alpha}_{\xi} \left(\tilde{a}-\sum_{s=0}^{N-j-1} p_{s,a}\right)\cdot \partial^{\beta}_{\xi} D^{\alpha+\beta}_x \tilde{b}\right)\\
&{}\,\,\,\,+ \sum_{j=0}^{N-1}\sum_{s=0}^{N-j-1}\sum_{|\alpha+\beta+\gamma|=j} \frac{(-1)^{|\gamma|}}{\alpha!\beta!\gamma!2^{|\gamma|}} \partial^{\gamma}_{\xi} D^{\gamma}_x \left(\partial^{\alpha}_{\xi} p_{s,a}\cdot \partial^{\beta}_{\xi} D^{\alpha+\beta}_x \left(\tilde{b}-\sum_{k=0}^{N-j-s-1}q_{k,b}\right)\right)\\
&{}\,\,\,\,+ \sum_{j=0}^{N-1}\sum_{s=0}^{N-j-1}\sum_{k=0}^{N-j-s-1} \sum_{|\alpha+\beta+\gamma|=j}\frac{(-1)^{|\gamma|}}{\alpha!\beta!\gamma!2^{|\gamma|}} \partial^{\gamma}_{\xi} D^{\gamma}_x \left(\partial^{\alpha}_{\xi} p_{s,a}\cdot \partial^{\beta}_{\xi} D^{\alpha+\beta}_xq_{k,b}\right)\\
&= S_{1,N-1}+S_{2,N-1}+S_{3,N-1}.
\end{align*}
For $S_{3,N-1}$ we have
\beqs
S_{3,N-1}&=&\sum_{t=0}^{N-1}\sum_{j+s+k=t} \sum_{|\alpha+\beta+\gamma|=j}\frac{(-1)^{|\gamma|}}{\alpha!\beta!\gamma!2^{|\gamma|}} \partial^{\gamma}_{\xi} D^{\gamma}_x \left(\partial^{\alpha}_{\xi} p_{s,a}\cdot \partial^{\beta}_{\xi} D^{\alpha+\beta}_xq_{k,b}\right)\\
&=& \sum_{t=0}^{N-1} \sum_{|\alpha+\beta+\gamma+\delta+\mu|=t} \frac{(-1)^{|\gamma|+|\mu|}}{\alpha!\beta!\gamma!\delta!\mu!2^{|\gamma|+|\delta|+|\mu|}} \partial^{\gamma}_{\xi} D^{\gamma}_x \left(\partial^{\alpha+\delta}_{\xi} D^{\delta}_x a\cdot \partial^{\beta+\mu}_{\xi} D^{\alpha+\beta+\mu}_x b\right).
\eeqs
Let $g(x,\eta,y,\xi)=a(x,\eta)b(y,\xi)$. Clearly $g\in\mathcal{C}^{\infty}(\RR^{4d})$. One verifies
\begin{multline*}
S_{3,N-1}(x,\xi)=\sum_{t=0}^{N-1} \sum_{|\alpha+\beta+\gamma+\delta+\mu|=t} \frac{(-1)^{|\gamma|+|\mu|}}{\alpha!\beta!\gamma!\delta!\mu!2^{|\gamma|+|\delta|+|\mu|}}\\
\cdot(\partial_{\xi}+\partial_{\eta})^{\gamma} (D_x+D_y)^{\gamma} \partial^{\alpha+\delta}_{\eta} D^{\delta}_x \partial^{\beta+\mu}_{\xi} D^{\alpha+\beta+\mu}_y g(x,\eta,y,\xi)\Big|_{\substack{y=x\\ \eta=\xi}}.
\end{multline*}
Considering only the operator on the right hand side, we have
\begin{align*}
\sum_{t=0}^{N-1}& \sum_{|\alpha+\beta+\gamma+\delta+\mu|=t} \frac{(-1)^{|\gamma|+|\mu|}}{\alpha!\beta!\gamma!\delta!\mu!2^{|\gamma|+|\delta|+|\mu|}} (\partial_{\xi}+\partial_{\eta})^{\gamma} (D_x+D_y)^{\gamma} \partial^{\alpha+\delta}_{\eta} D^{\delta}_x \partial^{\beta+\mu}_{\xi} D^{\alpha+\beta+\mu}_y\\
&=\sum_{t=0}^{N-1} \frac{1}{t!}\left(\partial_{\eta}\cdot D_y+\partial_{\xi}\cdot D_y-\frac{1}{2}(\partial_{\xi}+\partial_{\eta})\cdot(D_x+D_y)+\frac{1}{2}\partial_{\eta}\cdot D_x-\frac{1}{2}\partial_{\xi}\cdot D_y\right)^t\\
&= \sum_{t=0}^{N-1} \frac{1}{2^t t!}(\partial_{\eta}\cdot D_y-\partial_{\xi}\cdot D_x)^t.
\end{align*}
Hence,
\beqs
S_{3,N-1}(x,\xi)=\sum_{j=0}^{N-1}\sum_{|\alpha+\beta|=j}\frac{(-1)^{|\beta|}}{\alpha!\beta!2^j}\partial^{\alpha}_{\xi}D^{\beta}_x a(x,\xi)\partial^{\beta}_{\xi}D^{\alpha}_x b(x,\xi).
\eeqs
One easily verifies that for every $h>0$ there exists $C>0$ (resp. there exist $h,C>0$) such that for all $\sum_j a_j\in U_1$ and $\sum_j b_j\in U_2$
\beqs
\sup_{N\in\ZZ_+}\sup_{\alpha\in\NN^{2d}}\sup_{w\in Q_{B'm_N}^c}\frac{\left|D^{\alpha}_w S_{1,N-1}(w)\right|\langle w\rangle^{\rho(|\alpha|+2N)}}{h^{|\alpha|+2N}A_{|\alpha|+2N}f_1(w)f_2(w)}&\leq& C\\
\sup_{N\in\ZZ_+}\sup_{\alpha\in\NN^{2d}}\sup_{w\in Q_{B'm_N}^c}\frac{\left|D^{\alpha}_w S_{2,N-1}(w)\right|\langle w\rangle^{\rho(|\alpha|+2N)}}{h^{|\alpha|+2N}A_{|\alpha|+2N}f_1(w)f_2(w)} &\leq& C,
\eeqs
for some $B'>0$. We conclude that (\ref{trs1793}) holds true. We will prove that for every $h>0$ there exists $C>0$ (resp. there exist $h,C>0$) such that for all $\sum_j a_j\in U_1$ and $\sum_j b_j\in U_2$ and the corresponding $a=\Sigma_1(\sum_j a_j)$ and $b=\Sigma_2(\sum_j b_j),$
\beqs
\sup_{N\in\ZZ_+}\sup_{\alpha\in\NN^{2d}}\sup_{w\in Q_{B''m_N}^c}\frac{\left|D^{\alpha}_w \left((a\#b)_{<N}(w)-(\sum_j a_j\#\sum_j b_j)_{<N}(w)\right)\right|}{h^{|\alpha|+2N}A_{|\alpha|+2N}f_1(w)f_2(w)\langle w\rangle^{-\rho(|\alpha|+2N)}}\leq C,
\eeqs
for some $B''>0$. Note that this, together with (\ref{trs1793}), implies $\{c_{a,b}|\,\sum_j a_j\in U_1,\, \sum_j b_j\in U_2\}\precsim_{f_1f_2} U$, where the surjective mapping is given by $\sum_j a_j\#\sum_j b_j$ $\mapsto c_{a,b}$. Thus, by employing Proposition \ref{subordinate} to $U\precsim f_1f_2$ and then applying Proposition \ref{eqsse}, we conclude that the set $\{c_{a,b}^w-\Op_{1/2}(R'(\sum_j a_j\#\sum_j b_j))|$ $ \sum_j a_j\in U_1,\, \sum_j b_j\in U_2\}$ is equicontinuous $*$-regularising for arbitrarily large $R'$. To prove the desired estimate, let $N\in\ZZ_+$. By employing the notation $a=\Sigma_1(\sum_j a_j)$ and $b=\Sigma_2(\sum_j b_j)$ for $\sum_j a_j\in U_1$ and $\sum_j b_j\in U_2$, we have
\beqs
(a\#b)_{<N}&=&\sum_{j=0}^{N-1}\sum_{|\alpha+\beta|=j}\frac{(-1)^{|\beta|}}{\alpha!\beta!2^j}\partial^{\alpha}_{\xi}D^{\beta}_x \left(a-\sum_{s=0}^{N-j-1}a_s\right)\cdot \partial^{\beta}_{\xi} D^{\alpha}_x b\\
&{}&+ \sum_{j=0}^{N-1}\sum_{s=0}^{N-j-1}\sum_{|\alpha+\beta|=j} \frac{(-1)^{|\beta|}}{\alpha!\beta!2^j}\partial^{\alpha}_{\xi}D^{\beta}_x a_s\cdot \partial^{\beta}_{\xi} D^{\alpha}_x\left(b-\sum_{k=0}^{N-j-s-1} b_k\right)\\
&{}&+\sum_{j=0}^{N-1}\sum_{s=0}^{N-j-1} \sum_{k=0}^{N-j-s-1}\sum_{|\alpha+\beta|=j} \frac{(-1)^{|\beta|}}{\alpha!\beta!2^j}\partial^{\alpha}_{\xi}D^{\beta}_x a_s\cdot \partial^{\beta}_{\xi} D^{\alpha}_x b_k\\
&=&S'_{1,N-1}+S'_{2,N-1}+S'_{3,N-1}.
\eeqs
One easily verifies that for $S'_{1,N-1}$ and $S'_{2,N-1}$ the same type of estimate as for $S_{1,N-1}$ and $S_{2,N-1}$, hold. For $S'_{3,N-1}$ we have
\beqs
S'_{3,N-1}=\sum_{t=0}^{N-1}\sum_{j+s+k=t}\sum_{|\alpha+\beta|=j} \frac{(-1)^{|\beta|}}{\alpha!\beta!2^j}\partial^{\alpha}_{\xi}D^{\beta}_x a_s\cdot \partial^{\beta}_{\xi} D^{\alpha}_x b_k=(\ssum a_j\#\ssum b_j)_{<N},
\eeqs
which completes the proof.
\end{proof}

As a direct consequence of Theorem \ref{weylq} and Proposition \ref{eqsse} we have the following corollary.

\begin{corollary}\label{corweylqu}
Let $U_1,U_2\subseteq FS_{A_p,\rho}^{*,\infty}(\RR^{2d};B)$ with $U_1\precsim f_1$ and $U_2\precsim f_2$ for some continuous positive function of ultrapolynomial growth of class $*$. For $\sum_j a_j\in U_1$ and $\sum_j b_j\in U_2$ denote $\sum_j c_{j,a,b}=\sum_j a_j\#\sum_j b_j\in U_1\# U_2$. Then, there exists $R>0$, which can be chosen arbitrarily large, such that
\beqs
\left\{a^wb^w-c^w\big|\, a=R(\ssum a_j),\, b=R(\ssum b_j),\, c=R(\ssum c_{j,a,b})\right\}
\eeqs
is an equicontinuous subset of $\mathcal{L}(\SSS'^*(\RR^d),\SSS^*(\RR^d))$ and (\ref{krh1791}) holds.
\end{corollary}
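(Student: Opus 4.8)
The plan is to read the Corollary as the special case of Theorem~\ref{weylq} obtained by choosing the bounded realisations $V_k$ to be canonically obtained from the asymptotic expansions themselves. Concretely, since $U_1\precsim f_1$ and $U_2\precsim f_2$, Proposition~\ref{subordinate} gives a threshold $R_0$ such that for every $R\geq R_0$ the canonically obtained sets $V_k=\{R(\sum_j a^{(k)}_j)\}$ are bounded in the appropriate $\Gamma$-class and satisfy $V_k\precsim_{f_k} U_k$, with surjection $\Sigma_k:\sum_j a^{(k)}_j\mapsto R(\sum_j a^{(k)}_j)$. Likewise, $U_1\#U_2\precsim f_1f_2$ by part $i)$ of the theorem, so Proposition~\ref{subordinate} applied to $U_1\#U_2$ gives, for $R$ large enough, that the set $\{R(\sum_j c_{j,a,b})\}$ is bounded in a $\Gamma$-class with $\{R(\sum_j c_{j,a,b})\}\precsim_{f_1f_2}U_1\#U_2$; this is exactly \eqref{krh1791}.

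Next I would invoke part $ii)$ of Theorem~\ref{weylq} with these specific choices of $V_1,V_2$ and their surjections $\Sigma_1,\Sigma_2$. The theorem yields some $R'$, which may be taken arbitrarily large, so that
\beqs
\left\{\Op_{1/2}(\Sigma_1(\ssum a_j))\Op_{1/2}(\Sigma_2(\ssum b_j))-\Op_{1/2}(R'(\ssum a_j\#\ssum b_j))\big|\,\ssum a_j\in U_1,\,\ssum b_j\in U_2\right\}
\eeqs
is an equicontinuous $*$-regularising set. Since $\Sigma_k(\sum_j a^{(k)}_j)=R'(\sum_j a^{(k)}_j)$ by our choice, and writing $a=R'(\sum_j a_j)$, $b=R'(\sum_j b_j)$, $c=R'(\sum_j c_{j,a,b})$, this is precisely the statement that $\{a^wb^w-c^w\}$ is equicontinuous in $\mathcal{L}(\SSS'^*(\RR^d),\SSS^*(\RR^d))$. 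The only point requiring a little care is to align the various constants $R$: Proposition~\ref{subordinate} (applied separately to $U_1$, $U_2$, and $U_1\#U_2$) and Theorem~\ref{weylq}$ii)$ each impose a lower bound, and one simply takes $R$ to exceed all of them, which is permissible because each of these results allows the relevant constant to be chosen arbitrarily large. The role of Proposition~\ref{eqsse} here is the standard one used repeatedly in Section~\ref{section3}: it guarantees that passing between the ``truncated-expansion'' realisations $R(\sum_j c_{j,a,b})$ and any other $\Op_{1/2}$-realisation differing from it by a subordinate remainder only changes the operator by an equicontinuous $*$-regularising family, so the stated $R$ can indeed be enlarged freely.

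There is essentially no hard part: the corollary is a bookkeeping specialisation of Theorem~\ref{weylq}. The mild subtlety, and the only place where one must be slightly attentive, is ensuring that the single constant $R$ in the statement simultaneously serves as the realisation parameter for $U_1$, for $U_2$, and for $U_1\#U_2$, and also satisfies the hypothesis of Theorem~\ref{weylq}$ii)$; this is handled by the ``arbitrarily large'' clauses in Propositions~\ref{subordinate}, \ref{eqsse} and in Theorem~\ref{weylq}$ii)$, together with Proposition~\ref{eqsse} to absorb the discrepancies between different large choices into the equicontinuous $*$-regularising error. I would therefore present the proof in three short sentences: choose $R$ large, identify $V_k$ with the canonical realisations so that $\Sigma_k$ becomes the map in the statement, and quote Theorem~\ref{weylq}.
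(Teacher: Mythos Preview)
Your proposal is correct and matches the paper's approach: the paper simply states the corollary as ``a direct consequence of Theorem~\ref{weylq} and Proposition~\ref{eqsse}'', and you have correctly unpacked this by taking $V_k$ to be the canonical realisations from Proposition~\ref{subordinate} and then using Proposition~\ref{eqsse} to align the various $R$'s. The only expositional wrinkle is that the $R'$ produced by Theorem~\ref{weylq}$ii)$ depends on the already-fixed $V_k$ (and hence on their $R$), so one cannot literally set them equal \emph{a priori}; but your final paragraph resolves this correctly --- one first fixes $V_k$ with some $R^*$, obtains $R'\geq R^*$ from the theorem, and then uses Proposition~\ref{eqsse} (together with Proposition~\ref{continuity} for the composition) to replace $R^*$ by $R'$ in the $a^w$ and $b^w$ factors at the cost of an equicontinuous $*$-regularising error.
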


\begin{remark} This corollary is applicable when $U_1$ and $U_2$ are bounded subsets of $\Gamma_{A_p,\rho}^{(M_p),\infty}(\RR^{2d};m)$ for some $m>0$ (resp. of $\Gamma_{A_p,\rho}^{\{M_p\},\infty}(\RR^{2d};h)$ for some $h>0$). Then the corollary gives the famous Moyal's formula for the asymptotic expansion of the composition of two Weyl quantisations. Furthermore, it states that the resulting $*$-regularising operators form an equicontinuous subset of $\mathcal{L}(\SSS'^*(\RR^d),\SSS^*(\RR^d))$.
\end{remark}

\subsection{The ring structure of $FS_{A_p,\rho}^{*,\infty}(\RR^{2d};B)$}

Theorem \ref{weylq} $i)$ implies that if $U_1$ and $U_2$ are two bounded subsets of $FS_{A_p,\rho}^{(M_p),\infty}(\RR^{2d};B,m_1)$ and $FS_{A_p,\rho}^{(M_p),\infty}(\RR^{2d};B,m_2)$ (resp. of $FS_{A_p,\rho}^{\{M_p\},\infty}(\RR^{2d};B,h_1)$ and $FS_{A_p,\rho}^{\{M_p\},\infty}(\RR^{2d};B,h_2)$), then $U_1\#U_2$ is a bounded subset of $FS_{A_p,\rho}^{(M_p),\infty}(\RR^{2d};B,m)$ for some $m>0$ (resp. of $FS_{A_p,\rho}^{\{M_p\},\infty}(\RR^{2d};B,h)$ for some $h>0$). As $FS_{A_p,\rho}^{(M_p),\infty}(\RR^{2d};B,m)$ and $FS_{A_p,\rho}^{\{M_p\},\infty}(\RR^{2d};B,h)$ are $(F)$-spaces it follows that the mappings
\beqs
&{}&\#: FS_{A_p,\rho}^{(M_p),\infty}(\RR^{2d};B,m_1)\times FS_{A_p,\rho}^{(M_p),\infty}(\RR^{2d};B,m_2)\rightarrow FS_{A_p,\rho}^{(M_p),\infty}(\RR^{2d};B,m)\,\, \mbox{and}\\
&{}&\#: FS_{A_p,\rho}^{\{M_p\},\infty}(\RR^{2d};B,h_1)\times FS_{A_p,\rho}^{\{M_p\},\infty}(\RR^{2d};B,h_2)\rightarrow FS_{A_p,\rho}^{\{M_p\},\infty}(\RR^{2d};B,h)
\eeqs
are continuous. Since $FS_{A_p,\rho}^{*,\infty}(\RR^{2d};B)$ is barrelled, this also proves that
\beqs
\#: FS_{A_p,\rho}^{*,\infty}(\RR^{2d};B)\times FS_{A_p,\rho}^{*,\infty}(\RR^{2d};B)\rightarrow FS_{A_p,\rho}^{*,\infty}(\RR^{2d};B)
\eeqs
is hypocontinuous.\\
\indent Given $\sum_j a_j, \sum_j b_j, \sum_j c_j\in FS_{A_p,B_p,\rho}^{*,\infty}(\RR^{2d};B)$ one easily verifies that\\
$\left(\sum_j a_j \# \sum_j b_j\right)\# \sum_j c_j =\sum_j p_j$, where
\beqs
p_j=\sum_{r=0}^j \sum_{s+k+l=r}\sum_{|\alpha+\beta+\gamma+\delta|=j-r}\frac{(-1)^{|\beta+\delta|}}{\alpha!\beta!\gamma!\delta!2^{j-r}} \partial^{\gamma}_{\xi} D^{\delta}_x \left(\partial^{\alpha}_{\xi} D^{\beta}_x a_s\cdot \partial^{\beta}_{\xi} D^{\alpha}_x b_k\right)\cdot\partial^{\delta}_{\xi}D^{\gamma}_x c_l.
\eeqs
Similarly, $\sum_j a_j \# \left(\sum_j b_j\# \sum_j c_j\right) =\sum_j \tilde{p}_j$, where
\beqs
\tilde{p}_j=\sum_{r=0}^j \sum_{s+k+l=r}\sum_{|\alpha+\beta+\gamma+\delta|=j-r}\frac{(-1)^{|\beta+\delta|}}{\alpha!\beta!\gamma!\delta!2^{j-r}} \partial^{\alpha}_{\xi} D^{\beta}_x a_s\cdot \partial^{\beta}_{\xi} D^{\alpha}_x \left(\partial^{\gamma}_{\xi} D^{\delta}_x b_k\cdot\partial^{\delta}_{\xi}D^{\gamma}_x c_l\right).
\eeqs
Let $f_{s,k,l}(z,\zeta,y,\eta,u,v)=a_s(z,\zeta)b_k(y,\eta)c_l(u,v)$. One verifies that
\begin{multline*}
p_j(x,\xi)=\sum_{r=0}^j \sum_{s+k+l=r}\frac{1}{2^{j-r}(j-r)!}\\
\cdot\left(\partial_{\zeta}\cdot D_y-\partial_{\eta}\cdot D_z+(\partial_{\zeta}+\partial_{\eta})\cdot D_u-\partial_v\cdot(D_z+D_y)\right)^{j-r}f_{s,k,l}(z,\zeta,y,\eta,u,v)\Bigg|\substack{z=x\\ y=x\\ u=x\\ \zeta=\xi\\ \eta=\xi\\ v=\xi}
\end{multline*}
and similarly,
\begin{multline*}
\tilde{p}_j(x,\xi)=\sum_{r=0}^j \sum_{s+k+l=r}\frac{1}{2^{j-r}(j-r)!}\\
\cdot\left(\partial_{\eta}\cdot D_u-\partial_v\cdot D_y+\partial_{\zeta}\cdot(D_y+D_u)-(\partial_{\eta}+\partial_v)\cdot D_z\right)^{j-r}f_{s,k,l}(z,\zeta,y,\eta,u,v)\Bigg|\substack{z=x\\ y=x\\ u=x\\ \zeta=\xi\\ \eta=\xi\\ v=\xi}.
\end{multline*}
It is easy to check that
\begin{multline*}
\partial_{\zeta}\cdot D_y-\partial_{\eta}\cdot D_z+(\partial_{\zeta}+\partial_{\eta})\cdot D_u-\partial_v\cdot(D_z+D_y)\\
=(\partial_{\eta}\cdot D_u-\partial_v\cdot D_y+\partial_{\zeta}\cdot(D_y+D_u)-(\partial_{\eta}+\partial_v)\cdot D_z.
\end{multline*}
Hence, $p_j=\tilde{p}_j$. Thus $\left(\sum_j a_j \# \sum_j b_j\right)\# \sum_j c_j=\sum_j a_j \# \left(\sum_j b_j\# \sum_j c_j\right)$, i.e. $\#$ is associative. Clearly, $\#$ is distributive over the addition and $\mathbf{1}$ is the $\#$ identity. Thus we have the following result.

\begin{proposition}\label{hyporingg}
For each $B\geq 0$, $FS_{A_p,\rho}^{*,\infty}(\RR^{2d};B)$ is a ring with the pointwise addition and multiplication given by $\#$. Moreover, the multiplication
\beqs
\#:FS_{A_p,\rho}^{*,\infty}(\RR^{2d};B)\times FS_{A_p,\rho}^{*,\infty}(\RR^{2d};B)\rightarrow FS_{A_p,\rho}^{*,\infty}(\RR^{2d};B)
\eeqs
is hypocontinuous.
\end{proposition}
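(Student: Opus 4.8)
The plan is to establish Proposition \ref{hyporingg} in two movements: first the ring axioms, then the hypocontinuity of $\#$. The ring axioms are, essentially, bookkeeping. Distributivity of $\#$ over pointwise addition is immediate from the defining formula for the coefficients $c_j$, since each $c_j$ depends bilinearly on the pairs $(a_s,b_k)$. That $\mathbf{1}=\sum_j a_j$ with $a_0\equiv 1$, $a_j\equiv 0$ for $j\in\ZZ_+$, is a two-sided identity follows by inspecting which terms of $\sum_{s+k+l=j}$ survive when one factor is $\mathbf{1}$: only $s=0$, $l=0$, $k=j$ (resp. $k=0$, $l=0$, $s=j$) remains, and the derivatives $\partial^\alpha_\xi D^\beta_x$ hitting the constant $1$ force $\alpha=\beta=0$, recovering the other factor. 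The one genuinely substantive algebraic point is associativity, and here I would not expand everything by brute force; instead I would use the ``symbol of the symbol'' trick. Writing $f_{s,k,l}(z,\zeta,y,\eta,u,v)=a_s(z,\zeta)b_k(y,\eta)c_l(u,v)$, one re-expresses both $\bigl(\sum_j a_j\#\sum_j b_j\bigr)\#\sum_j c_j$ and $\sum_j a_j\#\bigl(\sum_j b_j\#\sum_j c_j\bigr)$ as exponentials of first-order constant-coefficient differential operators in the doubled variables, evaluated on the diagonal $z=y=u=x$, $\zeta=\eta=v=\xi$. The two bracketed expressions in my statement (which I would verify by a direct but short algebraic identity of symplectic-type forms) are equal as operators, hence the $j$-th coefficients $p_j$ and $\tilde p_j$ coincide for every $j$; this is the content of the displayed identity
\beqs
\partial_{\zeta}\cdot D_y-\partial_{\eta}\cdot D_z+(\partial_{\zeta}+\partial_{\eta})\cdot D_u-\partial_v\cdot(D_z+D_y)=\partial_{\eta}\cdot D_u-\partial_v\cdot D_y+\partial_{\zeta}\cdot(D_y+D_u)-(\partial_{\eta}+\partial_v)\cdot D_z,
\eeqs
which one checks termwise.

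For the hypocontinuity statement, the strategy is to lift the ``boundedness is preserved'' assertion of Theorem \ref{weylq} $i)$ through the functional-analytic machinery of $(F)$-spaces. Concretely: given bounded sets $U_1\subseteq FS_{A_p,\rho}^{(M_p),\infty}(\RR^{2d};B,m_1)$ and $U_2\subseteq FS_{A_p,\rho}^{(M_p),\infty}(\RR^{2d};B,m_2)$ (and the Roumieu analogue), one notes that a bounded set in a fixed-$m$ Banach-limit space is, after translating via Lemma \ref{lemulgr117}, subordinated to some $f_k$ of ultrapolynomial growth; Theorem \ref{weylq} $i)$ then gives $U_1\#U_2\precsim f_1f_2$, which again by Lemma \ref{lemulgr117} means $U_1\#U_2$ is bounded in $FS_{A_p,\rho}^{(M_p),\infty}(\RR^{2d};B,m)$ for a suitable $m>0$ (resp. bounded in the Roumieu space for some $h>0$). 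Thus $\#$ maps pairs of bounded sets to bounded sets at each fixed index. Since $FS_{A_p,\rho}^{(M_p),\infty}(\RR^{2d};B,m)$ and $FS_{A_p,\rho}^{\{M_p\},\infty}(\RR^{2d};B,h)$ are $(F)$-spaces, a separately continuous bilinear map between them that sends bounded $\times$ bounded to bounded is jointly continuous (indeed, for $(F)$-spaces separate continuity already forces joint continuity, but the boundedness argument makes this transparent and lands us in a fixed target space). Separate continuity in turn is clear from the explicit coefficient formulas, which exhibit each $c_j$ as a finite sum of products of fixed-order derivatives of the $a_s$ and $b_k$.

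Having continuity of $\#$ on the Banach/$(F)$-space building blocks, I then pass to the limit spaces $FS_{A_p,\rho}^{*,\infty}(\RR^{2d};B)=\lim_\to$ (Roumieu) or $\lim_\leftarrow\lim_\to$ (Beurling): the space $FS_{A_p,\rho}^{*,\infty}(\RR^{2d};B)$ is barrelled and bornological, as recorded in Section \ref{section2}, and $\#$ sends bounded sets to bounded sets (by the paragraph above). A bilinear map on a product of barrelled spaces that is bounded on products of bounded sets is hypocontinuous; invoking this principle concludes the proof. The main obstacle, such as it is, is the associativity verification — not conceptually hard, but it requires care in setting up the doubled-variable exponential representation correctly and matching the combinatorial factors $(-1)^{|\beta+\delta|}/(\alpha!\beta!\gamma!\delta!2^{j-r})$ with the Taylor coefficients of the operator exponential. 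Everything else is a routine transfer of the already-proved Theorem \ref{weylq} $i)$ through standard locally-convex-space arguments about $(F)$-spaces and barrelledness.
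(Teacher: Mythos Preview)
Your proposal is correct and follows essentially the same route as the paper: associativity via the doubled-variable representation $f_{s,k,l}(z,\zeta,y,\eta,u,v)=a_s(z,\zeta)b_k(y,\eta)c_l(u,v)$ and the displayed first-order operator identity, and hypocontinuity by first using Theorem \ref{weylq} $i)$ to land bounded $\times$ bounded in a fixed $(F)$-space building block, then invoking barrelledness of the inductive limit. One small precision: the ``principle'' you invoke at the end (bounded $\times$ bounded $\to$ bounded on barrelled spaces implies hypocontinuity) is not a standard theorem as stated; what actually works is that bornologicity turns bounded $\times$ bounded $\to$ bounded into separate continuity, and then barrelledness upgrades separate continuity to hypocontinuity --- you have both properties available and mention them, so the argument goes through, but the intermediate step through separate continuity should be made explicit.
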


\begin{remark} Given $\sum_j a_j\in FS_{A_p,\rho}^{*,\infty}(\RR^{2d};B)$, $(\sum_j a_j)^{\#1}$ will mean just $\sum_j a_j$ and, for $k\in\ZZ_+$, we define $(\sum_j a_j)^{\#(k+1)}$ as $(\sum_j a_j)^{\#k}\#\sum_j a_j$. Since $\#$ is associative, $(\sum_j a_j)^{\#k}$, $k\in\ZZ_+$, is just the $\#$ product of $\sum_j a_j$ with itself $k$ times.
\end{remark}

\section{Hypoelliptic symbols over $L^2(\RR^d)$}\label{section4}

We recall from \cite{CPP} the definition of hypoelliptic symbols in $\Gamma_{A_p,\rho}^{*,\infty}(\RR^{2d})$.

\begin{definition}(\cite[Definition 1.1]{CPP})
Let $a\in\Gamma^{*,\infty}_{A_p,\rho}(\RR^{2d})$. We say that $a$ is $\Gamma^{*,\infty}_{A_p,\rho}$-hypoelliptic if
\begin{itemize}
\item[$i$)] there exists $B>0$ such that there exist $c,m>0$ (resp. for every $m>0$ there exists $c>0$) such that
\beq\label{dd1}
|a(x,\xi)|\geq c e^{-M(m|x|)-M(m|\xi|)},\quad (x,\xi)\in Q^c_B
\eeq
\item[$ii$)] there exists $B>0$ such that for every $h>0$ there exists $C>0$ (resp. there exist $h,C>0$) such that
\beq\label{dd2}
\left|D^{\alpha}_{\xi}D^{\beta}_x a(x,\xi)\right|\leq C\frac{h^{|\alpha|+|\beta|}|a(x,\xi)|A_{\alpha}A_{\beta}}{\langle(x,\xi)\rangle^{\rho(|\alpha|+|\beta|)}},\,\, \alpha,\beta\in\NN^d,\, (x,\xi)\in Q^c_B.
\eeq
\end{itemize}
\end{definition}

\begin{proposition}\label{parametweyl}
Let $a\in\Gamma^{*,\infty}_{A_p,\rho}(\RR^{2d})$ be hypoelliptic. Define $q_0(w)=a(w)^{-1}$ on $Q^c_B$ and inductively, for $j\in\ZZ_+$,
\beqs
q_j(x,\xi)=-q_0(x,\xi)\sum_{s=1}^j\sum_{|\alpha+\beta|=s}\frac{(-1)^{|\beta|}}{\alpha!\beta!2^s}\partial^{\alpha}_{\xi} D^{\beta}_x q_{j-s}(x,\xi) \partial^{\beta}_{\xi} D^{\alpha}_x a(x,\xi),\,\, (x,\xi)\in Q^c_B.
\eeqs
Then, for every $h>0$ there exists $C>0$ (resp. there exist $h,C>0$) such that
\beq\label{estofpara}
\left|D^{\alpha}_w q_j(w)\right|\leq C\frac{h^{|\alpha|+2j}A_{|\alpha|+2j}}{|a(w)|\langle w\rangle^{\rho(|\alpha|+2j)}},\,\, w\in Q^c_B,\,\alpha\in\NN^{2d},\, j\in\NN.
\eeq
If $B\leq 1$, then $(\sum_j q_j)\# a= \mathbf{1}$ in $FS_{A_p,\rho}^{*,\infty}(\RR^{2d};0)$. If $B>1$, one can extend $q_0$ to an element of $\Gamma^{*,\infty}_{A_p,\rho}(\RR^{2d})$ by modifying it on $Q_{B'}\backslash Q^c_B$, for $B'>B$. In this case $\sum_j q_j\in FS_{A_p,\rho}^{*,\infty}(\RR^{2d};B')$, $((\sum_j q_j)\# a)_k=0$ on $Q^c_{B'}$, $\forall k\in\ZZ_+$, and $((\sum_j q_j)\#a)_0-1=q_0a-1$ belongs to $\DD^{(A_p)}(\RR^{2d})$ (resp. $\DD^{\{A_p\}}(\RR^{2d})$).\\
\indent In particular, for $q\sim\sum_j q_j$ there exists $*$-regularising operator $T$ such that $q^wa^w=\mathrm{Id}+T$.
\end{proposition}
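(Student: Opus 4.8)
The plan is to construct the parametrix symbol $q_0, q_1, q_2, \dots$ by the stated recursion and first establish the estimate \eqref{estofpara} by induction on $j$. For $j = 0$ the bound is immediate from the definition $q_0 = a^{-1}$ and the hypoellipticity condition \eqref{dd2} (for the derivatives of $a^{-1}$ one uses the Fa\`a di Bruno formula together with \eqref{dd1}, \eqref{dd2} and $(M.2)$ for $A_p$; this is a standard computation, essentially the same as showing $q_0 \in \Gamma_{A_p,\rho}^{*,\infty}$ near infinity). For the inductive step, I would substitute the induction hypothesis for $q_0, \dots, q_{j-1}$ and the estimate \eqref{dd2} for $a$ into the recursion formula, and then carry out the bookkeeping with the multi-indices: the key combinatorial input is $(M.2)$ for $A_p$, which handles the sums $\sum_{|\alpha+\beta|=s}$ and the product $A_{|\alpha|+2(j-s)} A_s$, together with the fact that $\langle w \rangle^{-\rho(\cdots)}$ combines correctly because every derivative of $a$ that lands on $q_{j-s}$ carries the right power of $\langle w\rangle$. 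One also needs to absorb the geometric factors $2^{-s}$ and the constants $c_0, H$ from $(M.2)$; in the Beurling case one shrinks $h$ at each stage, in the Roumieu case one fixes $h$ large enough from the start. This is where I expect the main technical effort to lie, though it is morally identical to the corresponding finite-order computation and to arguments already in \cite{CPP}.

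Next I would record the algebraic identity $(\sum_j q_j) \# a = \mathbf{1}$. By the very definition of $\#$ and of the $q_j$, the $0$-th term of $(\sum_j q_j)\#a$ is $q_0 a$ and the $j$-th term ($j \geq 1$) is
\beqs
\sum_{s+k+l=j}\sum_{|\alpha+\beta|=l}\frac{(-1)^{|\beta|}}{\alpha!\beta!2^l}\partial^{\alpha}_{\xi}D^{\beta}_x q_s \,\partial^{\beta}_{\xi}D^{\alpha}_x a = q_j a + q_0 \sum_{s=1}^{j}\sum_{|\alpha+\beta|=s}\frac{(-1)^{|\beta|}}{\alpha!\beta!2^s}\partial^{\alpha}_{\xi}D^{\beta}_x q_{j-s}\,\partial^{\beta}_{\xi}D^{\alpha}_x a,
\eeqs
which vanishes by the defining recursion for $q_j$ (after multiplying through by $a$, i.e. using $q_0 a = 1$ wherever it appears). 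So when $B \leq 1$, $Q^c_B = \RR^{2d}$ and all terms are globally defined, giving $(\sum_j q_j)\# a = \mathbf{1}$ in $FS_{A_p,\rho}^{*,\infty}(\RR^{2d};0)$. When $B > 1$, I would extend $q_0$ to a function in $\Gamma_{A_p,\rho}^{*,\infty}(\RR^{2d})$ by gluing it (via a cutoff from $\DD^{(A_p)}$ resp. $\DD^{\{A_p\}}$) to an arbitrary smooth extension on $Q_{B'}$; then the recursion still defines $q_j \in \mathcal{C}^\infty(Q^c_{B'})$ with the same estimates for $w \in Q^c_{B'}$, so $\sum_j q_j \in FS_{A_p,\rho}^{*,\infty}(\RR^{2d};B')$, and on $Q^c_{B'}$ the identity above still forces $((\sum_j q_j)\#a)_k = 0$ for $k \geq 1$, while $((\sum_j q_j)\#a)_0 - 1 = q_0 a - 1$ differs from the modified-$q_0$ computation only on $Q_{B'}$, hence lies in $\DD^{(A_p)}(\RR^{2d})$ resp. $\DD^{\{A_p\}}(\RR^{2d})$.

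Finally, for the operator statement: by \eqref{estofpara} the set $\{\sum_j q_j\}$ (a single element) is subordinated to $f(w) = |a(w)|^{-1}$, which has ultrapolynomial growth of class $*$ by \eqref{dd1}, so by Proposition \ref{subordinate} there is a genuine symbol $q \in \Gamma_{A_p,\rho}^{*,\infty}(\RR^{2d})$ with $q \sim \sum_j q_j$. Then I apply Corollary \ref{corweylqu} (or Theorem \ref{weylq}) with $U_1 = \{\sum_j q_j\}$, $U_2 = \{a\}$ (the canonical image in $FS$), and $f_1 = |a|^{-1}$, $f_2 = $ the ultrapolynomial-growth bound for $a$ itself: this gives $q^w a^w = c^w + T$ where $T$ is $*$-regularising and $c \sim (\sum_j q_j)\#a$. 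Since $(\sum_j q_j)\#a = \mathbf{1}$ (or differs from $\mathbf{1}$ only by a term in $\DD^*(\RR^{2d})$, whose Weyl quantisation is itself $*$-regularising), $c \sim \mathbf{1}$, so $c - 1$ is a symbol asymptotic to zero and hence $(c-1)^w$ is $*$-regularising by Proposition \ref{eqsse}; absorbing it into $T$ yields $q^w a^w = \mathrm{Id} + T$. The only subtlety here is making sure the choice of the representative $q$ is compatible with the $R$ appearing in the composition result, which is handled by the ``$R$ arbitrarily large'' clauses in Propositions \ref{subordinate} and \ref{changequa} and Corollary \ref{corweylqu}.
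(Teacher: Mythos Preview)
Your proposal is correct and follows essentially the same approach as the paper, whose own proof is even terser: it cites \cite[Lemmas 3.3 and 3.4]{CPP} for the estimate \eqref{estofpara}, declares the remaining algebraic identities easy, and refers to Corollary \ref{corweylqu} for the final operator statement. One minor slip in your algebraic display: the $j$-th term of $(\sum_j q_j)\#a$ is $q_j a + \sum_{s=1}^{j}(\cdots)$ \emph{without} the extra $q_0$ factor, and the cancellation then follows directly from $q_j a = -\sum_{s=1}^j(\cdots)$ (which is the recursion multiplied by $a$, using $q_0 a = 1$).
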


\begin{proof} The estimate (\ref{estofpara}) for $j=0$ follows from \cite[Lemma 3.3]{CPP} and for $j\geq 1$ the proof is analogous to the proof of \cite[Lemma 3.4]{CPP}. The rest is easy and we omit it (see Corollary \ref{corweylqu} for the very last part).
\end{proof}

Given $a\in\Gamma^{*,\infty}_{A_p,\rho}(\RR^{2d})$, denote by $A$ the unbounded operator on $L^2(\RR^d)$ with domain $\SSS^*(\RR^d)$ defined as $A \varphi=a^w\varphi$, $\varphi\in\SSS^*(\RR^d)$. The restriction of $a^w$ to the subspace
\beqs
\{g\in L^2(\RR^d)|\, a^wg\in L^2(\RR^d)\}
\eeqs
defines a closed extension of $A$ which is called the maximal realisation of $A$. We denote by $\overline{A}$ the closure of $A$; it is also called the minimal realisation of $A$.

\begin{lemma}\label{operatoronl2}
Let $V\subseteq \Gamma_{A_p,\rho}^{*,\infty}(\RR^{2d})$. Assume that for every $h>0$ there exists $C>0$ (resp. there exist $h,C>0$) such that
\beq\label{estforope}
\left|D^{\alpha}_w b(w)\right|\leq Ch^{|\alpha|}A_{\alpha}\langle w\rangle^{-\rho|\alpha|},\,\, w\in\RR^{2d},\,\alpha\in \NN^{2d},\, b\in V.
\eeq
Then, for each $b\in V$, $b^w$ extends to a bounded operator on $L^2(\RR^d)$ and the set $\{b^w|\, b\in V\}$ is bounded in $\mathcal{L}_b(L^2(\RR^d),L^2(\RR^d))$.\\
\indent If $\{b_{\lambda}\}_{\lambda\in\Lambda}\subseteq V$ is a net which converges to $b_0\in V$ in the topology of $\Gamma_{A_p,\rho}^{*,\infty}(\RR^{2d})$, then $b_{\lambda}^w\rightarrow b_0^w$ in $\mathcal{L}_p(L^2(\RR^d),L^2(\RR^d))$.
\end{lemma}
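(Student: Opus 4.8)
The plan is to establish $L^2$-boundedness via a Calderón--Vaillancourt type argument, carried out uniformly over the family $V$. First I would reduce to a normalised situation: by (\ref{estforope}) the family $\{b\}_{b\in V}$ is bounded in the (F)-space $\Gamma_{A_p,\rho}^{(M_p),\infty}(\RR^{2d};m)$ for any $m>0$ (in the Beurling case) or in $\Gamma_{A_p,\rho}^{\{M_p\},\infty}(\RR^{2d};h)$ for small $h$ (in the Roumieu case), since the right-hand side of (\ref{estforope}) has no exponential factor; in particular every $b\in V$ lies in the ordinary Shubin-type symbol class of order $0$. The classical Calderón--Vaillancourt theorem then gives that each $b^w$ extends to a bounded operator on $L^2(\RR^d)$, and, crucially, the operator norm $\|b^w\|_{\mathcal{L}(L^2)}$ is estimated by a fixed finite number of the seminorms $\sup_w |D^\alpha_w b(w)|\langle w\rangle^{-\rho|\alpha|}$ with $|\alpha|$ bounded by a constant depending only on $d$. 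Since (\ref{estforope}) bounds exactly these quantities uniformly over $b\in V$, we get $\sup_{b\in V}\|b^w\|_{\mathcal{L}(L^2)}<\infty$, which is the first assertion.

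For the second assertion, suppose $\{b_\lambda\}_{\lambda\in\Lambda}\subseteq V$ converges to $b_0\in V$ in $\Gamma_{A_p,\rho}^{*,\infty}(\RR^{2d})$. Fix $\varphi\in\SSS^*(\RR^d)$. By Proposition \ref{continuity} (the continuity of $a\mapsto \Op_{1/2}(a)$ into $\mathcal{L}_b(\SSS^*(\RR^d),\SSS^*(\RR^d))$, hence into $\mathcal{L}_b(\SSS^*(\RR^d),L^2(\RR^d))$) we have $b_\lambda^w\varphi\to b_0^w\varphi$ in $\SSS^*(\RR^d)$, in particular in $L^2(\RR^d)$. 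Thus $b_\lambda^w\to b_0^w$ pointwise on the dense subspace $\SSS^*(\RR^d)$ of $L^2(\RR^d)$. Combining this with the uniform bound $\sup_{\lambda}\|b_\lambda^w\|_{\mathcal{L}(L^2)}<\infty$ from the first part, a standard $3\varepsilon$-argument (approximate an arbitrary $g\in L^2(\RR^d)$ by $\varphi\in\SSS^*(\RR^d)$, use the uniform bound to control $\|(b_\lambda^w-b_0^w)(g-\varphi)\|_{L^2}$, and the pointwise convergence to control $\|(b_\lambda^w-b_0^w)\varphi\|_{L^2}$) shows $b_\lambda^w g\to b_0^w g$ in $L^2(\RR^d)$ for every $g$, i.e. $b_\lambda^w\to b_0^w$ in $\mathcal{L}_p(L^2(\RR^d),L^2(\RR^d))$.

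The main obstacle is purely the first step: one needs a version of the Calderón--Vaillancourt estimate in which the operator norm is controlled by \emph{finitely many} of the symbol seminorms, with the number of derivatives and the constant depending only on the dimension. For $\rho=1$ this is the textbook Shubin calculus statement; for $\rho_0\le\rho\le 1$ it follows from the same proof (the extra decay $\langle w\rangle^{-\rho|\alpha|}$ is only more favourable than $\langle w\rangle^{-|\alpha|}$ would be), but some care is needed because our symbols are of infinite order in general — here, however, the hypothesis (\ref{estforope}) has stripped away all exponential growth, so we are genuinely in the finite-order (indeed order-zero) regime and the classical result applies verbatim. Once that quantitative boundedness is in hand, everything else is soft functional analysis: the uniform bound over $V$ is immediate, and the net convergence follows from density of $\SSS^*(\RR^d)$ together with Proposition \ref{continuity}.
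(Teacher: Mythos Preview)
Your proposal is correct in outline and close to the paper's argument, with one small slip and one minor difference in route worth flagging.

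For the first assertion, the paper does not invoke Calder\'on--Vaillancourt directly for the Weyl quantisation. Instead it applies Proposition~\ref{changequa} (with $f\equiv 1$) to pass uniformly from $b^w$ to a left quantisation $c_b(x,D)$, with $\{b^w-c_b(x,D)\,|\,b\in V\}$ an equicontinuous $*$-regularising set and the $c_b$ obeying the same uniform order-zero bounds; it then cites the $L^2$-boundedness result \cite[Theorem~1.7.14]{NR} for the $0$-quantisation. Your direct route (Calder\'on--Vaillancourt for Weyl, with the norm controlled by finitely many seminorms) is perfectly legitimate and arguably more economical; the paper's detour buys only that it can quote a specific reference already at hand. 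One small confusion in your write-up: when $\rho<1$ the decay $\langle w\rangle^{-\rho|\alpha|}$ is \emph{less} favourable than $\langle w\rangle^{-|\alpha|}$, not more---but this is harmless, since Calder\'on--Vaillancourt only needs bounded derivatives, which you certainly have.

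For the second assertion your $3\varepsilon$ argument yields convergence in $\mathcal{L}_\sigma(L^2,L^2)$ (simple convergence), whereas the lemma asks for $\mathcal{L}_p$ (precompact convergence). The upgrade is free because the family $\{b_\lambda^w\}$ is equicontinuous, and this is exactly what the paper does: it observes $b_\lambda^w\varphi\to b_0^w\varphi$ for $\varphi\in\SSS^*(\RR^d)$ via Proposition~\ref{continuity}, then invokes the Banach--Steinhaus theorem \cite[Theorem~4.5, p.~85]{Sch} (on an equicontinuous set the topology of simple convergence on a total subset coincides with $\mathcal{L}_p$) to conclude directly. You should add that sentence.
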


\begin{proof} By Proposition \ref{changequa} (applied with $f(w)=1$) for each $b\in V$ there exists $c_b\in \Gamma_{A_p,\rho}^{*,\infty}(\RR^{2d})$, such that $\{b^w-c_b(x,D)|\, b\in V\}$ is an equicontinuous $*$-regularising set and for every $h>0$ there exists $C>0$ (resp. there exist $h,C>0$) such that
\beqs
\left|D^{\alpha}_w c_b(w)\right|\leq C h^{|\alpha|}A_{\alpha}\langle w\rangle^{-\rho|\alpha|},\,\, w\in\RR^{2d},\, \alpha\in \NN^{2d},\, b\in V.
\eeqs
Then, \cite[Theorem 1.7.14]{NR} implies that $c_b(x,D)$ extends to a continuous operator on $L^2(\RR^d)$ and the set $\{c_b(x,D)|\, b\in V\}$ is bounded in $\mathcal{L}_b(L^2(\RR^d),L^2(\RR^d))$. Thus, the same holds for $\{b^w|\, b\in V\}$. If $\{b_{\lambda}\}_{\lambda\in\Lambda}\subseteq V$ is as in the statement, Proposition \ref{continuity} implies $b_{\lambda}^w\varphi\rightarrow b_0^w\varphi$ for each $\varphi\in\SSS^*(\RR^d)$, i.e. $b_{\lambda}^w\rightarrow b_0^w$ in the topology of simple convergence on the total subset $\SSS^*(\RR^d)$ of $L^2(\RR^d)$. Since $\{b^w|\, b\in V\}$ is equicontinuous in $\mathcal{L}(L^2(\RR^d),L^2(\RR^d))$, the Banach-Steinhaus theorem \cite[Theorem 4.5, p. 85]{Sch} implies $b_{\lambda}^w\rightarrow b^w_0$ in $\mathcal{L}_p(L^2(\RR^d),L^2(\RR^d))$.
\end{proof}

Now we are ready to prove that the minimal and maximal realisation of the Weyl quantisation of a hypoelliptic symbol coincide. This is classical result for the finite order operators, in the distributional setting. The following proposition proves that this remains true even for infinite order operators. This result is instrumental for the rest of the article and we will often tacitly apply it in what follows.

\begin{proposition}\label{maximalreal}
Let $a$ be hypoelliptic and $A$ be the corresponding unbounded operator on $L^2(\RR^d)$ defined above. Then the minimal realisation $\overline{A}$ coincides with the maximal realisation. Moreover, $\overline{A}$ coincides with the restriction of $a^w$ on the domain of $\overline{A}$.
\end{proposition}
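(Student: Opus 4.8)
The plan is to exploit the parametrix constructed in Proposition \ref{parametweyl} together with the $L^2$-boundedness provided by Lemma \ref{operatoronl2}. Let $B_{\mathrm{max}}$ denote the maximal realisation, i.e. the restriction of $a^w$ to $D_{\mathrm{max}}=\{g\in L^2(\RR^d)|\, a^wg\in L^2(\RR^d)\}$. Since $\SSS^*(\RR^d)\subseteq D_{\mathrm{max}}$ and $B_{\mathrm{max}}$ is a closed extension of $A$, we automatically have $\overline{A}\subseteq B_{\mathrm{max}}$; the whole content is the reverse inclusion $D(B_{\mathrm{max}})\subseteq D(\overline{A})$, which will also give $\overline{A}g = a^wg$ on $D(\overline{A})$ for free. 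The point is thus to show that every $g\in D_{\mathrm{max}}$ can be approximated in the graph norm of $a^w$ by elements of $\SSS^*(\RR^d)$.

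First I would fix $q\sim\sum_j q_j$ as in Proposition \ref{parametweyl}, so that $q^wa^w = \mathrm{Id}+T$ with $T$ a $*$-regularising operator (hence $T\colon L^2\to \SSS^*$ continuously, in particular $T$ is bounded on $L^2$ and maps $L^2$ into the domain of every power of $a^w$). By the same parametrix construction applied with the roles reversed (a left parametrix), one also obtains $q'$ with $a^wq'^w=\mathrm{Id}+T'$, $T'$ $*$-regularising; actually $q\sim q'$ so this is the same $q$ up to a $*$-regularising correction. Now take $g\in D_{\mathrm{max}}$ and set $f=a^wg\in L^2$. Choose a sequence $\varphi_n\in\SSS^*(\RR^d)$ with $\varphi_n\to f$ in $L^2$ (possible since $\SSS^*$ is dense in $L^2$). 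Put $g_n = q^w\varphi_n + Tg - T^{(2)}$-type corrections; more precisely, the natural candidate is
\[
g_n = q^w\varphi_n - T g + g - q^w f,
\]
but since $q^wf\in L^2$ need not lie in $\SSS^*$, the cleaner route is: write $g = q^w a^w g - Tg = q^w f - Tg$, so $g - q^w\varphi_n = q^w(f-\varphi_n) - Tg + \text{(a term already handled)}$. Regularise instead by a standard cutoff: let $\chi_n(x)=\chi(x/n)$ with $\chi\in\DD^*(\RR^d)$, $\chi\equiv 1$ near $0$, and approximate $g$ itself by $\psi_n\in\SSS^*$ with $\psi_n\to q^wf$ in $L^2$ and $\psi_n\to g-q^wf+q^wf=g$; then control $a^w\psi_n$ using that $a^w\psi_n = a^wq^wf + a^w(\psi_n - q^wf)\to f + $ error, where the error is governed by $a^w(\psi_n-q^wf)$. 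This is where care is needed, since $a^w$ is unbounded.

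The clean argument avoids this circularity by a two-step approximation using the smoothing nature of $T$. Given $g\in D_{\mathrm{max}}$ with $a^wg=f$, first approximate $f$ by $\varphi_n\in\SSS^*$ in $L^2$. Set $h_n = q^w\varphi_n\in\SSS^*$ (since $q^w$ maps $\SSS^*\to\SSS^*$ by Proposition \ref{continuity}). Then $a^w h_n = a^wq^w\varphi_n = \varphi_n + T'\varphi_n$, and $T'\varphi_n\to T'f$ in $\SSS^*$, hence in $L^2$; so $a^wh_n\to f + T'f$ in $L^2$. On the other hand $h_n\to q^wf + T'f$... one still must identify the limit. The correct bookkeeping: from $q^wa^w=\mathrm{Id}+T$ we get $g = q^wf - Tg$, and $Tg\in\SSS^*$. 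Now $h_n + Tg \to q^wf + T'f - \text{(adjustment)}$; choosing the left parametrix so that $q=q'$ exactly modulo a fixed $*$-regularising operator $S_0$, one arranges $h_n + Tg - (\text{fixed }\SSS^*\text{ element}) \to g$ in $L^2$ while simultaneously $a^w(h_n + Tg - \cdots) = \varphi_n + T'\varphi_n + a^wTg - \cdots \to f$ in $L^2$ (here $a^wTg\in\SSS^*$ is a fixed element, and $T'\varphi_n\to T'f$). Thus $g_n := h_n + (\text{fixed }\SSS^*\text{ element depending on }g)$ lies in $\SSS^*$, $g_n\to g$ and $a^wg_n = Ag_n \to a^wg$ in $L^2$, proving $g\in D(\overline{A})$ and $\overline{A}g=a^wg$. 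The main obstacle is precisely this bookkeeping: ensuring that the two limits (of $g_n$ and of $Ag_n$) are consistent and that all the correction terms genuinely land in $\SSS^*(\RR^d)$, which hinges on $T$, $T'$ being $*$-regularising, i.e. mapping all of $L^2$ (indeed $\SSS'^*$) into $\SSS^*$ — this is where Proposition \ref{parametweyl}, Proposition \ref{continuity} and Lemma \ref{operatoronl2} are combined.
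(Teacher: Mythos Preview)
Your approach has a genuine gap: the convergence $h_n = q^w\varphi_n \to q^wf$ in $L^2$ is never established, and in fact cannot be deduced from $\varphi_n\to f$ in $L^2$ alone, because $q^w$ is in general \emph{unbounded} on $L^2(\RR^d)$. Recall that $q_0 = 1/a$ on $Q_B^c$, and the hypoellipticity condition (\ref{dd1}) only gives $|a(w)|\geq ce^{-M(m|\xi|)-M(m|x|)}$ from below, so $q$ may have genuine sub-exponential growth; nothing in the hypotheses forces $q^w$ to be $L^2$-bounded. The bookkeeping you sketch (the identities $g = q^wf - Tg$ and $T'f = a^wTg$) is correct and would close the argument \emph{if} $q^w\varphi_n\to q^wf$ in $L^2$, but that is precisely the step that fails. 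Your brief aside about an $x$-space cutoff $\chi(x/n)$ does not help either: multiplication by $\chi_n$ is not $*$-regularising, and one would again need to push the unbounded $a^w$ through it.

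The paper circumvents this by regularising $g$ directly with \emph{phase-space} cutoffs: take $\psi\in\DD^{(A_p)}(\RR^{2d})$ (resp.\ $\DD^{\{A_p\}}(\RR^{2d})$) equal to $1$ near the origin, set $\psi_n(w)=\psi(w/n)$, and put $\varphi_n=\psi_n^w g$. Each $\psi_n^w$ is $*$-regularising (compactly supported symbol), so $\varphi_n\in\SSS^*(\RR^d)$, and Lemma~\ref{operatoronl2} gives $\varphi_n\to g$ in $L^2$. For $a^w\varphi_n\to a^wg$ one writes, using $q^wa^w=\mathrm{Id}+T$,
\[
a^w\varphi_n = a^w\psi_n^w q^w(a^wg) - a^w\psi_n^wTg,
\]
and the crucial point is that the composites $V_n=a^w\psi_n^wq^w$ are \emph{uniformly bounded} on $L^2$: one has $\{a\#\psi_n\#q\,|\,n\in\ZZ_+\}\precsim 1$ (the growth of $a$ and of $q$ cancel, and the $\psi_n$ are uniformly in the class), so Corollary~\ref{corweylqu} and Lemma~\ref{operatoronl2} produce uniform $L^2$-bounds on the $V_n$; Banach--Steinhaus plus convergence on the dense subspace $\SSS^*$ then gives $V_n(a^wg)\to a^wq^w(a^wg)$. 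The missing idea in your proposal is exactly this sandwiching by phase-space mollifiers together with the uniform symbolic calculus, rather than trying to push an $L^2$-approximation through the unbounded operator $q^w$.
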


\begin{proof} Let $g\in L^2(\RR^d)$ be such that $a^w g\in L^2(\RR^d)$. We will construct a sequence $\{\varphi_j\}_{j\in\ZZ_+}\subseteq \SSS^*(\RR^d)$ such that $\varphi_j\rightarrow g$ and $a^w \varphi_j\rightarrow a^w g$ in $L^2(\RR^d)$ which will complete the proof. Fix $\psi\in \DD^{(A_p)}(\RR^{2d})$ (resp. $\psi\in\DD^{\{A_p\}}(\RR^{2d})$) such that $0\leq \psi(w)\leq 1$, $\psi(w)=1$ on $|w|\leq 1$ and $\mathrm{supp}\, \psi\subseteq \{w\in\RR^{2d}|\, |w|\leq 2\}$. For $n\in\ZZ_+$ define $\psi_n(w)=\psi(w/n)$. One easily verifies that $\psi_n\in \Gamma_{A_p,\rho}^{\{M_p\},\infty}(\RR^{2d})$, $\forall n\in\ZZ_+$ and they satisfy the following estimate: for every $h>0$ there exists $C>0$ (resp. there exist $h,C>0$) such that
\beqs
\left|D^{\alpha}_w \psi_j(w)\right|\leq C h^{|\alpha|}A_{\alpha}\langle w\rangle^{-\rho|\alpha|},\,\, w\in\RR^{2d},\,\alpha\in \NN^{2d},\, j\in\NN.
\eeqs
Also, it is easy to check that $\psi_n\rightarrow 1_w$ in $\Gamma_{A_p,\rho}^{\{M_p\},\infty}(\RR^{2d})$. The operators $\psi_n^w$ are $*$-regularising. Put $\varphi_n=\psi_n^wg\in\SSS^*(\RR^d)$. By Lemma \ref{operatoronl2}, $\varphi_n\rightarrow g$ in $L^2(\RR^d)$. It remains to prove $a^w\varphi_n\rightarrow a^wg$ in $L^2(\RR^d)$. For the given $a$, let $\sum_j q_j$ be as in Proposition \ref{parametweyl}. Fix $q\sim\sum_j q_j$ and $*$-regularising operator $T$ such that $q^wa^w=\mathrm{Id}+T$. Then $a^w\varphi_n=a^w\psi_n^wq^wa^wg-a^w\psi_n^wTg$. Since $T$ is $*$-regularising, Proposition \ref{continuity} implies $a^w\psi_n^wTg\rightarrow a^wTg$ in $\SSS^*(\RR^d)$, hence, also in $L^2(\RR^d)$. Since $\{a\#\psi_n\#q|\, n\in\ZZ_+\}\precsim 1$, by Corollary \ref{corweylqu} one can find $b_n$, $n\in\ZZ_+$, for which the uniform estimate (\ref{estforope}) holds and the set $\{T_n=a^w\psi_n^wq^w-b_n^w|\, n\in\ZZ_+\}$ is equicontinuous $*$-regularising. Lemma \ref{operatoronl2} verifies that $\{b_n^w\}_{n\in\ZZ_+}$ extends to a bounded subset of $\mathcal{L}_b(L^2(\RR^d),L^2(\RR^d))$. Hence the operators $V_n=a^w\psi_n^wq^w$, $n\in\ZZ_+$, extend to $L^2(\RR^d)$ and the set $\{V_n\}_{n\in\ZZ_+}$ is bounded in $\mathcal{L}_b(L^2(\RR^d),L^2(\RR^d))$. By Corollary \ref{corweylqu}, there exists $v$ for which (\ref{estforope}) holds and $a^wq^w-v^w$ is $*$-regularising operator. Lemma \ref{operatoronl2} yields that $v^w$ extends to a bounded operator on $L^2(\RR^d)$ and hence, so does $a^wq^w$. Since for each $\chi\in\SSS^*(\RR^d)$, $V_n\chi\rightarrow a^wq^w\chi$ (cf. Proposition \ref{continuity}), $V_n$ converges to the operator $a^wq^w\in\mathcal{L}(L^2(\RR^d),L^2(\RR^d))$ on the total subset $\SSS^*(\RR^d)$ of $L^2(\RR^d)$. Now, the Banach-Steinhaus theorem \cite[Theorem 4.5, p. 85]{Sch} implies $V_n\rightarrow a^wq^w$ in $\mathcal{L}_{\sigma}(L^2(\RR^d),L^2(\RR^d))$. Hence $V_na^wg\rightarrow a^wq^wa^wg$ in $L^2(\RR^d)$.
\end{proof}

\section{Complex powers of hypoelliptic operators}\label{section5}

Following Komatsu \cite{KNonnegative}, given a $(B)$-space $X$, a closed operator $A:D(A)\subseteq X\rightarrow X$ is said to be non-negative if $(-\infty,0)$ is contained in the resolvent set of $A$ and
\beqs
\sup_{\lambda\in\RR_+} \lambda\|(A+\lambda\mathrm{Id})^{-1}\|_{\mathcal{L}_b(X,X)}<\infty.
\eeqs
In this case, for $z\in\CC_+$ and $v\in D(A^{[\mathrm{Re}\,z]+1})$, the function $\lambda\mapsto \lambda^{z-1}\left(A(A+\lambda\mathrm{Id})^{-1}\right)^k v$, $\RR_+\rightarrow X$, is Bochner integrable for all integers $k>\mathrm{Re}\,z$ and by defining
\beqs
I^z_{A,k} v=\gamma_k(z)\int_0^{\infty} \lambda^{z-1}\left(A(A+\lambda\mathrm{Id})^{-1}\right)^kv d\lambda,\,\, v\in D(A^{[\mathrm{Re}\,z]+1}),\,\,\, k>\mathrm{Re}\, z,
\eeqs
where $\gamma_k(z)=\Gamma(k)/(\Gamma(z)\Gamma(k-z))$, we have that $I^z_{A,k+1} v=I^z_{A,k} v$, for all integers $k>\mathrm{Re}\,z$ (see \cite[Proposition 3.1.3, p. 59]{Fractionalpowersbook}). The operator
\beqs
J^z_A:D(J^z_A)=D(A^{[\mathrm{Re}\,z]+1})\subseteq X\rightarrow X,\,\, J^z_A v=I^z_{A,k} v,\,\, \mbox{for any}\,\, k>\mathrm{Re}\,z,
\eeqs
is closable (cf. \cite[Theorem 3.1.8, p. 64]{Fractionalpowersbook}). Balakrishnan defines the power of $A$ with exponent $z$ as the operator $\overline{J^z_A}$. If in addition $A$ is densely defined then $A^{z+\zeta}=A^zA^{\zeta}$, $\forall z,\zeta\in\CC_+$ (in particular, $A^k=\underbrace{A\ldots A}_{k}$) and $\sigma(A^z)=\{\zeta^z|\, \zeta\in\sigma(A)\}$; where $\zeta^z$ is defined by the principal branch of logarithm and we put $0^z=0$ (cf. \cite[Corollary 5.1.12, p. 110]{Fractionalpowersbook} and \cite[Theorem 5.3.1, p. 116]{Fractionalpowersbook}).

\subsection{Statement of the main result}\label{subse}
Let $a\in\Gamma_{A_p,\rho}^{*,\infty}(\RR^{2d})$ be hypoelliptic, where the hypoellipticity conditions (\ref{dd1}) and (\ref{dd2}) hold for some $\tilde{B}>0$. We impose the following conditions on $a$:
\begin{itemize}
\item[$(I)$] $\mathrm{Re}\,a(w)\geq -\tilde{B}|\mathrm{Im}\,a(w)|$ for $w\in Q^c_{\tilde{B}}$;
\item[$(II)$] the densely defined operator $A:\SSS^*(\RR^d)\subseteq L^2(\RR^d)\rightarrow L^2(\RR^d)$, $A=a^w|_{\SSS^*(\RR^d)}$, is such that $\overline{A}$ is non-negative.
\end{itemize}
Let $\tilde{\chi}\in\DD^{(A_p)}(\RR^{2d})$ in the $(M_p)$ case and $\tilde{\chi}\in\DD^{\{A_p\}}(\RR^{2d})$ in the $\{M_p\}$ case respectively, be such that $\tilde{\chi}\geq 0$ and $\tilde{\chi}(w)>\max\{0,-\mathrm{Re}\,a(w)\}$ when $w\in \overline{Q_{\tilde{B}}}$. Denote $a_0=a+\tilde{\chi}$. One easily verifies that, possibly for a larger $\tilde{B}$,
\beq\label{finqa}
\mathrm{Re}\,a_0(w)> -\tilde{B}|\mathrm{Im}\,a_0(w)|,\,\, \forall w\in\RR^{2d}
\eeq
and consequently
\beq\label{sinqa}
|a_0(w)|\leq \sqrt{1+\tilde{B}^2}|a_0(w)+\lambda|,\,\, \lambda \leq \sqrt{1+\tilde{B}^2}|a_0(w)+\lambda|,
\eeq
for all $w\in \RR^{2d}$ and $\lambda \geq 0$. As a consequence of (\ref{finqa}), $a_0$ never vanishes and for any $z\in \CC_+$, the function $w\mapsto (a_0(w))^z$, $\RR^{2d}\rightarrow \CC$, is a well defined $\mathcal{C}^{\infty}$ function (in $(a_0(w))^z$ we use the principal branch of the logarithm). Notice that (\ref{sinqa}) also implies the existence of $c>0$ such that
\beq\label{krt558811}
|a_0(w)|+\lambda\leq c|a_0(w)+\lambda|,\,\, w\in\RR^{2d},\, \lambda\geq 0.
\eeq
Thus, by using the identity
\beq\label{inteq}
\int_0^{\infty} \frac{\lambda^{z-1}\zeta^k}{(\zeta+\lambda)^k}d\lambda=\frac{\zeta^z}{\gamma_k(z)},
\eeq
which is valid for $z\in\CC_+$, $k\in\NN$ with $k>\mathrm{Re}\, z$ and $\zeta\in\CC\backslash\{0\}$ with $|\arg \zeta|<\pi$, we deduce that
\beqs
\left|(a_0(w))^z\right|\leq c^k|\gamma_k(z)|\int_0^{\infty}\frac{\lambda^{\mathrm{Re}\,z-1}|a_0(w)|^k}{(|a_0(w)|+\lambda)^k}d\lambda= \frac{c^k|\gamma_k(z)||a_0(w)|^{\mathrm{Re}\, z}}{\gamma_k(\mathrm{Re}\, z)},
\eeqs
for all $w\in\RR^{2d}$, $z\in\CC_+$, $k>\mathrm{Re}\, z$, $k\in\ZZ_+$. Thus, $e^{-\mathrm{Im}\, z\arg(a_0(w))}\leq c^k|\gamma_k(z)|/\gamma_k(\mathrm{Re}\, z)$. Hence,
\beq\label{thk779911}
|a_0(w)|^{\mathrm{Re}\, z}=\left|(a_0(w))^z\right|e^{\mathrm{Im}\, z\arg(a_0(w))}\leq c^k\left|(a_0(w))^z\right||\gamma_k(\bar{z})|/\gamma_k(\mathrm{Re}\, z),
\eeq
for all $w\in\RR^{2d}$, $z\in\CC_+$, $k>\mathrm{Re}\, z$, $k\in\ZZ_+$. The main result is the following theorem.

\begin{theorem}\label{maint}
Let $a\in\Gamma_{A_p,\rho}^{*,\infty}(\RR^{2d})$ be a hypoelliptic symbol that satisfies $(I)$ and $(II)$ and let $a_0$ and $A$ be defined as above. Then, for every $z\in\CC_+$ there exists $a_0^{\# z}\in FS_{A_p,\rho}^{*,\infty}(\RR^{2d};0)$ such that the following conditions hold.
\begin{itemize}
\item[$(i)$] $a_0^{\# z}\#a_0^{\#\zeta}=a_0^{\# (z+\zeta)}=a_0^{\#\zeta}\#a_0^{\# z}$, $\forall z,\zeta\in\CC_+$.
\item[$(ii)$] When $z=k\in\ZZ_+$, $a_0^{\# z}$ is just $\underbrace{a_0\#\ldots\#a_0}_{k}=a_0^{\# k}$. In particular, for $z=1$, $a_0^{\# z}$ is just $a_0$.
\item[$(iii)$] The mapping $z\mapsto a_0^{\# z}$, $\CC_+\rightarrow FS_{A_p,\rho}^{*,\infty}(\RR^{2d};0)$ is continuous.
\item[$(iv)$] $(a_0^{\# z})_0(w)=(a_0(w))^z$, $w\in\RR^{2d}$, $z\in\CC_+$.
\end{itemize}
For each fixed vertical strip $\CC_{+,t}=\{\zeta\in\CC_+|\, \mathrm{Re}\,\zeta\leq t\}$, $t>0$, and $k=[t]+1\in\ZZ_+$, the following estimate holds: for every $h>0$ there exists $C>0$ (resp. there exist $h,C>0$) such that
\beq\label{tkr554411}
\left|D^{\alpha}_w (a_0^{\# z})_j(w)\right|\leq C\frac{h^{|\alpha|+2j}A_{|\alpha|+2j}|\gamma_k(z)||a_0(w)|^{\mathrm{Re}\, z}}{\gamma_k(\mathrm{Re}\, z)\langle w\rangle^{\rho(|\alpha|+2j)}},
\eeq
for all $w\in\RR^{2d}$, $\alpha\in\NN^{2d}$, $j\in\NN$, $z\in\CC_{+,t}$. Furthermore, there exists $R_t>0$ such that $a^{\uwidehat{z}}:=R_t(a_0^{\# z})$, $z\in \CC_{+,t}$, are hypoelliptic symbols in $\Gamma_{A_p,\rho}^{*,\infty}(\RR^{2d})$ and the following conditions hold:
\begin{itemize}
\item[$(v)$] There exists $B_t>0$ such that for every $h>0$ (resp. for some $h>0$)
\begin{gather*}
\sup_{\substack{N\in\ZZ_+\\z\in\CC_{+,t}}}\sup_{\alpha\in\NN^{2d}}\sup_{w\in Q^c_{B_tm_N}}\frac{\left|D^{\alpha}_wa^{\uwidehat{z}}(w)-D^{\alpha}_w \sum_{j<N}(a_0^{\# z})_j(w)\right|\langle w\rangle^{\rho(|\alpha|+2N)}\gamma_k(\mathrm{Re}\, z)}{h^{|\alpha|+2N}A_{|\alpha|+2N}|a_0(w)|^{\mathrm{Re}\, z}|\gamma_k(z)|}<\infty,\\
\sup_{z\in\CC_{+,t}}\sup_{\alpha\in\NN^{2d}}\sup_{w\in \RR^{2d}}\frac{\left|D^{\alpha}a^{\uwidehat{z}}(w)\right|\langle w\rangle^{\rho|\alpha|}\gamma_k(\mathrm{Re}\, z)} {h^{|\alpha|}A_{\alpha}|a_0(w)|^{\mathrm{Re}\, z}|\gamma_k(z)|}<\infty.
\end{gather*}
\item[$(vi)$] $D(\overline{A}^z)=\left\{v\in L^2(\RR^d)|\, (a^{\uwidehat{z}})^w v\in L^2(\RR^d)\right\}$ and there exist $*$-regularising operators $S^{\uwidehat{z}}$, $z\in\CC_{+,t}$, such that $\{\gamma_k(\mathrm{Re}\, z)(\gamma_k(z))^{-1}S^{\uwidehat{z}}|\, z\in\CC_{+,t}\}$ is an equicontinuous subset of $\mathcal{L}(\SSS'^*(\RR^d),\SSS^*(\RR^d))$ and
    \beqs
    \overline{A}^z=\overline{(a^{\uwidehat{z}})^w|_{\SSS^*(\RR^d)}}+S^{\uwidehat{z}}.
    \eeqs
    Moreover, for each $v\in L^2(\RR^d)$, $z\mapsto S^{\uwidehat{z}}v$, $\mathrm{int}\,\CC_{+,t}\rightarrow L^2(\RR^d)$, is analytic and for each $v\in D(A^k)$, $z\mapsto (a^{\uwidehat{z}})^w v$, $\mathrm{int}\,\CC_{+,t}\rightarrow L^2(\RR^d)$, is analytic.
\end{itemize}
\end{theorem}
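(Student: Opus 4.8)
The plan is to lift the Balakrishnan--Komatsu integral representation of $\overline{A}^{z}$ recalled above to the level of formal series and symbols, exploiting the stability of the calculus of Sections \ref{section2}--\ref{section4} under families of symbols subordinated to a fixed weight. Fix $t>0$ and put $k=[t]+1$. For each $\lambda\geq 0$ the symbol $a_{0}+\lambda$ is hypoelliptic on all of $\RR^{2d}$: it never vanishes by (\ref{finqa}), its derivatives of order $\geq 1$ coincide with those of $a_{0}$, and (\ref{sinqa})--(\ref{krt558811}) give $|a_{0}(w)+\lambda|\geq c^{-1}(|a_{0}(w)|+\lambda)$. Proposition \ref{parametweyl} thus produces a parametrix $q^{(\lambda)}\sim\sum_{j}q_{j}^{(\lambda)}\in FS_{A_{p},\rho}^{*,\infty}(\RR^{2d};0)$ with $q_{0}^{(\lambda)}=(a_{0}+\lambda)^{-1}$, and (\ref{estofpara}) together with (\ref{krt558811}) give the bound $|D^{\alpha}_{w}q_{j}^{(\lambda)}(w)|\leq C h^{|\alpha|+2j}A_{|\alpha|+2j}(|a_{0}(w)|+\lambda)^{-1}\langle w\rangle^{-\rho(|\alpha|+2j)}$, uniformly in $\lambda$. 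Set $b^{(\lambda)}:=q^{(\lambda)}\#a_{0}=\mathbf{1}-\lambda q^{(\lambda)}\in FS_{A_{p},\rho}^{*,\infty}(\RR^{2d};0)$, the formal expansion of $\overline{A}(\overline{A}+\lambda)^{-1}$; since $a_{0}$ is globally hypoelliptic its canonical image is subordinated to $|a_{0}|$, so by Lemma \ref{subsharpproduct} $(b^{(\lambda)})^{\#k}$ is subordinated to $|a_{0}|^{k}(|a_{0}|+\lambda)^{-k}$, uniformly in $\lambda$. Define termwise
\[
\bigl(a_{0}^{\#z}\bigr)_{j}(w):=\gamma_{k}(z)\int_{0}^{\infty}\lambda^{z-1}\bigl((b^{(\lambda)})^{\#k}\bigr)_{j}(w)\,d\lambda,\qquad z\in\CC_{+},\ k>\mathrm{Re}\,z.
\]
The uniform subordination and the elementary identity (\ref{inteq}) (with $\zeta=|a_{0}(w)|$) show that this integral converges absolutely, that differentiation under the integral is legitimate, that $a_{0}^{\#z}\in FS_{A_{p},\rho}^{*,\infty}(\RR^{2d};0)$, and that it satisfies (\ref{tkr554411}); the term $j=0$ equals $(a_{0})^{z}$ by (\ref{inteq}), which is $(iv)$. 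Independence of $k>\mathrm{Re}\,z$ is checked at the formal-series level exactly as for the operators in \cite[Proposition 3.1.3]{Fractionalpowersbook}, using $\lambda q^{(\lambda)}=\mathbf{1}-b^{(\lambda)}$ and an integration by parts relating $\gamma_{k+1}(z)$ to $\gamma_{k}(z)$.

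Part $(iii)$ follows from (\ref{tkr554411}), locally uniform in $z$, by dominated convergence applied termwise together with the topology of $FS_{A_{p},\rho}^{*,\infty}(\RR^{2d};0)$ from Section \ref{section2}. Multiplicativity $(i)$ is obtained by inserting two copies of the representation, applying Fubini (justified by the uniform integrable bounds), and collapsing the resulting double integral to a single one via the resolvent-type identities among the $q^{(\lambda)}$ and the beta integral for the $\gamma_{k}$'s --- the argument of \cite[Chapter 5]{Fractionalpowersbook} carried out termwise in the ring $\bigl(FS_{A_{p},\rho}^{*,\infty}(\RR^{2d};0),\#\bigr)$ of Proposition \ref{hyporingg}. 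Property $(ii)$ follows from $(i)$ together with the case $z=1$, $k=2$, where the $j$-th term of $a_{0}^{\#1}$ equals $\int_{0}^{\infty}((\mathbf{1}-\lambda q^{(\lambda)})^{\#2})_{j}\,d\lambda$; the term $j=0$ equals $a_{0}$ by (\ref{inteq}), and the higher terms vanish because $(\mathbf{1}-\lambda q^{(\lambda)})^{\#2}$ is, via the above exact identities, the full symbol of $(\overline{A}(\overline{A}+\lambda)^{-1})^{2}$ whose $\lambda^{0}$-integral reproduces the symbol of $\overline{A}$. For $(v)$: by (\ref{tkr554411}) the family $\{\gamma_{k}(\mathrm{Re}\,z)|\gamma_{k}(z)|^{-1}a_{0}^{\#z}\mid z\in\CC_{+,t}\}$ is subordinated, in the sense of Subsection \ref{sec_ch_qua}, to the positive continuous function $w\mapsto 1+|a_{0}(w)|^{t}$, which has ultrapolynomial growth of class $*$ since $a_{0}\in\Gamma_{A_{p},\rho}^{*,\infty}(\RR^{2d})$; Proposition \ref{subordinate} then gives $R_{t}>0$ and symbols $a^{\uwidehat{z}}:=R_{t}(a_{0}^{\#z})\in\Gamma_{A_{p},\rho}^{*,\infty}(\RR^{2d})$ with $\{\gamma_{k}(\mathrm{Re}\,z)|\gamma_{k}(z)|^{-1}a^{\uwidehat{z}}\}$ subordinated to $\{a_{0}^{\#z}\}$ under $1+|a_{0}|^{t}$, which is the first estimate in $(v)$; the second is its $N=1$ instance combined with the hypoelliptic derivative bounds for $(a_{0})^{z}$ (from the global hypoellipticity of $a_{0}$ and the bound preceding (\ref{thk779911})) and compactness on $Q_{B_{t}m_{1}}$. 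Hypoellipticity of $a^{\uwidehat{z}}$ is inherited from its principal symbol $(a_{0})^{z}$: (\ref{finqa}) keeps $|\arg a_{0}|$ bounded away from $\pi$, so $|(a_{0})^{z}|=|a_{0}|^{\mathrm{Re}\,z}e^{-\mathrm{Im}\,z\,\arg a_{0}}$ has two-sided hypoelliptic bounds by (\ref{dd1}) and $\mathrm{Re}\,z\leq t$, and (\ref{dd2}) for $a^{\uwidehat{z}}$ follows from (\ref{tkr554411}) after absorbing the lower-order remainders controlled by $(v)$.

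For $(vi)$, first $a+\lambda$ is hypoelliptic on $Q^{c}_{\tilde{B}}$ uniformly in $\lambda\geq 0$ by $(I)$ and the computation giving (\ref{sinqa}), so Proposition \ref{parametweyl}, Corollary \ref{corweylqu}, Lemma \ref{operatoronl2} and the bound $\|(\overline{A}+\lambda)^{-1}\|_{\mathcal{L}(L^{2}(\RR^{d}))}\leq M/\lambda$ from $(II)$ give $(\overline{A}+\lambda)^{-1}=(q^{(\lambda)})^{w}+E^{(\lambda)}$ with $\{E^{(\lambda)}\}$ an equicontinuous $*$-regularising family, the discrepancy between $a+\lambda$ and $a_{0}+\lambda$ being compactly supported and hence $*$-regularising. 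Composing $k$ times and applying Corollary \ref{corweylqu} to each composition gives $(\overline{A}(\overline{A}+\lambda)^{-1})^{k}=(\beta^{(\lambda)})^{w}+T^{(\lambda)}$ with $\beta^{(\lambda)}=R_{t}((b^{(\lambda)})^{\#k})$ (same $R_{t}$, enlarged if needed) and $\{T^{(\lambda)}\}$ equicontinuous $*$-regularising. Multiplying by $\gamma_{k}(z)\lambda^{z-1}$ and integrating over $(0,\infty)$: for $v\in\SSS^{*}(\RR^{d})$ (which equals $D(A^{k})$, since $a^{w}$ preserves $\SSS^{*}(\RR^{d})$) the left-hand side integrates to $\overline{A}^{z}v$ by Balakrishnan's formula (the convergence at $\infty$ coming from the factorisation $(\overline{A}+\lambda)^{-k}\overline{A}^{k}v$), and since $\int_{0}^{\infty}\lambda^{z-1}\beta^{(\lambda)}\,d\lambda=\gamma_{k}(z)^{-1}a^{\uwidehat{z}}$ by construction we obtain $\overline{A}^{z}v=(a^{\uwidehat{z}})^{w}v+S^{\uwidehat{z}}v$ on $\SSS^{*}(\RR^{d})$, with $S^{\uwidehat{z}}:=\overline{A}^{z}-\overline{(a^{\uwidehat{z}})^{w}|_{\SSS^{*}(\RR^{d})}}$. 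One then proves that $S^{\uwidehat{z}}$ is $*$-regularising with $\{\gamma_{k}(\mathrm{Re}\,z)\gamma_{k}(z)^{-1}S^{\uwidehat{z}}\mid z\in\CC_{+,t}\}$ equicontinuous in $\mathcal{L}(\SSS'^{*}(\RR^{d}),\SSS^{*}(\RR^{d}))$, and that $z\mapsto S^{\uwidehat{z}}v$ is analytic on $\mathrm{int}\,\CC_{+,t}$ --- first for $v\in\SSS^{*}(\RR^{d})$, where $\int_{0}^{\infty}\lambda^{z-1}T^{(\lambda)}v\,d\lambda$ converges in $\SSS^{*}(\RR^{d})$ with a $z$-analytic integrand (after an integration by parts in $\lambda$ trading the slow decay of the individual remainders for faster decay), then for all $v\in L^{2}(\RR^{d})$ by this equicontinuity and the density of $\SSS^{*}(\RR^{d})$ in $L^{2}(\RR^{d})$. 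Since $a^{\uwidehat{z}}$ is hypoelliptic, Proposition \ref{maximalreal} identifies $\overline{(a^{\uwidehat{z}})^{w}|_{\SSS^{*}(\RR^{d})}}$ with the maximal realisation, whose domain is $\{v\in L^{2}(\RR^{d})\mid(a^{\uwidehat{z}})^{w}v\in L^{2}(\RR^{d})\}$; as $S^{\uwidehat{z}}$ is everywhere defined and bounded, the identity on $\SSS^{*}(\RR^{d})$ extends by a density argument (using the general properties of Balakrishnan powers from \cite{Fractionalpowersbook}) to $\overline{A}^{z}=\overline{(a^{\uwidehat{z}})^{w}|_{\SSS^{*}(\RR^{d})}}+S^{\uwidehat{z}}$, whence $D(\overline{A}^{z})=\{v\in L^{2}(\RR^{d})\mid(a^{\uwidehat{z}})^{w}v\in L^{2}(\RR^{d})\}$. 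Finally, $z\mapsto\overline{A}^{z}v$ is analytic on $\mathrm{int}\,\CC_{+,t}$ for $v\in\SSS^{*}(\RR^{d})$ by differentiating Balakrishnan's integral, hence so is $z\mapsto(a^{\uwidehat{z}})^{w}v=\overline{A}^{z}v-S^{\uwidehat{z}}v$ for $v\in D(A^{k})$.

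The crux is the uniform-in-$\lambda$ bookkeeping underlying $(vi)$: one must control, simultaneously for all $\lambda\in(0,\infty)$, every $*$-regularising error produced by the change of quantisation (Proposition \ref{changequa}), by the $k$-fold composition formula (Corollary \ref{corweylqu}), and by the passage from the parametrix $(q^{(\lambda)})^{w}$ to the genuine resolvent $(\overline{A}+\lambda)^{-1}$, and simultaneously recover --- from (\ref{krt558811}), the hypoelliptic lower bound (\ref{dd1}) for $a_{0}$, and the non-negativity bound --- enough decay in $\lambda$ to make the $\lambda^{z-1}$-integrals converge at both ends and to secure the $z$-analyticity of $S^{\uwidehat{z}}$. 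This is precisely what Subsection \ref{sec_ch_qua} and Section \ref{section3} were set up for: stated for \emph{families} of symbols subordinated to a fixed weight, Propositions \ref{subordinate}, \ref{changequa}, \ref{weylq} and Corollary \ref{corweylqu} keep the cut-off constants $R$ uniform over the one-parameter family $\{a_{0}+\lambda\}_{\lambda\geq 0}$, which is what makes the scheme run.
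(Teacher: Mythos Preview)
Your overall strategy matches the paper's: lift the Balakrishnan integral to the formal-series level and exploit the family calculus of Sections~\ref{section2}--\ref{section3}. The construction of $a_0^{\#z}$ and the derivation of (\ref{tkr554411}) are essentially right.

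The substantive gap is in $(vi)$. You obtain $(\overline{A}(\overline{A}+\lambda)^{-1})^k=(\beta^{(\lambda)})^w+T^{(\lambda)}$ with $\{T^{(\lambda)}\}$ merely equicontinuous, and then need $\int_0^\infty\lambda^{z-1}T^{(\lambda)}v\,d\lambda$ to converge in $\SSS^*(\RR^d)$. Equicontinuity alone gives no decay in $\lambda$, and your ``integration by parts in $\lambda$ trading the slow decay for faster decay'' does not work: it makes the power of $\lambda$ worse and requires $\lambda$-differentiability of $T^{(\lambda)}$ with quantitative control that you have not established. What is actually needed is that $\{(1+\lambda)^kT^{(\lambda)}\}$ be equicontinuous in $\mathcal{L}(\SSS'^*(\RR^d),\SSS^*(\RR^d))$. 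The paper extracts this from the sandwich identity
\[
a^w(\tilde a_\lambda^w)^{-1}=a^wq_\lambda^w+q_\lambda^w a^w(\mathrm{Id}-\tilde a_\lambda^wq_\lambda^w)+(\mathrm{Id}-q_\lambda^w\tilde a_\lambda^w)\,\overline{A}(\overline{A}+\lambda)^{-1}\,(\mathrm{Id}-\tilde a_\lambda^wq_\lambda^w).
\]
In the last term the true resolvent sits only in the $L^2$-bounded middle (non-negativity $(II)$ gives both $\overline{A}(\overline{A}+\lambda)^{-1}$ and $\lambda(\overline{A}+\lambda)^{-1}$ uniformly bounded on $L^2$), flanked by uniformly $*$-regularising factors from Corollary~\ref{corunipar}; the middle term uses that $\{(1+\lambda)q_\lambda^w\}$ is equicontinuous. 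This yields one factor of $(1+\lambda)$; iterating gives $(1+\lambda)^k$. Without this weighted equicontinuity neither the convergence defining $S^{\uwidehat z}$, nor the equicontinuity of $\{\gamma_k(\mathrm{Re}\,z)\gamma_k(z)^{-1}S^{\uwidehat z}\}$, nor the analyticity in $z$ follows.

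Three smaller points. For $(v)$, subordinating to the single weight $1+|a_0|^t$ yields only a bound by $1+|a_0(w)|^t$, not the sharp $|a_0(w)|^{\mathrm{Re}\,z}$ claimed; either use Proposition~\ref{subordinate} with the $z$-indexed weight family, or --- as the paper does --- choose $R_t$ for the $\lambda$-family, write $a^{\uwidehat z}(w)=\gamma_k(z)\int_0^\infty\lambda^{z-1}\beta^{(\lambda)}(w)\,d\lambda$ pointwise, and integrate the bound $|D^\alpha\beta^{(\lambda)}|\leq Ch^{|\alpha|}A_\alpha|a_0|^k|a_0+\lambda|^{-k}\langle w\rangle^{-\rho|\alpha|}$ via (\ref{krt558811}) and (\ref{inteq}). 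Your argument for $(ii)$ is circular (you invoke the operator-level identity to compute the formal series); the paper proves it directly by repeated integration by parts using $\partial_\lambda(\sum_jq_j^{(\lambda)})^{\#n}=-n(\sum_jq_j^{(\lambda)})^{\#(n+1)}$. Finally, $\SSS^*(\RR^d)\subsetneq D(A^k)$ in general; and the passage from the symbol identity $\gamma_k(z)\int_0^\infty\lambda^{z-1}\beta^{(\lambda)}\,d\lambda=a^{\uwidehat z}$ to the operator identity $(a^{\uwidehat z})^wv=\gamma_k(z)\int_0^\infty\lambda^{z-1}(\beta^{(\lambda)})^wv\,d\lambda$ is not ``by construction'' --- the paper uses a Riemann-sum approximation, showing the partial sums converge to $a^{\uwidehat z}$ in $\Gamma_{A_p,\rho}^{*,\infty}(\RR^{2d})$ and simultaneously to the Bochner integral in $L^2(\RR^d)$, which forces the equality.
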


\begin{remark}\label{remark1144}
Notice that the second estimate in $(v)$ together with (\ref{thk779911}) implies that for every $h>0$ (resp. for some $h>0$
\beq\label{est1155776622}
\sup_{z\in\CC_{+,t}}\sup_{\alpha\in\NN^{2d}}\sup_{w\in \RR^{2d}}\frac{\left|D^{\alpha}a^{\uwidehat{z}}(w)\right|\langle w\rangle^{\rho|\alpha|}(\gamma_k(\mathrm{Re}\, z))^2} {h^{|\alpha|}A_{\alpha}|(a_0(w))^z||\gamma_k(z)|^2}<\infty.
\eeq
\end{remark}

\subsection{Proof of Theorem \ref{maint}}

This subsection is devoted to the proof of the main result. From now on $a, a_0\in \Gamma_{A_p,\rho}^{*,\infty}(\RR^{2d})$ and the non-negative operator $\overline{A}$ are as in Subsection \ref{subse}. For the simplicity in notation, for $\lambda\geq 0$ we put $a_0(w)+\lambda$ by $a_{\lambda}(w)$.\\
\indent Since the proof of Theorem \ref{maint} is rather lengthy, we will divide it into four parts.

\subsubsection{The parametrix of $a_{\lambda}^w$}

As $\tilde{\chi}$ has a compact support, (\ref{sinqa}) implies that there exist $c,C,m>0$ (resp. for every $m>0$ there exist $c,C>0$) such that
\beq\label{growthal}
c(1+\lambda)e^{-M(m|\xi|)}e^{-M(m|x|)}\leq |a_{\lambda}(x,\xi)|\leq C(1+\lambda)e^{M(m|\xi|)}e^{M(m|x|)},
\eeq
for all $(x,\xi)\in\RR^{2d}$, $\lambda\geq 0$. In the Roumieu case, this inequality together with Lemma \ref{lemulgr117} implies that there exist $(k_p)\in\mathfrak{R}$ and $c,C>0$ such that
\beq\label{growthalr}
c(1+\lambda)e^{-N_{k_p}(|\xi|)}e^{-N_{k_p}(|x|)}\leq |a_{\lambda}(x,\xi)|\leq C(1+\lambda)e^{N_{k_p}(|\xi|)}e^{N_{k_p}(|x|)},
\eeq
for all $(x,\xi)\in\RR^{2d}$, $\lambda\geq 0$. Moreover, for every $h>0$ there exists $C>0$ (resp. there exist $h,C>0$) such that
\beq\label{estfora_0}
\left|D^{\alpha}_w a_0(w)\right|\leq Ch^{|\alpha|}|a_0(w)|A_{\alpha}\langle w\rangle^{-\rho|\alpha|},\,\, \alpha\in\NN^{2d},\, w\in \RR^{2d}.
\eeq
This and (\ref{sinqa}), imply that for every $h>0$ there exists $C>0$ (resp. there exist $h,C>0$) such that
\beq\label{estforahi}
\left|D^{\alpha}_w a_{\lambda}(w)\right|\leq Ch^{|\alpha|}|a_{\lambda}(w)|A_{\alpha}\langle w\rangle^{-\rho|\alpha|},\,\, \alpha\in\NN^{2d},\, w\in \RR^{2d},\, \lambda \geq 0.
\eeq
Now, by the same technique as in \cite[Lemma 3.3]{CPP} one proves that for every $h>0$ there exists $C>0$ (resp. there exist $h,C>0$) such that
\beq\label{uniestfoa}
\left|D^{\alpha}_w \left(a_{\lambda}(w)\right)^{-1}\right|\leq Ch^{|\alpha|}A_{\alpha}|a_{\lambda}(w)|^{-1}\langle w\rangle^{-\rho|\alpha|},
\eeq
for all $\alpha\in\NN^{2d}$, $w\in \RR^{2d}$, $\lambda\geq 0$. For each $\lambda\geq 0$ define $q^{(\lambda)}_0(w)=1/a_{\lambda}(w)$, $w\in\RR^{2d}$, and inductively
\beqs
q^{(\lambda)}_j(x,\xi)=-q^{(\lambda)}_0(x,\xi)\sum_{s=1}^j\sum_{|\alpha+\beta|=s} \frac{(-1)^{|\beta|}}{\alpha!\beta!2^s}\partial^{\alpha}_{\xi} D^{\beta}_x q^{(\lambda)}_{j-s}(x,\xi) \partial^{\beta}_{\xi} D^{\alpha}_x a_{\lambda}(x,\xi),\,\, (x,\xi)\in \RR^{2d}.
\eeqs
Because of the uniform estimates in $\lambda$ given in (\ref{uniestfoa}), analogously as in the proof of Proposition \ref{parametweyl} (cf. the proof of \cite[Lemma 3.4]{CPP}), we can conclude the following estimate: for every $h>0$ there exists $C>0$ (resp. there exist $h,C>0$) such that
\beq\label{estforthepar}
\left|D^{\alpha}_w q^{(\lambda)}_j(w)\right|\leq C\frac{h^{|\alpha|+2j}A_{|\alpha|+2j}}{|a_{\lambda}(w)|\langle w\rangle^{\rho(|\alpha|+2j)}},\,\, w\in \RR^{2d},\,\alpha\in\NN^{2d},\, j\in\NN,\, \lambda\geq 0.
\eeq
This estimate together with (\ref{growthal}) in the Beurling case and (\ref{growthalr}) in the Roumieu case respectively, implies the following:
\begin{itemize}
\item[$-$] in the $(M_p)$ case, there exists $m>0$ such that for every $h>0$ there is $C>0$ such that
\beq\label{estuniforminlamb}
(1+\lambda)\left|D^{\alpha}_w q^{(\lambda)}_j(w)\right|\leq Ch^{|\alpha|+2j}A_{|\alpha|+2j}e^{M(m|\xi|)}e^{M(m|x|)}\langle w\rangle^{-\rho(|\alpha|+2j)},
\eeq
for all $w\in \RR^{2d}$, $\alpha\in\NN^{2d}$, $j\in\NN$, $\lambda\geq 0$;
\item[$-$] in the Roumieu case, there exist $(k_p)\in\mathfrak{R}$ and $h,C>0$ such that
\beq\label{estuniforminlamr}
(1+\lambda)\left|D^{\alpha}_w q^{(\lambda)}_j(w)\right|\leq Ch^{|\alpha|+2j}A_{|\alpha|+2j}e^{N_{k_p}(|\xi|)}e^{N_{k_p}(|x|)}\langle w\rangle^{-\rho(|\alpha|+2j)},
\eeq
for all $w\in \RR^{2d}$, $\alpha\in\NN^{2d}$, $j\in\NN$, $\lambda\geq 0$.
\end{itemize}
Thus, we conclude that
\beq
\Big\{\ssum (1+\lambda)q^{(\lambda)}_j\big|\, \lambda\geq 0\Big\}&\precsim& e^{M(m|\xi|)}e^{M(m|x|)}\,\, \mbox{in}\,\, FS_{A_p,\rho}^{(M_p),\infty}(\RR^{2d};0)\,\, \mbox{and}\label{ths7975}\\
\Big\{\ssum (1+\lambda)q^{(\lambda)}_j\big|\, \lambda\geq 0\Big\}&\precsim& e^{N_{k_p}(|\xi|)}e^{N_{k_p}(|x|)}\,\, \mbox{in}\,\, FS_{A_p,\rho}^{\{M_p\},\infty}(\RR^{2d};0)\label{ths7977}
\eeq
in the Beurling and the Roumieu case, respectively. Similarly, (\ref{estforahi}) and (\ref{growthal}) imply $\{a_{\lambda}/(1+\lambda)|\,\lambda\geq 0\}\precsim e^{M(m|\xi|)}e^{M(m|x|)}$ in the Beurling case and (\ref{estforahi}) and (\ref{growthalr}) imply $\{a_{\lambda}/(1+\lambda)|\,\lambda\geq 0\}\precsim e^{N_{k_p}(|\xi|)}e^{N_{k_p}(|x|)}$ in the Roumieu case. By a direct computation, one verifies $\sum_j q^{(\lambda)}_j\# a_{\lambda}=\mathbf{1}$ (cf. Proposition \ref{parametweyl}). Now, Corollary \ref{corweylqu} implies that there exist $R_1>0$ such that $\left\{\mathrm{Op}_{1/2}\left(R_1(\sum_j q^{(\lambda)}_j)\right)a_{\lambda}^w-\mathrm{Id}\big|\, \lambda\geq 0\right\}$ is an equicontinuous subset of $\mathcal{L}(\SSS'^*(\RR^d),\SSS^*(\RR^d))$ (note that $R_1(\sum_j (1+\lambda)q^{(\lambda)}_j)=(1+\lambda)R_1(\sum_j q^{(\lambda)}_j)$).\\
\indent Analogously, for each $\lambda\geq 0$ one can define $\tilde{q}^{(\lambda)}_0(w)=1/a_{\lambda}(w)(=q^{(\lambda)}_0(w))$, $w\in\RR^{2d}$, and inductively
\beqs
\tilde{q}^{(\lambda)}_j(x,\xi)=-\tilde{q}^{(\lambda)}_0(x,\xi)\sum_{s=1}^j\sum_{|\alpha+\beta|=s} \frac{(-1)^{|\beta|}}{\alpha!\beta!2^s}\partial^{\alpha}_{\xi} D^{\beta}_x a_{\lambda}(x,\xi) \partial^{\beta}_{\xi} D^{\alpha}_x \tilde{q}^{(\lambda)}_{j-s}(x,\xi),\,\, (x,\xi)\in \RR^{2d}.
\eeqs
Analogously as in Proposition \ref{parametweyl}, $\sum_j \tilde{q}_j^{(\lambda)}\in FS_{A_p,\rho}^{*,\infty}(\RR^{2d};0)$ and one easily verifies $a_{\lambda}\#\sum_j \tilde{q}^{(\lambda)}_j=\mathbf{1}$ in $FS_{A_p,\rho}^{*,\infty}(\RR^{2d};0)$. Since $\#$ is associative,
\beqs
\ssum q^{(\lambda)}_j=\ssum q^{(\lambda)}_j\#a_{\lambda}\#\ssum \tilde{q}^{(\lambda)}_j=\ssum\tilde{q}^{(\lambda)}_j.
\eeqs
By the same technique as above, one proves that there exists $R_2>0$ such that
\beqs
\left\{a_{\lambda}^w\mathrm{Op}_{1/2}\left(R_2(\ssum q^{(\lambda)}_j)\right)-\mathrm{Id}\big|\, \lambda\geq 0\right\}
\eeqs
is equicontinuous in $\mathcal{L}(\SSS'^*(\RR^d),\SSS^*(\RR^d))$. By taking $R=\max\{R_1, R_2\}$ we have the following result (taking larger $R_1$ or $R_2$ yields the same results because of Proposition \ref{eqsse}).

\begin{proposition}\label{uniformparam}
There exists $R>0$, which can be chosen arbitrarily large, such that
\beqs
\Big\{\mathrm{Op}_{1/2}\big(R(\ssum q^{(\lambda)}_j)\big)a_{\lambda}^w-\mathrm{Id}\big|\, \lambda\geq 0\Big\}\, \mbox{and}\, \Big\{a_{\lambda}^w\mathrm{Op}_{1/2}\big(R(\ssum q^{(\lambda)}_j)\big)-\mathrm{Id}\big|\, \lambda\geq 0\Big\}
\eeqs
are equicontinuous subsets of $\mathcal{L}(\SSS'^*(\RR^d),\SSS^*(\RR^d))$. Moreover, for $\{\sum_j q^{(\lambda)}_j\}_{\lambda\geq 0}$ the estimate (\ref{estforthepar}) holds.
\end{proposition}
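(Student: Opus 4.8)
The plan is to collect the facts already assembled in the discussion preceding the statement. The ``Moreover'' assertion is immediate: the bound (\ref{estforthepar}) for the coefficients $q^{(\lambda)}_j$ was obtained exactly as in \cite[Lemma 3.4]{CPP}, the only additional input being the \emph{$\lambda$-uniform} estimate (\ref{uniestfoa}) for $(a_{\lambda})^{-1}$. Combined with the two-sided growth bounds (\ref{growthal}) in the Beurling case and (\ref{growthalr}) in the Roumieu case, this yields the subordinations (\ref{ths7975}), resp. (\ref{ths7977}), namely $\{\ssum(1+\lambda)q^{(\lambda)}_j\mid\lambda\geq0\}\precsim e^{M(m|\xi|)}e^{M(m|x|)}$ in $FS_{A_p,\rho}^{(M_p),\infty}(\RR^{2d};0)$, resp. its Roumieu counterpart; and (\ref{estforahi}) together with the same growth bounds gives $\{a_{\lambda}/(1+\lambda)\mid\lambda\geq0\}\precsim e^{M(m|\xi|)}e^{M(m|x|)}$, resp. its Roumieu version. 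The point of normalising by $1+\lambda$ is that the subordinating functions then do not depend on $\lambda$ and are continuous, positive and of ultrapolynomial growth of class $*$ --- exactly the situation in which Corollary \ref{corweylqu} furnishes an \emph{equicontinuous} $*$-regularising remainder.

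Next I would record the parametrix identities. As in Proposition \ref{parametweyl}, a direct computation on the coefficients gives $\ssum q^{(\lambda)}_j\#a_{\lambda}=\mathbf{1}$ and $a_{\lambda}\#\ssum\tilde q^{(\lambda)}_j=\mathbf{1}$ in $FS_{A_p,\rho}^{*,\infty}(\RR^{2d};0)$, and associativity of $\#$ forces $\ssum q^{(\lambda)}_j=\ssum\tilde q^{(\lambda)}_j$, so that $\ssum q^{(\lambda)}_j$ is a two-sided parametrix of $a_{\lambda}$ in the ring $FS_{A_p,\rho}^{*,\infty}(\RR^{2d};0)$. Now I would apply Corollary \ref{corweylqu} (with $B=0$) twice: once with $U_1=\{\ssum(1+\lambda)q^{(\lambda)}_j\mid\lambda\geq0\}$ and $U_2$ the canonical image of $\{a_{\lambda}/(1+\lambda)\mid\lambda\geq0\}$, and once with $U_1$ and $U_2$ interchanged. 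Restricting to the (still equicontinuous) subfamily in which both formal series are indexed by the same $\lambda$, the relevant $\#$-product equals $\mathbf{1}$ by the two-sided parametrix property, hence the associated $c=R(\mathbf{1})$ equals $1$ (because $\chi_{0,R}\equiv0$) and $\Op_{1/2}(R(\mathbf{1}))=\mathrm{Id}$; moreover the mutually inverse scalar factors $1+\lambda$ and $(1+\lambda)^{-1}$ cancel in every composition since $\Op_{1/2}$ is linear. The first application thus yields $R_1>0$, which may be taken arbitrarily large, with $\{\Op_{1/2}(R_1(\ssum q^{(\lambda)}_j))a_{\lambda}^w-\mathrm{Id}\mid\lambda\geq0\}$ equicontinuous in $\mathcal{L}(\SSS'^*(\RR^d),\SSS^*(\RR^d))$, and the second yields in the same way an arbitrarily large $R_2>0$ with $\{a_{\lambda}^w\Op_{1/2}(R_2(\ssum q^{(\lambda)}_j))-\mathrm{Id}\mid\lambda\geq0\}$ equicontinuous there.

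Finally I would set $R=\max\{R_1,R_2\}$; since each $R_i$ may be chosen arbitrarily large one may simply take $R_1=R_2=R$ from the start, or, if one prefers to enlarge an already-chosen $R_i$, note via Proposition \ref{eqsse} that the extra term $\Op_{1/2}\big(R(\ssum q^{(\lambda)}_j)-R_i(\ssum q^{(\lambda)}_j)\big)$, post- or pre-composed with $a_{\lambda}^w$, still forms an equicontinuous $*$-regularising family (using Proposition \ref{continuity} for the $\lambda$-uniform boundedness of $\{(a_{\lambda}/(1+\lambda))^w\}$ on $\SSS^*(\RR^d)$). I do not expect any genuinely hard step here --- all the substance has been supplied in advance --- and the one truly delicate point, already dealt with, is the uniformity in $\lambda$ of every constant that appears (in the bounds on $q^{(\lambda)}_j$, on $a_{\lambda}$, and on the error operators), which is precisely what the $(1+\lambda)$-normalisation together with the subordination calculus of Subsection \ref{sec_ch_qua} is engineered to deliver.
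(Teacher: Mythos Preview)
Your proposal is correct and follows essentially the same approach as the paper: the paper likewise collects the subordinations (\ref{ths7975})--(\ref{ths7977}) and $\{a_{\lambda}/(1+\lambda)\mid\lambda\geq0\}\precsim f$, uses the two-sided parametrix identity $\ssum q^{(\lambda)}_j\#a_{\lambda}=a_{\lambda}\#\ssum q^{(\lambda)}_j=\mathbf{1}$ (via associativity), applies Corollary \ref{corweylqu} once on each side to produce $R_1,R_2$, and then sets $R=\max\{R_1,R_2\}$ with the parenthetical remark that enlarging either $R_i$ is harmless by Proposition \ref{eqsse}. Your additional remark invoking Proposition \ref{continuity} to control the composition with $a_{\lambda}^w$ when enlarging $R$ is a correct elaboration of that parenthetical.
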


\begin{remark} Observe that for each $\lambda\geq 0$, $a_0\#\sum_j q^{(\lambda)}_j=\mathbf{1}-\lambda\sum_j q^{(\lambda)}_j=\sum_j q^{(\lambda)}_j \# a_0$, i.e. $a_0$ and $\sum_j q^{(\lambda)}_j$ commute. We will often tacitly apply this fact throughout the rest of the article.
\end{remark}

Observe that $a^w+\lambda\mathrm{Id}-a_{\lambda}^w=(a-a_0)^w$, for all $\lambda\geq 0$, and the last operator is $*$-regularising since $a-a_0$ has compact support. Because of (\ref{ths7975}) and (\ref{ths7977}) we can conclude that the same also holds for $\{\sum_j q^{(\lambda)}_j|\, \lambda\geq 0\}$. Propositions \ref{subordinate} and \ref{continuity} prove the existence of $R'_0>0$ such that for each $R\geq R'_0$ the set $\big\{\mathrm{Op}_{1/2}\big(R(\sum_j q^{(\lambda)}_j)\big)\big|\, \lambda\geq 0\big\}$ is equicontinuous in $\mathcal{L}(\SSS^*(\RR^d),\SSS^*(\RR^d))$ and $\mathcal{L}(\SSS'^*(\RR^d),\SSS'^*(\RR^d))$. Hence, denoting by $\tilde{a}_{\lambda}$ the symbol $a+\lambda$ for $\lambda\geq 0$, as an easy consequence of Proposition \ref{uniformparam} we deduce the following corollary.

\begin{corollary}\label{corunipar}
There exists $R>0$, which can be chosen arbitrarily large, for which Proposition \ref{uniformparam} holds and furthermore
\beqs
\Big\{\mathrm{Op}_{1/2}\big(R(\ssum q^{(\lambda)}_j)\big)\tilde{a}_{\lambda}^w-\mathrm{Id}\big|\, \lambda\geq 0\Big\}\,\,\, \mbox{and}\,\,\, \Big\{\tilde{a}_{\lambda}^w\mathrm{Op}_{1/2}\big(R(\ssum q^{(\lambda)}_j)\big)-\mathrm{Id}\big|\, \lambda\geq 0\Big\}
\eeqs
are equicontinuous subsets of $\mathcal{L}(\SSS'^*(\RR^d),\SSS^*(\RR^d))$.
\end{corollary}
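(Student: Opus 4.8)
The plan is to deduce the corollary from Proposition~\ref{uniformparam} by splitting off the compactly supported correction $a-a_0=-\tilde\chi$. First I would fix $R$ to be at least as large as the constant produced by Proposition~\ref{uniformparam} and at least as large as the constant $R'_0$ isolated in the paragraph preceding the corollary; by Proposition~\ref{eqsse} every larger value of $R$ works just as well, so $R$ may be chosen arbitrarily large while simultaneously making all the equicontinuity statements we are about to invoke valid. Concretely, writing $P_\lambda=\mathrm{Op}_{1/2}\big(R(\sum_j q^{(\lambda)}_j)\big)$, Proposition~\ref{uniformparam} gives that $\{P_\lambda a_\lambda^w-\mathrm{Id}\mid\lambda\geq0\}$ and $\{a_\lambda^w P_\lambda-\mathrm{Id}\mid\lambda\geq0\}$ are equicontinuous in $\mathcal{L}(\SSS'^*(\RR^d),\SSS^*(\RR^d))$, while the discussion before the corollary gives that $\{P_\lambda\mid\lambda\geq0\}$ is equicontinuous both in $\mathcal{L}(\SSS^*(\RR^d),\SSS^*(\RR^d))$ and in $\mathcal{L}(\SSS'^*(\RR^d),\SSS'^*(\RR^d))$.

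Next I would use the linearity of the Weyl quantisation in the symbol to write $\tilde a_\lambda^w=a_\lambda^w+(a-a_0)^w$, where $(a-a_0)^w=-\tilde\chi^w$ is a single fixed $*$-regularising operator, i.e. it maps $\SSS'^*(\RR^d)$ continuously into $\SSS^*(\RR^d)$ because $\tilde\chi$ has compact support. This yields the identities
\begin{gather*}
P_\lambda\tilde a_\lambda^w-\mathrm{Id}=\big(P_\lambda a_\lambda^w-\mathrm{Id}\big)+P_\lambda(a-a_0)^w,\\
\tilde a_\lambda^w P_\lambda-\mathrm{Id}=\big(a_\lambda^w P_\lambda-\mathrm{Id}\big)+(a-a_0)^w P_\lambda.
\end{gather*}
In each identity the first summand runs over an equicontinuous family by Proposition~\ref{uniformparam}. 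For the second summand of the first identity I would observe that each $P_\lambda(a-a_0)^w$ is obtained by first applying the fixed continuous map $(a-a_0)^w\colon\SSS'^*(\RR^d)\to\SSS^*(\RR^d)$ and then a member of the equicontinuous family $\{P_\lambda\}\subseteq\mathcal{L}(\SSS^*(\RR^d),\SSS^*(\RR^d))$, so that $\{P_\lambda(a-a_0)^w\mid\lambda\geq0\}$ is equicontinuous in $\mathcal{L}(\SSS'^*(\RR^d),\SSS^*(\RR^d))$; for the second identity, each $(a-a_0)^w P_\lambda$ is obtained by first applying a member of the equicontinuous family $\{P_\lambda\}\subseteq\mathcal{L}(\SSS'^*(\RR^d),\SSS'^*(\RR^d))$ and then the fixed map $(a-a_0)^w\colon\SSS'^*(\RR^d)\to\SSS^*(\RR^d)$, which again gives equicontinuity. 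Since the sum of two equicontinuous sets of operators is equicontinuous, both families in the statement are equicontinuous subsets of $\mathcal{L}(\SSS'^*(\RR^d),\SSS^*(\RR^d))$.

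I do not expect a genuine obstacle here: this is a routine decomposition/composition argument once Proposition~\ref{uniformparam} and the equicontinuity of $\{P_\lambda\}$ on $\SSS^*(\RR^d)$ and $\SSS'^*(\RR^d)$ are in hand. The only point demanding a modicum of care is that a single, arbitrarily large $R$ must be chosen making all of the invoked statements hold at once, which is exactly what Proposition~\ref{eqsse} (together with the remark following Proposition~\ref{uniformparam}) permits.
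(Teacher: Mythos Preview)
Your proposal is correct and follows exactly the approach the paper intends: the paper sets up precisely these ingredients in the paragraph preceding the corollary (the identity $\tilde a_\lambda^w-a_\lambda^w=(a-a_0)^w$ with $(a-a_0)^w$ $*$-regularising, and the equicontinuity of $\{P_\lambda\}$ on both $\SSS^*(\RR^d)$ and $\SSS'^*(\RR^d)$ for $R\geq R'_0$) and then simply declares the corollary an ``easy consequence'' of Proposition~\ref{uniformparam}. Your decomposition $P_\lambda\tilde a_\lambda^w-\mathrm{Id}=(P_\lambda a_\lambda^w-\mathrm{Id})+P_\lambda(a-a_0)^w$ (and its companion) is the intended argument, and your handling of the choice of a single arbitrarily large $R$ via Proposition~\ref{eqsse} matches the paper's own justification given just before Proposition~\ref{uniformparam}.
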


This corollary implies that $\tilde{a}^w_{\lambda}$ is globally regular for each $\lambda\geq 0$, i.e. for $u\in \SSS'^*(\RR^d)$ if $(a^w+\lambda\mathrm{Id})u\in \SSS^*(\RR^d)$ then $u\in\SSS^*(\RR^d)$. Since $\overline{A}+\lambda\mathrm{Id}:D(\overline{A})\subseteq L^2(\RR^d)\rightarrow L^2(\RR^d)$ is injective for $\lambda>0$, $a^w+\lambda\mathrm{Id}:\SSS^*(\RR^d)\rightarrow \SSS^*(\RR^d)$ is injective for $\lambda>0$. Clearly, it is also continuous. Since the range of $\overline{A}+\lambda\mathrm{Id}$ is $L^2(\RR^d)$, for given $\varphi\in\SSS^*(\RR^d)$ there exists $v\in L^2(\RR^d)$ such that $(\overline{A}+\lambda\mathrm{Id})v=\varphi$, i.e. $(a^w+\lambda\mathrm{Id})v=\varphi$. As $a^w+\lambda\mathrm{Id}$ is globally regular, $v\in\SSS^*(\RR^d)$. Thus $\tilde{a}_{\lambda}^w=a^w+\lambda\mathrm{Id}$ is continuous bijection on $\SSS^*(\RR^d)$. As $\SSS^{(M_p)}(\RR^d)$ is an $(F)$-space and $\SSS^{\{M_p\}}(\RR^d)$ is a $(DFS)$-space, $\SSS^*(\RR^d)$ is a Ptak space (see \cite[Sect. IV. 8, p. 162]{Sch}). The Ptak homomorphism theorem \cite[Corollary 1, p. 164]{Sch} implies that $\tilde{a}_{\lambda}^w$ is topological isomorphism on $\SSS^*(\RR^d)$, for each $\lambda>0$.

\subsubsection{The analysis of the operators $(\widetilde{a}_{\lambda}^w)^{-1}$ and $(a^w(\widetilde{a}_{\lambda}^w)^{-1})^k$, $k\in\ZZ_+$}

Fix a large enough $R>0$ such that the conclusions in Proposition \ref{uniformparam} and Corollary \ref{corunipar} hold and let $q_{\lambda}=R(\sum_j q^{(\lambda)}_j)$, for $\lambda>0$. Because of (\ref{ths7975}) and (\ref{ths7977}), $\{(1+\lambda)q_{\lambda}|\, \lambda>0\}$ is a bounded subset of $\Gamma_{A_p,\rho}^{(M_p),\infty}(\RR^{2d};m')$ for some $m'>0$ (resp. of $\Gamma_{A_p,\rho}^{\{M_p\},\infty}(\RR^{2d};h')$ for some $h'>0$) (cf. Proposition \ref{subordinate}) and consequently $\{(1+\lambda)q_{\lambda}^w|\, \lambda>0\}$ is an equicontinuous subset of $\mathcal{L}(\SSS^*(\RR^d), \SSS^*(\RR^d))$ and $\mathcal{L}(\SSS'^*(\RR^d),\SSS'^*(\RR^d))$. As $\tilde{a}_{\lambda}^wa^w=a^w\tilde{a}_{\lambda}^w$, we have
\beqs
a^w(\tilde{a}_{\lambda}^w)^{-1}=a^wq_{\lambda}^w+q_{\lambda}^wa^w(\mathrm{Id}-\tilde{a}_{\lambda}^wq_{\lambda}^w)+ (\mathrm{Id}-q_{\lambda}^w\tilde{a}_{\lambda}^w)a^w(\tilde{a}_{\lambda}^w)^{-1}(\mathrm{Id}-\tilde{a}_{\lambda}^wq_{\lambda}^w)
\eeqs
as operators on $\SSS^*(\RR^d)$. For $\lambda>0$ denote $S_{\lambda}= (\mathrm{Id}-q_{\lambda}^w\tilde{a}_{\lambda}^w)a^w(\tilde{a}_{\lambda}^w)^{-1}(\mathrm{Id}-\tilde{a}_{\lambda}^wq_{\lambda}^w)$. Corollary \ref{corunipar} implies $S_{\lambda}\in\mathcal{L}(\SSS'^*(\RR^d),\SSS^*(\RR^d))$. Moreover
\beqs
S_{\lambda}=(\mathrm{Id}-q_{\lambda}^w\tilde{a}_{\lambda}^w) \overline{A}(\overline{A}+\lambda\mathrm{Id})^{-1}(\mathrm{Id}-\tilde{a}_{\lambda}^wq_{\lambda}^w).
\eeqs
Since $\overline{A}(\overline{A}+\lambda\mathrm{Id})^{-1}=\mathrm{Id}- \lambda(\overline{A}+\lambda\mathrm{Id})^{-1}$, $\{\overline{A}(\overline{A}+\lambda\mathrm{Id})^{-1}|\, \lambda>0\}$ and $\{\lambda(\overline{A}+\lambda\mathrm{Id})^{-1}|\, \lambda>0\}$ are equicontinuous subsets of $\mathcal{L}(L^2(\RR^d),L^2(\RR^d))$ ($\overline{A}$ is non-negative, by assumption). Hence, $\{S_{\lambda}|\, \lambda>0\}$ and $\{\lambda S_{\lambda}|\, \lambda>0\}$ are equicontinuous subsets of $\mathcal{L}(\SSS'^*(\RR^d),\SSS^*(\RR^d))$ and consequently, the same also holds for $\{(1+\lambda)S_{\lambda}|\, \lambda>0\}$. As $\{(1+\lambda)q_{\lambda}^w|\, \lambda>0\}$ is an equicontinuous subset of $\mathcal{L}(\SSS^*(\RR^d),\SSS^*(\RR^d))$, Corollary \ref{corunipar} yields the equicontinuity of the set $\{(1+\lambda)q_{\lambda}^wa^w(\mathrm{Id}-\tilde{a}_{\lambda}^wq_{\lambda}^w)|\, \lambda> 0\}$ in $\mathcal{L}(\SSS'^*(\RR^d),\SSS^*(\RR^d))$. Thus, $a^w(\tilde{a}_{\lambda}^w)^{-1}=a^wq_{\lambda}^w+\tilde{S}_{\lambda}$ as operators on $\SSS^*(\RR^d)$, where $\{(1+\lambda)\tilde{S}_{\lambda}|\, \lambda>0\}$ is an  equicontinuous subset of $\mathcal{L}(\SSS'^*(\RR^d),\SSS^*(\RR^d))$. Whence $a^w(\tilde{a}_{\lambda}^w)^{-1}$ can be continuously extended to an operator on $\SSS'^*(\RR^d)$ for each $\lambda>0$. As $a-a_0$ is  compactly supported, (\ref{ths7975}) and (\ref{ths7977}) together with Corollary \ref{corweylqu} imply the existence of $R_1\geq R$ such that $b^{(1)}_{\lambda}=R_1(a_0\#\sum_j q^{(\lambda)}_j)\in\Gamma_{A_p,\rho}^{*,\infty}(\RR^{2d})$, $\lambda>0$, $a^w(\tilde{a}_{\lambda}^w)^{-1}= (b^{(1)}_{\lambda})^w+S^{(1)}_{\lambda}$ and $\{(1+\lambda)S^{(1)}_{\lambda}|\,\lambda>0\}$ is equicontinuous in $\mathcal{L}(\SSS'^*(\RR^d),\SSS^*(\RR^d))$. Moreover, there exists $B_1>0$ such that for every $h>0$ there exists $C>0$ (resp. there exist $h,C>0$) such that
\beqs
\sup_{\substack{N\in\ZZ_+\\ \alpha\in\NN^{2d}}}\sup_{w\in Q_{B_1m_N}^c}\frac{(1+\lambda)\left|D^{\alpha}_w\left(b^{(1)}_{\lambda}(w)- (a_0\#\sum_j q^{(\lambda)}_j)_{<N}(w)\right)\right|\langle w\rangle^{\rho(|\alpha|+2N)}}{h^{|\alpha|+2N}A_{|\alpha|+2N}F_1(w)}\leq C,
\eeqs
for some continuous positive function $F_1$ on $\RR^{2d}$ with ultrapolynomial growth of class $*$ and $\{(1+\lambda)b^{(1)}_{\lambda}|\, \lambda>0\}$ is bounded subset of $\Gamma_{A_p,\rho}^{(M_p),\infty}(\RR^{2d};m^{(1)})$ for some $m^{(1)}>0$ (resp. bounded subset of $\Gamma_{A_p,\rho}^{\{M_p\},\infty}(\RR^{2d};h^{(1)})$ for some $h^{(1)}>0$). This implies that $\{(1+\lambda)(b^{(1)}_{\lambda})^w|\, \lambda>0\}$ is equicontinuous in $\mathcal{L}(\SSS'^*(\RR^d),\SSS'^*(\RR^d))$ and in $\mathcal{L}(\SSS^*(\RR^d),\SSS^*(\RR^d))$. Iterating the process, by Corollary \ref{corweylqu} we can conclude that for each $k\in\ZZ_+$ there exists $R_k>0$ such that $b^{(k)}_{\lambda}=R_k\big((a_0\#\sum_j q^{(\lambda)}_j)^{\# k}\big)\in\Gamma_{A_p,\rho}^{*,\infty}(\RR^{2d})$, $\lambda>0$, $(a^w(\tilde{a}_{\lambda}^w)^{-1})^k= (b^{(k)}_{\lambda})^w+S^{(k)}_{\lambda}$ and $\{(1+\lambda)^kS^{(k)}_{\lambda}|\,\lambda>0\}$ is equicontinuous subset of $\mathcal{L}(\SSS'^*(\RR^d),\SSS^*(\RR^d))$. Moreover, there exists $B_k>0$ such that for every $h>0$ there exists $C>0$ (resp. there exist $h,C>0$) such that
\beqs
\sup_{\substack{N\in\ZZ_+\\ \alpha\in\NN^{2d}}}\sup_{w\in Q_{B_km_N}^c}\frac{(1+\lambda)^k\left|D^{\alpha}_w\left(b^{(k)}_{\lambda}(w)- \left((a_0\#\sum_j q^{(\lambda)}_j)^{\# k}\right)_{<N}(w)\right)\right|\langle w\rangle^{\rho(|\alpha|+2N)}}{h^{|\alpha|+2N}A_{|\alpha|+2N}F_k(w)}\leq C,
\eeqs
for some continuous positive function $F_k$ with the ultrapolynomial growth of class $*$ and $\{(1+\lambda)^kb^{(k)}_{\lambda}|\, \lambda>0\}$ is a bounded subset of $\Gamma_{A_p,\rho}^{(M_p),\infty}(\RR^{2d};m^{(k)})$ for some $m^{(k)}>0$ (resp. a bounded subset of $\Gamma_{A_p,\rho}^{\{M_p\},\infty}(\RR^{2d};h^{(k)})$ for some $h^{(k)}>0$). Thus we proved the first part of the following result.

\begin{proposition}\label{knoforcomp}
For each $k\in\ZZ_+$, there exists $R_k>0$ such that
$$
b^{(k)}_{\lambda}=R_k\big((a_0\#\ssum q^{(\lambda)}_j)^{\# k}\big) \in\Gamma_{A_p,\rho}^{*,\infty}(\RR^{2d}),\,\,\, \lambda>0,
$$
\beq\label{equforequikno}
\left(a^w(\tilde{a}_{\lambda}^w)^{-1}\right)^k= (b^{(k)}_{\lambda})^w+S^{(k)}_{\lambda}
\eeq
and $\{(1+\lambda)^kS^{(k)}_{\lambda}|\,\lambda>0\}$ is an equicontinuous subset of $\mathcal{L}(\SSS'^*(\RR^d),\SSS^*(\RR^d))$. Moreover for every $h>0$ there exists $C>0$ (resp. there exist $h,C>0$) such that
\beq\label{estforthepower}
\left|D^{\alpha}_w b^{(k)}_{\lambda}(w)\right|\leq C\frac{h^{|\alpha|}A_{\alpha}|a_0(w)|^k}{\langle w\rangle^{\rho|\alpha|}|a_{\lambda}(w)|^k},\,\, w\in\RR^{2d},\, \alpha\in\NN^{2d},\, \lambda>0.
\eeq
Consequently, for each $k\in\ZZ_+$, $\{(1+\lambda)^kb^{(k)}_{\lambda}|\, \lambda>0\}$ is bounded in $\Gamma^{(M_p),\infty}_{A_p,\rho}(\RR^{2d};\tilde{m}'_k)$ for some $\tilde{m}'_k>0$ (resp. in $\Gamma_{A_p,\rho}^{\{M_p\},\infty}(\RR^{2d};\tilde{h}'_k)$ for some $\tilde{h}'_k>0$).
\end{proposition}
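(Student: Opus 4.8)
The existence of the $R_k$, the identity (\ref{equforequikno}) and the equicontinuity of $\{(1+\lambda)^kS^{(k)}_{\lambda}|\,\lambda>0\}$ have just been obtained by the iteration above; what remains is the sharp pointwise bound (\ref{estforthepower}) and, as a consequence of it, the boundedness of $\{(1+\lambda)^kb^{(k)}_{\lambda}|\,\lambda>0\}$ in a single space $\Gamma_{A_p,\rho}^{(M_p),\infty}(\RR^{2d};\tilde m'_k)$ (resp.\ $\Gamma_{A_p,\rho}^{\{M_p\},\infty}(\RR^{2d};\tilde h'_k)$). The plan is to run the subordination machinery of Subsection~\ref{sec_ch_qua} once more, but now with the sharp weight $|a_0(\cdot)|^k/|a_{\lambda}(\cdot)|^k$ in place of the crude ultrapolynomial majorant used so far. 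First, (\ref{estforthepar}) says precisely that $\{\ssum q^{(\lambda)}_j|\,\lambda>0\}\precsim\{1/|a_{\lambda}|\,\big|\,\lambda>0\}$ in $FS_{A_p,\rho}^{*,\infty}(\RR^{2d};0)$, while (\ref{estfora_0}) says $a_0\precsim|a_0|$; hence, iterating Lemma~\ref{subsharpproduct} (restricting at each step to the diagonal in the index $\lambda$),
\beqs
\Big\{\big(a_0\#\ssum q^{(\lambda)}_j\big)^{\#k}\,\Big|\,\lambda>0\Big\}\precsim\Big\{|a_0(\cdot)|^k/|a_{\lambda}(\cdot)|^k\,\Big|\,\lambda>0\Big\}\quad\mbox{in}\quad FS_{A_p,\rho}^{*,\infty}(\RR^{2d};0).
\eeqs
Applying Proposition~\ref{subordinate} to this subordinated family (for $R_k$ chosen large enough; the first part is unaffected by taking $R_k$ larger, cf.\ Proposition~\ref{eqsse}) produces $B_k>0$ such that, for every $h>0$ there is $C>0$ (resp.\ there are $h,C>0$),
\beqs
\sup_{\substack{\lambda>0,\ N\in\ZZ_+\\ \alpha\in\NN^{2d}}}\ \sup_{w\in Q^c_{B_km_N}}\frac{\left|D^{\alpha}_w\Big(b^{(k)}_{\lambda}(w)-\big((a_0\#\ssum q^{(\lambda)}_j)^{\#k}\big)_{<N}(w)\Big)\right|\langle w\rangle^{\rho(|\alpha|+2N)}|a_{\lambda}(w)|^k}{h^{|\alpha|+2N}A_{|\alpha|+2N}|a_0(w)|^k}\le C.
\eeqs

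Specialising this to $N=1$ and using that the $0$-th term of a $\#$-power is the $k$-th power of the $0$-th term, i.e.\ $\big((a_0\#\ssum q^{(\lambda)}_j)^{\#k}\big)_0=(a_0q^{(\lambda)}_0)^k=(a_0/a_{\lambda})^k$, one gets the bound of (\ref{estforthepower}) for $D^{\alpha}_w\big(b^{(k)}_{\lambda}-(a_0/a_{\lambda})^k\big)$ on $Q^c_{B_km_1}$. It remains to estimate $(a_0/a_{\lambda})^k$ itself: from (\ref{estfora_0}), the Leibniz rule, $(M.2)$, and the logarithmic convexity of $A_p$ (which gives $A_pA_q\le A_{p+q}$ as $A_0=1$) one gets $|D^{\alpha}_w a_0(w)^k|\le Ch^{|\alpha|}A_{\alpha}|a_0(w)|^k\langle w\rangle^{-\rho|\alpha|}$, and similarly (\ref{uniestfoa}) gives $|D^{\alpha}_w a_{\lambda}(w)^{-k}|\le Ch^{|\alpha|}A_{\alpha}|a_{\lambda}(w)|^{-k}\langle w\rangle^{-\rho|\alpha|}$ uniformly in $\lambda$; one more Leibniz step gives the same bound for $D^{\alpha}_w(a_0/a_{\lambda})^k$ on all of $\RR^{2d}$. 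Adding the two estimates proves (\ref{estforthepower}) on $Q^c_{B_km_1}$. On the bounded set $Q_{B_km_1}$ one argues directly, as in the last paragraph of the proof of Proposition~\ref{subordinate}: writing $b^{(k)}_{\lambda}=\sum_j(1-\chi_{j,R_k})\big((a_0\#\ssum q^{(\lambda)}_j)^{\#k}\big)_j$, only finitely many terms are non-zero there and they are the same for every $\lambda$; each is controlled by the subordination bound above together with the $\DD^{(A_p)}$- (resp.\ $\DD^{\{A_p\}}$-) estimates for the derivatives of $\chi$, and since $\langle w\rangle$ is bounded on $Q_{B_km_1}$ the factor $\langle w\rangle^{\rho|\alpha|}$ is absorbed into $h^{|\alpha|}$. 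This gives (\ref{estforthepower}) on all of $\RR^{2d}$.

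For the final assertion I would feed (\ref{estforthepower}) into the lower bound in (\ref{growthal}) (in the Roumieu case into (\ref{growthalr})): since $|a_{\lambda}(w)|\ge c(1+\lambda)e^{-M(m|x|)-M(m|\xi|)}$ and $|a_0(w)|\le Ce^{M(m|x|)+M(m|\xi|)}$, we obtain $(1+\lambda)^k|D^{\alpha}_w b^{(k)}_{\lambda}(w)|\le Ch^{|\alpha|}A_{\alpha}\langle w\rangle^{-\rho|\alpha|}e^{2kM(m|x|)+2kM(m|\xi|)}$, and the standard consequence of $(M.2)$ that $nM(m\rho)\le M(\tilde m\rho)+C_{n,m}$ for a suitable $\tilde m$ (in the Roumieu case combined with $N_{k_p}(\rho)\le M(\epsilon\rho)$ for $\rho$ large and every $\epsilon>0$) allows one to replace the exponential by $e^{M(\tilde m'_k|x|)+M(\tilde m'_k|\xi|)}$ for an appropriate $\tilde m'_k>0$; this is precisely the assertion that $\{(1+\lambda)^kb^{(k)}_{\lambda}|\,\lambda>0\}$ is bounded in $\Gamma_{A_p,\rho}^{(M_p),\infty}(\RR^{2d};\tilde m'_k)$, resp.\ (since in the Roumieu case the exponential can be dominated by $e^{M(m|x|)+M(m|\xi|)}$ for every $m>0$) bounded in $\Gamma_{A_p,\rho}^{\{M_p\},\infty}(\RR^{2d};\tilde h'_k)$. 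The one genuinely delicate point in all of this is the passage from the estimate on $Q^c_{B_km_1}$ furnished by Proposition~\ref{subordinate} to one valid on all of $\RR^{2d}$, and in particular the check that the sharp weight $|a_0|^k/|a_{\lambda}|^k$ survives the finite-term analysis near the origin; the rest is a routine application of the calculus built above.
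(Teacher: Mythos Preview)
Your proof is correct and follows essentially the same route as the paper's: establish the sharp subordination $\big\{(a_0\#\ssum q^{(\lambda)}_j)^{\#k}\big|\lambda>0\big\}\precsim\{|a_0|^k/|a_\lambda|^k\}$ in $FS_{A_p,\rho}^{*,\infty}(\RR^{2d};0)$ via Lemma~\ref{subsharpproduct}, apply Proposition~\ref{subordinate} (possibly enlarging $R_k$, harmless by Proposition~\ref{eqsse}), then split into $Q^c_{B_km_1}$ and the compact $Q_{B_km_1}$ where only finitely many terms contribute. Your explicit Leibniz estimate of $D^\alpha_w(a_0/a_\lambda)^k$ is a slight redundancy---the required bound on the $0$-th term is already contained in the subordination statement (case $j=0$, recall $Q^c_{Bm_0}=\RR^{2d}$)---but it does no harm; the paper is conversely more explicit than you about why enlarging $R_k$ preserves the first part, writing out the telescoping difference $b^{(k)}_\lambda-b'^{(k)}_\lambda$ and checking it meets the hypotheses of Proposition~\ref{eqsse} after multiplication by $(1+\lambda)^k$.
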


\begin{proof} It only remains to prove the estimate (\ref{estforthepower}). Keeping in mind (\ref{estfora_0}) and (\ref{estforthepar}), Lemma \ref{subsharpproduct} implies that for each fixed $k\in\ZZ_+$,
\beq\label{growthofsol1}
\left\{a_0^{\# k}\#(\ssum q^{(\lambda)}_j)^{\# k}\big|\, \lambda>0\right\}\precsim \{|a_0/a_{\lambda}|^k|\, \lambda>0\}
\eeq
in $FS_{A_p,\rho}^{*,\infty}(\RR^{2d};0)$. We apply Proposition \ref{subordinate} to (\ref{growthofsol1}). Hence, there exists $R'_k\geq R_k$ such that $b'^{(k)}_{\lambda}=R'_k\big(a_0^{\# k}\#(\sum_j q^{(\lambda)}_j)^{\# k}\big)\in \Gamma_{A_p,\rho}^{*,\infty}(\RR^{2d})$ and for $\{b'^{(k)}_{\lambda}|\, \lambda>0\}$ the uniform estimates (\ref{estforthepower}) holds on $Q^c_{B'_km_1}$ for some $B'_k>0$. There exists $j_0\in\ZZ_+$ such that $b'^{(k)}_{\lambda}(w)=\sum_{j=0}^{j_0}(1-\chi_{j,R'_k}(w))\big(a_0^{\# k}\#(\sum_j q^{(\lambda)}_j)^{\# k}\big)_j(w)$ on $Q_{B'_km_1}$ for all $\lambda\in\RR_+$. As (\ref{growthofsol1}) holds in $FS_{A_p,\rho}^{*,\infty}(\RR^{2d};0)$, we conclude that (\ref{estforthepower}) holds for $\{b'^{(k)}_{\lambda}|\, \lambda>0\}$ on $Q_{B'_km_1}$ too. Combining the estimates that we prove for $b^{(k)}_{\lambda}$ above, the fact that (\ref{estforthepower}) holds for $b'^{(k)}_{\lambda}$ and
\begin{multline*}
b^{(k)}_{\lambda}(w)-b'^{(k)}_{\lambda}(w)\\
=b^{(k)}_{\lambda}(w)-\left(a_0^{\# k}\#(\ssum q^{(\lambda)}_j)^{\# k}\right)_{<N}(w)+\left(a_0^{\# k}\#(\ssum q^{(\lambda)}_j)^{\# k}\right)_{<N}(w)-b'^{(k)}_{\lambda}(w)
\end{multline*}
on $Q^c_{B''_km_N}$, where $B''_k=\max\{B_k,B'_k\}$, we can conclude that the set
\beqs
\{(1+\lambda)^kb^{(k)}_{\lambda}-(1+\lambda)^kb'^{(k)}_{\lambda}|\, \lambda>0\}
\eeqs
satisfies the conditions of Proposition \ref{eqsse} (observe that, by (\ref{growthal}) in the Beurling case and (\ref{growthalr}) in the Roumieu case $(1+\lambda)^k|a_0(w)/a_{\lambda}(w)|^k\leq \tilde{F}_k(w)$, $\forall w\in\RR^{2d}$, $\forall \lambda\in\RR_+$, for some continuous positive function $F_k$ of ultrapolynomial growth of class $*$). Thus $\{(1+\lambda)^k(b^{(k)}_{\lambda})^w-(1+\lambda)^k(b'^{(k)}_{\lambda})^w|\, \lambda>0\}$ is equicontinuous in $\mathcal{L}(\SSS'^*(\RR^d),\SSS^*(\RR^d))$ and the conclusions of the proposition hold for $R'_k>0$ and $\{b'^{(k)}_{\lambda}|\, \lambda>0\}$.
\end{proof}

\begin{remark}\label{krs559933}
From the construction of $b^{(k)}_{\lambda}$, we can conclude that for each fixed $k\in\ZZ_+$, the estimate in Proposition \ref{subordinate} holds for $R_k$, $\{b^{(k)}_{\lambda}|\, \lambda>0\}$, $\left\{a_0^{\# k}\#(\ssum q^{(\lambda)}_j)^{\# k}\big|\, \lambda>0\right\}$ and $\{|a_0/a_{\lambda}|^k|\, \lambda>0\}$.
\end{remark}

As operators on $\SSS^*(\RR^d)$, we have
\beqs
(\tilde{a}_{\lambda}^w)^{-1}=\lambda^{-1}(\mathrm{Id}-a^w(\tilde{a}_{\lambda}^w)^{-1})= (\lambda^{-1}-\lambda^{-1}b^{(1)}_{\lambda})^w- \lambda^{-1}S^{(1)}_{\lambda},
\eeqs
for each $\lambda>0$. Thus, $(\tilde{a}_{\lambda}^w)^{-1}$ can be extended to a continuous operator on $\SSS'^*(\RR^d)$.

\subsubsection{Construction of $a^{\# z}_0$ (parts $(i)-(iv)$) and the estimates in $(v)$ of Theorem \ref{maint}}

For $\lambda,\lambda_0\geq0$, the following equalities hold true in $FS_{A_p,\rho}^{*,\infty}(\RR^{2d};0)$:
\beqs
\ssum q^{(\lambda)}_j&=&\ssum q^{(\lambda)}_j\#\mathbf{1}=\ssum q^{(\lambda)}_j\#a_{\lambda_0}\#\ssum q^{(\lambda_0)}_j\\
&=&\ssum q^{(\lambda)}_j\#(a_{\lambda}+(\lambda_0-\lambda)\mathbf{1})\#\ssum q^{(\lambda_0)}_j\\
&=&\ssum q^{(\lambda_0)}_j+(\lambda_0-\lambda)\ssum q^{(\lambda)}_j\#\ssum q^{(\lambda_0)}_j.
\eeqs
Thus,
\beq\label{diffunctionofl}
\ssum q^{(\lambda)}_j-\ssum q^{(\lambda_0)}_j=-(\lambda-\lambda_0)\ssum q^{(\lambda)}_j\#\ssum q^{(\lambda_0)}_j,\,\, \lambda,\lambda_0\geq 0.
\eeq
Incidentally, notice that this implies that $\ssum q^{(\lambda)}_j$ and $\ssum q^{(\lambda_0)}_j$ commute for every $\lambda,\lambda_0\geq0$. Now, by induction, one easily verifies that
\begin{multline}
\big(\ssum q_j^{(\lambda)}\big)^{\# k}-\big(\ssum q_j^{(\lambda_0)}\big)^{\# k}\\
=-(\lambda-\lambda_0)\sum_{s=1}^k\big(\ssum q_j^{(\lambda)}\big)^{\# (k+1-s)}\#\big(\ssum q_j^{(\lambda_0)}\big)^{\# s},\,\, \lambda,\lambda_0\geq0.\label{diffunctionofl11}
\end{multline}

\begin{lemma}\label{derofittt}
Let $\sum_j p_j\in FS_{A_p,\rho}^{*,\infty}(\RR^{2d};0)$. For each $n\in\NN$, $k\in\ZZ_+$,
\beqs
G_n^{(k)}:(\lambda,w)\mapsto \left(\ssum p_j\#\big(\ssum q^{(\lambda)}_j\big)^{\# k}\right)_n(w),\,\, \overline{\RR_+}\times\RR^{2d}\rightarrow \CC,
\eeqs
is in $\mathcal{C}^{\infty}(\overline{\RR_+}\times\RR^{2d})$ and $(\partial/\partial_{\lambda})G_n^{(k)}(\lambda,w)=-kG_n^{(k+1)}(\lambda,w)$, $(\lambda,w)\in\overline{\RR_+}\times\RR^{2d}$.
\end{lemma}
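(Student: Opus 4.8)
The plan is to reduce the whole statement to the joint smoothness of the individual parametrix terms $q_j^{(\lambda)}$, and then to read off the differential recursion directly from the algebraic identity (\ref{diffunctionofl11}).

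First I would observe that, by the very definition of $\#$, the component of index $n$ of a $\#$-product of finitely many elements of $FS_{A_p,\rho}^{*,\infty}(\RR^{2d};0)$ depends only on the components of index $\leq n$ of each factor and is a finite sum of products of their $w$-derivatives. Consequently $G_n^{(k)}(\lambda,w)$ is, for each fixed $n,k$, a finite algebraic combination of the $w$-derivatives (evaluated at $w$) of $p_0,\dots,p_n$ and of $q_0^{(\lambda)},\dots,q_n^{(\lambda)}$. Hence it suffices to prove that each map $(\lambda,w)\mapsto q_j^{(\lambda)}(w)$ belongs to $\mathcal{C}^\infty(\overline{\RR_+}\times\RR^{2d})$; then $G_n^{(k)}\in\mathcal{C}^\infty(\overline{\RR_+}\times\RR^{2d})$ follows termwise, the derivatives at $\lambda=0$ being understood as right derivatives.

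The smoothness of $q_j^{(\lambda)}$ is an easy induction on $j$. For $j=0$, $q_0^{(\lambda)}(w)=a_\lambda(w)^{-1}=(a_0(w)+\lambda)^{-1}$; since $a_0$ never vanishes and (\ref{sinqa}) (equivalently (\ref{krt558811})) gives $a_\lambda(w)\neq 0$ for all $w\in\RR^{2d}$ and $\lambda\geq 0$ — in fact $a_0(w)+\lambda\neq 0$ for $\lambda$ in a neighbourhood of $\overline{\RR_+}$ for each fixed $w$ — this function is $\mathcal{C}^\infty$ on $\overline{\RR_+}\times\RR^{2d}$. For the inductive step, the recursion defining $q_j^{(\lambda)}$ writes it as $-q_0^{(\lambda)}$ times a finite sum of products of $w$-derivatives of $q_0^{(\lambda)},\dots,q_{j-1}^{(\lambda)}$ and of $\partial^\alpha_\xi D^\beta_x a_\lambda$ with $|\alpha+\beta|\geq 1$; the latter coincide with $\partial^\alpha_\xi D^\beta_x a_0$, hence are smooth in $w$ and independent of $\lambda$, while the former are $\mathcal{C}^\infty$ in $(\lambda,w)$ by the induction hypothesis. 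This gives $q_j^{(\lambda)}\in\mathcal{C}^\infty(\overline{\RR_+}\times\RR^{2d})$, hence the first assertion of the lemma.

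For the differential recursion, fix $\lambda_0\in\overline{\RR_+}$, $w\in\RR^{2d}$, $n\in\NN$, $k\in\ZZ_+$. Applying $\ssum p_j\#(\,\cdot\,)$ to (\ref{diffunctionofl11}), taking the component of index $n$ and evaluating at $w$ (all three operations being linear) gives
\[
G_n^{(k)}(\lambda,w)-G_n^{(k)}(\lambda_0,w)=-(\lambda-\lambda_0)\sum_{s=1}^{k}H_{n,s}(\lambda,\lambda_0,w),\qquad \lambda\geq 0,
\]
where $H_{n,s}(\lambda,\lambda_0,w)=\big(\ssum p_j\#(\ssum q^{(\lambda)}_j)^{\#(k+1-s)}\#(\ssum q^{(\lambda_0)}_j)^{\#s}\big)_n(w)$. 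By the first part each $H_{n,s}$ is continuous (indeed $\mathcal{C}^\infty$) in $\lambda$, and, since $\#$ is associative, $H_{n,s}(\lambda_0,\lambda_0,w)=\big(\ssum p_j\#(\ssum q^{(\lambda_0)}_j)^{\#(k+1)}\big)_n(w)=G_n^{(k+1)}(\lambda_0,w)$ for every $s=1,\dots,k$. Dividing the displayed identity by $\lambda-\lambda_0$ and letting $\lambda\to\lambda_0$ therefore yields
\[
\frac{\partial}{\partial\lambda}G_n^{(k)}(\lambda_0,w)=-\sum_{s=1}^{k}H_{n,s}(\lambda_0,\lambda_0,w)=-kG_n^{(k+1)}(\lambda_0,w),
\]
which is the claim. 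The argument is essentially bookkeeping: the one point deserving care is the reduction of the $n$-th component of a $\#$-product to a finite algebraic combination of $w$-derivatives of the factors — this is what legitimises differentiating in $\lambda$ pointwise and lets joint smoothness follow termwise — together with the pure formal-series identity (\ref{diffunctionofl11}) (and its one-parameter precursor (\ref{diffunctionofl})), which are already available. No estimate on the symbol classes enters this particular statement.
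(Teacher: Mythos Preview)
Your proof is correct and follows essentially the same route as the paper: both establish smoothness of $q_j^{(\lambda)}$ in $(\lambda,w)$ by induction on $j$ starting from $q_0^{(\lambda)}=(a_0(w)+\lambda)^{-1}$, then use the algebraic identity (\ref{diffunctionofl11}) to read off the derivative formula. The one minor difference is that the paper, for the derivative step, invokes the hypocontinuity of $\#$ (Proposition \ref{hyporingg}) together with the boundedness estimates (\ref{ths7975})--(\ref{ths7977}) to argue continuity of $\lambda\mapsto \sum_j c_j\#(\sum_j q_j^{(\lambda)})^{\#k}$ in the $FS$-topology before passing to the limit, whereas you work directly at the level of the $n$-th component evaluated at a point and use only the pointwise continuity already secured by the first part; your route is slightly more elementary and avoids the symbol-class estimates, as you note.
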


\begin{proof} Denote $\tilde{q}^{(k)}_n(\lambda,w)=\big((\sum_j q^{(\lambda)}_j)^{\# k}\big)_n(w)$. By definition,
\beqs
\tilde{q}^{(1)}_0(\lambda,w)=1/(a_0(w)+\lambda)\in\mathcal{C}^{\infty}(\overline{\RR_+}\times\RR^{2d})
\eeqs
(cf. (\ref{finqa}) and (\ref{sinqa})) and, using induction, we deduce that $\tilde{q}^{(1)}_j(\lambda,w)\in\mathcal{C}^{\infty}(\overline{\RR_+}\times\RR^{2d})$, $\forall j\in\ZZ_+$ (by the definition of $q_j^{(\lambda)}$). Thus, $G_n^{(1)}\in\mathcal{C}^{\infty}(\overline{\RR_+}\times\RR^{2d})$, for all $n\in\NN$, too. By induction on $k$, we also conclude that $G_n^{(k)}\in\mathcal{C}^{\infty}(\overline{\RR_+}\times\RR^{2d})$ for all $k\in\ZZ_+$, $n\in\NN$. Observe that (\ref{ths7975}) and (\ref{ths7977}) together with Theorem \ref{weylq} $i)$ imply that for each $k\in\ZZ_+$ there exists a positive $f_k\in\mathcal{C}(\RR^{2d})$ of ultrapolynomial growth of class $*$ such that $\{\sum_j q^{(\lambda)}_j|\, \lambda\geq 0\}^{\# k}\precsim f_k$ in $FS_{A_p,\rho}^{*,\infty}(\RR^{2d};0)$. This together with (\ref{diffunctionofl11}) and Proposition \ref{hyporingg} implies that for fixed $\sum_j c_j\in FS_{A_p,\rho}^{*,\infty}(\RR^{2d};0)$ and $k\in\ZZ_+$, the function $\lambda\mapsto \sum_j c_j\#\big(\sum_j q^{(\lambda)}_j)^{\# k}$, $\overline{\RR_+}\rightarrow FS_{A_p,\rho}^{*,\infty}(\RR^{2d};0)$, is continuous. Now, (\ref{diffunctionofl11}) yields $(\partial/\partial_{\lambda})G_n^{(k)}(\lambda,w)=-kG_n^{(k+1)}(\lambda,w)$.
\end{proof}

For brevity in notation, for each $n\in\NN$, $k\in\ZZ_+$, we denote by $g^{(k)}_n$ the function $(\lambda,w)\mapsto \big(a_0^{\# k}\#(\sum_j q^{(\lambda)}_j)^{\# k}\big)_n(w)$, $\RR_+\times\RR^{2d}\rightarrow \CC$. Lemma \ref{derofittt} implies that for each $n\in\NN$ and $k\in\ZZ_+$, $g_n^{(k)}\in\mathcal{C}^{\infty}(\RR_+\times\RR^{2d})$. For $z\in\CC_+$ and $k>\mathrm{Re}\, z$, put
\beqs
p^{(k)}_{z,n}(w)=\gamma_k(z)\int_0^{\infty} \lambda^{z-1}g^{(k)}_n(\lambda,w)d\lambda,\,\, \forall w\in\RR^{2d}.
\eeqs
Theorem \ref{weylq} $i)$ and (\ref{ths7975}) and (\ref{ths7977}) imply
\beq\label{rhv117755}
\big\{(1+\lambda)^ka_0^{\# k}\#(\ssum q^{(\lambda)}_j)^{\# k}\big|\, \lambda\geq0\big\}\precsim f_k\,\, \mbox{in}\,\, FS_{A_p,\rho}^{*,\infty}(\RR^{2d};0),
\eeq
for some positive $f_k\in\mathcal{C}(\RR^{2d})$ with ultrapolynomial growth of class $*$. This shows that for each $z\in\CC_+$, $k>\mathrm{Re}\, z$ and $n\in\NN$, $p^{(k)}_{z,n}\in\mathcal{C}^{\infty}(\RR^{2d})$ and
\beqs
D^{\alpha}_w p^{(k)}_{z,n}(w)=\gamma_k(z)\int_0^{\infty} \lambda^{z-1}D^{\alpha}_w g^{(k)}_n(\lambda,w)d\lambda,\,\, \forall w\in\RR^{2d},\,\, \alpha\in\NN^{2d}
\eeqs
and $\sum_j p^{(k)}_{z,j}(w)\in FS_{A_p,\rho}^{*,\infty}(\RR^{2d};0)$. Integration by parts together with Lemma \ref{derofittt} yields
\beqs
\frac{p^{(k)}_{z,n}(w)}{\gamma_k(z)}=\frac{\lambda^z}{z} g^{(k)}_n(\lambda,w)\big|_{\lambda=0}^{\lambda=\infty}+\frac{k}{z}\int_0^{\infty}\lambda^z \left(a_0^{\# k}\#\big(\ssum q^{(\lambda)}_j\big)^{\# (k+1)}\right)_n(w)d\lambda.
\eeqs
The first term is obviously equal to $0$. Observe that
\beqs
a_0^{\# (k+1)}\#\big(\ssum q^{(\lambda)}_j\big)^{\# (k+1)}&=&a_0^{\# k}\#(a_{\lambda}-\lambda)\#\big(\ssum q^{(\lambda)}_j\big)^{\# (k+1)}\\
&=&a_0^{\# k}\#\big(\ssum q^{(\lambda)}_j\big)^{\#k} -\lambda a_0^{\# k}\#\big(\ssum q^{(\lambda)}_j\big)^{\# (k+1)}.
\eeqs
Thus, the second term is equal to
\beqs
\frac{k}{z}\int_0^{\infty}\lambda^{z-1}g_n^{(k)}(\lambda,w)d\lambda- \frac{k}{z}\int_0^{\infty}\lambda^{z-1}g_n^{(k+1)}(\lambda,w)d\lambda=\frac{kp^{(k)}_{z,n}(w)}{z\gamma_k(z)}- \frac{kp^{(k+1)}_{z,n}(w)}{z\gamma_{k+1}(z)}.
\eeqs
Since $\gamma_{k+1}(z)=\gamma_k(z)k/(k-z)$, we conclude $p_{z,n}^{(k+1)}=p_{z,n}^{(k)}$. Thus, for $z\in\CC_+$, $j\in\NN$, we can define $p_{z,j}=p_{z,j}^{(k)}$ for arbitrary $k>\mathrm{Re}\, z$ and $\sum_j p_{z,j}\in FS_{A_p,\rho}^{*,\infty}(\RR^{2d};0)$.\\
\indent For $\{z_n\}_{n\in\ZZ_+}\subseteq\CC_+$ which converges to $z\in\CC_+$, (\ref{rhv117755}) implies that for $k\in\ZZ_+$ such that $k>[\mathrm{Re}\, z]$ and $k>[\mathrm{Re}\, z_n]$, $n\in\ZZ_+$, the following estimate holds: for every $h>0$ there exists $C>0$ (resp. there exist $h,C>0$) such that
\beqs
\left|D^{\alpha}p_{z,j}(w)-D^{\alpha}p_{z_n,j}(w)\right|
\leq  \frac{Ch^{|\alpha|+2j}A_{|\alpha|+2j}f_k(w)}{\langle w\rangle^{\rho(|\alpha|+2j)}}\int_0^{\infty}\frac{\left|\gamma_k(z)\lambda^{z-1}- \gamma_k(z_n)\lambda^{z_n-1}\right|}{(1+\lambda)^k}d\lambda.
\eeqs
Hence, Lebesgue's theorem on dominated convergence implies that $\sum_j p_{z_n,j}\rightarrow\sum_j p_{z,j}$ in $FS_{A_p,\rho}^{*,\infty}(\RR^{2d};0)$. We proved the following lemma.

\begin{lemma}\label{thj994455}
The mapping $z\mapsto \sum_j p_{z,j}$, $\CC_+\rightarrow FS_{A_p,\rho}^{*,\infty}(\RR^{2d};0)$, is continuous.
\end{lemma}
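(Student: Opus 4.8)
The plan is to verify sequential continuity at an arbitrary $z_0\in\CC_+$, which suffices since $\CC_+$ is metrizable. So fix $z_0\in\CC_+$ and a sequence $\{z_n\}_{n\in\ZZ_+}\subseteq\CC_+$ with $z_n\to z_0$; discarding finitely many terms we may assume $\mathrm{Re}\,z_0/2\le\mathrm{Re}\,z_n$ for all $n$, and since $\{z_n\}$ is bounded we fix a single integer $k$ with $k>\mathrm{Re}\,z_0$ and $k>\mathrm{Re}\,z_n$ for every $n$. For this $k$ the representation $p_{z,j}(w)=\gamma_k(z)\int_0^{\infty}\lambda^{z-1}g^{(k)}_j(\lambda,w)\,d\lambda$ holds for $z=z_0$ and every $z=z_n$, and by (\ref{rhv117755}) differentiation under the integral sign is legitimate, so
\[
D^{\alpha}_w\bigl(p_{z_0,j}-p_{z_n,j}\bigr)(w)=\int_0^{\infty}\bigl(\gamma_k(z_0)\lambda^{z_0-1}-\gamma_k(z_n)\lambda^{z_n-1}\bigr)D^{\alpha}_w g^{(k)}_j(\lambda,w)\,d\lambda .
\]

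Next I would insert the subordination bound contained in (\ref{rhv117755}): for the fixed $k$ there is a positive continuous $f_k$ of ultrapolynomial growth of class $*$ such that for every $h>0$ there is $C>0$ (resp. there exist $h,C>0$) with $(1+\lambda)^{k}\bigl|D^{\alpha}_w g^{(k)}_j(\lambda,w)\bigr|\le C h^{|\alpha|+2j}A_{|\alpha|+2j}f_k(w)\langle w\rangle^{-\rho(|\alpha|+2j)}$ for all $w,\alpha,j$ and all $\lambda\ge0$. Pulling this bound out of the integral separates the variables:
\[
\bigl|D^{\alpha}_w\bigl(p_{z_0,j}-p_{z_n,j}\bigr)(w)\bigr|\le\frac{C\,h^{|\alpha|+2j}A_{|\alpha|+2j}f_k(w)}{\langle w\rangle^{\rho(|\alpha|+2j)}}\,I_n,\quad\text{where}\quad I_n:=\int_0^{\infty}\frac{\bigl|\gamma_k(z_0)\lambda^{z_0-1}-\gamma_k(z_n)\lambda^{z_n-1}\bigr|}{(1+\lambda)^{k}}\,d\lambda .
\]
It then remains to show $I_n\to0$ by dominated convergence: the integrand tends to $0$ for each $\lambda$, by continuity of $z\mapsto\gamma_k(z)$ on $\{\mathrm{Re}\,z<k\}\cap\CC_+$ and of $z\mapsto\lambda^{z-1}$, while, using $\sup_n|\gamma_k(z_n)|<\infty$ and $\lambda^{\mathrm{Re}\,z_n-1}\le\lambda^{\mathrm{Re}\,z_0/2-1}+\lambda^{\Delta-1}$ with $\Delta:=\sup_n\mathrm{Re}\,z_n<k$, the integrand is majorised by the fixed integrable function $\mathrm{const}\cdot(\lambda^{\mathrm{Re}\,z_0/2-1}+\lambda^{\Delta-1})(1+\lambda)^{-k}$, which is integrable on $(0,\infty)$ because $\mathrm{Re}\,z_0>0$ and $k>\Delta$.

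Finally, since $f_k$ has ultrapolynomial growth of class $*$, all the symbols $\sum_j p_{z_n,j}$ together with $\sum_j p_{z_0,j}$ lie in one fixed Fréchet step $FS_{A_p,\rho}^{(M_p),\infty}(\RR^{2d};0,m)$ for a suitable $m>0$ (resp. the bound above already holds with a fixed $h>0$, placing them in $FS_{A_p,\rho}^{\{M_p\},\infty}(\RR^{2d};0,h)$), and in that step the displayed estimate says precisely that every defining seminorm of $\sum_j(p_{z_0,j}-p_{z_n,j})$ is $\le C\,I_n\to0$; hence $\sum_j p_{z_n,j}\to\sum_j p_{z_0,j}$ in $FS_{A_p,\rho}^{*,\infty}(\RR^{2d};0)$, which is the claim. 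The one delicate point is this last reduction: one must exploit that the function $f_k$ furnished by (\ref{rhv117755}) does not depend on $z$ and that a whole neighbourhood of $z_0$ admits one common admissible $k$, so that the argument never leaves a fixed Fréchet step and convergence in each seminorm genuinely upgrades to convergence in the inductive-limit topology of $FS_{A_p,\rho}^{*,\infty}(\RR^{2d};0)$; the dominated-convergence estimate itself, once the majorant is built from a tail of $\{z_n\}$ whose real parts lie in a compact subinterval of $(0,k)$, is routine.
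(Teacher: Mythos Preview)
Your proposal is correct and follows essentially the same approach as the paper: both pick a single $k$ valid for the whole sequence, use the integral representation together with the subordination bound (\ref{rhv117755}) to reduce matters to showing that the scalar integral $\int_0^{\infty}|\gamma_k(z_0)\lambda^{z_0-1}-\gamma_k(z_n)\lambda^{z_n-1}|(1+\lambda)^{-k}\,d\lambda$ tends to zero, and then invoke dominated convergence. Your write-up is in fact more detailed than the paper's, which leaves the construction of the majorant and the passage to the inductive-limit topology implicit.
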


Before we proceed, we need a technical result. For $f\in\mathcal{C}^1(\overline{\RR_+})$, denote
\beqs
\tilde{f}(\lambda,\mu)=\left\{\begin{array}{ll}
\frac{f(\lambda)-f(\mu)}{\lambda-\mu},\,\, \lambda\neq \mu,\\
f'(\lambda),\,\, \lambda=\mu,
\end{array}\right.
\eeqs
Clearly, $\tilde{f}\in\mathcal{C}(\overline{\RR_+^2})$.

\begin{lemma}\label{stk774433}
Let $z,\zeta\in\CC_+$ be such that $0<\mathrm{Re}\, z,\mathrm{Re}\, \zeta<1$. Let $f\in\mathcal{C}^1(\overline{\RR_+})$ be such that $f\in L^1(\RR_+,|\lambda^{z-1}|d\lambda)\cap L^1(\RR_+,|\lambda^{\zeta-1}|d\lambda)$, $f'\in L^1(\RR_+,|\lambda^{z+\zeta-1}|d\lambda)$ and $\tilde{f}\in L^1(\RR^2_+,|\lambda^{z-1}\mu^{\zeta-1}|d\lambda d\mu)$. Then
\beq\label{kvh114477}
\gamma_1(z)\gamma_1(\zeta)\int_{\RR^2_+}\lambda^{z-1}\mu^{\zeta-1}\tilde{f}(\lambda,\mu)d\lambda d\mu=\gamma_2(z+\zeta)\int_0^{\infty}\lambda^{z+\zeta-1}f'(\lambda)d\lambda.
\eeq
\end{lemma}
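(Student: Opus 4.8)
The plan is to reduce the double integral on the left of (\ref{kvh114477}) to a one–dimensional integral of $f'$ by writing the divided difference $\tilde f$ as an average of $f'$ along the segment from $\mu$ to $\lambda$. Substituting $s=(1-t)\mu+t\lambda$ in $f(\lambda)-f(\mu)=\int_\mu^\lambda f'(s)\,ds$ gives, for $\lambda\neq\mu$,
\[
\tilde f(\lambda,\mu)=\int_0^1 f'\big((1-t)\mu+t\lambda\big)\,dt ,
\]
and this identity extends to $\lambda=\mu$ by continuity (and $\{\lambda=\mu\}$ is a null set anyway). Since by hypothesis $\tilde f\in L^1(\RR_+^2,|\lambda^{z-1}\mu^{\zeta-1}|\,d\lambda\,d\mu)$, the left-hand side of (\ref{kvh114477}) is absolutely convergent; plugging in this representation and applying Fubini (justified below), it equals
\[
\gamma_1(z)\gamma_1(\zeta)\int_0^1\left(\int_{\RR_+^2}\lambda^{z-1}\mu^{\zeta-1}f'\big((1-t)\mu+t\lambda\big)\,d\lambda\,d\mu\right)dt .
\]

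For each fixed $t\in(0,1)$ I would evaluate the inner double integral by the change of variables $(\lambda,\mu)\mapsto(s,\nu)$ with $s=(1-t)\mu+t\lambda\in(0,\infty)$ and $\nu=t\lambda/s\in(0,1)$, i.e.\ $\lambda=s\nu/t$, $\mu=s(1-\nu)/(1-t)$, with Jacobian $s/(t(1-t))$; this separates the variables and gives
\[
\int_{\RR_+^2}\lambda^{z-1}\mu^{\zeta-1}f'\big((1-t)\mu+t\lambda\big)\,d\lambda\,d\mu=\frac{B(z,\zeta)}{t^{z}(1-t)^{\zeta}}\int_0^\infty s^{z+\zeta-1}f'(s)\,ds ,
\]
where $B(z,\zeta)=\int_0^1\nu^{z-1}(1-\nu)^{\zeta-1}\,d\nu=\Gamma(z)\Gamma(\zeta)/\Gamma(z+\zeta)$ is the Euler Beta function. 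Integrating in $t$ over $(0,1)$ produces the extra factor $\int_0^1 t^{-z}(1-t)^{-\zeta}\,dt=B(1-z,1-\zeta)=\Gamma(1-z)\Gamma(1-\zeta)/\Gamma(2-z-\zeta)$, convergent precisely because $\mathrm{Re}\,z,\mathrm{Re}\,\zeta<1$. Hence the left-hand side of (\ref{kvh114477}) equals
\[
\gamma_1(z)\gamma_1(\zeta)\,\frac{\Gamma(z)\Gamma(\zeta)}{\Gamma(z+\zeta)}\,\frac{\Gamma(1-z)\Gamma(1-\zeta)}{\Gamma(2-z-\zeta)}\int_0^\infty s^{z+\zeta-1}f'(s)\,ds ,
\]
and since $\gamma_1(w)=1/(\Gamma(w)\Gamma(1-w))$ the constant telescopes to $1/\big(\Gamma(z+\zeta)\Gamma(2-z-\zeta)\big)=\gamma_2(z+\zeta)$, which is exactly (\ref{kvh114477}).

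The routine but necessary part is justifying the interchanges. Absolute convergence of the left-hand side is a hypothesis; for the Fubini step in $t$ I would run the same change of variables with $|f'|$, $\mathrm{Re}\,z$, $\mathrm{Re}\,\zeta$ in place of $f'$, $z$, $\zeta$, which shows the triple integral of absolute values equals $|B(z,\zeta)|\,B(1-\mathrm{Re}\,z,1-\mathrm{Re}\,\zeta)\int_0^\infty s^{\mathrm{Re}(z+\zeta)-1}|f'(s)|\,ds$, finite by $f'\in L^1(\RR_+,|\lambda^{z+\zeta-1}|\,d\lambda)$ together with $\mathrm{Re}\,z,\mathrm{Re}\,\zeta<1$; the change of variables itself is a diffeomorphism of $\RR_+^2$ onto $(0,\infty)\times(0,1)$, so no further care is needed. (The hypotheses $f\in L^1(\RR_+,|\lambda^{z-1}|\,d\lambda)\cap L^1(\RR_+,|\lambda^{\zeta-1}|\,d\lambda)$ are not used for the identity itself but are retained to match the later application of the lemma.) I expect the only place requiring genuine attention to be the bookkeeping of the exponents through the change of variables so that the Beta integral $B(z,\zeta)$ emerges cleanly; everything else is formal manipulation of the Gamma function.
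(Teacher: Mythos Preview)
Your proof is correct and takes a genuinely different route from the paper's. The paper verifies (\ref{kvh114477}) first for $f(\lambda)=(t+\lambda)^{-1}$ via the identity (\ref{inteq}), then for $f(\lambda)=(t+\lambda)^{-n}$ by a direct computation involving a combinatorial Gamma identity, hence for the whole algebra $\mathcal{A}_t$ generated by $(t+\lambda)^{-1}$; it then invokes the Stone--Weierstrass theorem to pass to compactly supported $\mathcal{C}^1$ functions (approximating $(t+\lambda)^2 f'(\lambda)$ uniformly by elements of $\mathcal{A}_t$ and controlling both sides via a Cauchy mean-value argument), and finally reaches the general case by smooth cutoffs and dominated convergence. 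Your approach sidesteps all of this by writing $\tilde f(\lambda,\mu)=\int_0^1 f'((1-t)\mu+t\lambda)\,dt$ and reducing the whole identity to two Beta integrals, which is both shorter and more transparent. One small correction: in your Fubini justification the constant should read $B(\mathrm{Re}\,z,\mathrm{Re}\,\zeta)$ rather than $|B(z,\zeta)|$, since you are replacing $z,\zeta$ by their real parts; this is harmless. It is also worth noting that your argument shows the hypothesis $\tilde f\in L^1(\RR_+^2,|\lambda^{z-1}\mu^{\zeta-1}|\,d\lambda\,d\mu)$ is in fact a \emph{consequence} of $f'\in L^1(\RR_+,|\lambda^{z+\zeta-1}|\,d\lambda)$, whereas the paper uses the $f\in L^1$ hypotheses in its cutoff step; so your proof is strictly more economical with the assumptions.
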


\begin{proof} For $f$ as in the statement in the lemma, denote the left hand side of (\ref{kvh114477}) by $I_1(f)$ and the right hand side by $I_2(f)$. Notice that for such $f$, the mapping $f\mapsto I_1(f)-I_2(f)$ is linear. The equality (\ref{kvh114477}) holds for $f(\lambda)=(t+\lambda)^{-1}$, $t>0$ fixed, by (\ref{inteq}). We prove (\ref{kvh114477}) for $f(\lambda)=(t+\lambda)^{-n}$, $t>0$, $n\in\NN$, $n\geq 2$. First, notice that
\beq\label{sht7791}
\prod_{j=2}^n(j-z-\zeta)=\sum_{j=1}^n {{n-1}\choose{j-1}}\prod_{l=1}^{n-j}(l-z)\cdot \prod_{k=1}^{j-1}(k-\zeta),\,\, z,\zeta\in\CC.
\eeq
One verifies this equality by direct inspection for $n=2$ and then proves the general case by induction. Since $\tilde{f}(\lambda,\mu)=-\sum_{j=1}^n (t+\lambda)^{-(n+1-j)}(t+\mu)^{-j}$, by employing (\ref{inteq}), one easily verifies that
\beqs
I_1(f)=-t^{z+\zeta-n-1}\sum_{j=1}^n\frac{\gamma_1(z)\gamma_1(\zeta)}{\gamma_{n+1-j}(z)\gamma_j(\zeta)}.
\eeqs
Since
\beq\label{gmaf157}
\Gamma(v+j)=\Gamma(v)v(v+1)\cdot\ldots\cdot(v+j-1),\,\, v\in\CC_+,\, j\in\ZZ_+,
\eeq
we infer
\beqs
I_1(f)=-t^{z+\zeta-n-1}\sum_{j=1}^n\frac{(1-z)\cdot\ldots\cdot(n-j-z)\cdot (1-\zeta)\cdot\ldots \cdot(j-1-\zeta)}{(n-j)!(j-1)!}.
\eeqs
Similarly,
\beqs
I_2(f)=-nt^{z+\zeta-n-1}\gamma_2(z+\zeta)/\gamma_{n+1}(z+\zeta)=-\frac{t^{z+\zeta-n-1}}{(n-1)!}\prod_{j=2}^n(j-z-\zeta).
\eeqs
Now, (\ref{sht7791}) proves (\ref{kvh114477}) when $f(\lambda)=(t+\lambda)^{-n}$. Hence, (\ref{kvh114477}) holds true for all elements of the algebra $\mathcal{A}_t$ generated by the function $\lambda\mapsto (t+\lambda)^{-1}$, $t>0$ fixed. Next, we prove (\ref{kvh114477}) for arbitrary compactly supported $f\in\mathcal{C}^1(\overline{\RR_+})$. Obviously, it is enough to prove it when $f$ is real valued. Fix $t>0$. Since $\lambda\mapsto (t+\lambda)^2f'(\lambda)$ is real valued and in $\mathcal{C}_0(\overline{\RR_+})$ (the $(B)$-space of functions which vanish at infinity), the Stone-Weierstrass theorem implies that for every $\varepsilon>0$ there exists $g_{\varepsilon}\in\mathcal{A}_t$ such that
\beq\label{sft9944}
-\frac{\varepsilon}{(t+\lambda)^2}\leq f'(\lambda)-\frac{g_{\varepsilon}(\lambda)}{(t+\lambda)^2}\leq \frac{\varepsilon}{(t+\lambda)^2},\,\, \forall \lambda\geq 0.
\eeq
Denote $f_{\varepsilon}(\lambda)=-\int_{\lambda}^{\infty}g_{\varepsilon}(s)(t+s)^{-2}ds$. Clearly, $f_{\varepsilon},f'_{\varepsilon}\in\mathcal{A}_t$. We conclude that
\beqs
|f'(\lambda)-f'_{\varepsilon}(\lambda)|\leq \varepsilon/(t+\lambda)^2,\,\,\, \lambda\geq 0
\eeqs
and consequently $I_2(f_{\varepsilon})\rightarrow I_2(f)$, as $\varepsilon\rightarrow 0^+$. Write
\beqs
\lambda-\mu=(t+\lambda)(t+\mu)((t+\mu)^{-1}-(t+\lambda)^{-1})
\eeqs
and apply the Cauchy mean value theorem to obtain
\beqs
\tilde{f}_{\varepsilon}(\lambda,\mu)-\tilde{f}(\lambda,\mu)=-(t+\xi)^2(f'(\xi)-f'_{\varepsilon}(\xi))(t+\lambda)^{-1}(t+\mu)^{-1}
\eeqs
for some $\xi$ between $\lambda$ and $\mu$. Thus, (\ref{sft9944}) implies
\beqs
|\tilde{f}_{\varepsilon}(\lambda,\mu)-\tilde{f}(\lambda,\mu)|\leq \varepsilon(t+\lambda)^{-1}(t+\mu)^{-1},\,\, \lambda,\mu\in\overline{\RR_+}.
\eeqs
This readily implies $\tilde{f}\in L^1(\RR^2_+,|\lambda^{z-1}\mu^{\zeta-1}|d\lambda d\mu)$ and $I_1(f_{\varepsilon})\rightarrow I_1(f)$, as $\varepsilon\rightarrow 0^+$, which completes the proof of (\ref{kvh114477}) for $f\in\mathcal{C}^1(\overline{\RR_+})$ with compact support. For general $f$ as in the assumption in the lemma take $\varphi\in\mathcal{D}(\RR)$ such that $0\leq \varphi\leq 1$, $\varphi(\lambda)=1$ when $\lambda\in[-1,1]$ and $\varphi(\lambda)=0$ when $\lambda\in\RR\backslash[-2,2]$. Denote $\varphi_n(\lambda)=\varphi(\lambda/n)$, $n\in\ZZ_+$. Then (\ref{kvh114477}) holds true for $f_n=f\varphi_n$, $n\in\ZZ_+$. Denote $C_1=\sup\{(1+\mu)|\varphi'(\mu)||\, \mu\in\RR\}$ and observe that $|f'_n(\lambda)|\leq |f'(\lambda)|+C_1|f(\lambda)|(1+\lambda)^{-1}$. The dominated convergence theorem implies $I_2(f_n)\rightarrow I_2(f)$, as $n\rightarrow\infty$. Notice that
\beqs
\tilde{f}_n(\lambda,\mu)&=&\tilde{f}(\lambda,\mu)\varphi_n(\mu)+\tilde{\varphi}_n(\lambda,\mu)f(\lambda),\,\, \lambda,\mu\in\overline{\RR_+},\\
\tilde{f}_n(\lambda,\mu)&=&\tilde{f}(\lambda,\mu)\varphi_n(\lambda)+\tilde{\varphi}_n(\lambda,\mu)f(\mu),\,\, \lambda,\mu\in\overline{\RR_+}
\eeqs
When $\mu\leq \lambda$, the first equality implies $|\tilde{f}_n(\lambda,\mu)|\leq |\tilde{f}(\lambda,\mu)|+C_1|f(\lambda)|(1+\mu)^{-1}$. When $\lambda\leq \mu$, one uses the second equality to deduce $|\tilde{f}_n(\lambda,\mu)|\leq |\tilde{f}(\lambda,\mu)|+C_1|f(\mu)|(1+\lambda)^{-1}$. Hence, the dominated convergence theorem implies $I_1(f_n)\rightarrow I_1(f)$ and the proof of the lemma is complete.
\end{proof}

\begin{proposition}
The following properties hold for $\sum_j p_{z,j}$.
\begin{itemize}
\item[$(i)$] $\sum_j p_{z,j}\#\sum_j p_{\zeta,j}=\sum_j p_{\zeta,j}\#\sum_j p_{z,j}$, $\forall z,\zeta\in\CC_+$.
\item[$(ii)$] When $z=k\in\ZZ_+$, $\sum_j p_{k,j}=a_0^{\# k}$. In particular, $\sum_j p_{1,j}$ is just $a_0$.
\item[$(iii)$] $\sum_j p_{z,j}\#\sum_j p_{\zeta,j}=\sum_j p_{z+\zeta,j}$, $\forall z,\zeta\in\CC_+$.
\end{itemize}
\end{proposition}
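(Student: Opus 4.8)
The plan is to establish $(i)$, then $(iii)$, then $(ii)$, working throughout inside the \emph{commutative} subring of $FS_{A_p,\rho}^{*,\infty}(\RR^{2d};0)$ generated by $a_0$ and the parametrices $Q^{(\lambda)}:=\ssum q^{(\lambda)}_j$, $\lambda\geq0$; commutativity is the remark after Proposition \ref{uniformparam} together with the one following (\ref{diffunctionofl}). I put $T^{(\lambda)}:=a_0\#Q^{(\lambda)}=\mathbf 1-\lambda Q^{(\lambda)}$, so that $g^{(K)}_n(\lambda,w)=\big(a_0^{\#K}\#(Q^{(\lambda)})^{\#K}\big)_n(w)=\big((T^{(\lambda)})^{\#K}\big)_n(w)$ and hence $p_{z,n}(w)=\gamma_K(z)\int_0^\infty\lambda^{z-1}\big((T^{(\lambda)})^{\#K}\big)_n(w)\,d\lambda$ for every integer $K>\mathrm{Re}\,z$. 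For $(i)$: inserting these integral representations into the (bilinear, finite) formula for the $n$-th component of $\#$, differentiating under the integral sign and invoking Fubini — all justified by (\ref{rhv117755}) and Lemma \ref{subsharpproduct} — one gets, for any integers $K>\mathrm{Re}\,z$ and $L>\mathrm{Re}\,\zeta$,
\[
\big(\ssum p_{z,j}\#\ssum p_{\zeta,j}\big)_n(w)=\gamma_K(z)\gamma_L(\zeta)\int_{\RR^2_+}\lambda^{z-1}\mu^{\zeta-1}\big((T^{(\lambda)})^{\#K}\#(T^{(\mu)})^{\#L}\big)_n(w)\,d\lambda\,d\mu ;
\]
since $(T^{(\lambda)})^{\#K}\#(T^{(\mu)})^{\#L}=(T^{(\mu)})^{\#L}\#(T^{(\lambda)})^{\#K}$ in the commutative subring, the right-hand side is symmetric under $(z,K,\lambda)\leftrightarrow(\zeta,L,\mu)$, which is $(i)$.

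For $(iii)$ I first treat the bi-strip $0<\mathrm{Re}\,z,\mathrm{Re}\,\zeta<1$, where $K=L=1$ and $K+L=2>\mathrm{Re}(z+\zeta)$ are admissible. Fix $n\in\NN$ and $w\in\RR^{2d}$ and apply Lemma \ref{stk774433} to $f(\nu):=-\big(a_0^{\#2}\#Q^{(\nu)}\big)_n(w)$. By Lemma \ref{derofittt} (with $\ssum p_j=a_0^{\#2}$, $k=1$), $f\in\mathcal{C}^{\infty}(\overline{\RR_+})$ and $f'(\nu)=\big(a_0^{\#2}\#(Q^{(\nu)})^{\#2}\big)_n(w)=\big((T^{(\nu)})^{\#2}\big)_n(w)$, while (\ref{diffunctionofl}) gives $(Q^{(\mu)}-Q^{(\lambda)})/(\lambda-\mu)=Q^{(\lambda)}\#Q^{(\mu)}$, so the divided difference of $f$ is $\tilde f(\lambda,\mu)=\big(a_0^{\#2}\#Q^{(\lambda)}\#Q^{(\mu)}\big)_n(w)=\big(T^{(\lambda)}\#T^{(\mu)}\big)_n(w)$. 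The integrability hypotheses of Lemma \ref{stk774433} are met because $|f(\lambda)|\leq C(1+\lambda)^{-1}$, $|f'(\lambda)|\leq C(1+\lambda)^{-2}$ and $|\tilde f(\lambda,\mu)|\leq C(1+\lambda)^{-1}(1+\mu)^{-1}$ (with $C=C(n,w)$), all from Lemma \ref{subsharpproduct} and (\ref{ths7975})/(\ref{ths7977})/(\ref{rhv117755}). Lemma \ref{stk774433} then reads — using the formula of $(i)$ with $K=L=1$ on the left, and $p_{z+\zeta,n}=p^{(2)}_{z+\zeta,n}$ on the right — as $\big(\ssum p_{z,j}\#\ssum p_{\zeta,j}\big)_n(w)=\big(\ssum p_{z+\zeta,j}\big)_n(w)$; since $n,w$ were arbitrary, $(iii)$ holds on the bi-strip.

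To pass to all of $\CC_+\times\CC_+$: for fixed $j,\alpha,w$ the map $z\mapsto D^\alpha_wp_{z,j}(w)=\gamma_K(z)\int_0^\infty\lambda^{z-1}D^\alpha_wg^{(K)}_j(\lambda,w)\,d\lambda$ is holomorphic on $\{0<\mathrm{Re}\,z<K\}$ (differentiation under the Mellin integral, using (\ref{rhv117755})) and, being independent of $K$, holomorphic on all of $\CC_+$. Consequently, for each $n,w$, both $(z,\zeta)\mapsto\big(\ssum p_{z,j}\#\ssum p_{\zeta,j}\big)_n(w)$ — a finite sum of products of a holomorphic function of $z$ and one of $\zeta$ — and $(z,\zeta)\mapsto\big(\ssum p_{z+\zeta,j}\big)_n(w)=p_{z+\zeta,n}(w)$ are holomorphic on the connected open set $\CC_+\times\CC_+\subseteq\CC^2$; they coincide on the nonempty open subset $\{0<\mathrm{Re}\,z<1\}\times\{0<\mathrm{Re}\,\zeta<1\}$, hence everywhere by analytic continuation, which gives $(iii)$ for all $z,\zeta\in\CC_+$.

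Finally $(ii)$: for $z=1$ take $K=2$; then $\gamma_2(1)=1$ and, by Lemma \ref{derofittt}, $g^{(2)}_n(\lambda,w)=-\partial_\lambda\big(a_0^{\#2}\#Q^{(\lambda)}\big)_n(w)$. Since $\big(a_0^{\#2}\#Q^{(\lambda)}\big)_n(w)\to0$ as $\lambda\to\infty$ (Lemma \ref{subsharpproduct}, (\ref{ths7975})/(\ref{ths7977})) and $a_0^{\#2}\#Q^{(0)}=a_0\#(a_0\#Q^{(0)})=a_0$ (recall $a_0\#Q^{(0)}=\mathbf 1$), integration yields $p_{1,n}(w)=\big(a_0^{\#2}\#Q^{(0)}\big)_n(w)=(a_0)_n(w)$, i.e. $\ssum p_{1,j}=a_0$; iterating $(iii)$ gives $\ssum p_{k,j}=(\ssum p_{1,j})^{\#k}=a_0^{\#k}$ for $k\in\ZZ_+$, which is $(ii)$. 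The only nonroutine points are the divided-difference identity $\tilde f(\lambda,\mu)=\big(T^{(\lambda)}\#T^{(\mu)}\big)_n(w)$, which is what reduces the semigroup law to Lemma \ref{stk774433}, and the holomorphy argument upgrading $(iii)$ from the bi-strip to $\CC_+\times\CC_+$; everything else is bookkeeping with the component formula for $\#$ and the uniform estimates already in hand.
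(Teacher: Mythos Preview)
Your proof is correct and shares the same backbone as the paper's: the double-integral representation for $(i)$ and the use of Lemma \ref{stk774433} on the bi-strip $0<\mathrm{Re}\,z,\mathrm{Re}\,\zeta<1$ for $(iii)$ are exactly what the paper does. The genuine differences lie in the extension step for $(iii)$ and in the treatment of $(ii)$.

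The paper first establishes $(ii)$ directly for every $k\in\ZZ_+$ via repeated integration by parts (using Lemma \ref{derofittt} to reduce the power of $Q^{(\lambda)}$), then proves the mixed semigroup law $\ssum p_{n,j}\#\ssum p_{\zeta,j}=\ssum p_{n+\zeta,j}$ for integer $n$, and finally extends $(iii)$ to all of $\CC_+\times\CC_+$ by writing $z=[\mathrm{Re}\,z]+z'$, $\zeta=[\mathrm{Re}\,\zeta]+\zeta'$ and combining the bi-strip and integer cases, treating the boundary lines $\ZZ_++i\RR$ by continuity (Lemma \ref{thj994455} and Proposition \ref{hyporingg}). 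You instead observe that, for each fixed $n,\alpha,w$, the map $z\mapsto D^{\alpha}_wp_{z,n}(w)$ is holomorphic on $\CC_+$, so both sides of $(iii)$ are holomorphic on $\CC_+\times\CC_+$ componentwise, and analytic continuation from the bi-strip finishes the job; then you derive $(ii)$ from the single computation $\ssum p_{1,j}=a_0$ and iterate $(iii)$. Your route is shorter and avoids the separate integer-shift lemma and the boundary-line continuity argument; the paper's route is more hands-on and does not invoke several-variable holomorphy. Both are entirely valid.
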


\begin{proof} Fix $z,\zeta\in\CC_+$ and denote $\sum_j c_j=\sum_j p_{z,j}\#\sum_j p_{\zeta,j}\in FS_{A_p,\rho}^{*,\infty}(\RR^{2d};0)$. Then, for $k>\max\{[\mathrm{Re}\,z],[\mathrm{Re}\,\zeta]\}$, $k\in\ZZ_+$, we have
\beq
c_j(w)&=&\sum_{s+r+l=j}\sum_{|\alpha+\beta|=l}\frac{(-1)^{|\beta|}}{\alpha!\beta!2^l}\partial^{\alpha}_{\xi}D^{\beta}_x p_{z,s}(w)\partial^{\beta}_{\xi} D^{\alpha}_x p_{\zeta,r}(w)\nonumber\\
&=&\gamma_k(z)\gamma_k(\zeta) \sum_{s+r+l=j}\sum_{|\alpha+\beta|=l}\frac{(-1)^{|\beta|}}{\alpha!\beta!2^l}\nonumber\\
&{}&\cdot\int_{\RR^2_+}\lambda^{z-1}\mu^{\zeta-1}\partial^{\alpha}_{\xi}D^{\beta}_x \Big(\big(a_0\#\ssum q^{(\lambda)}_j\big)^{\# k}\Big)_s(w)\partial^{\beta}_{\xi} D^{\alpha}_x\Big(\big(a_0\#\ssum q^{(\mu)}_j\big)^{\# k}\Big)_r(w)d\lambda d\mu\nonumber\\
&=&\gamma_k(z)\gamma_k(\zeta) \int_{\RR^2_+}\lambda^{z-1}\mu^{\zeta-1} \Big(a_0^{\# 2k}\#\big(\ssum q^{(\lambda)}_j\#\ssum q^{(\mu)}_j\big)^{\# k}\Big)_j(w)d\lambda d\mu\label{hts77551199}
\eeq
and the integrals over $\RR^2_+$ are absolutely convergent uniformly for $w$ in compact subsets of $\RR^{2d}$ because of (\ref{ths7975}), (\ref{ths7977}) and Theorem \ref{weylq} $i)$. This readily implies that $\sum_j p_{z,j}$ and $\sum_j p_{\zeta,j}$ commute ($\ssum q^{(\lambda)}_j$ and $\ssum q^{(\mu)}_j$ commute). If $z=k\in\ZZ_+$, then (notice that $\gamma_{k+1}(k)=k$)
\beqs
p_{k,j}(w)=k\int_0^{\infty}\lambda^{k-1}\Big(a_0^{\# (k+1)}\#\big(\ssum q_j^{(\lambda)}\big)^{\# (k+1)}\Big)_j(w)d\lambda.
\eeqs
Repeated integration by parts together with Lemma \ref{derofittt} and the fact
\beqs
\lambda^{k-r}\Big(a_0^{\# (k+1)}\#\big(\ssum q_j^{(\lambda)}\big)^{\# (k+1-r)}\Big)_j(w)\Big|_{\lambda=0}^{\lambda=\infty}=0,\,\, r=1,\ldots,k-1
\eeqs
(cf. (\ref{ths7975}) and (\ref{ths7977})), imply
\beqs
p_{k,j}(w)&=&\int_0^{\infty}\Big(a_0^{\# (k+1)}\#\big(\ssum q_j^{(\lambda)}\big)^{\# 2}\Big)_j(w)d\lambda\\
&=&-\Big(a_0^{\# (k+1)}\#\ssum q_j^{(\lambda)}\Big)_j(w)\Big|_{\lambda=0}^{\lambda=\infty}=\big(a_0^{\# k}\big)_j(w).
\eeqs
Hence, $\sum_j p_{k,j}=a_0^{\# k}$.\\
\indent Let $z=n\in\ZZ_+$ and $\zeta\in\CC_+$. Then, denoting $\sum_j c_j=\sum_j p_{n,j}\#\sum_j p_{\zeta,j}$ and taking $k>[\mathrm{Re}\, \zeta]$, $k\in\ZZ_+$, one obtains
\beqs
c_j(w)&=&\gamma_k(\zeta)\sum_{s+r+l=j}\sum_{|\alpha+\beta|=l}\frac{(-1)^{|\beta|}}{\alpha!\beta!2^l}\\
&{}&\cdot\int_0^{\infty}\lambda^{\zeta-1}\partial^{\alpha}_{\xi}D^{\beta}_x \big(a_0^{\# n}\big)_s(w)\partial^{\beta}_{\xi} D^{\alpha}_x\Big(\big(a_0\#\ssum q^{(\lambda)}_j\big)^{\# k}\Big)_r(w)d\lambda\\
&=&\gamma_k(\zeta)\int_0^{\infty}\lambda^{\zeta-1}\Big(a_0^{\#(k+n)}\#\big(\ssum q^{(\lambda)}_j\big)^{\# k}\Big)_j(w)d\lambda.
\eeqs
Repeated integration by parts and Lemma \ref{derofittt} yield
\beqs
c_j(w)=\frac{\gamma_k(\zeta)k(k+1)\cdot\ldots\cdot(k+n-1)}{\zeta(\zeta+1)\cdot\ldots\cdot(\zeta+n-1)} \int_0^{\infty}\lambda^{\zeta+n-1}\Big(a_0^{\#(k+n)}\#\big(\ssum q^{(\lambda)}_j\big)^{\# (k+n)}\Big)_j(w)d\lambda,
\eeqs
which, in view of (\ref{gmaf157}), proves $\sum_j p_{n,j}\#\sum_j p_{\zeta,j}=\sum_j p_{n+\zeta,j}$. Assume now that $z,\zeta\in\CC_+$ are such that $0<\mathrm{Re}\,z,\mathrm{Re}\,\zeta<1$. Denoting $\sum_j c_j=\sum_j p_{z,j}\#\sum_j p_{\zeta,j}$, we have (\ref{hts77551199}) with $k=1$. Fix $w\in\RR^{2d}$ and put $f_{j,w}(\lambda)=\Big(a_0^{\# 2}\#\ssum q^{(\lambda)}_j\Big)_j(w)$. In view of (\ref{ths7975}), (\ref{ths7977}), Theorem \ref{weylq} $i)$ and Lemma \ref{derofittt}, $f_{j,w}$ satisfies the assumptions of Lemma \ref{stk774433}; notice that $\tilde{f}_{j,w}(\lambda,\mu)=-\Big(a_0^{\# 2}\#\ssum q^{(\lambda)}_j\#\ssum q^{(\mu)}_j\Big)_j(w)$ (because of (\ref{diffunctionofl})). Hence, employing Lemma \ref{stk774433} we conclude $c_j=p_{z+\zeta,j}$. If $z,\zeta\in\CC_+\backslash(\ZZ_+ +i\RR)$ are such that $\mathrm{Re}\,z,\mathrm{Re}\,\zeta>1$, by denoting $z'=z-[\mathrm{Re}\,z]$, $\zeta'=\zeta-[\mathrm{Re}\, \zeta]$ and using the facts we proved above, we have
\beqs
\ssum p_{z,j}\#\ssum p_{\zeta,j}&=&\ssum p_{z',j}\#\ssum p_{[\mathrm{Re}\,z],j}\#\ssum p_{\zeta',j}\#\ssum p_{[\mathrm{Re}\, \zeta],j}\\
&=&\ssum p_{[\mathrm{Re}\,z]+[\mathrm{Re}\, \zeta],j}\#\ssum p_{z'+\zeta',j}=\ssum p_{z+\zeta,j}.
\eeqs
In a similar fashion, one proves this equality when $\mathrm{Re}\,z<1$ or $\mathrm{Re}\,\zeta<1$. The case when one or both of $z$ and $\zeta$ belong to $\ZZ_+ +i(\RR\backslash\{0\})$ follows from Proposition \ref{hyporingg} and Lemma \ref{thj994455}.
\end{proof}

Because of this proposition, from now on we write $a_0^{\# z}$ instead of $\sum_j p_{z,j}$, $z\in\CC_+$. This proposition and Lemma \ref{thj994455} prove $(i)$, $(ii)$ and $(iii)$ of Theorem \ref{maint}.\\
\indent For arbitrary $R'>0$, $R'(\sum_j g^{(k)}_j(\lambda,w))\in\mathcal{C}^{\infty}(\RR_+\times\RR^{2d}_w)$ since the sum is locally finite. Clearly,
\beq\label{sumforaz}
R'(a_0^{\# z})(w)=\gamma_k(z)\int_0^{\infty} \lambda^{z-1} R'\left(\ssum g^{(k)}_j(\lambda,w)\right)d\lambda,\,\, k>\mathrm{Re}\, z.
\eeq
Keeping in mind (\ref{estfora_0}) and (\ref{estforthepar}), Lemma \ref{subsharpproduct} implies that for each fixed $k\in\ZZ_+$,
\beq\label{growthofsol}
\left\{\ssum g^{(k)}_j(\lambda,\cdot)=a_0^{\# k}\#(\ssum q^{(\lambda)}_j)^{\# k}\big|\, \lambda\geq0\right\}\precsim \{|a_0/a_{\lambda}|^k|\, \lambda\geq0\}
\eeq
in $FS_{A_p,\rho}^{*,\infty}(\RR^{2d};0)$. Because of (\ref{krt558811}) we have
\beqs
\left|D^{\alpha}_w (a_0^{\# z})_j(w)\right|\leq C\frac{h^{|\alpha|+2j}A_{|\alpha|+2j}|\gamma_k(z)|}{\langle w\rangle^{\rho(|\alpha|+2j)}}\int_0^{\infty}\frac{\lambda^{\mathrm{Re}\,z-1}|a_0(w)|^k}{\left(|a_0(w)|+\lambda\right)^k}d\lambda
\eeqs
for each $h>0$ and a corresponding $C>0$ in the Beurling case and for some $h,C>0$ in the Roumieu case, respectively. Because of (\ref{inteq}) this yields (\ref{tkr554411}), since the constant $C$ depends on $k$ (i.e. on $\mathrm{Re}\, z$). Thus, $C$ can be chosen to be the same when $z$ varies in a fixed vertical strip $\CC_{+,\tilde{c}}=\{\zeta\in\CC_+|\, \mathrm{Re}\,\zeta\leq \tilde{c}\}$. Observe that $g^{(k)}_0(\lambda,w)=\left(a_0(w)/(a_0(w)+\lambda)\right)^k$. Thus, by (\ref{inteq}), we conclude that $(a_0^{\# z})_0(w)=(a_0(w))^z$, $w\in\RR^{2d}$, which proves $(iv)$ of Theorem \ref{maint}. Incidentally, notice that (\ref{tkr554411}) and (\ref{thk779911}) imply that $w\mapsto (a_0(w))^z$ is hypoelliptic.\\
\indent For the strip $\CC_{+,\tilde{c}}$ put $k_0=[\tilde{c}]+1$. Take $R_{k_0}$ as in Proposition \ref{knoforcomp}, i.e.
\beqs
R_{k_0}(\ssum g^{(k_0)}_j(\lambda,\cdot))=b^{(k_0)}_{\lambda}\in \Gamma_{A_p,\rho}^{*,\infty}(\RR^{2d}).
\eeqs
Specifying $R'=R_{k_0}$ in (\ref{sumforaz}) and denoting $a^{\uwidehat{z}}:=R_{k_0}(a^{\# z})$, (\ref{estforthepower}), (\ref{krt558811}) and (\ref{inteq}) yield the second estimate in $(v)$ of Theorem \ref{maint}. Furthermore, Remark \ref{krs559933}, (\ref{krt558811}) and (\ref{inteq}) prove the first estimate in $(v)$ of Theorem \ref{maint}. For fixed $z\in\CC_{+,\tilde{c}}$, particularising the first estimate in $(v)$ of Theorem \ref{maint} for $N=1$, $\alpha=0$ and employing (\ref{thk779911}) we conclude that $|a^{\uwidehat{z}}(w)-(a_0(w))^z|\leq \tilde{C}|(a_0(w))^z|/\langle w\rangle^{2\rho}$ for $w$ outside of a compact neighbourhood of the origin. Thus, the second estimate in $(v)$ of Theorem \ref{maint} together with (\ref{thk779911}) yields the hypoellipticity of $a^{\uwidehat{z}}$.

\subsubsection{Part $(vi)$ of Theorem \ref{maint}}

Since $\overline{A}(\overline{A}+\lambda\mathrm{Id})^{-1}= \mathrm{Id}-\lambda(\overline{A}+\lambda\mathrm{Id})^{-1}$ as operators on $L^2(\RR^d)$, $\{\overline{A}(\overline{A}+\lambda\mathrm{Id})^{-1}|\, \lambda>0\}$ is an equicontinuous subset of $\mathcal{L}(L^2(\RR^d),L^2(\RR^d))$ ($\overline{A}$ is non-negative). Employing the resolvent identity and using induction one easily verifies that for each $v\in L^2(\RR^d)$, $k\in\ZZ_+$, the function $\lambda\mapsto \left(\overline{A}(\overline{A}+\lambda\mathrm{Id})^{-1}\right)^k v$, $\RR_+\rightarrow L^2(\RR^d)$, is continuous.\\
\indent Throughout this subsection $\CC_{+,\tilde{c}}$ is a fixed vertical strip and $a^{\uwidehat{z}}$, $z\in\CC_{+,\tilde{c}}$, and $b^{(k_0)}_{\lambda}$, $\lambda>0$, $k_0=[\tilde{c}]+1$, are defined as above. In Lemma \ref{derofittt} we proved that for arbitrary $\sum_j p_j\in FS_{A_p,\rho}^{*,\infty}(\RR^{2d};0)$, $n\in\ZZ_+$, the function $\lambda\mapsto \sum_j p_j\# (\sum_j q^{(\lambda)}_j)^{\# n}$, $\overline{\RR_+}\rightarrow FS_{A_p,\rho}^{*,\infty}(\RR^{2d};0)$, is continuous. This implies that for each $n\in\NN$, the mapping $\lambda\mapsto\big((a_0\# \sum_j q^{(\lambda)}_j)^{\# k_0}\big)_n$, $\overline{\RR_+}\rightarrow\EE^{(A_p)}(\RR^{2d})$ (resp. $\overline{\RR_+}\rightarrow\EE^{\{A_p\}}(\RR^{2d})$) is continuous. Hence, $\lambda\mapsto b^{(k_0)}_{\lambda}$ in $\RR_+\rightarrow\EE^{(A_p)}(\RR^{2d})$ (resp. $\RR_+\rightarrow\EE^{\{A_p\}}(\RR^{2d})$) is continuous; $b^{(k_0)}_{\lambda}$ is a locally finite sum of the form $\sum_j (1-\chi_{j,R_{k_0}})\big((a_0\# \sum_j q^{(\lambda)}_j)^{\# k_0}\big)_j$. Moreover, (\ref{estforthepower}) and (\ref{sinqa}) imply that $\{b^{(k_0)}_{\lambda}|\, \lambda>0\}\precsim 1$, i.e. for every $h>0$ there exists $C>0$ (resp. there exist $h,C>0$) such that
\beq\label{eeq1}
\left|D^{\alpha}_w b^{(k_0)}_{\lambda}(w)\right|\leq Ch^{|\alpha|}A_{\alpha}\langle w\rangle^{-\rho|\alpha|},\,\, w\in\RR^{2d},\, \alpha\in\NN^{2d},\, \lambda>0.
\eeq
In the Beurling case, the estimate (\ref{eeq1}) and $b^{(k_0)}_{\lambda}\rightarrow b^{(k_0)}_{\lambda_0}$ in $\EE^{(A_p)}(\RR^{2d})$ as $\lambda\rightarrow\lambda_0$ readily yield that $b^{(k_0)}_{\lambda}\rightarrow b^{(k_0)}_{\lambda_0}$ in $\Gamma^{(M_p),\infty}_{A_p,\rho}(\RR^{2d};m')$, for some $m'>0$. Thus, the mapping $\lambda\mapsto b^{(k_0)}_{\lambda}$, $\RR_+\rightarrow \Gamma^{(M_p),\infty}_{A_p,\rho}(\RR^{2d};m')$, is continuous, hence, also continuous if we regard it as a mapping from $\RR_+$ to $\Gamma_{A_p,\rho}^{(M_p),\infty}(\RR^{2d})$. In order to prove the convergence in the Roumieu case, let $h,C>0$ be the constants for which (\ref{eeq1}) holds. Denote $h_1=2h$. We prove that $b^{(k_0)}_{\lambda}\rightarrow b^{(k_0)}_{\lambda_0}$ in $\Gamma_{A_p,\rho}^{\{M_p\},\infty}(\RR^{2d};h_1H)$ as $\lambda\rightarrow\lambda_0$. Let $m'>0$ be arbitrary but fixed. Fix $\varepsilon>0$. Since $e^{-M(m'|\xi|)}e^{-M(m'|x|)}\rightarrow 0$ as $|(x,\xi)|\rightarrow \infty$, there exists $K\subset\subset \RR^{2d}$ such that
\beqs
\frac{\left|D^{\alpha}_{(x,\xi)} b^{(k_0)}_{\lambda}(x,\xi)\right|\langle w\rangle^{\rho|\alpha|}} {h^{|\alpha|}A_{\alpha} e^{M(m'|\xi|)}e^{M(m'|x|)}}\leq \frac{\varepsilon}{2},\,\, (x,\xi)\in\RR^{2d}\backslash K,\, \alpha\in\NN^{2d},\, \lambda>0.
\eeqs
Pick $n_0\in\ZZ_+$ such that $C/2^{n_0}\leq \varepsilon/2$. Thus, if $(x,\xi)\in\RR^{2d}\backslash K$ or $|\alpha|\geq n_0$ we have
\beqs
\frac{\left|D^{\alpha}_{(x,\xi)} \left(b^{(k_0)}_{\lambda}(x,\xi)-b^{(k_0)}_{\lambda_0}(x,\xi)\right)\right|\langle w\rangle^{\rho|\alpha|}} {h_1^{|\alpha|}A_{\alpha} e^{M(m'|\xi|)}e^{M(m'|x|)}}\leq \varepsilon,\,\, \forall\lambda>0.
\eeqs
Since $b^{(k_0)}_{\lambda}\rightarrow b^{(k_0)}_{\lambda_0}$ in $\EE^{\{A_p\}}(\RR^{2d})$ the convergence also holds in $\mathcal{C}^{\infty}(\RR^{2d})$. Thus, when $(x,\xi)\in K$ and $|\alpha|\leq n_0$, there exists $\delta>0$ such that for $|\lambda-\lambda_0|\leq \delta$ we have
\beqs
\frac{\left|D^{\alpha}_{(x,\xi)} \left(b^{(k_0)}_{\lambda}(x,\xi)-b^{(k_0)}_{\lambda_0}(x,\xi)\right)\right|\langle w\rangle^{\rho|\alpha|}} {h_1^{|\alpha|}A_{\alpha} e^{M(m'|\xi|)}e^{M(m'|x|)}}\leq \varepsilon.
\eeqs
We conclude that $b^{(k_0)}_{\lambda}\rightarrow b^{(k_0)}_{\lambda_0}$ in $\Gamma_{A_p,\rho}^{\{M_p\},\infty}(\RR^{2d};h_1H)$ (the extra $H$ appears since for $\alpha',\alpha''\in\NN^d$, $A_{\alpha'+\alpha''}\leq c_0H^{|\alpha'|+|\alpha''|}A_{\alpha'}A_{\alpha''}$; see the definition of $\Gamma_{A_p,\rho}^{\{M_p\},\infty}(\RR^{2d};h)$). Thus, the mapping $\lambda\mapsto b^{(k_0)}_{\lambda}$, $\RR_+\rightarrow \Gamma_{A_p,\rho}^{\{M_p\},\infty}(\RR^{2d};h_1H)$, is continuous, hence, also continuous if we regard it as a mapping from $\RR_+$ to $\Gamma_{A_p,\rho}^{\{M_p\},\infty}(\RR^{2d})$. Now, (\ref{eeq1}) and Lemma \ref{operatoronl2} yield that $\{(b^{(k_0)}_{\lambda})^w|\,\lambda>0\}$ extends to a bounded subset of $\mathcal{L}_b(L^2(\RR^d), L^2(\RR^d))$ and $(b^{(k_0)}_{\lambda})^w\rightarrow (b^{(k_0)}_{\lambda_0})^w$ in $\mathcal{L}_p(L^2(\RR^d),L^2(\RR^d))$ as $\lambda\rightarrow \lambda_0$. Thus, for each $v\in L^2(\RR^d)$, the function $\lambda\mapsto (b^{(k_0)}_{\lambda})^wv$, $\RR_+\rightarrow L^2(\RR^d)$, is continuous. Now, (\ref{equforequikno}) proves the continuity of $\lambda\mapsto S^{(k_0)}_{\lambda}v$, $\RR_+\rightarrow L^2(\RR^d)$. Moreover, since $\{(1+\lambda)^{k_0} S^{(k_0)}_{\lambda}|\, \lambda>0\}$ is equicontinuous in $\mathcal{L}(\SSS'^*(\RR^d),\SSS^*(\RR^d))$ (cf. Proposition \ref{knoforcomp}), the $L^2(\RR^d)$-valued function $\lambda\mapsto\lambda^{z-1}S^{(k_0)}_{\lambda}v$ is Bochner integrable for each $v\in L^2(\RR^d)$. Since $\lambda\mapsto \lambda^{z-1}\left(\overline{A}(\overline{A}+\lambda\mathrm{Id})^{-1}\right)^{k_0}v$ is Bochner integrable for each $v\in D(\overline{A}^{[\mathrm{Re}\,z]+1})$, (\ref{equforequikno}) implies that for $v\in D(\overline{A}^{[\mathrm{Re}\,z]+1})$, the function $\lambda\mapsto \lambda^{z-1}(b^{(k_0)}_{\lambda})^w v$ is also Bochner integrable and
\beqs
\overline{A}^z v=\gamma_{k_0}(z)\int_0^{\infty} \lambda^{z-1}(b^{(k_0)}_{\lambda})^w v d\lambda+ \gamma_{k_0}(z)\int_0^{\infty} \lambda^{z-1}S^{(k_0)}_{\lambda} v d\lambda.
\eeqs
Thus, we proved the following proposition.

\begin{proposition}\label{rkpropo117}
The mapping $\lambda\mapsto b^{(k_0)}_{\lambda}$ is continuous as a mapping from $\RR_+$ into $\Gamma^{(M_p),\infty}_{A_p,\rho}(\RR^{2d};m')$ for some $m'>0$ (resp. into $\Gamma_{A_p,\rho}^{\{M_p\},\infty}(\RR^{2d};h_1)$ for some $h_1>0$). Moreover, $\{(b^{(k_0)}_{\lambda})^w|\,\lambda>0\}$ is a bounded subset of $\mathcal{L}_b(L^2(\RR^d), L^2(\RR^d))$.\\
\indent For each $v\in L^2(\RR^d)$, the mappings $\lambda\mapsto (b^{(k_0)}_{\lambda})^wv$ and $\lambda\mapsto S^{(k_0)}_{\lambda}v$, $\RR_+\rightarrow L^2(\RR^d)$, are continuous. Furthermore, for each $v\in D(\overline{A}^{[\mathrm{Re}\,z]+1})$, $\lambda\mapsto \lambda^{z-1}(b^{(k_0)}_{\lambda})^wv$, $\RR_+\rightarrow L^2(\RR^d)$, is Bochner integrable and for each $v\in L^2(\RR^d)$, the mapping $\lambda\mapsto \lambda^{z-1}S^{(k_0)}_{\lambda}v$, $\RR_+\rightarrow L^2(\RR^d)$, is Bochner integrable.
\end{proposition}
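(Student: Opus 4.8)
The plan is to collect the facts established earlier in this subsection, together with Lemma \ref{operatoronl2} and the Balakrishnan material recalled at the start of Section \ref{section5}, into the four assertions; only the first requires genuine work.

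\emph{Continuity of $\lambda\mapsto b^{(k_0)}_{\lambda}$.} Since $b^{(k_0)}_{\lambda}=R_{k_0}\big(a_0^{\# k_0}\#(\ssum q^{(\lambda)}_j)^{\# k_0}\big)$ is the locally finite sum $\sum_j(1-\chi_{j,R_{k_0}})\big((a_0\#\ssum q^{(\lambda)}_j)^{\# k_0}\big)_j$, the continuity statement proved inside Lemma \ref{derofittt} — that $\lambda\mapsto \ssum c_j\#(\ssum q^{(\lambda)}_j)^{\# k}$ is continuous from $\overline{\RR_+}$ to $FS_{A_p,\rho}^{*,\infty}(\RR^{2d};0)$ — yields continuity of each coefficient, hence of $\lambda\mapsto b^{(k_0)}_{\lambda}$, into $\EE^{(A_p)}(\RR^{2d})$ (resp. $\EE^{\{A_p\}}(\RR^{2d})$). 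I would then upgrade this to continuity into a fixed $\Gamma$-space by invoking the uniform bound (\ref{eeq1}) (which comes from (\ref{estforthepower}) and (\ref{sinqa}), i.e. $\{b^{(k_0)}_{\lambda}|\,\lambda>0\}\precsim 1$): in the Beurling case the $(F)$-space structure makes the upgrade to $\Gamma^{(M_p),\infty}_{A_p,\rho}(\RR^{2d};m')$ automatic; in the Roumieu case I would show convergence in the $(F)$-space $\Gamma^{\{M_p\},\infty}_{A_p,\rho}(\RR^{2d};h_1H)$ with $h_1=2h$ ($h$ as in (\ref{eeq1})) seminorm by seminorm, via an $\varepsilon$-splitting: outside a suitable $K\subset\subset\RR^{2d}$ the weighted derivatives of $b^{(k_0)}_{\lambda}$ are uniformly $\le\varepsilon/2$; for $|\alpha|\geq n_0$ the factor $2^{-|\alpha|}$ absorbs the constant in (\ref{eeq1}); and on the remaining compact set of pairs $(\alpha,w)$ one uses the $\mathcal{C}^{\infty}$-convergence coming from $\EE^{*}(\RR^{2d})$, the extra $H$ entering through $(M.2)$ when passing from $A_{\alpha+\beta}$ to $A_{\alpha}A_{\beta}$.

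\emph{The $L^2$-statements.} Estimate (\ref{eeq1}) is precisely hypothesis (\ref{estforope}) of Lemma \ref{operatoronl2} applied to $V=\{b^{(k_0)}_{\lambda}|\,\lambda>0\}$, so this family extends to a bounded subset of $\mathcal{L}_b(L^2(\RR^d),L^2(\RR^d))$; and the convergence half of that lemma, applied to the continuous curve just obtained (viewed in $\Gamma^{*,\infty}_{A_p,\rho}(\RR^{2d})$), gives $(b^{(k_0)}_{\lambda})^w\to(b^{(k_0)}_{\lambda_0})^w$ in $\mathcal{L}_p(L^2(\RR^d),L^2(\RR^d))$, in particular $\lambda\mapsto(b^{(k_0)}_{\lambda})^wv$ is continuous for each $v\in L^2(\RR^d)$. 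By (\ref{equforequikno}), $S^{(k_0)}_{\lambda}=\big(\overline{A}(\overline{A}+\lambda\mathrm{Id})^{-1}\big)^{k_0}-(b^{(k_0)}_{\lambda})^w$ on $L^2(\RR^d)$; since $\lambda\mapsto\big(\overline{A}(\overline{A}+\lambda\mathrm{Id})^{-1}\big)^{k_0}v$ is continuous for each $v\in L^2(\RR^d)$ (resolvent identity and induction, as recorded at the start of this subsection), so is $\lambda\mapsto S^{(k_0)}_{\lambda}v$. For Bochner integrability I would use that $\{(1+\lambda)^{k_0}S^{(k_0)}_{\lambda}|\,\lambda>0\}$ is equicontinuous in $\mathcal{L}(\SSS'^*(\RR^d),\SSS^*(\RR^d))$ by Proposition \ref{knoforcomp}, hence uniformly bounded as maps into $L^2(\RR^d)$, so $\|\lambda^{z-1}S^{(k_0)}_{\lambda}v\|_{L^2}\leq C\lambda^{\mathrm{Re}\,z-1}(1+\lambda)^{-k_0}\|v\|_{L^2}$, which is integrable on $\RR_+$ because $0<\mathrm{Re}\,z\leq\tilde{c}<k_0=[\tilde{c}]+1$; with the strong measurability from the preceding continuity this gives Bochner integrability of $\lambda\mapsto\lambda^{z-1}S^{(k_0)}_{\lambda}v$. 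Finally, for $v\in D(\overline{A}^{[\mathrm{Re}\,z]+1})$ the function $\lambda\mapsto\lambda^{z-1}\big(\overline{A}(\overline{A}+\lambda\mathrm{Id})^{-1}\big)^{k_0}v$ is Bochner integrable by the Balakrishnan construction recalled at the beginning of Section \ref{section5} (note $k_0>\mathrm{Re}\,z$), and subtracting the $S^{(k_0)}_{\lambda}$-term via (\ref{equforequikno}) exhibits $\lambda\mapsto\lambda^{z-1}(b^{(k_0)}_{\lambda})^wv$ as a difference of Bochner integrable functions.

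\emph{The main obstacle} is the Roumieu half of the first step: the $\EE^{\{A_p\}}$-continuity of $\lambda\mapsto b^{(k_0)}_{\lambda}$ and the bound (\ref{eeq1}) only hold with \emph{some} $h>0$, so one has to produce a \emph{single} Banach space $\Gamma^{\{M_p\},\infty}_{A_p,\rho}(\RR^{2d};h_1H)$ inside which the convergence takes place, and control uniformly in $\lambda$ both the behaviour at infinity (through the weight $e^{-M(m'|\cdot|)}$ and the surplus powers of $\langle w\rangle$ supplied by the order of the parametrix) and the derivatives of arbitrarily high order (through the enlargement $h\mapsto 2h$). The remaining three assertions are routine bookkeeping with Lemmas \ref{derofittt} and \ref{operatoronl2}, Proposition \ref{knoforcomp}, and the standard theory of non-negative operators.
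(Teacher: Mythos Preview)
Your proposal is correct and follows essentially the same route as the paper: continuity into $\EE^{*}(\RR^{2d})$ from Lemma \ref{derofittt} and the locally finite sum structure, upgraded to a fixed $\Gamma$-space via (\ref{eeq1}) (with exactly the $h_1=2h$, extra $H$, and $\varepsilon$-splitting you describe in the Roumieu case), then Lemma \ref{operatoronl2}, (\ref{equforequikno}), Proposition \ref{knoforcomp}, and the Balakrishnan integrability for the remaining claims. The paper's argument, which appears in the text immediately preceding the proposition, is identical in structure and in all the key details you single out.
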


Fix $v\in L^2(\RR^d)$ and denote
\beq
\psi&=&\gamma_{k_0}(z)\int_0^{\infty} \lambda^{z-1}S^{(k_0)}_{\lambda} v d\lambda\in L^2(\RR^d);\nonumber\\
\tilde{C}_z&=&\left|\gamma_{k_0}(z)\right|\int_0^{\infty} \lambda^{\mathrm{Re}\,z-1}(1+\lambda)^{-k_0} d\lambda>0.\label{definofc}
\eeq
Let $\varepsilon>0$. Since $\{(1+\lambda)^{k_0} S^{(k_0)}_{\lambda}|\, \lambda>0\}$ is an equicontinuous subset of $\mathcal{L}(\SSS'^*(\RR^d),\SSS^*(\RR^d))$, there exists a neighbourhood of zero $W$ in $\SSS'^*(\RR^d)$ such that $(1+\lambda)^{k_0}|\langle v,{}^tS^{(k_0)}_{\lambda} f\rangle|\leq \varepsilon/\tilde{C}_z$ for all $f\in W$. Hence, for $u\in L^2(\RR^d)\cap W$, by the properties of the Bochner integral, we have
\beqs
|\langle \psi,u\rangle|\leq \left|\gamma_{k_0}(z)\right|\int_0^{\infty} \lambda^{\mathrm{Re}\,z-1}(1+\lambda)^{-k_0}(1+\lambda)^{k_0} |\langle S^{(k_0)}_{\lambda} v,u\rangle| d\lambda\leq \varepsilon.
\eeqs
Hence, $u\mapsto \langle \psi,u\rangle$, can be extended to become a continuous functional on $\SSS'^*(\RR^d)$, i.e. $\psi\in\SSS^*(\RR^d)$. Thus, the linear mapping
\beqs
v\mapsto S^{\uwidehat{z}}(v)=\gamma_{k_0}(z)\int_0^{\infty} \lambda^{z-1}S^{(k_0)}_{\lambda} v d\lambda,\,\, L^2(\RR^d)\rightarrow \SSS^*(\RR^d),
\eeqs
is well defined. Fix $v\in L^2(\RR^d)$, $f\in\SSS'^*(\RR^d)$. There exist $f_n\in L^2(\RR^d)$, $n\in\ZZ_+$, such that $f_n\rightarrow f$ in $\SSS'^*(\RR^d)$. Since $\lambda \mapsto S^{(k_0)}_{\lambda}v$, $\RR_+\rightarrow L^2(\RR^d)$, is continuous, so is the function $F_n:\lambda\mapsto \langle S^{(k_0)}_{\lambda}v, f_n\rangle$, $\RR_+\rightarrow \CC$, for each $n\in\ZZ_+$. Employing the equicontinuity of the set $\{(1+\lambda)^{k_0}S^{(k_0)}_{\lambda}|\, \lambda>0\}$, one can easily verify that $(1+\lambda)^{k_0}F_n(\lambda)$ tends to $\lambda\mapsto (1+\lambda)^{k_0}\langle S^{(k_0)}_{\lambda}v,f\rangle$, uniformly in $\lambda\in\RR_+$. Thus, $\lambda\mapsto \langle S^{(k_0)}_{\lambda}v,f\rangle$, $\RR_+\rightarrow \CC$, is a continuous function and $(1+\lambda)^{k_0}|\langle S^{(k_0)}_{\lambda}v,f\rangle|\leq c'$, for all $\lambda\in\RR_+$. Hence
\beq\label{equ1155}
\langle S^{\uwidehat{z}}v,f\rangle=\lim_{n\rightarrow \infty} \gamma_{k_0}(z)\int_0^{\infty}\lambda^{z-1}F_n(\lambda) d\lambda=\gamma_{k_0}(z)\int_0^{\infty}\lambda^{z-1}\langle S^{(k_0)}_{\lambda} v,f\rangle d\lambda.
\eeq
Next, we prove that $S^{\uwidehat{z}}$ extends to a continuous operator from $\SSS'^*(\RR^d)$ to $\SSS^*(\RR^d)$ (i.e. it is $*$-regularising). For this purpose, let $U$ be a convex circled neighbourhood of zero in $\SSS^*(\RR^d)$ which we can take to be the absolute polar $B^{\circ}$ of a bounded set $B$ in $\SSS'^*(\RR^d)$ ($\SSS^*(\RR^d)$ is reflexive). Again, using the equicontinuity of $\{(1+\lambda)^{k_0} S^{(k_0)}_{\lambda}|\, \lambda>0\}$, the set $K=\bigcup_{\lambda>0} (1+\lambda)^{k_0}({}^tS^{(k_0)}_{\lambda})(B)$ is bounded in $\SSS^*(\RR^d)$ and its absolute polar $K^{\circ}$ is a neighbourhood of zero in $\SSS'^*(\RR^d)$. For $v\in L^2(\RR^d)\cap \tilde{C}_z^{-1} K^{\circ}$ ($\tilde{C}_z$ is defined by (\ref{definofc})), (\ref{equ1155}) implies $S^{\uwidehat{z}}v\in B^{\circ}=U$, i.e. $S^{\uwidehat{z}}$ extends to a continuous operator from $\SSS'^*(\RR^d)$ to $\SSS^*(\RR^d)$. (In fact one can prove that for $g\in\SSS'^*(\RR^d)$, $\lambda^{z-1} S^{(k_0)}_{\lambda} g$ is Pettis integrable and $S^{\uwidehat{z}}g$ is the Pettis integral $\gamma_{k_0}(z)\int_0^{\infty}\lambda^{z-1} S^{(k_0)}_{\lambda} g d\lambda$, but we will not need this fact.) Moreover, for the neighbourhood of zero $B^{\circ}$ of $\SSS^*(\RR^d)$ if we take $K$ as above, we readily see that $\gamma_{k_0}(\mathrm{Re}\, z)(\gamma_{k_0}(z))^{-1}S^{\uwidehat{z}}v\in B^{\circ}$ for all $v\in L^2(\RR^d)\cap K^{\circ}$ and $z\in\CC_{+,\tilde{c}}$ (cf. (\ref{inteq})), i.e. $\{\gamma_{k_0}(\mathrm{Re}\, z)(\gamma_{k_0}(z))^{-1}S^{\uwidehat{z}}|\, z\in\CC_{+,\tilde{c}}\}$ is equicontinuous in $\mathcal{L}(\SSS'^*(\RR^d),\SSS^*(\RR^d))$. Hence,
\beqs
\overline{A}^zv=\gamma_{k_0}(z)\int_0^{\infty}\lambda^{z-1}(b^{(k_0)}_{\lambda})^w v d\lambda+S^{\uwidehat{z}} v,\,\,\, \forall v\in D(\overline{A}^{[\mathrm{Re}\, z]+1}).
\eeqs
Next, we prove that for $v\in D(\overline{A}^{[\mathrm{Re}\, z]+1})$, $(a^{\uwidehat{z}})^w v\in L^2(\RR^d)$ and
\beqs
(a^{\uwidehat{z}})^w v=\gamma_{k_0}(z)\int_0^{\infty}\lambda^{z-1}(b^{(k_0)}_{\lambda})^w v d\lambda.
\eeqs
Fix $v\in D(\overline{A}^{[\mathrm{Re}\, z]+1})$ and denote by $\mathbf{V}$ the function $\lambda\mapsto \lambda^{z-1}(b^{(k_0)}_{\lambda})^w v$, $\RR_+\rightarrow L^2(\RR^d)$; it is continuous and Bochner integrable (cf. Proposition \ref{rkpropo117}). Since $\lambda\mapsto \|\mathbf{V}(\lambda)\|_{L^2(\RR^d)}$, $\RR_+\rightarrow [0,\infty)$, is continuous, for each $n\in\NN$, $n\geq 2$, and $j\in\{2,...,n^2\}$, there exists $\lambda_{j,n}\in [(j-1)/n,j/n]$ such that $\|\mathbf{V}(\lambda_{j,n})\|_{L^2(\RR^d)}=\min\{\|\mathbf{V}(\lambda)\|_{L^2(\RR^d)}|\, \lambda\in[(j-1)/n,j/n]\}$. For $n\in \NN$, $n\geq 2$, we define the following simple function on $\RR_+$ with values in $L^2(\RR^d),$
\beqs
\mathbf{V}_n(\lambda)=\left\{\begin{array}{ll}
0,\,\, \mbox{when}\,\, \lambda\in(0,1/n]\cup (n,\infty),\\
\mathbf{V}(\lambda_{j,n}),\,\, \mathrm{when}\,\, \lambda\in ((j-1)/n,j/n],\,\, j\in\{2,...,n^2\}.
\end{array}\right.
\eeqs
Then, $\|\mathbf{V}_n(\lambda)\|_{L^2(\RR^d)}\leq \|\mathbf{V}(\lambda)\|_{L^2(\RR^d)}$ for all $\lambda\in\RR_+$ and $n\geq 2$. Employing the continuity of $\mathbf{V}$, one easily verifies that $\mathbf{V}_n(\lambda)$ tends to $\mathbf{V}(\lambda)$ pointwise. Since $\mathbf{V}$ is Bochner integrable,the  dominated convergence theorem implies that $\mathbf{V}_n\rightarrow \mathbf{V}$ in $L^1(\RR_+;L^2(\RR^d))$ (the Bochner $L^1$ space). For each $n\in\NN$, $n\geq 2$, observe the function
\beqs
F_n(w)=\frac{\gamma_{k_0}(z)}{n}\sum_{j=2}^{n^2} \lambda_{j,n}^{z-1} b^{(k_0)}_{\lambda_{j,n}}(w),\,\,w\in\RR^{2d}.
\eeqs
Clearly, $F_n\in \Gamma_{A_p,\rho}^{*,\infty}(\RR^{2d})$ and $F_n^w$ extends to a continuous operator on $L^2(\RR^d)$. Moreover
\beqs
F_n^w v=\gamma_{k_0}(z)\int_0^{\infty} \mathbf{V}_n(\lambda)d\lambda.
\eeqs
Hence,
\beq\label{liminl2forsol}
\lim_{n\rightarrow \infty}F_n^w v=\gamma_{k_0}(z)\int_0^{\infty} \mathbf{V}(\lambda)d\lambda,\,\, \mbox{in}\,\, L^2(\RR^d).
\eeq
Next, we prove that $F_n\rightarrow a^{\uwidehat{z}}$ in $\Gamma_{A_p,\rho}^{*,\infty}(\RR^{2d})$. By Proposition \ref{rkpropo117}, $\lambda\mapsto b^{(k_0)}_{\lambda}$, $\RR_+\rightarrow \Gamma_{A_p,\rho}^{(M_p),\infty}(\RR^{2d};m)$, is continuous for some $m>0$ in the Beurling case and $\lambda\mapsto b^{(k_0)}_{\lambda}$, $\RR_+\rightarrow \Gamma_{A_p,\rho}^{\{M_p\},\infty}(\RR^{2d};h)$, is continuous for some $h>0$ in the Roumieu case. Combining (\ref{growthal}) and (\ref{estforthepower}) we deduce that there exists $m>0$ such that for every $h>0$ there exists $C>0$ (resp. there exists $h>0$ such that for every $m>0$ there exists $C>0$) such that
\beq\label{estforbing}
\left|D^{\alpha}_w b^{(k_0)}_{\lambda}(w)\right|\leq C\frac{h^{|\alpha|}A_{\alpha}e^{M(m|\xi|)}e^{M(m|x|)}}{(1+\lambda)^{k_0}\langle w\rangle^{\rho|\alpha|}},\,\, w\in\RR^{2d},\, \alpha\in\NN^{2d},\, \lambda>0.
\eeq
We consider first the Beurling case. Obviously, we can assume that this $m$ is the same as the one for the continuity of $\lambda\mapsto b^{(k_0)}_{\lambda}$, $\RR_+\rightarrow \Gamma_{A_p,\rho}^{(M_p),\infty}(\RR^{2d};m)$. Let $h'>0$ be arbitrary but fixed and let $C'\geq 1$ be a constant for which (\ref{estforbing}) holds for this $h'$. Let $0<\varepsilon<1$. There exist $0<r_1<\varepsilon/\left(4C'(|\gamma_{k_0}(z)|+1)\right)<1<r_2$ such that
\beq\label{hks775511}
|\gamma_{k_0}(z)|\int_0^{r_1}\frac{\lambda^{\mathrm{Re}z-1}}{(1+\lambda)^{k_0}}d\lambda+ |\gamma_{k_0}(z)|\int_{r_2}^{\infty} \frac{\lambda^{\mathrm{Re}z-1}}{(1+\lambda)^{k_0}}d\lambda\leq \frac{\varepsilon}{4C'}.
\eeq
If $\mathrm{Re}\, z>1$, $\lambda\mapsto \lambda^{\mathrm{Re}\, z-1}(1+\lambda)^{-k_0}$ is decreasing on $[c_1,\infty)$ for some $c_1>1$. In this case, without losing generality, we can assume that $r_2>c_1$. Observe that $\lambda\mapsto \lambda^{\mathrm{Re}\, z-1}(1+\lambda)^{-k_0}$ is decreasing on $\RR_+$ if $0<\mathrm{Re}\, z\leq 1$. As the mapping $\lambda\mapsto \lambda^{z-1}b^{(k_0)}_{\lambda}$, $[r_1/2,2r_2]\rightarrow \Gamma_{A_p,\rho}^{(M_p),\infty}(\RR^{2d};m)$, is uniformly continuous, there exists $\delta>0$ such that for arbitrary $\lambda',\lambda''\in[r_1/2,2r_2]$ with $|\lambda'-\lambda''|\leq \delta$ the following holds
\beqs
\|\lambda'^{z-1}b^{(k_0)}_{\lambda'}-\lambda''^{z-1}b^{(k_0)}_{\lambda''}\|_{\Gamma,h',m} \leq \varepsilon/\left(4(2r_2-r_1/2)|\gamma_{k_0}(z)|\right).
\eeqs
Pick $n_0\in\ZZ_+$ such that $n_0\geq \max\{\delta^{-1},4/r_1,4r_2\}$. Let $n\in\ZZ_+$, $n\geq n_0$, be arbitrary but fixed. From the definition of $n_0$, it follows that for fixed $n$ there exist $l,l'\in\{3,...,n^2-1\}$ such that $(l-1)/n\leq r_1/2<l/n<r_1$ and $r_2<(l'-1)/n<l'/n\leq 2r_2<(l'+1)/n$. Then
\begin{align*}
\big|D^{\alpha}a^{\uwidehat{z}}(w)&-D^{\alpha}F_n(w)\big|\\
&\leq |\gamma_{k_0}(z)|\int_0^{1/n} \left|\lambda^{z-1}D^{\alpha}b^{(k_0)}_{\lambda}(w)\right|d\lambda+ |\gamma_{k_0}(z)|\int_n^{\infty} \left|\lambda^{z-1}D^{\alpha}b^{(k_0)}_{\lambda}(w)\right|d\lambda\\
&{}\,\,\,\,+ |\gamma_{k_0}(z)|\sum_{j=2}^{n^2}\int_{(j-1)/n}^{j/n}\left|\lambda^{z-1} D^{\alpha}b^{(k_0)}_{\lambda}(w) -\lambda_{j,n}^{z-1}D^{\alpha}b^{(k_0)}_{\lambda_{j,n}}(w)\right|d\lambda\\
&\leq|\gamma_{k_0}(z)|\int_0^{l/n} \left|\lambda^{z-1}D^{\alpha}b^{(k_0)}_{\lambda}(w)\right|d\lambda+ |\gamma_{k_0}(z)|\int_{l'/n}^{\infty} \left|\lambda^{z-1}D^{\alpha}b^{(k_0)}_{\lambda}(w)\right|d\lambda\\
&{}\,\,\,\,+|\gamma_{k_0}(z)|\sum_{j=2}^l\int_{(j-1)/n}^{j/n} \left|\lambda_{j,n}^{z-1}D^{\alpha}b^{(k_0)}_{\lambda_{j,n}}(w)\right|d\lambda\\
&{}\,\,\,\,+ |\gamma_{k_0}(z)|\sum_{j=l'+1}^{n^2}\int_{(j-1)/n}^{j/n} \left|\lambda_{j,n}^{z-1}D^{\alpha}b^{(k_0)}_{\lambda_{j,n}}(w)\right|d\lambda\\
&{}\,\,\,\,+ |\gamma_{k_0}(z)|\sum_{j=l+1}^{l'}\int_{(j-1)/n}^{j/n}\left|\lambda^{z-1} D^{\alpha}b^{(k_0)}_{\lambda}(w) -\lambda_{j,n}^{z-1}D^{\alpha}b^{(k_0)}_{\lambda_{j,n}}(w)\right|d\lambda\\
&=I_1+I_2+I_3+I_4+I_5.
\end{align*}
Employing (\ref{estforbing}) and (\ref{hks775511}), we conclude
\beqs
I_1+I_2\leq\frac{\varepsilon}{4}\cdot\frac{h'^{|\alpha|}A_{\alpha}e^{M(m|\xi|)}e^{M(m|x|)}} {\langle w\rangle^{\rho|\alpha|}}.
\eeqs
Next, we estimate $I_3$. The inequality (\ref{estforbing}) implies
\beqs
I_3\leq C'|\gamma_{k_0}(z)|\frac{h'^{|\alpha|}A_{\alpha}e^{M(m|\xi|)}e^{M(m|x|)}}{\langle w\rangle^{\rho|\alpha|}}\sum_{j=2}^l\int_{(j-1)/n}^{j/n} \frac{\lambda_{j,n}^{\mathrm{Re}\,z-1}}{(1+\lambda_{j,n})^{k_0}}d\lambda.
\eeqs
If $0<\mathrm{Re}\, z\leq 1$, $\lambda\mapsto \lambda^{\mathrm{Re}z-1}(1+\lambda)^{-k_0}$ is decreasing, hence,
\beqs
\sum_{j=2}^l\int_{(j-1)/n}^{j/n} \frac{\lambda_{j,n}^{\mathrm{Re}\,z-1}}{(1+\lambda_{j,n})^{k_0}}d\lambda\leq\int_{0}^{r_1} \frac{\lambda^{\mathrm{Re}\,z-1}}{(1+\lambda)^{k_0}}d\lambda
\eeqs
and (\ref{hks775511}) yields $I_3\leq (\varepsilon/4)\cdot h'^{|\alpha|}A_{\alpha}e^{M(m|\xi|)}e^{M(m|x|)}\langle w\rangle^{-\rho|\alpha|}$. If $\mathrm{Re}\,z>1$, then
\beqs
\lambda_{j,n}^{\mathrm{Re}\,z-1}(1+\lambda_{j,n})^{-k_0}\leq 1,\,\,\, \forall j\in\{2,...,l\}.
\eeqs
Thus, by the choice of $r_1$, we arrive at the same estimate for $I_3$ in the case $\mathrm{Re}\,z>1$ as well. For $I_4$, observe that $\lambda\mapsto \lambda^{\mathrm{Re}\,z-1}(1+\lambda)^{-k_0}$ is decreasing on $[r_2,\infty)$ (by the choice of $r_2$), hence
\beqs
I_4&\leq&C'|\gamma_{k_0}(z)|\frac{h'^{|\alpha|}A_{\alpha}e^{M(m|\xi|)}e^{M(m|x|)}}{\langle w\rangle^{\rho|\alpha|}}\int_{r_2}^{\infty} \frac{\lambda^{\mathrm{Re}\,z-1}}{(1+\lambda)^{k_0}}d\lambda\\
&\leq&\frac{\varepsilon}{4}\cdot\frac{h'^{|\alpha|}A_{\alpha}e^{M(m|\xi|)}e^{M(m|x|)}} {\langle w\rangle^{\rho|\alpha|}}.
\eeqs
Lastly, $I_5\leq (\varepsilon/4)\cdot h'^{|\alpha|}A_{\alpha}e^{M(m|\xi|)}e^{M(m|x|)}\langle w\rangle^{-\rho|\alpha|}$. Thus, $\|a^{\uwidehat{z}}-F_n\|_{\Gamma,h',m}\leq \varepsilon$, for all $n\in\ZZ_+$, $n\geq n_0$. The proof in the Roumieu case is completely analogous and we omit it. This yields $F_n^w v\rightarrow (a^{\uwidehat{z}})^w v$ in $\SSS'^*(\RR^d)$. This and (\ref{liminl2forsol}) imply that $(a^{\uwidehat{z}})^w v\in L^2(\RR^d)$ and
\beqs
(a^{\uwidehat{z}})^w v=\gamma_{k_0}(z)\int_0^{\infty}\lambda^{z-1}(b^{(k_0)}_{\lambda})^w v d\lambda.
\eeqs
We proved that $\overline{A}^z v=(a^{\uwidehat{z}})^w v+ S^{\uwidehat{z}} v$ and $(a^{\uwidehat{z}})^w v\in L^2(\RR^d)$, for all $v\in D(\overline{A}^{[\mathrm{Re}\, z]+1})$. Hence, $\overline{A}^z -S^{\uwidehat{z}}$ is a closed extension of $(a^{\uwidehat{z}})^w|_{\SSS^*(\RR^d)}$. As $a^{\uwidehat{z}}$ is hypoelliptic, Proposition \ref{maximalreal} implies that $\overline{A}^z -S^{\uwidehat{z}}$ is a closed extension of the maximal realisation of $(a^{\uwidehat{z}})^w$ and as $(a^{\uwidehat{z}})^w v\in L^2(\RR^d)$, for all $v\in D(\overline{A}^{[\mathrm{Re}\, z]+1})$, we conclude that
\beqs
D(\overline{A}^z)=\{v\in L^2(\RR^d)|\, (a^{\uwidehat{z}})^w v\in L^2(\RR^d)\}.
\eeqs
\indent It remains to prove the last part of Theorem \ref{maint} $(vi)$ concerning the analyticity of the stated mappings. Since $\{(1+\lambda)^{k_0}S^{(k_0)}_{\lambda}|\, \lambda>0\}$ is bounded in $\mathcal{L}(L^2(\RR^d),L^2(\RR^d))$, the function $\lambda\mapsto \lambda^{z-1}\ln\lambda\, S^{(k_0)}_{\lambda}v$, $\RR_+\rightarrow L^2(\RR^d)$, is Bochner integrable for each $v\in L^2(\RR^d)$. As an easy consequence of this, we derive that for each $v\in L^2(\RR^d)$, the function $z\mapsto S^{\uwidehat{z}}v$, $\mathrm{int}\,\CC_{+,\tilde{c}}\rightarrow L^2(\RR^d)$, is analytic. Since $z\mapsto \overline{A}^z v$, $\mathrm{int}\,\CC_{+,\tilde{c}}\rightarrow L^2(\RR^d)$, is analytic for each $v\in D(A^{k_0})$ (see \cite[Theorem 3.1.5, p. 61]{Fractionalpowersbook}), we conclude the last part of Theorem \ref{maint} $(vi)$.

\section{Application: Semigroups generated by square roots of non-negative infinite order operators}\label{section6}

If $A$ is a non-negative operator with a dense domain in $L^2(\RR^d)$, then it is known  that $-A^{1/2}$ is the infinitesimal generator of an analytic semigroup (see Subsection \ref{sec_smgr_portk} below). The goal of this section is to apply Theorem \ref{maint} to prove that if a $A$ is the $L^2(\RR^d)$-closure of the Weyl quantisation of an appropriate hypoelliptic symbol in $\Gamma_{A_p,\rho}^{*,\infty}(\RR^{2d}),$ then all the operators in the semigroup are pseudodifferential with symbols in $\Gamma_{A_p,\rho}^{*,\infty}(\RR^{2d})$ modulo $*$-regularising operators. As it turns out, the heat parametrix is a crucial tool and thus we devote a whole subsection to it. We give the construction, in a slightly more general setting than what needed because the result is interesting by itself. However, before we proceed, we need a few preliminary estimates.

\subsection{Estimates for the derivative of $(b(w))^n$ and $e^{-tb(w)}$ for hypoelliptic $b\in\Gamma_{A_p,\rho}^{*,\infty}(\RR^{2d})$}

We need a few technical results.

\begin{lemma}\label{est_sequ_bi}(\cite[Lemma 3.2]{CPP})
Let $N_p$ be a sequence of positive numbers satisfying $(M.4)$ and $N_0 = N_1 = 1$. Then for all $\alpha,\beta\in\NN^d$ such that $\beta\leq \alpha$ and $1\leq |\beta|\leq |\alpha|-1$ the inequality ${\alpha\choose\beta}N_{\alpha-\beta}N_{\beta}\leq |\alpha|N_{|\alpha|-1}$ holds.
\end{lemma}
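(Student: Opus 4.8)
The plan is to reduce the multi-index statement to a one-dimensional inequality for the numbers $N_p$ and then exploit the log-convexity encoded in $(M.4)$. First I would pass from multi-indices to their lengths. Recall that by convention $N_\alpha$ means $N_{|\alpha|}$, so $N_{\alpha-\beta}=N_{|\alpha|-|\beta|}$ and $N_\beta=N_{|\beta|}$. Moreover, comparing the coefficient of $x^{|\beta|}$ in $\prod_{i=1}^d (1+x)^{\alpha_i}=(1+x)^{|\alpha|}$ gives $\binom{\alpha}{\beta}=\prod_i\binom{\alpha_i}{\beta_i}\leq\binom{|\alpha|}{|\beta|}$ (one nonnegative term in a sum of nonnegative terms). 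Hence, writing $p=|\alpha|$ and $q=|\beta|$, it suffices to prove that for all integers $p,q$ with $1\leq q\leq p-1$,
$$ \binom{p}{q}N_qN_{p-q}\leq p\,N_{p-1}. $$

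Next, set $b_p=N_p/p!$ for $p\in\NN$. Condition $(M.4)$ says precisely $b_p^2\leq b_{p-1}b_{p+1}$ for $p\in\ZZ_+$, i.e. the ratios $r_m:=b_{m+1}/b_m$ are non-decreasing in $m\geq0$, so $(b_p)$ is log-convex and positive. Dividing the target inequality by $p!$ turns it into $b_qb_{p-q}\leq b_{p-1}$; since $N_0=N_1=1$ we have $b_0=b_1=1$, so this is exactly $b_qb_{p-q}\leq b_1b_{p-1}$.

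I would then use the elementary fact that for a log-convex positive sequence a product $b_jb_k$ with $j+k$ fixed grows as the pair is pulled apart: if $1\leq j\leq k$ then $b_jb_k\leq b_{j-1}b_{k+1}$, because this is equivalent to $b_{k+1}/b_k\geq b_j/b_{j-1}$, i.e. $r_k\geq r_{j-1}$, which holds since $k\geq j>j-1$ and the $r_m$ are non-decreasing. Assuming without loss of generality $q\leq p-q$ and iterating this step $q-1$ times moves $(q,p-q)$ down to $(1,p-1)$ (at each stage one checks $1\leq j\leq k$ is preserved), yielding $b_qb_{p-q}\leq b_1b_{p-1}=b_{p-1}$. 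Multiplying back by $p!$ and using $p!\,b_{p-1}=p!\,N_{p-1}/(p-1)!=p\,N_{p-1}$ completes the proof.

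As for the main obstacle: there is essentially none of substance here, the argument being a routine manipulation. The one point requiring a moment's care is the choice of normalization — working with the sequence $b_p=N_p/p!$ that $(M.4)$ renders log-convex, rather than with $N_p$ directly — since the cruder estimate $\binom{p}{q}N_qN_{p-q}\leq N_p$ (which also follows from log-convexity, by pulling the pair all the way out to $(0,p)$) is too weak to give the claimed bound $p\,N_{p-1}$.
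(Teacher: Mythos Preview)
Your argument is correct. The paper does not give its own proof of this lemma but simply cites \cite[Lemma 3.2]{CPP}, so there is no in-paper proof to compare against; your reduction to lengths via $\binom{\alpha}{\beta}\leq\binom{|\alpha|}{|\beta|}$ followed by the log-convexity of $b_p=N_p/p!$ (which is exactly the content of $(M.4)$) is the natural route and is presumably what the cited reference does as well.
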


\begin{lemma}\label{est_ontheseqa}
Let $N_p$ be a sequence of positive numbers such that $N_0=N_1=1$. Assume that $N_p$ satisfies $(M.1)$. Then for all $j,k\in\ZZ_+$ such that $j\leq k$, $N_jN_{k_1}\cdot\ldots \cdot N_{k_j}\leq N_k$ for all $k_1,\ldots,k_j\in\ZZ_+$ such that $k_1+\ldots+k_j=k$.
\end{lemma}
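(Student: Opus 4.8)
The plan is to argue by induction on $j$, using a single structural consequence of $(M.1)$: with the convention $N_0=1$, the ratios $n_p:=N_p/N_{p-1}$, $p\in\ZZ_+$, form a non-decreasing sequence (this is exactly what $(M.1)$ asserts, after dividing $N_p^2\leq N_{p-1}N_{p+1}$ by $N_pN_{p-1}$), one has $n_1=1$ since $N_0=N_1=1$, hence $n_p\geq1$ for all $p$ and $N_p=\prod_{i=1}^p n_i$.

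The first step is an auxiliary estimate: for all $a,b\in\ZZ_+$,
\beqs
\frac{N_{a+b}}{N_aN_b}\geq n_{\max\{a,b\}+1}.
\eeqs
Indeed, writing $N_{a+b}/(N_aN_b)=\prod_{i=1}^{\min\{a,b\}}n_{\max\{a,b\}+i}/n_i$ after cancelling the common block, and using that $(n_p)$ is non-decreasing so that each factor is $\geq1$, one discards all but the $i=1$ factor and invokes $n_1=1$; this already contains $N_aN_b\leq N_{a+b}$. Then comes the induction. For $j=1$ the statement is the identity $N_1N_{k_1}=N_k$. For $j\geq2$ put $k'=k_1+\cdots+k_{j-1}$, so that $k=k'+k_j$ with $k_j\geq1$ and $k'\geq j-1\geq1$; the induction hypothesis applied to $j-1$ and the decomposition $k_1+\cdots+k_{j-1}=k'$ gives $N_{j-1}N_{k_1}\cdots N_{k_{j-1}}\leq N_{k'}$, whence
\beqs
N_jN_{k_1}\cdots N_{k_j}\leq \frac{N_j}{N_{j-1}}\,N_{k'}N_{k_j}=n_j\,N_{k'}N_{k_j}.
\eeqs
Applying the auxiliary estimate with $a=k'$, $b=k_j$, and using $\max\{k',k_j\}+1\geq k'+1\geq j$ together with the monotonicity of $(n_p)$, we obtain $N_{k'+k_j}/(N_{k'}N_{k_j})\geq n_{\max\{k',k_j\}+1}\geq n_j$, i.e. $n_j\,N_{k'}N_{k_j}\leq N_{k'+k_j}=N_k$, which closes the induction.

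The only point needing a little care is that the extra factor $n_j=N_j/N_{j-1}$, produced when replacing $N_{j-1}$ by $N_j$, must be absorbed into the slack in $N_{k'+k_j}\geq N_{k'}N_{k_j}$; this is precisely where the hypothesis $k'\geq j-1$ — equivalently, that each summand $k_m$ is a positive integer — is used, in combination with the monotonicity of $(n_p)$. Everything else is routine bookkeeping, and no property of $N_p$ beyond $(M.1)$ and $N_0=N_1=1$ is required.
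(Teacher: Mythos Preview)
Your proof is correct. Both arguments rest on the monotonicity of the ratios $n_p=N_p/N_{p-1}$ implied by $(M.1)$, but they organise the induction differently: the paper fixes $j$ and inducts on $k$, at each step peeling off a single unit from some $k_s\geq2$ and using only the one-line estimate $N_{k_s}/N_{k_s-1}\leq N_{k+1}/N_k$; you instead induct on $j$, peeling off the full factor $N_{k_j}$ at each step, which forces you to absorb the extra $n_j$ via the sharper auxiliary bound $N_{a+b}/(N_aN_b)\geq n_{\max\{a,b\}+1}$. The paper's route is shorter and avoids any auxiliary lemma; your route yields a quantitative refinement of $N_aN_b\leq N_{a+b}$ that may be of independent interest, at the cost of a slightly longer argument.
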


\begin{proof} Let $j\in\ZZ_+$ be arbitrary but fixed. The proof is by induction on $k$. For $k=j$ the claim is trivial. Assume that it holds for some $k\in\ZZ_+$. To prove it for $k+1,$ let $k_1,\ldots ,k_j$ be positive integers with sum $k+1$. There exists $s\in\{1,\ldots,j\}$ such that $k_s\geq 2$. We have
\beqs
N_jN_{k_1}\cdot\ldots \cdot N_{k_j}=N_jN_{k_1}\cdot\ldots \cdot N_{k_s-1}\cdot\ldots \cdot N_{k_j}\cdot N_{k_s}/N_{k_s-1}\leq N_kN_{k_s}/N_{k_s-1}.
\eeqs
As $(M.1)$ implies $N_{k_s}/N_{k_s-1}\leq N_{k+1}/N_k$, the proof is completed.
\end{proof}

We recall the following multidimensional variant of the Fa\`a di Bruno formula (see \cite[Corollary 2.10]{faadib}).

\begin{proposition}(\cite{faadib})
Let $|\alpha|=n\geq 1$ and $h(x_1,...,x_d)=f(g(x_1,...,x_d))$ with $g\in \mathcal{C}^{n}$ in a neighbourhood of $x^0$ and $f\in\mathcal{C}^n$ in a neighbourhood of $y^0=g(x^0)$. Then
\beqs
\partial^{\alpha}h(x^0)=\sum_{r=1}^{n}f^{(r)}(y^0)\sum_{p(\alpha,r)}\alpha!\prod_{j=1}^n \frac{\left(\partial^{\alpha^{(j)}}g(x^0)\right)^{k_j}}{k_j! \left(\alpha^{(j)}!\right)^{k_j}},
\eeqs
where
\beqs
p(\alpha,r)&=&\Big\{\left(k_1,...,k_n; \alpha^{(1)},...,\alpha^{(n)}\right)\Big|\, \mbox{for some}\, 1\leq s\leq n, k_j=0 \mbox{ and } \alpha^{(j)}=0\\
&{}&\mbox{for } 1\leq j\leq n-s;\, k_j>0 \mbox{ for } n-s+1\leq j\leq n; \mbox{ and}\\
&{}&0\prec \alpha^{(n-s+1)}\prec...\prec \alpha^{(n)} \mbox{ are such that}\\
&{}&\sum_{j=1}^n k_j=r,\, \sum_{j=1}^n k_j\alpha^{(j)}=\alpha\Big\}.
\eeqs
\end{proposition}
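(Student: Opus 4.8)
The plan is to derive the formula from the \emph{set-partition form} of the chain rule, which is more transparent than a direct induction on $|\alpha|$. Fix $x^0$ and $y^0=g(x^0)$ and encode $\alpha$ by a finite ground set: choose a set $S$ with $|S|=n$ and a map $\iota\colon S\to\{1,\dots,d\}$ with $|\iota^{-1}(i)|=\alpha_i$ for every $i$. For $B\subseteq S$ let $\alpha(B)\in\NN^d$ be the multi-index with $\alpha(B)_i=|B\cap\iota^{-1}(i)|$, so $\alpha(S)=\alpha$, and write $\partial^{(B)}=\prod_{i}\partial_i^{\alpha(B)_i}$.

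First I would establish the set-partition chain rule
\[
\partial^\alpha h(x^0)=\sum_{\pi\in\Pi(S)}f^{(|\pi|)}(y^0)\prod_{B\in\pi}\bigl(\partial^{(B)}g\bigr)(x^0),
\]
where $\Pi(S)$ denotes the lattice of partitions of $S$ and $|\pi|$ the number of blocks, by induction on $|S|$. The case $|S|=1$ is the ordinary chain rule. For the inductive step one picks $s_0\in S$, applies $\partial_{\iota(s_0)}$ to the identity for $S\setminus\{s_0\}$, and uses the product rule together with $\partial_{\iota(s_0)}\bigl(f^{(r)}(g)\bigr)=f^{(r+1)}(g)\,\partial_{\iota(s_0)}g$ and $\partial_{\iota(s_0)}\partial^{(B)}g=\partial^{(B\cup\{s_0\})}g$: the term obtained by differentiating the scalar factor $f^{(|\pi|)}(g)$ corresponds to adjoining $\{s_0\}$ to $\pi$ as a new singleton block, and the term obtained by differentiating a factor $\partial^{(B)}g$ corresponds to inserting $s_0$ into the block $B$; these operations enumerate $\Pi(S)$ from $\Pi(S\setminus\{s_0\})$ exactly once each. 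The hypotheses $f\in\mathcal{C}^n$, $g\in\mathcal{C}^n$ are precisely what is needed, since every partition occurring has at most $n$ blocks and every block has at most $n$ elements.

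The second step is to collect equal summands. Fix a total order $\prec$ on $\NN^d$. Given $\pi\in\Pi(S)$, every block has $\alpha(B)\succ 0$; list the distinct values $0\prec\alpha^{(n-s+1)}\prec\cdots\prec\alpha^{(n)}$ attained by $B\mapsto\alpha(B)$ (setting $\alpha^{(j)}=0$, $k_j=0$ for $j\le n-s$), and let $k_j$ be the number of blocks of ``type'' $\alpha^{(j)}$. Then $\sum_j k_j=|\pi|=:r$ and $\sum_j k_j\alpha^{(j)}=\alpha$, so $(k_1,\dots,k_n;\alpha^{(1)},\dots,\alpha^{(n)})\in p(\alpha,r)$ and $\prod_{B\in\pi}\partial^{(B)}g=\prod_{j=1}^n(\partial^{\alpha^{(j)}}g)^{k_j}$. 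Conversely, for a fixed datum in $p(\alpha,r)$ the number of $\pi$ realising it is $\frac{\alpha!}{\prod_{j=1}^n k_j!\,(\alpha^{(j)}!)^{k_j}}$: distributing, for each coordinate $i$, the $\alpha_i$ elements of $\iota^{-1}(i)$ among the $r$ (temporarily ordered) blocks with the prescribed types contributes $\prod_i\alpha_i!/\prod_j(\alpha^{(j)}_i!)^{k_j}=\alpha!/\prod_j(\alpha^{(j)}!)^{k_j}$, after which one divides by $\prod_j k_j!$ for the interchangeable blocks of a common type. Substituting into the set-partition chain rule and summing over $r$ from $1$ to $n$ yields the asserted formula.

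I expect the only genuinely delicate point to be this counting: making the correspondence between set partitions of $S$ and the combinatorial data in $p(\alpha,r)$ precise, and checking that the multiplicity is exactly $\alpha!/\prod_j k_j!(\alpha^{(j)}!)^{k_j}$ — in particular keeping the three independent sources of overcounting apart (which preimages enter which block, the internal ordering inside a block, and the ordering among blocks of equal type), and handling the degenerate extremes $r=n$ (all blocks singletons) and $r=1$ (a single block). An alternative is the direct induction on $n=|\alpha|$ of \cite{faadib}, differentiating the displayed identity in one coordinate and verifying the resulting recursion on the sets $p(\alpha,r)$; that route avoids set partitions altogether but turns the recursion bookkeeping itself into the main obstacle.
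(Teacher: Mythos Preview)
The paper does not prove this proposition at all: it is quoted verbatim as \cite[Corollary~2.10]{faadib} and used as a black box in the subsequent lemmas. So there is no ``paper's own proof'' to compare against.

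That said, your sketch is a correct and standard derivation. The set-partition chain rule you state is exactly the combinatorial form of the higher chain rule for a composition $f\circ g$ with scalar-valued $g$, and your inductive argument (adjoin $s_0$ either as a new singleton block or into an existing block) is the usual bijective proof. The counting in the second step is also right: with the blocks temporarily ordered, distributing the $\alpha_i$ elements of colour $i$ among the $r$ blocks according to the prescribed types gives the multinomial $\alpha_i!/\prod_j(\alpha^{(j)}_i!)^{k_j}$ for each $i$, hence $\alpha!/\prod_j(\alpha^{(j)}!)^{k_j}$ in total, and dividing by $\prod_j k_j!$ to unorder blocks of equal type yields the claimed multiplicity. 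The role of the order $\prec$ is only to make the representative $(k_1,\dots,k_n;\alpha^{(1)},\dots,\alpha^{(n)})$ unique, which is precisely how $p(\alpha,r)$ is set up in the statement.

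For context, the original reference \cite{faadib} proceeds by the alternative you mention at the end --- a direct induction on $|\alpha|$ at the level of the multi-index data $p(\alpha,r)$ --- rather than passing through set partitions. Your route is arguably cleaner conceptually (the combinatorics is isolated in a single counting lemma), while the approach in \cite{faadib} stays entirely within multi-index notation and avoids introducing the auxiliary ground set $S$. Either way, nothing further is needed for the purposes of this paper, which only \emph{applies} the formula.
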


In the above formula, the convention $0^0=1$ is used. The relation $\prec$ used in this proposition is defined in the following way (cf. \cite{faadib}). We say that $\beta\prec\alpha$ when one of the following holds:
\begin{itemize}
\item[$(i)$] $|\beta|<|\alpha|$;
\item[$(ii)$] $|\beta|=|\alpha|$ and $\beta_1<\alpha_1$;
\item[$(iii)$] $|\beta|=|\alpha|$, $\beta_1=\alpha_1,...,\beta_k=\alpha_k$ and $\beta_{k+1}<\alpha_{k+1}$ for some $1\leq k<d$.
\end{itemize}

\begin{lemma}\label{1110001}
For $\beta\in\NN^{d}\backslash\{0\}$, the following estimate holds:
\beqs
\sum_{r=1}^{|\beta|}{|\beta|\choose r}\sum_{p(\beta,r)}\frac{r!}{k_1!\cdot ... \cdot k_{|\beta|}}\leq 2^{|\beta|(d+1)}.
\eeqs
\end{lemma}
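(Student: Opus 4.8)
The plan is to compute the left-hand side \emph{exactly} and then to estimate the resulting product of binomial coefficients; write $n=|\beta|$ throughout.

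\textbf{Step 1: reinterpret the inner sum.} A tuple in $p(\beta,r)$ amounts to a choice of finitely many pairwise distinct nonzero multi-indices $\gamma\in\NN^d$ with positive multiplicities $k_\gamma$ subject to $\sum_\gamma k_\gamma=r$ and $\sum_\gamma k_\gamma\gamma=\beta$ (the strictly increasing chain $0\prec\alpha^{(n-s+1)}\prec\dots\prec\alpha^{(n)}$ merely fixes a canonical listing of these multi-indices). For such a configuration, $r!/(k_1!\cdots k_n!)=r!/\prod_\gamma k_\gamma!$ is precisely the number of ordered $r$-tuples $(\gamma_1,\dots,\gamma_r)$ whose multiset of entries is the prescribed one. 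Setting
\[
c_r(\beta):=\#\bigl\{(\gamma_1,\dots,\gamma_r)\in(\NN^d\setminus\{0\})^r\ \big|\ \gamma_1+\cdots+\gamma_r=\beta\bigr\},
\]
I thus obtain $\sum_{p(\beta,r)}r!/(k_1!\cdots k_n!)=c_r(\beta)$. Since each $\gamma_i$ has $|\gamma_i|\ge 1$, one has $c_r(\beta)=0$ for $r>n$, while $c_0(\beta)=0$ because $\beta\neq 0$; hence the sum in the statement equals $\sum_{r\ge 0}\binom{n}{r}c_r(\beta)$.

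\textbf{Step 2: evaluate the weighted sum.} Using the formal variable $z=(z_1,\dots,z_d)$ and $z^\gamma=z_1^{\gamma_1}\cdots z_d^{\gamma_d}$, the number $c_r(\beta)$ is the coefficient of $z^\beta$ in $\bigl(\sum_{\gamma\neq 0}z^\gamma\bigr)^r$, so (extracting a fixed coefficient involves only finitely many $r$)
\[
\sum_{r\ge 0}\binom{n}{r}c_r(\beta)=\bigl[z^\beta\bigr]\Bigl(1+\sum_{\gamma\in\NN^d\setminus\{0\}}z^\gamma\Bigr)^{n}=\bigl[z^\beta\bigr]\prod_{l=1}^{d}(1-z_l)^{-n}=\prod_{l=1}^{d}\binom{\beta_l+n-1}{n-1}.
\]
(Equivalently, with no power series at all: both sides count the assignments of a multi-index in $\NN^d$ to each of $n$ ordered boxes with total $\beta$ --- the product side coordinatewise by stars and bars, the sum side by first choosing which $r$ boxes carry a nonzero multi-index.)

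\textbf{Step 3: estimate.} From $\binom{\beta_l+n-1}{n-1}=\binom{\beta_l+n-1}{\beta_l}\le 2^{\beta_l+n-1}$ and $\sum_{l=1}^d(\beta_l+n-1)=|\beta|+d|\beta|-d$,
\[
\sum_{r=1}^{|\beta|}\binom{|\beta|}{r}\sum_{p(\beta,r)}\frac{r!}{k_1!\cdots k_{|\beta|}!}=\prod_{l=1}^{d}\binom{\beta_l+n-1}{n-1}\le 2^{(d+1)|\beta|-d}\le 2^{(d+1)|\beta|},
\]
which is the asserted bound (in fact with the slightly sharper exponent $(d+1)|\beta|-d$). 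The only delicate point is the bookkeeping of Step 1 --- matching $p(\beta,r)$ with multisets of nonzero multi-indices and the multinomial coefficient with the count of their orderings, and checking that the range $r=1,\dots,|\beta|$ already exhausts $r\ge 0$; the generating-function identity and the final estimate are then routine.
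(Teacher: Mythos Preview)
Your proof is correct. Both you and the paper arrive at the same exact identity
\[
\sum_{r=1}^{|\beta|}\binom{|\beta|}{r}\sum_{p(\beta,r)}\frac{r!}{k_1!\cdots k_{|\beta|}!}=\prod_{l=1}^{d}\binom{|\beta|+\beta_l-1}{\beta_l},
\]
and then bound each binomial factor by a power of $2$ in essentially the same way. The difference lies in how the identity is obtained. The paper applies the Fa\`a di Bruno formula itself to the concrete composite $h(x)=f(g(x))$ with $f(\lambda)=\lambda^{|\beta|}$ and $g(x)=\prod_l(-x_l)^{-1}$, evaluated at $x=(-1,\dots,-1)$: computing $\partial^\beta h$ directly gives the product of binomials, while Fa\`a di Bruno gives the double sum, and equating the two yields the identity. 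You instead recognise $\sum_{p(\beta,r)} r!/\prod k_j!$ as the number $c_r(\beta)$ of ordered compositions of $\beta$ into $r$ nonzero multi-indices, and then evaluate $\sum_r\binom{n}{r}c_r(\beta)$ via the generating-function identity $[z^\beta](1+\sum_{\gamma\neq0}z^\gamma)^n=[z^\beta]\prod_l(1-z_l)^{-n}$. The paper's route is slick in that it reuses the very formula being analysed; your combinatorial route is more self-contained and makes the structure of $p(\beta,r)$ transparent, and it yields the slightly sharper exponent $(d+1)|\beta|-d$.
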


\begin{proof} Let $f(\lambda)=\lambda^{|\beta|}$ and $g(x)=(-x_1)^{-1}\cdot...\cdot(-x_d)^{-1}$. We apply the Fa\`a di Bruno formula to $\partial^{\beta}\left(f(g(x))\right)$ at $x=(-1,...,-1)$:
\beqs
\prod_{\substack{j=1\\ \beta_j\neq 0}}^d \left(|\beta|(|\beta|+1)\cdot...\cdot(|\beta|+\beta_j-1)\right)&=&\partial^{\beta}(f\circ g)(-1,\ldots,-1)\\
&=& \sum_{r=1}^{|\beta|}\frac{|\beta|!}{(|\beta|-r)!} \sum_{p(\beta,r)}\beta!\prod_{j=1}^{|\beta|}\frac{1}{k_j!}.
\eeqs
Hence, we infer that
\beqs
\prod_{\substack{j=1\\ \beta_j\neq 0}}^d \frac{|\beta|(|\beta|+1)\cdot...\cdot(|\beta|+\beta_j-1)}{\beta_j!}= \sum_{r=1}^{|\beta|}{|\beta|\choose r} \sum_{p(\beta,r)}\frac{r!}{k_1!\cdot ... \cdot k_{|\beta|}}.
\eeqs
From this, the desired inequality follows, since
\beqs
\frac{|\beta|(|\beta|+1)\cdot...\cdot(|\beta|+\beta_j-1)}{\beta_j!}=\frac{(|\beta|+\beta_j)!} {\beta_j!(|\beta|-1)!(|\beta|+\beta_j)}\leq {|\beta|+\beta_j\choose \beta_j}\leq 2^{|\beta|+\beta_j}.
\eeqs
\end{proof}

\begin{lemma}\label{flp11}
Let $b\in\Gamma^{*,\infty}_{A_p,\rho}(\RR^{2d})$ be hypoelliptic. Then for every $h>0$ there exists $C>0$ (resp. there exist $h,C>0$) such that
\beqs
\left|D^n_tD^{\alpha}_w (e^{-tb(w)})\right|\leq C2^nh^{|\alpha|}A_{\alpha}\langle w\rangle^{-\rho|\alpha|}|b(w)|^ne^{-t\mathrm{Re}\, b(w)} \sum_{r=0}^{|\alpha|}\frac{|t|^r|b(w)|^r}{r!},
\eeqs
for all $\alpha\in\NN^{2d}$, $n\in\NN$, $t\in\RR$, $w\in Q^c_B$, where $B$ is the constant from the hypoellipticity conditions (\ref{dd1}) and (\ref{dd2}) on $b$. If these hold for all $w\in\RR^{2d}$ (i.e. if $B=0$), then the above estimate holds for all $w\in\RR^{2d}$ as well.
\end{lemma}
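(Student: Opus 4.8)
The plan is to obtain the estimate by differentiating $e^{-tb(w)}$ with respect to $w$ via the Fa\`a di Bruno formula and controlling the resulting sum using the hypoellipticity bound \eqref{dd2} on $b$, the condition $(M.4)$ on $A_p$, and the combinatorial identities assembled in Lemmas \ref{est_sequ_bi}, \ref{est_ontheseqa}, and \ref{1110001}. First I would treat the $t$-derivatives trivially: since $D_t(e^{-tb(w)}) = i b(w) e^{-tb(w)}$ (recall $D_t = i^{-1}\partial_t$), applying $D_t^n$ produces $i^n b(w)^n e^{-tb(w)}$, which accounts for the factors $2^n$ (crudely) and $|b(w)|^n$, and reduces everything to the case $n=0$, i.e. to estimating $D^\alpha_w(e^{-tb(w)})$ for $\alpha \in \NN^{2d}$, $w \in Q^c_B$.

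Next, for fixed $w \in Q^c_B$, I would apply the multidimensional Fa\`a di Bruno formula with outer function $f(\lambda) = e^{-t\lambda}$ (so $f^{(r)}(b(w)) = (-t)^r e^{-tb(w)}$) and inner function $g = b$, obtaining
\[
D^\alpha_w(e^{-tb(w)}) = \sum_{r=1}^{|\alpha|} (-t)^r e^{-tb(w)} \sum_{p(\alpha,r)} \alpha! \prod_{j=1}^{|\alpha|} \frac{\left(D^{\alpha^{(j)}}_w b(w)\right)^{k_j}}{k_j!\,(\alpha^{(j)}!)^{k_j}},
\]
plus the $r=0$ term $e^{-tb(w)}$ when $\alpha = 0$. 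For each factor $D^{\alpha^{(j)}}_w b(w)$ I would insert \eqref{dd2}: $|D^{\alpha^{(j)}}_w b(w)| \le C h^{|\alpha^{(j)}|} |b(w)| A_{\alpha^{(j)}} \langle w\rangle^{-\rho|\alpha^{(j)}|}$. Since in a given term $\sum_j k_j \alpha^{(j)} = \alpha$ and $\sum_j k_j = r$, multiplying these bounds over $j$ collects: a constant $C^r$ (absorbed into a new $C$ after summing the finitely many values $r \le |\alpha|$ — but care is needed here, see below), a power $h^{|\alpha|}$, a power $|b(w)|^r$, a power $\langle w\rangle^{-\rho|\alpha|}$, and a product of the sequence terms $\prod_j A_{\alpha^{(j)}}^{k_j}$. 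The sequence product is handled by Lemma \ref{est_ontheseqa} applied to $N_p = A_p$ (which satisfies $(M.1)$): since the multi-index $\alpha^{(j)}$ repeated $k_j$ times, for $r = \sum k_j$ indices summing to $\alpha$, gives $A_r \prod_j A_{\alpha^{(j)}}^{k_j} \le A_{|\alpha|}$, hence $\prod_j A_{\alpha^{(j)}}^{k_j} \le A_{|\alpha|}/A_r \le A_{|\alpha|}$. Combined with $\alpha! \prod_j (k_j! (\alpha^{(j)}!)^{k_j})^{-1}$, the remaining purely combinatorial sum $\sum_{r} \binom{|\alpha|}{r} \sum_{p(\alpha,r)} r!/(k_1!\cdots k_{|\alpha|}!)$ is bounded by $2^{|\alpha|(2d+1)}$ via Lemma \ref{1110001} (with ambient dimension $2d$). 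After renaming $h \mapsto 2^{2d+1} h$ to absorb this factor, and grouping $(-t)^r$ with $|b(w)|^r$ and the $1/r!$ that survives from the multinomial bookkeeping, one arrives exactly at $\sum_{r=0}^{|\alpha|} |t|^r |b(w)|^r / r!$ times $h^{|\alpha|} A_\alpha \langle w\rangle^{-\rho|\alpha|} |b(w)|^0$, and reinstating the $t$- and $n$-dependence gives the claimed inequality on $Q^c_B$; if $B = 0$ the same argument runs verbatim on all of $\RR^{2d}$.

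The main obstacle is the uniformity of the constant $C$ across all $r \le |\alpha|$ and all $\alpha$: the naive bound produces a factor $C^r$ from iterating \eqref{dd2}, which is not absorbable into a single $C$ since $r$ is unbounded. This is precisely why one must be careful with the Beurling versus Roumieu bookkeeping of $h$. In the Roumieu case $h$ is at our disposal, so $C^r h^{|\alpha|} \le C (Ch)^{|\alpha|}$ (since $r \le |\alpha|$) and we simply enlarge $h$. In the Beurling case, where the estimate must hold for every $h$, one instead notes that for each target $h' > 0$ we may apply \eqref{dd2} with the smaller parameter $h'/C$ (valid since \eqref{dd2} holds for all $h$), so that $C \cdot (h'/C)^{|\alpha^{(j)}|}$ multiplied out over the factors yields $C^r (h'/C)^{|\alpha|} \le C (h')^{|\alpha|}$ because $r \le |\alpha|$ and $C \ge 1$. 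Tracking this parameter juggling, together with confirming that the $1/r!$ genuinely survives (it comes from the multinomial coefficient $\alpha!/\prod(\alpha^{(j)}!)^{k_j}$ minus what Lemma \ref{1110001} consumes — more precisely one keeps $r!$ in Lemma \ref{1110001}'s sum but pairs the leftover $1/r!$ with $t^r$), is the only genuinely delicate part; everything else is the routine combinatorics already packaged in the cited lemmas.
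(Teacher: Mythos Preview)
Your overall strategy matches the paper's, but there are two genuine gaps.

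First, the reduction to $n=0$ does not work as you describe. Applying $D_t^n$ indeed gives $i^n b(w)^n e^{-tb(w)}$, but then $D^\alpha_w$ must act on the \emph{product} $b(w)^n e^{-tb(w)}$, not on $e^{-tb(w)}$ alone. Via Leibniz you therefore need a separate uniform estimate on $D^\beta_w\big((b(w))^n\big)$; this is where the factor $2^n$ actually originates (from $\sum_r \binom{n}{r}\le 2^n$ after a second Fa\`a di Bruno application with outer function $\lambda\mapsto\lambda^n$). The paper carries this step out explicitly (it becomes Corollary~\ref{flp1133}) and only then combines the two estimates by Leibniz.

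Second, your treatment of the $n=0$ case is too crude at the sequence step. Applying Lemma~\ref{est_ontheseqa} with $N_p=A_p$ gives $\prod_j A_{\alpha^{(j)}}^{k_j}\le A_{|\alpha|}/A_r$, but the leftover combinatorial factor is then $\alpha!\big/\prod_j k_j!(\alpha^{(j)}!)^{k_j}$, which is \emph{not} the expression $\binom{|\alpha|}{r}\,r!/\prod k_j!$ that Lemma~\ref{1110001} controls. In one variable that leftover sum over $p(\alpha,r)$ is the Stirling number $S(|\alpha|,r)$, and $\max_r S(n,r)$ grows faster than any $C^n$, so the argument cannot close. The paper instead applies Lemma~\ref{est_ontheseqa} to $N_p=A_p/p!$ (log-convex by $(M.4)$): this absorbs the factorials $(|\alpha^{(j)}|!)^{k_j}$ into the sequence bound and produces $A_\alpha/|\alpha|!$, after which $\alpha!/|\alpha|!\le1$ kills the dangerous multinomial and only $\sum_{p(\alpha,r)} 1/\prod k_j! = (1/r!)\sum_{p(\alpha,r)} r!/\prod k_j!$ remains, to which Lemma~\ref{1110001} genuinely applies. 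The constants $C^r$ are handled along the way by $(M.3)'$ in the form $C^n\le C'A_n/n!$, which also resolves the Beurling bookkeeping; your proposed fix ``apply \eqref{dd2} with parameter $h'/C$'' is circular, since the constant $C$ in \eqref{dd2} already depends on the chosen parameter.
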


\begin{proof} We apply the Fa\`a di Bruno formula to $\partial^{\alpha}(f(g(w)))$, $\alpha\neq 0$, with $g(w)=b(w)$ and $f(\lambda)=e^{-t\lambda}$. By the hypoellipticity of $b$, we see that for each $h>0$ there exists $C>0$ (resp. there exist $h,C>0$) such that
\beqs
\left|D^{\alpha}_w (e^{-tb(w)})\right|&\leq& h^{|\alpha|}\langle w\rangle^{-\rho|\alpha|}\sum_{r=1}^{|\alpha|}|t|^re^{-t\mathrm{Re}\, b(w)}|b(w)|^r\sum_{p(\alpha,r)}\alpha!\prod_{j=1}^{|\alpha|} \frac{C^{k_j}A_{\alpha^{(j)}}^{k_j}}{k_j! \left(\alpha^{(j)}!\right)^{k_j}}\\
&\leq&(2dh)^{|\alpha|}\langle w\rangle^{-\rho|\alpha|}e^{-t\mathrm{Re}\, b(w)}\sum_{r=1}^{|\alpha|}|t|^r|b(w)|^r\sum_{p(\alpha,r)}\alpha!\prod_{j=1}^{|\alpha|} \frac{C^{k_j}A_{\alpha^{(j)}}^{k_j}}{k_j!(|\alpha^{(j)}|!)^{k_j}}.
\eeqs
We consider first the Beurling case. Let $h_1>0$ be arbitrary but fixed and choose $h=h_1/(2^{2d+2}d)$. For this $h$ there exists $C\geq 1$ such that the above estimate holds. As $A_p$ satisfies $(M.3)'$ there exists $C'\geq C$ such that $C^n\leq C'A_n/n!$, $\forall n\in\NN$. Again, by $(M.3)'$ for $A_p$, for this $C'$, there exists $C''\geq C'$ such that $C'^n\leq C''A_n/n!$, $\forall n\in\NN$. By the definition of $p(\alpha,r)$, $k_j\neq0$ if and only if $\alpha^{(j)}\neq 0$. As $A_p/p!$ satisfies $(M.1)$ (by $(M.4)$ for $A_p$), Lemma \ref{est_ontheseqa} implies
\beqs
C^{k_j}A_{\alpha^{(j)}}^{k_j}/(|\alpha^{(j)}|!)^{k_j}\leq C'^{\mathrm{sgn}\,k_j}A_{|\alpha^{(j)}|k_j}/(|\alpha^{(j)}|k_j)!,
\eeqs
where $\mathrm{sgn}\, n$ equals $0$ when $n=0$ and equals $1$ when $n>0$. Applying again Lemma \ref{est_ontheseqa}, we have
\beqs
\prod_{j=1}^{|\alpha|}C'^{\mathrm{sgn}\,k_j}A_{|\alpha^{(j)}|k_j}/(|\alpha^{(j)}|k_j)!\leq C''A_{\alpha}/|\alpha|!.
\eeqs
We deduce
\beqs
\left|D^{\alpha}_w (e^{-tb(w)})\right|\leq C''(2dh)^{|\alpha|}A_{\alpha}\langle w\rangle^{-\rho|\alpha|}e^{-t\mathrm{Re}\, b(w)} \sum_{r=1}^{|\alpha|}\frac{|t|^r|b(w)|^r}{r!}\sum_{p(\alpha,r)}\frac{r!}{k_1!\cdot\ldots \cdot k_{|\alpha|}!}.
\eeqs
Since Lemma \ref{1110001} implies $\sum_{p(\alpha,r)}r!/(k_1!\cdot\ldots \cdot k_{|\alpha|}!)\leq 2^{|\alpha|(2d+1)}$, we have
\beq\label{est_12}
\left|D^{\alpha}_w (e^{-tb(w)})\right|\leq C'' h_1^{|\alpha|}A_{\alpha}\langle w\rangle^{-\rho|\alpha|}e^{-t\mathrm{Re}\, b(w)} \sum_{r=1}^{|\alpha|}\frac{|t|^r|b(w)|^r}{r!}.
\eeq
With analogous technique, one proves that in the Roumieu case there exist $h_1,C''>0$ for which (\ref{est_12}) holds. By allowing the sum in (\ref{est_12}) to start from $r=0$, this estimate becomes trivially valid for $\alpha=0$.\\
\indent Next, we estimate $D^{\alpha}_w\left((b(w))^n\right)$, for $\alpha\in\NN^{2d}\backslash\{0\}$ and $n\in\ZZ_+$. Applying the Fa\`a di Bruno formula to the composition $f(g(w))$ with $g(w)=b(w)$ and $f(\lambda)=\lambda^n$ and using the hypoellipticity of $b$, we infer that for each $h>0$ there exists $C>0$ (resp. there exist $h,C>0$) such that
\beqs
\left|D^{\alpha}_w (b(w))^n\right|\leq h^{|\alpha|}\langle w\rangle^{-\rho|\alpha|}|b(w)|^n\sum_{r=1}^{\min\{|\alpha|,n\}}\frac{n!}{(n-r)!}\sum_{p(\alpha,r)}\alpha!\prod_{j=1}^{|\alpha|} \frac{C^{k_j}A_{\alpha^{(j)}}^{k_j}}{k_j! \left(\alpha^{(j)}!\right)^{k_j}}.
\eeqs
Now, the same procedure as above shows that for each $h>0$ there exists $C>0$ (resp. there exist $h,C>0$) such that
\beq\label{est_113}
\left|D^{\alpha}_w (b(w))^n\right|\leq C2^nh^{|\alpha|}A_{\alpha}\langle w\rangle^{-\rho|\alpha|}|b(w)|^n.
\eeq
This estimate trivially holds when $n=0$ or $\alpha=0$. Finally, since $\left|D^n_tD^{\alpha}_w (e^{-tb(w)})\right|=\left|D^{\alpha}_w\left((b(w))^ne^{-tb(w)}\right)\right|$, (\ref{est_12}) and (\ref{est_113}) together with the Leibniz rule yield the estimate in the lemma.
\end{proof}

\begin{remark}\label{kft551133}
This lemma (and especially its proof) can be employed to construct interesting infinite order hypoelliptic symbols.\\
\indent Let $a$ be positive smooth function on $Q_B^c$ (for some $B\geq 0$) which satisfies the estimate: for every $h>0$ there exists $C>0$ (resp. there exist $h,C>0$) such that
\beq\label{kkl551133}
|D^{\alpha}a(w)|\leq C h^{|\alpha|}A_{\alpha}a(w)\langle w\rangle^{-\rho_1|\alpha|},\,\, w\in Q_B^c,\, \alpha\in\NN^{2d},
\eeq
for some $0<\rho_1\leq 1$ (not necessarily the same $\rho$ from $\Gamma^{*,\infty}_{A_p,\rho}(\RR^{2d})$). For fixed $s>1$, we can estimate the derivative of $(a(w))^{1/s}$ by applying the Fa\`a di Bruno formula to $\partial^{\alpha}(f(a(w)))$ with $f(\lambda)=\lambda^{1/s}$, $\lambda>0$. In fact, by employing the same technique as in the proof of Lemma \ref{flp11} we deduce the following estimate: for every $h>0$ there exists $C>0$ (resp. there exist $h,C>0$) such that
\beqs
|D^{\alpha}a(w)^{1/s}|\leq C h^{|\alpha|}A_{\alpha}a(w)^{1/s}\langle w\rangle^{-\rho_1|\alpha|},\,\, w\in Q_B^c,\, \alpha\in\NN^{2d}.
\eeqs
Hence, (\ref{est_12}) is valid on $Q^c_B$ for $b(w)=a(w)^{1/s}$, $t=\pm1$ and with $\rho_1$ in place of $\rho$. This is particularly interesting if one takes for $a$ a positive elliptic Shubin symbol of finite order $m\geq 1$ (i.e $c'\langle w\rangle^m\leq a(w)\leq C'\langle w\rangle^m$ on $\RR^{2d}$) that additionally satisfies (\ref{kkl551133}) for some $\rho_1>\rho$ and $B=0$ (i.e. on the whole $\RR^{2d}$). In this case (\ref{est_12}) boils down to: for every $h>0$ there exists $C>0$ (resp. there exist $h,C>0$) such that
\beqs
\big|D^{\alpha}e^{\pm a(w)^{1/(sm)}}\big|\leq C h^{|\alpha|}A_{\alpha}e^{\pm a(w)^{1/(sm)}}\langle w\rangle^{-(\rho_1-\frac{1}{s})|\alpha|}.
\eeqs
Hence, by taking $s>1$ large enough such that $\rho_1-\frac{1}{s}\geq \rho$ and the function $e^{\langle w\rangle^{1/s}}$ to be of ultrapolynomial growth of class $*$, $e^{\pm a(w)^{1/(sm)}}$ becomes $\Gamma^{*,\infty}_{A_p,\rho}$-hypoelliptic.
\end{remark}

The estimate (\ref{est_113}) will be particularly important for the sequel. We state this result separately for future reference.

\begin{corollary}\label{flp1133}
Under the assumptions of Lemma \ref{flp11} the following estimate holds: for every $h>0$ there exists $C>0$ (there exist $h,C>0$) such that
\beqs
\left|D^{\alpha}_w (b(w))^n\right|\leq C2^nh^{|\alpha|}A_{\alpha}\langle w\rangle^{-\rho|\alpha|}|b(w)|^n,
\eeqs
for all $\alpha\in\NN^{2d}$, $n\in\NN$, $w\in Q^c_B$, where $B$ is the constant from the hypoellipticity conditions (\ref{dd1}) and (\ref{dd2}) on $b$. If these hold for all $w\in\RR^{2d}$ (i.e. if $B=0$) then the above estimate holds for all $w\in\RR^{2d}$ as well.
\end{corollary}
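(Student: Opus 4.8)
The estimate is exactly inequality (\ref{est_113}), which was already derived inside the proof of Lemma \ref{flp11}; thus the plan is simply to isolate and record that part of the argument. First I would apply the multidimensional Fa\`a di Bruno formula to $\partial^{\alpha}\big(f(g(w))\big)$ with $g(w)=b(w)$ and $f(\lambda)=\lambda^n$, noting that $f^{(r)}(\lambda)=n(n-1)\cdots(n-r+1)\lambda^{n-r}$ vanishes for $r>n$, so the sum over $r$ effectively runs only up to $\min\{|\alpha|,n\}$. For each $w\in Q^c_B$ and each multi-index $\alpha^{(j)}$ occurring in $p(\alpha,r)$, the hypoellipticity bound (\ref{dd2}) gives $\big|\partial^{\alpha^{(j)}}_w b(w)\big|\le Ch^{|\alpha^{(j)}|}|b(w)|A_{\alpha^{(j)}}\langle w\rangle^{-\rho|\alpha^{(j)}|}$. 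Since $\sum_j k_j\alpha^{(j)}=\alpha$ and $\sum_j k_j=r$, multiplying these out produces a factor $|b(w)|^r$, which combines with the $|b(w)|^{n-r}$ coming from $f^{(r)}$ to give $|b(w)|^n$, along with the factor $h^{|\alpha|}\langle w\rangle^{-\rho|\alpha|}$ and suitable powers of $C$.

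The remaining work is purely combinatorial. I would bound $n(n-1)\cdots(n-r+1)/(n-r)!=\binom{n}{r}\le 2^n$, absorb the dimensional constant $(2d)^{|\alpha|}$ arising from the number of coordinates into $h$, and then control $\sum_{p(\alpha,r)}\alpha!\prod_{j}\big(C^{k_j}A_{\alpha^{(j)}}^{k_j}/(k_j!(\alpha^{(j)}!)^{k_j})\big)$. Using $(M.3)'$ for $A_p$ twice (to replace $C^{k}$ by $C'A_k/k!$ and then $C'^{k}$ by $C''A_k/k!$) and the fact that $A_p/p!$ satisfies $(M.1)$ — a consequence of $(M.4)$ for $A_p$ — together with Lemma \ref{est_ontheseqa}, one collapses $\prod_j C^{k_j}A_{\alpha^{(j)}}^{k_j}/(|\alpha^{(j)}|!)^{k_j}$ into $C''A_{\alpha}/|\alpha|!$. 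Lemma \ref{1110001} then gives $\sum_{p(\alpha,r)}r!/(k_1!\cdots k_{|\alpha|}!)\le 2^{|\alpha|(2d+1)}$, and one final rescaling of $h$ turns the whole bound into $C''\,2^n h^{|\alpha|}A_{\alpha}\langle w\rangle^{-\rho|\alpha|}|b(w)|^n$. The cases $n=0$ and $\alpha=0$ are checked directly, and when the hypoellipticity conditions (\ref{dd1})--(\ref{dd2}) hold with $B=0$ the argument runs verbatim on all of $\RR^{2d}$.

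The step I expect to be the main obstacle is the combinatorial collapse of $\prod_j A_{\alpha^{(j)}}^{k_j}/(\alpha^{(j)}!)^{k_j}$ to $A_{\alpha}/|\alpha|!$: this is where the structural hypotheses on $A_p$ are essential (log-convexity of $A_p/p!$ from $(M.4)$, and $(M.3)'$ for the constant renormalisation), and one must be careful to apply the $(M.1)$-telescoping of Lemma \ref{est_ontheseqa} to the sequence $A_p/p!$ rather than to $A_p$ itself, checking that the indices $|\alpha^{(j)}|k_j$ indeed sum to $|\alpha|$. Everything else — the bound $\binom{n}{r}\le 2^n$, the dimensional factors, and the input of Lemma \ref{1110001} — is routine bookkeeping.
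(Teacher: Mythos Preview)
Your proposal is correct and follows exactly the paper's approach: the corollary is simply the estimate (\ref{est_113}) extracted from the proof of Lemma \ref{flp11}, obtained via Fa\`a di Bruno with $f(\lambda)=\lambda^n$, the hypoellipticity bound, the $(M.3)'$/$(M.4)$ collapse through Lemma \ref{est_ontheseqa}, and Lemma \ref{1110001}. One small slip: you wrote $n(n-1)\cdots(n-r+1)/(n-r)!=\binom{n}{r}$, but the denominator should be $r!$ (the $r!$ appears after the collapse to $A_{\alpha}/|\alpha|!$ when you separate $\sum_{p(\alpha,r)} r!/(k_1!\cdots k_{|\alpha|}!)$), and then indeed $\frac{n!}{(n-r)!\,r!}=\binom{n}{r}\le 2^n$.
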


\subsection{The heat parametrix}

Throughout this subsection, we assume that $b$ is a hypoelliptic symbol in $\Gamma^{*,\infty}_{A_p,\rho}(\RR^{2d})$ for which the condition (\ref{dd2}) holds on the whole $\RR^{2d}$. Furthermore, we assume that there exists $c>0$ such that
\beq\label{est_semigroup1}
\mathrm{Re}\, b(w)> c|\mathrm{Im}\, b(w)|,\,\, \forall w\in\RR^{2d};
\eeq
hence $\mathrm{Re}\,b(w)>0$, $\forall w\in\RR^{2d}$. Incidentally, notice that this implies that (\ref{dd1}) also holds on the whole $\RR^{2d}$.\\
\indent We want to find $u_j\in\mathcal{C}^{\infty}(\RR\times\RR^{2d})$, $j\in\NN$, which solve the system
\beq\label{system1}
\left\{\begin{array}{lll}
\partial_t u_j+\sum_{k+l=j}\sum_{|\mu+\nu|=l}\frac{(-1)^{|\nu|}}{\mu!\nu!2^l}\partial^{\mu}_{\xi} D^{\nu}_x b\cdot \partial^{\nu}_{\xi} D^{\mu}_x u_k=0,\,\,\ j\in\NN,\\
u_0(0,x,\xi)=1,\\
u_j(0,x,\xi)=0,\,\, j\in\ZZ_+.
\end{array}\right.
\eeq
We can immediately conclude $u_0(t,x,\xi)=e^{-tb(x,\xi)}\in \mathcal{C}^{\infty}(\RR\times\RR^{2d})$. Obviously, the system has a unique solution (in view of the initial conditions), since in the $j$-th step one only solves one equation in $u_j$ as $u_0,\ldots,u_{j-1}$ are already determined. We want to estimate the derivatives of $u_j$.

\begin{lemma}\label{est_heat_ker}
For every $h>0$ there exists $C>0$ (resp. there exist $h,C>0$) such that
\beqs
|D^n_tD^{\alpha}_wu_j(t,w)|\leq Cn!h^{|\alpha|+2j}A_{|\alpha|+2j}\left(\mathrm{Re}\, b(w)\right)^n\langle w\rangle^{-\rho(|\alpha|+2j)}e^{-\frac{t}{4}\mathrm{Re}\, b(w)},
\eeqs
for all $\alpha\in\NN^{2d}$, $n\in\NN$, $(t,w)\in[0,\infty)\times\RR^{2d}$.
\end{lemma}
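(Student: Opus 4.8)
The plan is to prove the estimate by induction on $j$, keeping the constant $C$ (and, in the Roumieu case, the constant $h$) uniform in $j$; the uniformity in $j$ is the whole point of the precise form of the bound, and the mechanism for it is exactly the one used for the symbol of the parametrix, cf.\ the proof of Proposition~\ref{parametweyl} and \cite[Lemma 3.4]{CPP}.

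\textbf{Base case $j=0$.} The system (\ref{system1}) gives $u_0(t,w)=e^{-tb(w)}$, and since (\ref{dd2}) — and hence, by (\ref{est_semigroup1}), also (\ref{dd1}) — holds on all of $\RR^{2d}$, Lemma~\ref{flp11} applies on all of $\RR^{2d}$. It remains to rewrite its right-hand side. By (\ref{est_semigroup1}) one has $|b(w)|\le\sqrt{1+c^{-2}}\,\mathrm{Re}\,b(w)=:C_1\mathrm{Re}\,b(w)$, whence $|b(w)|^n\le C_1^n(\mathrm{Re}\,b(w))^n\le e^{2C_1}\,n!\,(\mathrm{Re}\,b(w))^n$ (using $K^n\le e^Kn!$), and, splitting $e^{-t\mathrm{Re}\,b}=e^{-t\mathrm{Re}\,b/2}e^{-t\mathrm{Re}\,b/2}$ and invoking the elementary inequality $\sup_{\sigma\ge0}\sigma^re^{-\sigma/2}/r!\le 2^r$, one gets $e^{-t\mathrm{Re}\,b}\,t^r|b|^r/r!\le(2C_1)^re^{-t\mathrm{Re}\,b/2}$, so that the sum over $0\le r\le|\alpha|$ is at most $C(2C_1)^{|\alpha|}e^{-t\mathrm{Re}\,b/2}$. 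Absorbing $(2C_1)^{|\alpha|}$ into $h^{|\alpha|}$ (shrinking $h$ in the Beurling case, enlarging it in the Roumieu case) and using $e^{-t\mathrm{Re}\,b/2}\le e^{-t\mathrm{Re}\,b/4}$ gives the claim for $j=0$.

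\textbf{Inductive step.} Isolating in (\ref{system1}) the term $k=j$, which equals $b(w)u_j$, one sees that $u_j$ is the unique solution of $(\partial_t+b(w))u_j=-F_j$, $u_j(0,\cdot)=0$, with
\[
F_j(t,w)=\sum_{k=0}^{j-1}\sum_{|\mu+\nu|=j-k}\frac{(-1)^{|\nu|}}{\mu!\nu!\,2^{j-k}}\,\partial^\mu_\xi D^\nu_x b(w)\cdot\partial^\nu_\xi D^\mu_x u_k(t,w),
\]
hence $u_j(t,w)=-\int_0^t e^{-(t-s)b(w)}F_j(s,w)\,ds$. I would first estimate $D^m_sD^\alpha_wF_j$: the operator $D^m_s$ falls only on the factors $u_k$, and a Leibniz splitting $\alpha=\alpha_1+\alpha_2$, the hypoellipticity (\ref{dd2}) of $b$ for the $b$-factor and the induction hypothesis for $u_k$, together with $|b|\le C_1\mathrm{Re}\,b$ and $A_pA_q\le A_{p+q}$ (log-convexity of $A_p$, valid since $A_p$ satisfies $(M.1)$), collapse all weights to
\[
|D^m_sD^\alpha_wF_j(s,w)|\le C_F\,m!\,h^{|\alpha|+2j}A_{|\alpha|+2j}\,(\mathrm{Re}\,b(w))^{m+1}\langle w\rangle^{-\rho(|\alpha|+2j)}e^{-s\mathrm{Re}\,b(w)/4};
\]
the exponents match exactly because each recursion level $l=j-k$ contributes $l$ derivatives to the $b$-factor and $l$ to $u_k$, while $u_k$ already carries $2k$, and $2l+2k=2j$, and the weights $1/(2^{j-k}\mu!\nu!)$ summed over $k$ and $|\mu+\nu|=j-k$ produce only the $j$-independent quantity $\sum_{l\ge1}d^l/l!<\infty$, which is what keeps $C_F$ independent of $j$. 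Feeding this into the Duhamel formula and differentiating in $t$ (equivalently, iterating $(\partial_t+b)u_j=-F_j$) gives
\[
\partial^n_tu_j(t,w)=-\sum_{m=0}^{n-1}(-b(w))^{n-1-m}\partial^m_tF_j(t,w)-(-b(w))^n\!\int_0^te^{-(t-s)b(w)}F_j(s,w)\,ds.
\]
Applying $D^\alpha_w$, the Leibniz rule, Corollary~\ref{flp1133} for the powers $b^{n-1-m}$ and $b^n$, Lemma~\ref{flp11} with $n=0$ and argument $t-s\ge0$ for the kernel, log-convexity of $A_p$ to collapse the $A$-indices to $A_{|\alpha|+2j}$, and — for the last term — the split $e^{-(t-s)\mathrm{Re}\,b}=e^{-(t-s)\mathrm{Re}\,b/2}e^{-(t-s)\mathrm{Re}\,b/2}$ (so that $e^{-(t-s)\mathrm{Re}\,b/4}e^{-s\mathrm{Re}\,b/4}=e^{-t\mathrm{Re}\,b/4}$) together with $\int_0^te^{-(t-s)c\mathrm{Re}\,b}\,ds\le(c\mathrm{Re}\,b)^{-1}$, whose factor $(\mathrm{Re}\,b)^{-1}$ cancels the extra power of $\mathrm{Re}\,b$ in the $F_j$-estimate, recovers exactly the claimed bound. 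The factor $n!$ reappears via $|b|^{n-1-m}\le C_1^{n-1-m}(\mathrm{Re}\,b)^{n-1-m}$, $K^n\le e^Kn!$, and $\sum_{m=0}^{n-1}\binom{n-1}{m}m!\,C_1^{n-1-m}\le e^{C_1}\,n!$.

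\textbf{The main obstacle} is the bookkeeping that keeps $C$ (and $h$) uniform in $j$: one must run the induction with a working value of $h$ chosen small relative to the target, so that the $j$-independent series $\sum_{l\ge1}d^l/l!$ from the recursion weights and the geometric factors in $|\alpha|$ and $n$ from the Leibniz splittings can all be absorbed into the powers $h^{|\alpha|+2j}$ without spoiling uniformity in $j$ — this is precisely the argument of \cite[Lemma 3.4]{CPP} used in Proposition~\ref{parametweyl}. The only additional care needed here is to propagate the $t$-dependence $n!\,(\mathrm{Re}\,b(w))^ne^{-t\mathrm{Re}\,b(w)/4}$ through the time-differentiations and the Duhamel integral, which is what the exponential splittings and the bound $\int_0^te^{-(t-s)c\mathrm{Re}\,b}\,ds\le(c\mathrm{Re}\,b)^{-1}$ accomplish.
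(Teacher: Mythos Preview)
Your overall strategy—the Duhamel representation $u_j=-\int_0^t e^{-(t-s)b}F_j(s,\cdot)\,ds$, induction on $j$, and tracking powers of $h$—matches the paper's, and your base case and the time-differentiation formula for $\partial_t^n u_j$ are correct.

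The gap is in the mechanism you describe for closing the induction with constants \emph{uniform in $j$}. You collapse the $A$-indices via ``$A_pA_q\le A_{p+q}$ (log-convexity of $A_p$, valid since $A_p$ satisfies $(M.1)$)''. With only $(M.1)$ the Leibniz binomials are \emph{not} absorbed: after splitting $\alpha=\alpha_1+\alpha_2$ one is left with $\sum_{\alpha_1\le\alpha}\binom{\alpha}{\alpha_1}(h_1/h)^{|\alpha_1|}=(1+h_1/h)^{|\alpha|}$, which cannot be pushed into $h^{|\alpha|+2j}$ without changing $h$—and the induction runs with a fixed $h$. More fundamentally, even granting your claim that the $l$-sum contributes only $e^{dh_1/h}-1$, the bound you obtain for $u_j$ is the inductive constant $C$ times a fixed factor $K$ containing $C_{h_1}$, $C_1$, $e^{2C_1}$, etc.; nothing forces $K\le 1$, so iterating yields $CK^j$, not $C$. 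Your remark that ``$C_F$ is independent of $j$'' is true but beside the point: $C_F$ is proportional to $C$, and that proportionality constant exceeds $1$.

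The paper (and, in fact, the CPP argument you invoke) closes this through two further hypotheses on $A_p$. First, $(M.4)$ via Lemma~\ref{est_sequ_bi}: since $|\alpha_1|+l\ge 1$ and $|\alpha_2|+2j-l\ge 1$, the Leibniz multinomial together with the $A$-factors is bounded not by $A_{|\alpha|+2j}$ but by the sharper $(|\alpha|+2j)A_{|\alpha|+2j-1}$—the binomial is absorbed into the sequence itself. Second, $(M.3)'$: since $A_s/(sA_{s-1})\to\infty$, there is a threshold $s'$ with $C_1\,sA_{s-1}\le A_s$ for all $s\ge s'$; hence for $|\alpha|+2j\ge s'$ the residual constant $C_1$ at each step is swallowed by the gain $A_{|\alpha|+2j}/\bigl((|\alpha|+2j)A_{|\alpha|+2j-1}\bigr)$. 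For the finitely many $j< s'$ one simply lets the constant grow to $C_1^{s'+1}$. This ``$(M.4)$ to drop one index $+$ $(M.3)'$ to eat the constant'' device is the actual content of the uniformity; plain log-convexity $(M.1)$ does not suffice.
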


\begin{proof} Put $p_j=e^{tb}u_j$, $j\in\NN$. Clearly $p_0=1$. The $j$-th equation of the system becomes
\beqs
e^{-tb}\partial_t p_j+\sum_{l=1}^j\sum_{|\mu+\nu|=l}\frac{(-1)^{|\nu|}}{\mu!\nu!2^l} \partial^{\mu}_{\xi}D^{\nu}_xb\cdot\partial^{\nu}_{\xi}D^{\mu}_x(e^{-tb}p_{j-l})=0.
\eeqs
As $0=u_j(0,x,\xi)=p_j(0,x,\xi)$, $j\in\ZZ_+$, it follows that
\beqs
u_j(t,w)=-\sum_{l=1}^j\sum_{|\mu+\nu|=l}\frac{(-1)^{|\nu|}}{\mu!\nu!2^l}e^{-tb(w)}\int_0^t e^{sb(w)}\partial^{\mu}_{\xi}D^{\nu}_xb(w)\partial^{\nu}_{\xi}D^{\mu}_x u_{j-l}(s,w)ds.
\eeqs
Let $n\in\NN$, $\alpha\in\NN^{2d}$. Then
\beq
\left|D^{n+1}_tD^{\alpha}_wu_j(t,w)\right|&\leq& \sum_{l=1}^j\sum_{|\mu+\nu|=l}\sum_{\substack{n_1+n_2+n_3=n\\ \beta+\gamma+\delta=\alpha}} \frac{(n+1)!\alpha!}{n_1!n_2!n_3\beta!\gamma!\delta!\mu!\nu!2^l} \left|D^{\beta}_w\left((b(w))^{n_1+n_2}\right)\right| \nonumber \\
&{}&\,\,\,\, \cdot \left|D^{\gamma}_w D^{\mu}_{\xi}D^{\nu}_xb(w)\right|\left|D^{n_3}_tD^{\delta}_w D^{\nu}_{\xi}D^{\mu}_x u_{j-l}(t,w)\right|\nonumber \\
&{}&+\sum_{l=1}^j\sum_{|\mu+\nu|=l}\sum_{\beta+\gamma+\delta+\kappa=\alpha} \frac{\alpha!}{\beta!\gamma!\delta!\kappa!\mu!\nu!2^l} \left|D^{\beta}_w\left((b(w))^{n+1}\right)\right|\nonumber \\
&{}&\,\,\,\,\cdot\int_0^t \left|D^{\gamma}_we^{-(t-s)b(w)}\right|\left|D^{\delta}_w D^{\mu}_{\xi}D^{\nu}_xb(w)\right|\left|D^{\kappa}_wD^{\nu}_{\xi}D^{\mu}_x u_{j-l}(s,w)\right|ds\nonumber \\
&=& S_1+S_2.\label{est_der_forjj}
\eeq
We consider first the Beurling case. Let $h>0$ be arbitrary but fixed. Pick $h_1\leq h/(64^{d+1}d)$. Lemma \ref{flp11} and Corollary \ref{flp1133}, together with (\ref{est_semigroup1}), imply that for the chosen $h_1$ there exists $C_{h_1}\geq1$ such that
\beq
\left|D^{\alpha}_w (b(w))^n\right|&\leq& C_{h_1}c_1^nh_1^{|\alpha|}A_{\alpha}\langle w\rangle^{-\rho|\alpha|}\left(\mathrm{Re}\,b(w)\right)^n,\label{lkl_117} \\
|D^n_tD^{\alpha}_w(e^{-tb(w)})|&\leq& C_{h_1}n!h_1^{|\alpha|}A_{\alpha}\left(\mathrm{Re}\, b(w)\right)^n\langle w\rangle^{-\rho|\alpha|}e^{-\frac{t}{2}\mathrm{Re}\, b(w)},\label{estforsek}
\eeq
for all $\alpha\in\NN^{2d}$, $n\in\NN$, $(t,w)\in[0,\infty)\times\RR^{2d}$, where $c_1\geq2$ only depends on the constant $c$ from (\ref{est_semigroup1}). Let $C_1\geq 1$ be large enough such that $2C^3_{h_1}c_1e^{2c_1}\leq C_1$. Since $A_p$ satisfies $(M.3)'$, there exists $s'\in\ZZ_+$ such that for all $s\geq s'$, $s\in\ZZ_+$, we have $C_1\leq A_s/(sA_{s-1})$. We will prove that
\beq\label{to_pr}
|D^n_tD^{\alpha}_wu_j(t,w)|\leq \frac{C_1^{\min\{s',j\}+1}n!h^{|\alpha|+2j}A_{|\alpha|+2j}\left(\mathrm{Re}\, b(w)\right)^ne^{-\frac{t}{4}\mathrm{Re}\, b(w)}}{\langle w\rangle^{\rho(|\alpha|+2j)}},
\eeq
for all $\alpha\in\NN^{2d}$, $n\in\NN$, $(t,w)\in[0,\infty)\times\RR^{2d}$. The proof is by induction on $j$. For $j=0$ this holds because of (\ref{estforsek}). Assume that it is true for all $j-1\leq s'-1$. To prove it for $j$, we first consider the case when we have $n+1$, $n\in\NN$, derivatives with respect to $t$. Then, the inequality (\ref{est_der_forjj}) holds. To estimate $S_1$, first we note that $\alpha!/(\beta!\gamma!\delta!)\leq |\alpha|!/(|\beta|!|\gamma|!|\delta|!)$ (which holds for multiindexes satisfying $\beta+\gamma+\delta=\alpha$). We use the inductive hypothesis and (\ref{lkl_117}) to deduce
\beq
S_1&\leq& \frac{C_1^j(n+1)!\left(\mathrm{Re}\,b(w)\right)^{n+1}e^{-\frac{t}{4}\mathrm{Re}\, b(w)}}{\langle w\rangle^{\rho(|\alpha|+2j)}} \sum_{l=1}^j\sum_{|\mu+\nu|=l}\sum_{\beta+\gamma+\delta=\alpha} \frac{|\alpha|!}{|\beta|!|\gamma|!|\delta|!\mu!\nu!2^l}\nonumber \\
&{}&\cdot h_1^{|\beta|+|\gamma|+l}h^{|\delta|+2j-l}A_{\beta}A_{|\gamma|+l}A_{|\delta|+2j-l} C_{h_1}^2\sum_{n_1+n_2+n_3=n}\frac{c_1^{n_1+n_2+1}}{n_1!n_2!}.\label{pre_est_s1}
\eeq
Observe that
\beq
C_{h_1}^2\sum_{n_1+n_2+n_3=n}\frac{c_1^{n_1+n_2+1}}{n_1!n_2!}&=& C_{h_1}^2c_1\sum_{n'=0}^n\frac{c_1^{n'}}{n'!}\sum_{n_1+n_2=n'}\frac{n'!}{n_1!n_2!}\nonumber \\
&\leq& C_{h_1}^2c_1e^{2c_1}\leq C_1.\label{coe_est_by_c1}
\eeq
Moreover,
\beqs
\frac{|\alpha|!}{|\beta|!|\gamma|!|\delta|!}\leq \frac{(|\alpha|+l)!}{|\beta|!(|\gamma|+l)!|\delta|!}\leq \frac{(|\alpha|+2j)!}{|\beta|!(|\gamma|+l)!(|\delta|+2j-l)!}.
\eeqs
Hence, $(M.4)$ for $A_p$ yields
\beqs
\frac{|\alpha|!}{|\beta|!|\gamma|!|\delta|!}A_{\beta}A_{|\gamma|+l}A_{|\delta|+2j-l}\leq {{|\alpha|+2j}\choose{|\beta|+|\gamma|+l}} A_{|\beta|+|\gamma|+l}A_{|\delta|+2j-l}.
\eeqs
Since $|\beta|+|\gamma|+l\geq 1$ and $|\delta|+2j-l\geq 1$, we can apply Lemma \ref{est_sequ_bi} to obtain
\beq\label{lkk_122}
\frac{|\alpha|!}{|\beta|!|\gamma|!|\delta|!}A_{\beta}A_{|\gamma|+l}A_{|\delta|+2j-l}\leq (|\alpha|+2j)A_{|\alpha|+2j-1}.
\eeq
As $pA_{p-1}\leq A_p$, $p\in\ZZ_+$, (this trivially follows from $A_0=A_1=1$ and $(M.4)$) we have
\beq
S_1&\leq& \frac{C_1^{j+1}(n+1)!h^{|\alpha|+2j}A_{|\alpha|+2j} \left(\mathrm{Re}\,b(w)\right)^{n+1}e^{-\frac{t}{4}\mathrm{Re}\, b(w)}}{\langle w\rangle^{\rho(|\alpha|+2j)}}\nonumber \\
&{}&\cdot \sum_{l=1}^j\sum_{|\mu+\nu|=l}\sum_{\beta+\gamma+\delta=\alpha} \frac{1}{\mu!\nu!2^l}\left(\frac{h_1}{h}\right)^{|\beta|+|\gamma|+l}.\label{s1_ses_forgl}
\eeq
Now, we estimate as follows,
\beqs
\sum_{l=1}^j\sum_{|\mu+\nu|=l}\sum_{\beta+\gamma+\delta=\alpha} \frac{1}{\mu!\nu!2^l}\left(\frac{h_1}{h}\right)^{|\beta|+|\gamma|+l}&\leq& \sum_{l=1}^j\left(\frac{h_1}{2h}\right)^l\sum_{|\mu+\nu|=l}\frac{1}{\mu!\nu!} \sum_{|\beta+\gamma|=0}^{|\alpha|} \left(\frac{h_1}{h}\right)^{|\beta|+|\gamma|}\\
&=&\sum_{l=1}^j\left(\frac{h_1}{2h}\right)^l\cdot\frac{(2d)^l}{l!} \sum_{s=0}^{|\alpha|} {{s+4d-1}\choose{4d-1}}\left(\frac{h_1}{h}\right)^s\\
&\leq&\sum_{l=1}^{\infty}\left(\frac{dh_1}{h}\right)^l\frac{1}{l!}\cdot \sum_{s=0}^{\infty} \left(\frac{2^{4d}h_1}{h}\right)^s\\
&\leq& 2(e^{dh_1/h}-1)\leq 1/2,
\eeqs
where the last and second to last inequality follow from the choice of $h_1$. To estimate $S_2$, we use (\ref{lkl_117}), (\ref{estforsek}) and the inductive hypothesis in order to obtain
\beq
S_2&\leq& \frac{C_{h_1}^3c_1^{n+2}C_1^j\left(\mathrm{Re}\, b(w)\right)^{n+2}e^{-\frac{t}{2}\mathrm{Re}\, b(w)}}{\langle w\rangle^{\rho(|\alpha|+2j)}}\int_0^t e^{\frac{s}{4}\mathrm{Re}\, b(w)}ds\sum_{l=1}^j\sum_{|\mu+\nu|=l} \sum_{\beta+\gamma+\delta+\kappa=\alpha}\nonumber \\
&{}&\,\,\,\,\frac{|\alpha|!}{|\beta|!|\gamma|!|\delta|!|\kappa|!\mu!\nu!2^l} h_1^{|\beta|+|\gamma|+|\delta|+l}h^{|\kappa|+2j-l}A_{\beta}A_{\gamma} A_{|\delta|+l}A_{|\kappa|+2j-l}.\label{ll_157}
\eeq
Observe that $\int_0^t e^{\frac{s}{4}\mathrm{Re}\, b(w)}ds\leq 4e^{\frac{t}{4}\mathrm{Re}\, b(w)}/\mathrm{Re}\, b(w)$. By the choice of $C_1$, we have $C_{h_1}^3c_1^{n+2}\leq C_1(n+1)!/4$. In similar fashion as above, by employing $(M.4)$ for $A_p$, we have
\beqs
\frac{|\alpha|!}{|\beta|!|\gamma|!|\delta|!|\kappa|!}A_{\beta}A_{\gamma} A_{|\delta|+l}A_{|\kappa|+2j-l}\leq {{|\alpha|+2j}\choose{|\beta|+|\gamma|+|\delta|+l}} A_{|\beta|+|\gamma|+|\delta|+l}A_{|\kappa|+2j-l}.
\eeqs
Now, since $|\beta|+|\gamma|+|\delta|+l\geq 1$ and $|\kappa|+2j-l\geq 1$, Lemma \ref{est_sequ_bi} implies
\beq\label{e_1235}
\frac{|\alpha|!}{|\beta|!|\gamma|!|\delta|!|\kappa|!}A_{\beta}A_{\gamma} A_{|\delta|+l}A_{|\kappa|+2j-l}\leq (|\alpha|+2j)A_{|\alpha|+2j-1}.
\eeq
Hence
\beq
S_2&\leq&\frac{C_1^{j+1}(n+1)!\left(\mathrm{Re}\, b(w)\right)^{n+1}h^{|\alpha|+2j}A_{|\alpha|+2j}e^{-\frac{t}{4}\mathrm{Re}\, b(w)}}{\langle w\rangle^{\rho(|\alpha|+2j)}}\nonumber \\
&{}&\,\,\,\cdot\sum_{l=1}^j\sum_{|\mu+\nu|=l} \sum_{\beta+\gamma+\delta+\kappa=\alpha}\frac{1}{\mu!\nu!2^l} \left(\frac{h_1}{h}\right)^{|\beta|+|\gamma|+|\delta|+l}.\label{e_1299}
\eeq
By analogous technique as above, keeping in mind the way we choose $h_1$, we can estimate the last sums in this inequality by $1/2$. The case when there are no derivatives with respect to $t$ can be treated in a completely analogous way as for the estimate of $S_2$ given above. This finishes the proof for $j\leq s'$. To continue the induction, assume that (\ref{to_pr}) holds for $0,\ldots,j-1$, for some $j-1\geq s'$. To prove it for $j$, we again consider first the case when there is at least one derivative with respect to $t$. Hence, our goal is to estimate $S_1$ and $S_2$ in (\ref{est_der_forjj}). For $S_1$, in the same way as above, we infer the inequality (\ref{pre_est_s1}) but with $C_1^{s'+1}$ instead of $C_1^j$. Again, we use (\ref{coe_est_by_c1}) and (\ref{lkk_122}), but now the constant $C_1$ obtained in (\ref{coe_est_by_c1}) can be swallowed: $C_1(|\alpha|+2j)A_{|\alpha|+2j-1}\leq A_{|\alpha|+2j}$. This is true because of the way we chose $s'$ and $|\alpha|+2j\geq j\geq s'$. Thus, we can conclude (\ref{s1_ses_forgl}) but with $C_1^{s'+1}$ in place of $C_1^{j+1}$. Now, we proceed in exactly the same way to estimate $S_1$ by $1/2$ times the right hand side of (\ref{to_pr}). To estimate $S_2$, we proceed as before in order to deduce (\ref{ll_157}) but with $C_1^{s'+1}$ in place of $C_1^j$. We use the inequalities (\ref{e_1235}),
\beqs
\int_0^t e^{\frac{s}{4}\mathrm{Re}\, b(w)}ds\leq \frac{4e^{\frac{t}{4}\mathrm{Re}\, b(w)}}{\mathrm{Re}\, b(w)}
\eeqs
and $C_{h_1}^3c_1^{n+2}\leq C_1(n+1)!/4$, but now notice that the constant $C_1$ obtained in the last inequality can be swallowed: $C_1(|\alpha|+2j)A_{|\alpha|+2j-1}\leq A_{|\alpha|+2j}$ (because of the way we chose $s'$). Thus, we can conclude (\ref{e_1299}) but with $C_1^{s'+1}$ in place of $C_1^{j+1}$. As previously mentioned, we can now estimate $S_2$ by $1/2$ times the right hand side of (\ref{to_pr}). The case when there are no derivatives with respect to $t$ can be treated in an analogous fashion as for the estimation of $S_2$. This concludes the proof in the Beurling case.\\
\indent For the Roumieu case, Lemma \ref{flp11} and Corollary \ref{flp1133}, together with (\ref{est_semigroup1}), imply that there exist $h_1,C_{h_1}\geq 1$ such that (\ref{lkl_117}) and (\ref{estforsek}) hold. Pick $h\geq 64^{d+1}dh_1$ and $C_1\geq 1$ large enough such that $2C^3_{h_1}c_1e^{2c_1}\leq C_1$. Let $s'\in\ZZ_+$ be such that $C_1sA_{s-1}\leq A_s$, $\forall s\geq s'$, $s\in\ZZ_+$ (such number exists because $A_p$ satisfies $(M.3)'$). One can prove (\ref{to_pr}) by induction on $j$ in the same way as for the Beurling case.
\end{proof}

Since $b(w)$ has an ultrapolynomial growth of class $*$, the same holds for $(\mathrm{Re}\, b(w))^n$, for each $n\in\NN$. Since
$\mathrm{Re}\, b(w)>0$, $\forall w\in\RR^{2d}$, as a direct consequence of Lemma \ref{est_heat_ker} we can conclude that, for each $n\in\NN$, the mappings $t\mapsto \sum_j\partial^n_tu_j(t,\cdot)$, $[0,\infty)\rightarrow FS_{A_p,\rho}^{*,\infty}(\RR^{2d};0)$, are well defined. By Taylor's formula for $\partial^n_tD^{\alpha}_wu_j(t,w)$ at $t_0$ up to order $0$, Lemma \ref{est_heat_ker} yields that for every $h>0$ there exists $C>0$ (there exist $h,C>0$) such that
\begin{multline}
|\partial^n_tD^{\alpha}_wu_j(t,w)-\partial^n_tD^{\alpha}_wu_j(t_0,w)|\\
\leq C|t-t_0|(n+1)!h^{|\alpha|+2j}A_{|\alpha|+2j}\left(\mathrm{Re}\, b(w)\right)^{n+1}\langle w\rangle^{-\rho(|\alpha|+2j)},\label{krt31}
\end{multline}
for all $\alpha\in\NN^{2d}$, $n,j\in\NN$, $w\in\RR^{2d}$, $t,t_0\in[0,\infty)$. Expanding $\partial^n_tD^{\alpha}_wu(t,w)$ at $t_0$ up to order $1$, Lemma \ref{est_heat_ker} implies that for every $h>0$ there exists $C>0$ (there exist $h,C>0$) such that
\begin{multline*}
|\partial^n_tD^{\alpha}_wu_j(t,w)-\partial^n_tD^{\alpha}_wu_j(t_0,w)- (t-t_0)\partial^{n+1}_tD^{\alpha}_wu_j(t_0,w)|\\
\leq C|t-t_0|^2(n+2)!h^{|\alpha|+2j}A_{|\alpha|+2j}\left(\mathrm{Re}\, b(w)\right)^{n+2}\langle w\rangle^{-\rho(|\alpha|+2j)},
\end{multline*}
for all $\alpha\in\NN^{2d}$, $n,j\in\NN$, $w\in\RR^{2d}$, $t,t_0\in[0,\infty)$. Hence, we infer the following result.

\begin{lemma}
The mapping
\beqs
t\mapsto \ssum u_j(t,\cdot),\,\,\, [0,\infty)\rightarrow FS_{A_p,\rho}^{*,\infty}(\RR^{2d};0),
\eeqs
is in $\mathcal{C}^{\infty}([0,\infty); FS_{A_p,\rho}^{*,\infty}(\RR^{2d};0))$ and $\partial^n_t(\sum_j u_j(t,\cdot))=\sum_j\partial^n_t u_j(t,\cdot)$, $n\in\NN$.
\end{lemma}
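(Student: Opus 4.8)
The plan is to bootstrap differentiability in $t$ order by order, using the two Taylor estimates recorded just before the lemma. Well-definedness of each curve $t\mapsto\ssum\partial^n_tu_j(t,\cdot)$, $[0,\infty)\rightarrow FS_{A_p,\rho}^{*,\infty}(\RR^{2d};0)$, has already been observed above; what remains is to show that $t\mapsto\ssum u_j(t,\cdot)$ is continuously differentiable with derivative $t\mapsto\ssum\partial_tu_j(t,\cdot)$, that this derivative has the same structure, and then to iterate.

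First I would fix $n\in\NN$. Since $(\mathrm{Re}\,b(w))^{k}$ has ultrapolynomial growth of class $*$ for every $k$, Lemma \ref{lemulgr117} provides, in the Beurling case, an $m_n>0$ with $(\mathrm{Re}\,b(w))^{n+2}\le C_ne^{M(m_n|x|)}e^{M(m_n|\xi|)}$ on $\RR^{2d}$ (resp., in the Roumieu case, the analogous bound for every $m>0$), exactly as in the derivation of (\ref{ths7975})--(\ref{ths7977}). Combining this with (\ref{krt31}) and the defining seminorms of $FS_{A_p,\rho}^{M_p,\infty}(\RR^{2d};0,h,m_n)$ gives, for every $h>0$, a constant $C$ such that
\[
\Big\|\ssum\partial^n_tu_j(t,\cdot)-\ssum\partial^n_tu_j(t_0,\cdot)\Big\|_{FS_{A_p,\rho}^{M_p,\infty}(\RR^{2d};0,h,m_n)}\le C|t-t_0|,\qquad t,t_0\in[0,\infty),
\]
so $t\mapsto\ssum\partial^n_tu_j(t,\cdot)$ is Lipschitz, hence continuous, as a map into the $(F)$-space $FS_{A_p,\rho}^{(M_p),\infty}(\RR^{2d};0,m_n)$ (resp. $FS_{A_p,\rho}^{\{M_p\},\infty}(\RR^{2d};0,h_n)$ for some $h_n>0$), and therefore into $FS_{A_p,\rho}^{*,\infty}(\RR^{2d};0)$ because the canonical linking maps are continuous. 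In the same way, the second (order-$1$) Taylor estimate displayed before the lemma yields
\[
\Big\|\tau^{-1}\Big(\ssum\partial^n_tu_j(t+\tau,\cdot)-\ssum\partial^n_tu_j(t,\cdot)\Big)-\ssum\partial^{n+1}_tu_j(t,\cdot)\Big\|_{FS_{A_p,\rho}^{M_p,\infty}(\RR^{2d};0,h,m_n)}\le C|\tau|,
\]
so the difference quotient converges — already within the single $(F)$-space step $FS_{A_p,\rho}^{(M_p),\infty}(\RR^{2d};0,m_n)$ (resp. $FS_{A_p,\rho}^{\{M_p\},\infty}(\RR^{2d};0,h_n)$) — to $\ssum\partial^{n+1}_tu_j(t,\cdot)$ as $\tau\to0$. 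Hence $t\mapsto\ssum\partial^n_tu_j(t,\cdot)$ is differentiable on $[0,\infty)$, the derivative at $t=0$ being a right derivative in accordance with the convention for $\mathcal{C}^k([0,\infty);E)$, and its derivative is $t\mapsto\ssum\partial^{n+1}_tu_j(t,\cdot)$.

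An induction on $n$ then closes the argument: from the case $n=0$, the map $t\mapsto\ssum u_j(t,\cdot)$ is differentiable with derivative $t\mapsto\ssum\partial_tu_j(t,\cdot)$, which is continuous and itself differentiable with derivative $t\mapsto\ssum\partial^2_tu_j(t,\cdot)$, and so forth; therefore $t\mapsto\ssum u_j(t,\cdot)$ belongs to $\mathcal{C}^\infty([0,\infty);FS_{A_p,\rho}^{*,\infty}(\RR^{2d};0))$ with $n$-th derivative $\ssum\partial^n_tu_j(t,\cdot)$, which is precisely the asserted identity $\partial^n_t(\ssum u_j(t,\cdot))=\ssum\partial^n_tu_j(t,\cdot)$. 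The one point requiring care — the closest thing to an obstacle — is the topological bookkeeping: one must observe that the weight parameter $m_n$ (resp. $h_n$) needed to absorb $(\mathrm{Re}\,b)^{n+1}$ and $(\mathrm{Re}\,b)^{n+2}$ depends only on $n$, so that, for each fixed $n$, every increment and every difference quotient above lives in one fixed $(F)$-space step of the inductive limit $FS_{A_p,\rho}^{*,\infty}(\RR^{2d};0)$, whence convergence and continuity at that step pass to the limit topology; both facts are immediate from continuity of the canonical inclusions. In the Roumieu case one runs the same argument with the family $\{FS_{A_p,\rho}^{M_p,\infty}(\RR^{2d};0,h,m)\}_{m}$ for a fixed admissible $h$, which changes nothing.
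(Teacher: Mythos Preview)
Your proof is correct and follows exactly the paper's approach: the paper derives the two Taylor estimates (order~$0$ and order~$1$) immediately preceding the lemma and then simply writes ``Hence, we infer the following result,'' so your argument is a faithful expansion of what the paper leaves implicit. The only minor remark is that invoking Lemma~\ref{lemulgr117} is unnecessary here, since the ultrapolynomial growth of $(\mathrm{Re}\,b(w))^{n+2}$ is used directly (as the paper notes just before the lemma) without needing the $(r_p)$-characterisation.
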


Clearly, for $R>0$, the function $u(t,w)=\sum_n (1-\chi_{n,R}(w))u_n(t,w)=R(\sum_j u_j)(t,w)$ is in $\mathcal{C}^{\infty}(\RR\times\RR^{2d})$. Using Lemma \ref{est_heat_ker}, by exactly the same technique as in the proof of \cite[Theorem 4]{BojanP}, one can prove the following result.

\begin{lemma}\label{rks75}
There exists $R>1$ such that the $\mathcal{C}^{\infty}$-function
\beqs
u(t,w)=\sum_{n=0}^{\infty} (1-\chi_{n,R}(w))u_n(t,w)=R(\ssum u_j)(t,w)
\eeqs
satisfies the following condition: for every $h>0$ there exists $C>0$ (resp. there exist $h,C>0$) such that
\beq\label{est_heat_par}
|D^n_tD^{\alpha}_wu(t,w)|\leq Cn!h^{|\alpha|}A_{\alpha}\left(\mathrm{Re}\, b(w)\right)^n\langle w\rangle^{-\rho|\alpha|}e^{-\frac{t}{4}\mathrm{Re}\, b(w)},
\eeq
for all $\alpha\in\NN^{2d}$, $n\in\NN$, $(t,w)\in[0,\infty)\times\RR^{2d}$ and
\beqs
\sup_{N\in\ZZ_+}\sup_{\substack{\alpha\in\NN^{2d}\\ n\in\NN}}\sup_{\substack{w\in Q^c_{3Rm_N}\\ t\in [0,\infty)}}\frac{\left|D^n_tD^{\alpha}_w\left(u(t,w)-\sum_{j<N} u_j(t,w)\right)\right|\langle w\rangle^{\rho(|\alpha|+2N)}}{n!h^{|\alpha|+2N}A_{|\alpha|+2N}\left(\mathrm{Re}\, b(w)\right)^ne^{-\frac{t}{4}\mathrm{Re}\, b(w)}}\leq C.
\eeqs
\end{lemma}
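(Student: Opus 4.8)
The plan is to read Lemma~\ref{est_heat_ker} as a subordination statement in the sense of Subsection~\ref{sec_ch_qua} and then to feed it into the resummation procedure of Proposition~\ref{subordinate}, with the index set $\Lambda=[0,\infty)\times\NN$ parametrising the pairs $(t,n)$. For $(t,n)\in\Lambda$ set $f_{t,n}(w):=(\mathrm{Re}\, b(w))^ne^{-\frac{t}{4}\mathrm{Re}\, b(w)}$. Since $b\in\Gamma^{*,\infty}_{A_p,\rho}(\RR^{2d})$, $\mathrm{Re}\, b$ has ultrapolynomial growth of class $*$, and iterating $(M.2)$ one checks that for each fixed $n$ so does $(\mathrm{Re}\, b)^n$ (one uses $nM(m\rho)\le M(Hm\rho)+(n-1)\ln c_0$); as $\mathrm{Re}\, b(w)>0$ the factor $e^{-\frac{t}{4}\mathrm{Re}\, b(w)}$ is bounded by $1$, so each $f_{t,n}$ is a positive continuous function of ultrapolynomial growth of class $*$. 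With these weights, Lemma~\ref{est_heat_ker} says exactly that $\ssum\tfrac{1}{n!}\partial^n_tu_j(t,\cdot)\in FS^{*,\infty}_{A_p,\rho}(\RR^{2d};0)$ for every $(t,n)$, and that the family $\big\{\ssum\tfrac{1}{n!}\partial^n_tu_j(t,\cdot)\ :\ (t,n)\in\Lambda\big\}$ is subordinated to $\{f_{t,n}\ :\ (t,n)\in\Lambda\}$ in $FS^{*,\infty}_{A_p,\rho}(\RR^{2d};0)$.

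Next I would apply Proposition~\ref{subordinate} to this situation (here $B'=0$). It produces $R_0>0$, which we may take $>1$, such that for every $R\ge R_0$ the family $\big\{R\big(\ssum\tfrac{1}{n!}\partial^n_tu_j(t,\cdot)\big)\ :\ (t,n)\in\Lambda\big\}$ lies in $\Gamma^{*,\infty}_{A_p,\rho}(\RR^{2d})$ and, for a suitable $B=B(R)$, satisfies the subordination estimate on the sets $Q^c_{Bm_N}$; a look at the proof of \cite[Theorem~4]{BojanP}, on which Proposition~\ref{subordinate} rests, shows that one may take $B(R)=3R$, since with the chosen cut-offs $\chi_{j,R}$ one has $1-\chi_{j,R}\equiv1$ on $Q^c_{3Rm_j}$. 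Fix such an $R$. The crucial point is that the $\chi_{j,R}$ do not depend on $t$, so $R\big(\ssum\tfrac{1}{n!}\partial^n_tu_j(t,\cdot)\big)=\tfrac{1}{n!}\partial^n_t\big(R(\ssum u_j)(t,\cdot)\big)=\tfrac{1}{n!}\partial^n_tu(t,\cdot)$; thus one and the same $R$ handles all $t$-derivatives at once. Multiplying the resulting estimate by $n!$ and recalling the definition of $f_{t,n}$ is precisely the second (asymptotic-expansion) estimate in the statement of the lemma.

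It remains to deduce the pointwise bound (\ref{est_heat_par}) on all of $\RR^{2d}$, which goes exactly as in the final part of the proof of Proposition~\ref{subordinate}. Particularising the subordination estimate just obtained to $N=1$ bounds $D^n_tD^\alpha_w\big(u(t,w)-u_0(t,w)\big)$ on $Q^c_{3Rm_1}$ by $Cn!h^{|\alpha|+2}A_{|\alpha|+2}f_{t,n}(w)\langle w\rangle^{-\rho(|\alpha|+2)}$; using $(M.2)$ to absorb the index shift $|\alpha|+2\rightsquigarrow|\alpha|$ (at the cost of enlarging $h$), together with the $j=0$ case of Lemma~\ref{est_heat_ker}, which bounds $D^n_tD^\alpha_wu_0$ by the right-hand side of (\ref{est_heat_par}), yields (\ref{est_heat_par}) on $Q^c_{3Rm_1}$. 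On the bounded complementary region $Q_{3Rm_1}$ only finitely many of the terms $(1-\chi_{j,R})u_j$ making up $u$ fail to vanish identically there --- since $m_j\to\infty$, $\chi_{j,R}\equiv1$ on $Q_{3Rm_1}$ for $j$ large --- and each of these is controlled by the Leibniz rule, the uniform boundedness of the derivatives of the $\chi_{j,R}$ (indeed $\chi\in\DD^{(A_p)}(\RR^{2d})$, resp.\ $\DD^{\{A_p\}}(\RR^{2d})$, and rescaling does not increase sup-norms), and Lemma~\ref{est_heat_ker}; as $\langle w\rangle$ is bounded on $Q_{3Rm_1}$, the missing factor $\langle w\rangle^{-\rho|\alpha|}$ costs nothing after enlarging $h$ once more. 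This gives (\ref{est_heat_par}) on all of $\RR^{2d}$.

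The only points requiring care are the observation that the resummation $R(\cdot)$ commutes with $\partial_t$ (so that the radius $R$ can be chosen independently of $t$ and of the order $n$ of $t$-differentiation) and the bookkeeping that turns the estimate of Lemma~\ref{est_heat_ker} into a genuine subordination relation with the weights $f_{t,n}$; once this is in place, the assertion is a direct application of Proposition~\ref{subordinate} and of the formalism of Subsection~\ref{sec_ch_qua}, and no new estimate is needed.
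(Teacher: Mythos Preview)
Your proposal is correct and follows essentially the same approach as the paper, which simply refers to the technique of \cite[Theorem~4]{BojanP}; you make this explicit by invoking its abstraction Proposition~\ref{subordinate} with the index set $\Lambda=[0,\infty)\times\NN$ and weights $f_{t,n}$, together with the key observation that the cut-offs $\chi_{j,R}$ are $t$-independent so that $R(\cdot)$ commutes with $\partial_t$. The bookkeeping you describe (particularising to $N=1$, absorbing the index shift via $(M.2)$, and handling the finitely many surviving terms on the compact set $Q_{3Rm_1}$) is exactly the content of the argument.
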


As $(\mathrm{Re}\, b(w))^n$ has ultrapolynomial growth of class
$*$ for each $n\in\NN$, and $\mathrm{Re}\, b(w)>0$, $\forall
w\in\RR^{2d}$, as a direct consequence of this lemma we can
conclude that the mappings
\beqs
\mathbf{u}_n:[0,\infty)\rightarrow\Gamma_{A_p,\rho}^{*,\infty}(\RR^{2d}),\,\, \mathbf{u}_n(t)=
(\partial^n_tu)(t,\cdot),
\eeqs
are well defined for every $n\in\NN$. When $n=0$, we simply denote it by $\mathbf{u}$ (instead of
$\mathbf{u}_0$). Our goal is to prove
$\mathbf{u}\in\mathcal{C}^{\infty}([0,\infty);\Gamma_{A_p,\rho}^{*,\infty}(\RR^{2d}))$
and $\partial^n_t\mathbf{u}=\mathbf{u}_n$, $n\in\ZZ_+$. By Taylor
formula for $\partial^n_tD^{\alpha}_wu(t,w)$ at $t_0$ up to order
$0$ and employing (\ref{est_heat_par}) we see that for every
$h>0$ there exists $C>0$ (there exist $h,C>0$) such that
\begin{multline}
|\partial^n_tD^{\alpha}_wu(t,w)-\partial^n_tD^{\alpha}_wu(t_0,w)|\\
\leq C|t-t_0|(n+1)!h^{|\alpha|}A_{\alpha}\left(\mathrm{Re}\,
b(w)\right)^{n+1}\langle w\rangle^{-\rho|\alpha|},\label{kfh779911}
\end{multline}
for all $\alpha\in\NN^{2d}$, $n\in\NN$, $w\in\RR^{2d}$, $t,t_0\in[0,\infty)$. Hence $\mathbf{u},\mathbf{u}_n\in\mathcal{C}([0,\infty);\Gamma_{A_p,\rho}^{*,\infty}(\RR^{2d}))$, $n\in\ZZ_+$. Expanding $\partial^n_tD^{\alpha}_wu(t,w)$ at $t_0$ up to order $1$ and using (\ref{est_heat_par}), we infer that for every $h>0$ there exists $C>0$ (there exist $h,C>0$) such that
\begin{multline*}
|\partial^n_tD^{\alpha}_wu(t,w)-\partial^n_tD^{\alpha}_wu(t_0,w)- (t-t_0)\partial^{n+1}_tD^{\alpha}_wu(t_0,w)|\\
\leq C|t-t_0|^2(n+2)!h^{|\alpha|}A_{\alpha}\left(\mathrm{Re}\, b(w)\right)^{n+2}\langle w\rangle^{-\rho|\alpha|},
\end{multline*}
for all $\alpha\in\NN^{2d}$, $n\in\NN$, $w\in\RR^{2d}$, $t,t_0\in[0,\infty)$. Hence $\mathbf{u}$ and $\mathbf{u}_n$, $n\in\ZZ_+$, are differentiable $\Gamma_{A_p,\rho}^{*,\infty}(\RR^{2d})$-valued functions and $\partial_t\mathbf{u}=\mathbf{u}_1$ and $\partial_t\mathbf{u}_n=\mathbf{u}_{n+1}$, $n\in\ZZ_+$. Thus, we can conclude $\mathbf{u}\in\mathcal{C}^{\infty}([0,\infty);\Gamma_{A_p,\rho}^{*,\infty}(\RR^{2d}))$ and $\partial^n_t\mathbf{u}=\mathbf{u}_n$, $\forall n\in\ZZ_+$. By Proposition \ref{continuity}, the mappings $c\mapsto c^w$, $\Gamma_{A_p,\rho}^{*,\infty}(\RR^{2d})\rightarrow \mathcal{L}_b(\SSS^*(\RR^d),\SSS^*(\RR^d))$ and $c\mapsto c^w$, $\Gamma_{A_p,\rho}^{*,\infty}(\RR^{2d})\rightarrow \mathcal{L}_b(\SSS'^*(\RR^d),\SSS'^*(\RR^d))$ are continuous. Hence the mapping $t\mapsto (\mathbf{u}(t))^w$ is in\\ $\mathcal{C}^{\infty}([0,\infty);\mathcal{L}_b(\SSS^*(\RR^d),\SSS^*(\RR^d)))$ and in $\mathcal{C}^{\infty}([0,\infty);\mathcal{L}_b(\SSS'^*(\RR^d),\SSS'^*(\RR^d)))$. Thus, the mapping
\beqs
\mathbf{K}:t\mapsto (\partial_t+b^w)(\mathbf{u}(t))^w=(\partial_t\mathbf{u}(t))^w+b^w(\mathbf{u}(t))^w
\eeqs
is in $\mathcal{C}^{\infty}([0,\infty);\mathcal{L}_b(\SSS^*(\RR^d),\SSS^*(\RR^d)))$ and in $\mathcal{C}^{\infty}([0,\infty);\mathcal{L}_b(\SSS'^*(\RR^d),\SSS'^*(\RR^d)))$.\\
\indent For $n\in\NN$, $t\in[0,\infty)$, we denote $\tilde{\mathbf{u}}_n(t)=\sum_j\partial^n_tu_j(t,\cdot)\in FS_{A_p,\rho}^{*,\infty}(\RR^{2d};0)$. Fix $n\in\ZZ_+$. For the moment, for each $0\leq k\leq n$, we denote
\beqs
\mathbf{K}_k(t)=\partial_t^k\mathbf{K}(t)=(\partial_t^{k+1}\mathbf{u}(t))^w+ b^w(\partial^k_t\mathbf{u}(t))^w
\eeqs
(clearly $\mathbf{K}_0=\mathbf{K}$). Lemmas \ref{est_heat_ker} and \ref{rks75} imply that we can apply Theorem \ref{weylq} $ii)$ with $V_1=\{b\}$, $U_1=\{b+\sum_{j\in\ZZ_+} 0\}$, $V_2=\{\partial^k_tu(t,\cdot)|\, t\in[0,\infty),\, 0\leq k\leq n\}$, $U_2=\{\tilde{\mathbf{u}}_k(t)|\, t\in[0,\infty),\, 0\leq k\leq n\}$ and $f_1(w)=\mathrm{Re}\, b(w)$, $f_2(w)=\max_{0\leq k\leq n}(\mathrm{Re}\, b(w))^k$; where $\Sigma_2(\sum_j\partial^k_tu_j(t,\cdot))=\Sigma_2(\tilde{\mathbf{u}}_k(t)) =\partial^k_tu(t,\cdot)$. Hence, there exists $R_1>0$ such that $\{b^w(\partial^k_t\mathbf{u}(t))^w-\Op_{1/2}(R_1(b\#\tilde{\mathbf{u}}_k(t)))|\, t\in[0,\infty),\, 0\leq k\leq n\}$ is an equicontinuous $*$-regularising set and (\ref{krh1791}) holds. By (\ref{system1}), for all $N\in\ZZ_+$, we have
\begin{multline*}
\partial^{k+1}_tu(t,\cdot)+R_1(b\#\tilde{\mathbf{u}}_k(t))\\
= \partial^{k+1}_tu(t,\cdot)-\sum_{j<N}\partial^{k+1}_tu_j(t,\cdot) -(b\#\ssum\partial^k_tu_j(t,\cdot))_{<N} +R_1(b\#\ssum\partial^k_tu_j(t,\cdot)).
\end{multline*}
Hence, Lemma \ref{rks75} and Proposition \ref{eqsse} imply that
\beq\label{tkr991155}
\{(\partial^{k+1}_t\mathbf{u}(t))^w+\Op_{1/2}(R_1(b\#\tilde{\mathbf{u}}_k(t)))|\, t\in[0,\infty),\, 0\leq k\leq n\}
\eeq
is equicontinuous $*$-regularising set and thus, the same holds for $K_n=\{\mathbf{K}_k(t)|\, t\in[0,\infty),\, 0\leq k\leq n\}$ as well. Let $\mathfrak{G}$ be the family of all finite subsets of $\SSS^*(\RR^d)$. The union of $\mathfrak{G}$ is total in $\SSS'^*(\RR^d)$. The equicontinuity of $K_n$ together with the Banach-Steinhaus theorem \cite[Theorem 4.5, p. 85]{Sch} implies that the topology induced on $K_n$ by $\mathcal{L}_{\mathfrak{G}}(\SSS'^*(\RR^d),\SSS^*(\RR^d))$ is the same as the topology induced on it by $\mathcal{L}_p(\SSS'^*(\RR^d),\SSS^*(\RR^d))$. Since $\SSS'^*(\RR^d)$ is Montel, the latter space is in fact $\mathcal{L}_b(\SSS'^*(\RR^d),\SSS^*(\RR^d))$. Because $\mathbf{K}\in\mathcal{C}^{\infty}([0,\infty);\mathcal{L}_b(\SSS^*(\RR^d),\SSS^*(\RR^d)))$, we conclude
\beqs
\mathbf{K}_k\in\mathcal{C}([0,\infty);\mathcal{L}_b(\SSS'^*(\RR^d),\SSS^*(\RR^d))),\,\,\, 0\leq k\leq n.
\eeqs
Fix $t_0\in[0,\infty)$. We can apply Theorem \ref{weylq} $ii)$ with $V_1=\{b\}$, $U_1=\{b+\sum_{j\in\ZZ_+} 0\}$,
\beqs
V_2&=&\{(t-t_0)^{-1}(\partial^k_tu(t,\cdot)-\partial^k_tu(t_0,\cdot))|\, t\in[0,\infty)\backslash\{t_0\},\, 0\leq k\leq n-1\},\\
U_2&=&\{(t-t_0)^{-1}(\tilde{\mathbf{u}}_k(t)-\tilde{\mathbf{u}}_k(t_0))|\, t\in[0,\infty)\backslash\{t_0\},\, 0\leq k\leq n-1\}
\eeqs
and $f_1(w)=\mathrm{Re}\, b(w)$, $f_2(w)=\max_{0\leq k\leq n}(\mathrm{Re}\, b(w))^k$. To see this, notice that (\ref{krt31}) proves $U_2\precsim f_2$ and, by Taylor expanding $\partial^k_tD^{\alpha}_wu(t,w)-\sum_{j<N} \partial^k_tD^{\alpha}_wu_j(t,w)$ at $t_0$ up to order $1$ and employing Lemma \ref{rks75}, we conclude $V_2\precsim_{f_2} U_2$ (the boundness of $V_2$ in some $\Gamma^{(M_p),\infty}_{A_p,\rho}(\RR^{2d};m)$, resp. in some $\Gamma^{\{M_p\},\infty}_{A_p,\rho}(\RR^{2d};h)$, follows from (\ref{kfh779911})). We deduce the existence of $R_2>0$ such that the set
\begin{multline*}
\big\{(t-t_0)^{-1}\left(b^w(\partial^k_t\mathbf{u}(t))^w-b^w(\partial^k_t\mathbf{u}(t_0))^w\right)\\
-(t-t_0)^{-1}\Op_{1/2}\left(R_2\left(b\#\left(\tilde{\mathbf{u}}_k(t)- \tilde{\mathbf{u}}_k(t_0)\right)\right)\right) \big|\, t\in[0,\infty)\backslash\{t_0\},\, 0\leq k\leq n-1\big\}
\end{multline*}
is equicontinuous $*$-regularising and (\ref{krh1791}) holds. Clearly, if we change the definition of $V_2$, $U_2$ and $f_2$ such that $k$ ranges up to $n$ (instead of up to $n-1$), we still have $V_2\precsim_{f_2} U_2$. This fact together with application of (\ref{system1}) in the same manner as for the equicontinuity of the set (\ref{tkr991155}) implies that
\begin{multline*}
\big\{(t-t_0)^{-1}(\partial^{k+1}_t\mathbf{u}(t)-\partial^{k+1}_t\mathbf{u}(t_0))^w\\
+(t-t_0)^{-1}\Op_{1/2}\left(R_2\left(b\# \left(\tilde{\mathbf{u}}_k(t)-\tilde{\mathbf{u}}_k(t_0)\right)\right)\right)\big|\, t\in[0,\infty)\backslash\{t_0\},\, 0\leq k\leq n-1\big\}
\end{multline*}
is equicontinuous $*$-regularising and hence so is the set
\beqs
\{(t-t_0)^{-1}(\mathbf{K}_k(t)-\mathbf{K}_k(t_0))|\, t\in[0,\infty)\backslash\{t_0\},\, 0\leq k\leq n-1\}.
\eeqs
Since $\mathbf{K}\in\mathcal{C}^{\infty}([0,\infty);\mathcal{L}_b(\SSS^*(\RR^d),\SSS^*(\RR^d)))$, by the same technique as above (applying again the Banach-Steinhaus theorem) we conclude that $\mathbf{K}_k$ is differentiable at $t_0$ as an $\mathcal{L}_b(\SSS'^*(\RR^d),\SSS^*(\RR^d))$-valued function and its derivative is $\mathbf{K}_{k+1}(t_0)$, $0\leq k\leq n-1$. Since $t_0\in[0,\infty)$ and $n\in\ZZ_+$ were arbitrary, we conclude $\mathbf{K}\in\mathcal{C}^{\infty}([0,\infty);\mathcal{L}_b(\SSS'^*(\RR^d),\SSS^*(\RR^d)))$ and $\partial^n_t\mathbf{K}(t)=\mathbf{K}_n(t)$, $n\in\NN$. Lastly, because of the initial conditions in (\ref{system1}) and the way we constructed $u(t,w)$ we have $\mathbf{u}(0)=1$ and hence $(\mathbf{u}(0))^w=\mathrm{Id}$. We proved the first part of the following result.

\begin{theorem}\label{sol_heat_ker}
The $\mathcal{C}^{\infty}$ function $u(t,w)$ constructed in Lemma \ref{rks75} is such that the mapping $\mathbf{u}:t\mapsto u(t,\cdot)$, $[0,\infty)\rightarrow \Gamma_{A_p,\rho}^{*,\infty}(\RR^{2d})$, belongs to $\mathcal{C}^{\infty}([0,\infty); \Gamma_{A_p,\rho}^{*,\infty}(\RR^{2d}))$. The mapping $t\mapsto (\mathbf{u}(t))^w$ is in both
\beqs
\mathcal{C}^{\infty}([0,\infty);\mathcal{L}_b(\SSS^*(\RR^d),\SSS^*(\RR^d)))\,\,\, \mbox{and}\,\,\, \mathcal{C}^{\infty}([0,\infty);\mathcal{L}_b(\SSS'^*(\RR^d),\SSS'^*(\RR^d))).
\eeqs
Moreover, $(\mathbf{u}(t))^w$ satisfy
\beq\label{system2}
\left\{\begin{array}{l}
(\partial_t+b^w)(\mathbf{u}(t))^w=\mathbf{K}(t),\,t\in[0,\infty),\\
(\mathbf{u}(0))^w=\mathrm{Id},\\
\end{array}\right.
\eeq
where $\mathbf{K}\in \mathcal{C}^{\infty}([0,\infty);\mathcal{L}_b(\SSS'^*(\RR^d),\SSS^*(\RR^d)))$.\\
\indent For each $t\geq 0$, $(\mathbf{u}(t))^w\in\mathcal{L}(L^2(\RR^d))$ and there exists $C>0$ such that
\beqs
\|(\mathbf{u}(t))^w\|_{\mathcal{L}_b(L^2(\RR^d))}\leq C,\,\,\, \mbox{for all}\,\, t\geq 0.
\eeqs
The mapping $t\mapsto (\mathbf{u}(t))^w$, $(0,\infty)\rightarrow \mathcal{L}_b(L^2(\RR^d))$, is continuous and
\beqs
(\mathbf{u}(t))^w\rightarrow (\mathbf{u}(0))^w=\mathrm{Id},\,\,\, \mbox{as}\,\, t\rightarrow0^+,\,\, \mbox{in}\,\, \mathcal{L}_p(L^2(\RR^d)).
\eeqs
\indent Furthermore, for each $n\in\ZZ_+$, $(\partial^n_t\mathbf{u}(t))^w\in\mathcal{L}(L^2(\RR^d))$, for all $t>0$. The mapping $t\mapsto (\mathbf{u}(t))^w$, $(0,\infty)\rightarrow \mathcal{L}_b(L^2(\RR^d))$, is smooth and $\partial^n_t(\mathbf{u}(t))^w=(\partial^n_t\mathbf{u}(t))^w$.
\end{theorem}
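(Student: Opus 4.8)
The remaining assertions concern the action of $(\mathbf{u}(t))^w$ on $L^2(\RR^d)$, and the plan is to deduce all of them from Lemma \ref{operatoronl2}, the only new analytic input being the elementary observation that for any $t_1>0$ and $k\in\NN$ one has $\sup_{s>0}s^ke^{-t_1s/4}<\infty$, which allows the exponential factor $e^{-\frac{t}{4}\mathrm{Re}\,b(w)}$ in (\ref{est_heat_par}) to absorb any power of $\mathrm{Re}\,b(w)$ once $t$ is bounded away from $0$. First I would prove the uniform $L^2$-bound: taking $n=0$ in (\ref{est_heat_par}) and using $e^{-\frac{t}{4}\mathrm{Re}\,b(w)}\leq1$ (recall $\mathrm{Re}\,b>0$ everywhere), the family $\{\mathbf{u}(t)\,|\,t\geq0\}\subseteq\Gamma_{A_p,\rho}^{*,\infty}(\RR^{2d})$ satisfies hypothesis (\ref{estforope}) of Lemma \ref{operatoronl2} uniformly in $t$; hence each $(\mathbf{u}(t))^w$ extends to a bounded operator on $L^2(\RR^d)$ and $\{(\mathbf{u}(t))^w\,|\,t\geq0\}$ is bounded in $\mathcal{L}_b(L^2(\RR^d))$, giving $\|(\mathbf{u}(t))^w\|_{\mathcal{L}_b(L^2(\RR^d))}\leq C$ for all $t\geq0$. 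For the behaviour at $t=0$, the part of the theorem already proved gives $\mathbf{u}\in\mathcal{C}^{\infty}([0,\infty);\Gamma_{A_p,\rho}^{*,\infty}(\RR^{2d}))$, so $\mathbf{u}(t)\to\mathbf{u}(0)$ in $\Gamma_{A_p,\rho}^{*,\infty}(\RR^{2d})$ as $t\to0^+$; since $\{\mathbf{u}(t)\,|\,t\in[0,1]\}$ is of the form required in the second part of Lemma \ref{operatoronl2}, we get $(\mathbf{u}(t))^w\to(\mathbf{u}(0))^w=\mathrm{Id}$ in $\mathcal{L}_p(L^2(\RR^d))$.

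Next I would treat continuity and smoothness on $(0,\infty)$. Fix $0<t_1<t_2$ and $n\in\NN$. Applying the mean value theorem in $t$ to $\partial_t^nD_w^{\alpha}u$ and estimating the resulting $(n{+}1)$-st $t$-derivative by (\ref{est_heat_par}) — but, unlike in (\ref{kfh779911}), keeping the factor $e^{-\frac{s}{4}\mathrm{Re}\,b(w)}$ and bounding it by $e^{-\frac{t_1}{4}\mathrm{Re}\,b(w)}$ for $t,t_0\in[t_1,t_2]$ — and then absorbing $(\mathrm{Re}\,b(w))^{n+1}e^{-\frac{t_1}{4}\mathrm{Re}\,b(w)}$ into a constant, I obtain: for every $h>0$ there is $C>0$ (resp. for some $h,C>0$) with $|D_w^{\alpha}(\partial_t^nu(t,w)-\partial_t^nu(t_0,w))|\leq C|t-t_0|h^{|\alpha|}A_{\alpha}\langle w\rangle^{-\rho|\alpha|}$ uniformly in $\alpha,w$ and in $t,t_0\in[t_1,t_2]$. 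The same absorption applied directly to (\ref{est_heat_par}) shows $\{\mathbf{u}_n(t)\,|\,t\geq t_1\}$ itself satisfies (\ref{estforope}), so by Lemma \ref{operatoronl2} each $(\mathbf{u}_n(t))^w$ lies in $\mathcal{L}(L^2(\RR^d))$ for $t>0$; moreover the family $\{(t-t_0)^{-1}(\mathbf{u}_n(t)-\mathbf{u}_n(t_0))\,|\,t,t_0\in[t_1,t_2],\,t\neq t_0\}\subseteq\Gamma_{A_p,\rho}^{*,\infty}(\RR^{2d})$ satisfies (\ref{estforope}), whence that same lemma yields boundedness of $\{(t-t_0)^{-1}((\mathbf{u}_n(t))^w-(\mathbf{u}_n(t_0))^w)\}$ in $\mathcal{L}_b(L^2(\RR^d))$, i.e. $t\mapsto(\mathbf{u}_n(t))^w$ is locally Lipschitz, hence continuous, from $(0,\infty)$ into $\mathcal{L}_b(L^2(\RR^d))$. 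Repeating the argument with the first-order Taylor expansion of $\partial_t^nD_w^{\alpha}u$ at $t_0$ (again retaining $e^{-\frac{t_1}{4}\mathrm{Re}\,b(w)}$ in the remainder and absorbing $(\mathrm{Re}\,b(w))^{n+2}e^{-\frac{t_1}{4}\mathrm{Re}\,b(w)}$), the symbol $(t-t_0)^{-1}(\mathbf{u}_n(t)-\mathbf{u}_n(t_0))-\mathbf{u}_{n+1}(t_0)$ satisfies (\ref{estforope}) with constant $O(|t-t_0|)$, so its Weyl quantisation has $\mathcal{L}_b(L^2(\RR^d))$-norm $O(|t-t_0|)$; thus $t\mapsto(\mathbf{u}_n(t))^w$ is differentiable on $(0,\infty)$ into $\mathcal{L}_b(L^2(\RR^d))$ with derivative $(\mathbf{u}_{n+1}(t))^w$. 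Feeding these two facts into the standard iteration (a map with continuous derivative is $\mathcal{C}^1$; induct on $n$) gives $t\mapsto(\mathbf{u}(t))^w\in\mathcal{C}^{\infty}((0,\infty);\mathcal{L}_b(L^2(\RR^d)))$ with $\partial_t^n(\mathbf{u}(t))^w=(\mathbf{u}_n(t))^w=(\partial_t^n\mathbf{u}(t))^w$.

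The main obstacle is conceptual rather than computational: Lemma \ref{operatoronl2} supplies boundedness of \emph{families} of operators on $L^2(\RR^d)$, not a quantitative bound on a single operator norm, so to pass from $\Gamma$-convergence of symbols to $\mathcal{L}_b(L^2(\RR^d))$-convergence of operators one must apply the lemma to the rescaled difference quotients and Taylor remainders — and this is exactly where the exponential decay $e^{-\frac{t}{4}\mathrm{Re}\,b(w)}$ must be used to kill the powers of $\mathrm{Re}\,b(w)$ produced by $t$-differentiation. Because that compensation is uniform only on compact subsets of $(0,\infty)$, the $\mathcal{L}_b(L^2(\RR^d))$-continuity and smoothness hold on $(0,\infty)$ but not up to $t=0$, where $\partial_t^nu(0,\cdot)$ has genuinely positive order for $n\geq1$ and only $\mathcal{L}_p(L^2(\RR^d))$-convergence to $\mathrm{Id}$ can be expected; this dichotomy is already built into the statement of the theorem.
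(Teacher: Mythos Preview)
Your proof is correct and follows essentially the same route as the paper. The only cosmetic difference is that the paper packages your observation $\sup_{s>0}s^ke^{-t_1s/4}<\infty$ into the single global estimate $t^n|D^n_tD^{\alpha}_wu(t,w)|\leq C4^n n!^2h^{|\alpha|}A_{\alpha}\langle w\rangle^{-\rho|\alpha|}$ (valid for all $t\geq0$), whereas you restrict to compact intervals $[t_1,t_2]$ and absorb $(\mathrm{Re}\,b(w))^{n+1}e^{-\frac{t_1}{4}\mathrm{Re}\,b(w)}$ directly; both feed the same Taylor-remainder-plus-Lemma~\ref{operatoronl2} machinery.
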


\begin{proof} The fact $(\mathbf{u}(t))^w\in\mathcal{L}(L^2(\RR^d))$, $t\geq 0$, and the existence of $C>0$ such that $\|(\mathbf{u}(t))^w\|_{\mathcal{L}_b(L^2(\RR^d))}\leq C$, for all $t\geq 0$, follows from the estimate (\ref{est_heat_par}) for $n=0$ and Lemma \ref{operatoronl2}. Since $(\mathbf{u}(t))^w\varphi\rightarrow (\mathbf{u}(0))^w\varphi$ for each $\varphi\in\SSS^*(\RR^d)$ and $\{(\mathbf{u}(t))^w|\, t\geq 0\}$ is bounded in $\mathcal{L}_b(L^2(\RR^d))$, the Banach-Steinhaus theorem implies $(\mathbf{u}(t))^w\rightarrow (\mathbf{u}(0))^w=\mathrm{Id}$, as $t\rightarrow0^+$, in $\mathcal{L}_p(L^2(\RR^d))$. As a direct consequence of (\ref{est_heat_par}), we deduce that for every $h>0$ there exists $C>0$ (resp. there exist $h,C>0$) such that
\beq\label{kr_estt17}
t^n|D^n_tD^{\alpha}_wu(t,w)|\leq C4^n n!^2h^{|\alpha|}A_{\alpha}\langle w\rangle^{-\rho|\alpha|},
\eeq
for all $\alpha\in\NN^{2d}$, $n,k\in\NN$, $(t,w)\in[0,\infty)\times\RR^{2d}$. Thus, for fixed $n\in\ZZ_+$, Lemma \ref{operatoronl2} implies $(\partial^n_t\mathbf{u}(t))^w\in\mathcal{L}(L^2(\RR^d))$, $t>0$. Again, fix $n\in\NN$ and Taylor expand $\partial^n_tD^{\alpha}_wu(t,w)$ at $t_0>0$ up to order $0$. By using (\ref{kr_estt17}) with $n+1$ to estimate the reminder and employing Lemma \ref{operatoronl2}, one obtains that $t\mapsto (\partial^n_t\mathbf{u}(t))^w$, $(0,\infty)\rightarrow \mathcal{L}_b(L^2(\RR^d))$, is continuous at $t_0$. As $t_0$ is arbitrary, it is continuous on $(0,\infty)$. Fix $t_0\in(0,\infty)$ and $n\in\NN$. By Taylor expanding $\partial^n_tD^{\alpha}_wu(t,w)$ at $t_0$ up to order $1$ and employing (\ref{kr_estt17}) with $n+2$ to estimate the reminder, Lemma \ref{operatoronl2} implies that $t\mapsto (\mathbf{u}(t))^w$, $(0,\infty)\rightarrow \mathcal{L}_b(L^2(\RR^d))$, is differentiable at $t_0$ and the derivative is $(\partial^{n+1}_t\mathbf{u}(t_0))^w$. Thus $(\mathbf{u}(t))^w\in\mathcal{C}^{\infty}((0,\infty); \mathcal{L}_b(L^2(\RR^d)))$ and $\partial^n_t(\mathbf{u}(t))^w=(\partial^n_t\mathbf{u}(t))^w$.
\end{proof}

\subsection{Semigroup generated by the square root of a nonnegative hypoelliptic operator}\label{sec_smgr_portk}

Let $a\in\Gamma_{A_p,\rho}^{*,\infty}(\RR^{2d})$ be a hypoelliptic symbol which satisfies the assumptions in Theorem \ref{maint} and let $a_0\in\Gamma_{A_p,\rho}^{*,\infty}(\RR^{2d})$ be the symbol defined in the statement of the same theorem. By applying Theorem \ref{maint} with $z=1/2$, we concluded the existence of a hypoelliptic symbol $a^{\uwidehat{1/2}}\in \Gamma_{A_p,\rho}^{*,\infty}(\RR^{2d})$ and a $*$-regularising operator $S_1$ such that
\beqs
\overline{A}^{1/2}=\overline{\big(a^{\uwidehat{1/2}}\big)^w|_{\SSS^*(\RR^d)}}+S_1\,\,\, \mbox{with}\,\,\, D(\overline{A}^{1/2})=\big\{v\in L^2(\RR^d)\big|\, \big(a^{\uwidehat{1/2}}\big)^w v\in L^2(\RR^d)\big\}
\eeqs
and the estimates in part $(v)$ of Theorem \ref{maint} hold true with $k=1$. By particularising the first estimate in $(v)$ for $N=1$ and $\alpha=0$, one easily verifies $\mathrm{Re}\, a^{\uwidehat{1/2}}(w)>c' \big|\mathrm{Im}\, a^{\uwidehat{1/2}}(w)\big|$, $\forall w\in Q^c_B$, for some $B,c'>0$ (cf. (\ref{finqa})). Clearly, we can assume that $a^{\uwidehat{1/2}}$ satisfies (\ref{dd1}) and (\ref{dd2}) for this $B$. Take $\tilde{\tilde{\chi}}\in \DD^{(A_p)}(\RR^{2d})$ (resp. $\tilde{\tilde{\chi}}\in\DD^{\{A_p\}}(\RR^{2d})$) such that $0\leq \tilde{\tilde{\chi}}\leq 1$, $\tilde{\tilde{\chi}}=1$ on a small neighbourhood of $Q_B$ and $\tilde{\tilde{\chi}}=0$ on the complement of a slightly larger neighbourhood and define $b=\tilde{\tilde{\chi}}+(1-\tilde{\tilde{\chi}})a^{\uwidehat{1/2}}$. Then $b$ is a hypoelliptic symbol in $\Gamma_{A_p,\rho}^{*,\infty}(\RR^{2d})$ that satisfies (\ref{dd1}), (\ref{dd2}) and (\ref{est_semigroup1}) on the whole $\RR^{2d}$. Furthermore, for every $h>0$ there exists $C>0$ (resp. there exist $h,C>0$) such that
\beq\label{est_semigroup}
\left|D^{\alpha}_w b(w)\right|\leq Ch^{|\alpha|}A_{\alpha}|a_0(w)|^{1/2} \langle w\rangle^{-\rho|\alpha|},\,\, w\in\RR^{2d},\,\alpha\in\NN^{2d}.
\eeq
Moreover, $\overline{A}^{1/2}=\overline{b^w|_{\SSS^*(\RR^d)}}+S$ with $S$ a $*$-regularising operator and
\beqs
D(\overline{A}^{1/2})=\left\{v\in L^2(\RR^d)|\, b^w v\in L^2(\RR^d)\right\}.
\eeqs
Thus, we can apply Theorem \ref{sol_heat_ker} to $b$ to deduce (\ref{system2}). As $t\mapsto S(\mathbf{u}(t))^w$ belongs to $\mathcal{C}^{\infty}([0,\infty);\mathcal{L}_b(\SSS'^*(\RR^d),\SSS^*(\RR^d)))$ (since $\mathbf{u}\in\mathcal{C}^{\infty}([0,\infty); \mathcal{L}_b(\SSS'^*(\RR^d),\SSS'^*(\RR^d)))$) we have
\beq\label{system3}
\left\{\begin{array}{l}
(\partial_t+b^w+S)(\mathbf{u}(t))^w=\tilde{\mathbf{K}}(t),\,t\in[0,\infty),\\
(\mathbf{u}(0))^w=\mathrm{Id},\\
\end{array}\right.
\eeq
for some $\tilde{\mathbf{K}}\in\mathcal{C}^{\infty}([0,\infty); \mathcal{L}_b(\SSS'^*(\RR^d),\SSS^*(\RR^d)))$.\\
\indent On the other hand, since $\overline{A}$ is non-negative and densely defined, \cite[Theorem 5.5.2, p. 131]{Fractionalpowersbook} (cf. \cite[Theorem 5.4.1, p. 123; Theorem A.7.6, p. 329]{Fractionalpowersbook}) yields that $-\overline{A}^{1/2}$ is the infinitesimal generator of an analytic semigroup $T(t)$ of amplitude less than $\pi/2$ and
\beq\label{kks93}
T(t)=\frac{2}{\pi}\lim_{s\rightarrow \infty}\int_0^s \lambda\sin(t\lambda)(\overline{A}+\lambda^2\mathrm{Id})^{-1}d\lambda,\,\, t>0,
\eeq
where the limit exists in $\mathcal{L}_b(L^2(\RR^d))$.\\
\indent The main result is the following theorem.

\begin{theorem}\label{res_sqr_rtt1}
Let $a$ be a hypoelliptic symbol in $\Gamma_{A_p,\rho}^{*,\infty}(\RR^{2d})$ that satisfies the assumptions of Theorem \ref{maint} and let $T(t)$, $t\geq 0$, be the analytic semigroup generated by $-\overline{A}^{1/2}$. There exists $u\in\mathcal{C}^{\infty}(\RR\times\RR^{2d})$ such that the mapping $t\mapsto \mathbf{u}(t)=u(t,\cdot)$ belongs to $\mathcal{C}^{\infty}([0,\infty); \Gamma_{A_p,\rho}^{*,\infty}(\RR^{2d}))$ and $T(t)=(\mathbf{u}(t))^w+\mathbf{Q}(t)$, where the mapping $t\mapsto\mathbf{Q}(t)$ belongs to $\mathcal{C}^{\infty}([0,\infty);\mathcal{L}_b(\SSS'^*(\RR^d),\SSS^*(\RR^d)))$. Moreover, the function $u$ satisfies the following estimate: there exists $0<c_1<1$ such that for every $h>0$ there exists $C>0$ (resp. there exist $h,C>0$) such that
\beq\label{est1453}
|D^n_tD^{\alpha}_wu(t,w)|\leq Cn!h^{|\alpha|}A_{\alpha}|a_0(w)|^{n/2}\langle w\rangle^{-\rho|\alpha|}e^{-c_1t |a_0(w)|^{1/2}},
\eeq
for all $\alpha\in\NN^{2d}$, $n\in\NN$, $(t,w)\in[0,\infty)\times\RR^{2d}$, where $a_0\in\Gamma_{A_p,\rho}^{*,\infty}(\RR^{2d})$ is the symbol defined in the statement of Theorem \ref{maint}.\\
\indent Furthermore, $(\mathbf{u}(0))^w=\mathrm{Id}$, $(\mathbf{u}(t))^w\in\mathcal{L}(L^2(\RR^d))$ for every $t\geq 0$, and there exists $C>0$ such that $\|(\mathbf{u}(t))^w\|_{\mathcal{L}_b(L^2(\RR^d))}\leq C$, for all $t\geq 0$. The mapping $t\mapsto (\mathbf{u}(t))^w$, $\RR_+\rightarrow \mathcal{L}_b(L^2(\RR^d))$, is continuous and $(\mathbf{u}(t))^w\rightarrow \mathrm{Id}$, as $t\rightarrow0^+$, in $\mathcal{L}_p(L^2(\RR^d))$.\\
\indent For each $n\in\ZZ_+$, $(\partial^n_t\mathbf{u}(t))^w\in\mathcal{L}(L^2(\RR^d))$, $\forall t>0$, and the mapping $t\mapsto (\mathbf{u}(t))^w$, belongs to  $\mathcal{C}^{\infty}(\RR_+;\mathcal{L}_b(L^2(\RR^d)))$, with $\partial^n_t(\mathbf{u}(t))^w=(\partial^n_t\mathbf{u}(t))^w$, $n\in\ZZ_+$.
\end{theorem}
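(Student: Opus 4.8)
The plan is to combine Theorem \ref{sol_heat_ker} applied to the symbol $b$ with the explicit Balakrishnan-type representation (\ref{kks93}) of the semigroup $T(t)$, and to identify the two up to a smooth family of $*$-regularising operators. First I would recall that $b$ is hypoelliptic on all of $\RR^{2d}$, satisfies (\ref{est_semigroup1}), and fulfils the bound (\ref{est_semigroup}); hence Theorem \ref{sol_heat_ker} furnishes a $\mathcal{C}^{\infty}$ function $u(t,w)$ with the parametrix property (\ref{system3}), i.e. $(\partial_t+b^w+S)(\mathbf{u}(t))^w=\tilde{\mathbf{K}}(t)$ with $\tilde{\mathbf{K}}\in\mathcal{C}^{\infty}([0,\infty);\mathcal{L}_b(\SSS'^*(\RR^d),\SSS^*(\RR^d)))$ and $(\mathbf{u}(0))^w=\mathrm{Id}$. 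Since $-\overline{A}^{1/2}=-(\,\overline{b^w|_{\SSS^*(\RR^d)}}+S\,)$ generates the analytic semigroup $T(t)$, the operator-valued function $t\mapsto T(t)-(\mathbf{u}(t))^w$ should be shown to be $*$-regularising and smooth in $t$; this is the content of $\mathbf{Q}(t)$.

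The key steps, in order: (1) Set $\mathbf{R}(t)=T(t)-(\mathbf{u}(t))^w$. Using (\ref{system3}) and the fact that $T(t)$ solves the homogeneous Cauchy problem for $-\overline{A}^{1/2}$ on $D(\overline{A}^{1/2})$, derive by Duhamel's principle the identity $\mathbf{R}(t)=-\int_0^t T(t-\tau)\tilde{\mathbf{K}}(\tau)\,d\tau$, interpreted first on $\SSS^*(\RR^d)$ and then extended; here one uses $\mathbf{R}(0)=0$ and that $(\partial_t+\overline{A}^{1/2})\mathbf{R}(t)=-\tilde{\mathbf{K}}(t)$. (2) Show that this integral, together with all its $t$-derivatives, maps $\SSS'^*(\RR^d)$ continuously into $\SSS^*(\RR^d)$: since $\tilde{\mathbf{K}}(\tau)$ is $*$-regularising with values varying in an equicontinuous (hence bounded) subset of $\mathcal{L}_b(\SSS'^*(\RR^d),\SSS^*(\RR^d))$ over compact $\tau$-intervals, and $T(t-\tau)$ is bounded on $L^2(\RR^d)$, one concludes $\mathbf{Q}(t):=\mathbf{R}(t)\in\mathcal{C}^{\infty}([0,\infty);\mathcal{L}_b(\SSS'^*(\RR^d),\SSS^*(\RR^d)))$ by differentiating under the integral sign and using the smoothness of $\tilde{\mathbf{K}}$ from Theorem \ref{sol_heat_ker} together with the semigroup property and analyticity of $T$. (3) Transfer the estimate: apply Lemma \ref{est_heat_ker} and Lemma \ref{rks75} with the symbol $b$; by (\ref{est_semigroup1}) one has $\mathrm{Re}\,b(w)\geq c''|a_0(w)|^{1/2}$ and also $\mathrm{Re}\,b(w)\leq c'''|a_0(w)|^{1/2}$ on $\RR^{2d}$ (from (\ref{est_semigroup})), so the bound (\ref{est_heat_par}) for $u(t,w)$ with $b$ in place of $b$ yields (\ref{est1453}) with $c_1$ a small multiple of $c''$, after absorbing $(\mathrm{Re}\,b(w))^n\asymp |a_0(w)|^{n/2}$ into the constant. (4) The remaining assertions — $(\mathbf{u}(0))^w=\mathrm{Id}$, uniform $L^2$-boundedness, continuity of $t\mapsto(\mathbf{u}(t))^w$ on $\RR_+$ into $\mathcal{L}_b(L^2)$, $\mathcal{L}_p$-convergence to $\mathrm{Id}$ as $t\to0^+$, and the $\mathcal{C}^{\infty}$ regularity with $\partial^n_t(\mathbf{u}(t))^w=(\partial^n_t\mathbf{u}(t))^w$ — are already supplied verbatim by Theorem \ref{sol_heat_ker}, so I would simply cite that theorem.

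The main obstacle I anticipate is Step (2), namely establishing that $t\mapsto\mathbf{Q}(t)$ is not merely continuous but $\mathcal{C}^{\infty}$ as an $\mathcal{L}_b(\SSS'^*(\RR^d),\SSS^*(\RR^d))$-valued map, and that $\mathbf{Q}(t)$ genuinely lands in $\SSS^*(\RR^d)$ (not just in $L^2$) when applied to tempered ultradistributions. The subtlety is that $T(t-\tau)$ acts only on $L^2(\RR^d)$, so one must first use the smoothing of $\tilde{\mathbf{K}}(\tau)$ to land in $\SSS^*(\RR^d)\hookrightarrow L^2(\RR^d)$, then apply $T(t-\tau)$, and then recover smoothing in the output — the latter is not automatic and requires exploiting that $\mathbf{Q}(t)$ also solves $(\partial_t+\overline{A}^{1/2})\mathbf{Q}(t)=-\tilde{\mathbf{K}}(t)$ with $\overline{A}^{1/2}$ differing from the hypoelliptic operator $b^w$ only by a $*$-regularising operator $S$, so that a bootstrap using the global regularity of $b^w$ (Corollary \ref{corunipar}-type arguments, or directly Proposition \ref{maximalreal}) upgrades the $L^2$-valued solution to an $\SSS^*(\RR^d)$-valued one, uniformly in $t$ on compacta. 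Differentiation in $t$ then follows from the $\mathcal{C}^{\infty}$-dependence already proved for $\tilde{\mathbf{K}}$ and for $T(t)$ on $(0,\infty)$, together with the analyticity of the semigroup near $t=0^+$ after composing with a smoothing operator.
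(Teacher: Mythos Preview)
Your overall architecture --- apply Theorem \ref{sol_heat_ker} to $b$, use Duhamel to write $T(t)-(\mathbf{u}(t))^w$ as $-\int_0^t T(t-\tau)\tilde{\mathbf{K}}(\tau)\,d\tau$, and read off (\ref{est1453}) and the $L^2$ statements from Theorem \ref{sol_heat_ker} --- coincides with the paper's proof. Steps (1), (3) and (4) are essentially what the paper does (for (3) the relevant input is indeed $\mathrm{Re}\,b(w)\asymp |a_0(w)|^{1/2}$, obtained from (\ref{est_semigroup}), (\ref{est_semigroup1}) and the $N=1$, $\alpha=0$ case of part $(v)$ of Theorem \ref{maint}).

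The genuine gap is in your Step (2), precisely at the obstacle you flag. Your proposed bootstrap via the global regularity of $b^w$ does not close: from $(\partial_t+b^w+S)\mathbf{Q}(t)=-\tilde{\mathbf{K}}(t)$ you only get $b^w\mathbf{Q}(t)f=-\partial_t\mathbf{Q}(t)f-S\mathbf{Q}(t)f-\tilde{\mathbf{K}}(t)f$, and the right-hand side is merely in $L^2$ (through $\partial_t\mathbf{Q}(t)f$), not in $\SSS^*$. Global regularity of $b^w$ (Corollary \ref{corunipar}) requires $b^w g\in\SSS^*$ to conclude $g\in\SSS^*$; having $b^w g\in L^2$ gives nothing. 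Iterating leads you to needing $D(\overline{A}^{\infty})=\SSS^*$, which is plausible for these operators but is neither proved in the paper nor a one-line consequence of the stated results.

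The paper circumvents this entirely by proving directly that $T(t)$ maps $\SSS^*$ to $\SSS^*$ and that $t\mapsto T(t)\in\mathcal{C}^{\infty}([0,\infty);\mathcal{L}_b(\SSS^*,\SSS^*))$ (Lemma \ref{lmmsk11}). The mechanism is to rewrite the Balakrishnan integral (\ref{kks93}) as $T(t)=\mathrm{Id}-\frac{2}{\pi}\int_0^{\infty}\lambda^{-1}\sin(t\lambda)\,\overline{A}(\overline{A}+\lambda^2\mathrm{Id})^{-1}\,d\lambda$ and then invoke Proposition \ref{knoforcomp} with $k=1$: the decomposition $\overline{A}(\overline{A}+\lambda^2\mathrm{Id})^{-1}=(b^{(1)}_{\lambda^2})^w+S^{(1)}_{\lambda^2}$, with $\{(1+\lambda^2)(b^{(1)}_{\lambda^2})^w\}$ equicontinuous on $\SSS^*$ and $\{(1+\lambda^2)S^{(1)}_{\lambda^2}\}$ equicontinuous $*$-regularising, makes the integral visibly $\SSS^*$-valued with the required uniformity. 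Smoothness in $t$ then follows from $\partial_tT(t)=-\overline{A}^{1/2}T(t)$ and $\overline{A}^{1/2}\in\mathcal{L}(\SSS^*,\SSS^*)$, together with a weak-to-strong upgrade on Montel spaces (Lemma \ref{ktn1793}). Once $T(t)$ is known to preserve $\SSS^*$, the Duhamel integral $\int_0^t T(t-s)\tilde{\mathbf{K}}(s)f\,ds$ lands in $\SSS^*$ for every $f\in\SSS'^*$ without any bootstrap, and its $\mathcal{C}^{\infty}$ dependence on $t$ (Lemma \ref{hks7951}) follows by differentiating under the integral and another application of Lemma \ref{ktn1793}. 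This is the step you should replace your bootstrap with.
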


\subsection{Proof of Theorem \ref{res_sqr_rtt1}}

This subsection is devoted to the proof of Theorem \ref{res_sqr_rtt1}. In fact, we will show that the function $u\in\mathcal{C}^{\infty}(\RR\times\RR^{2d})$ is exactly the one obtained when we apply Theorem \ref{sol_heat_ker} to the symbol $b$ given above.\\
\indent We start by proving a useful technical result.

\begin{lemma}\label{ktn1793}
Let $F$ be a Montel space, $t_0\in\RR$ and $\varepsilon>0$.
\begin{itemize}
\item[$(i)$] Assume $\mathbf{F}\in\mathcal{C}([t_0,t_0+\varepsilon];F)$ (resp. $\mathbf{F}\in\mathcal{C}([t_0-\varepsilon,t_0];F)$, resp. $\mathbf{F}\in\mathcal{C}([t_0-\varepsilon,t_0+\varepsilon];F)$) and for each $f'\in F'$, the function $t\mapsto F_{f'}(t)=\langle f',\mathbf{F}(t)\rangle$, has a right derivative at $t_0$ (resp. left derivative at $t_0$, resp. derivative at $t_0$). Then the same also holds for $\mathbf{F}$ and its right derivative (resp. left derivative, resp. derivative) is the element in $F=(F'_b)'_b$ given by $f'\mapsto (d^+/dt)F_{f'}(t_0)$ (resp. by $f'\mapsto (d^-/dt)F_{f'}(t_0)$, resp. by $f'\mapsto (d/dt)F_{f'}(t_0)$).
\item[$(ii)$] Let $E$ be a Montel space. Assume that $\mathbf{Q}\in\mathcal{C}([t_0,t_0+\varepsilon];\mathcal{L}_b(E,F))$ (resp. $\mathbf{Q}\in\mathcal{C}([t_0-\varepsilon,t_0];\mathcal{L}_b(E,F))$, resp. $\mathbf{Q}\in\mathcal{C}([t_0-\varepsilon,t_0+\varepsilon];\mathcal{L}_b(E,F))$) and for each $e\in E$ and $f'\in F'$, the function $t\mapsto F_{e,f'}(t)=\langle f',\mathbf{Q}(t)e\rangle$, has a right derivative at $t_0$ (resp. left derivative at $t_0$, resp. derivative at $t_0$). Then the same also holds for the mapping $\mathbf{Q}$ and its right derivative (resp. left derivative, resp. derivative) is the element in $\mathcal{L}(E,F)=\mathcal{L}(E,(F'_b)'_b)$ given by $e\mapsto (f'\mapsto (d^+/dt)F_{e,f'}(t_0))$ (resp. by $e\mapsto (f'\mapsto (d^-/dt)F_{e,f'}(t_0))$, resp. by $e\mapsto (f'\mapsto (d/dt)F_{e,f'}(t_0))$).
\end{itemize}
\end{lemma}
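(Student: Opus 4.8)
The plan is to reduce both statements to three classical facts about locally convex spaces: in a Montel space bounded sets are relatively compact, a barrelled space makes the Banach--Steinhaus theorem available, and on an equicontinuous set of operators the topology of pointwise convergence coincides with that of uniform convergence on precompact sets. I will carry out the argument for the right derivative at $t_0$; the left derivative and the two-sided derivative are handled verbatim after restricting the increment $h$ to the appropriate sign.

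First I would prove $(i)$. Fix $t_0$ and, for $h$ in a one-sided punctured neighbourhood $(0,\delta)$ with $[t_0,t_0+\delta]$ inside the interval of definition, set $\mathbf{D}_h=h^{-1}\bigl(\mathbf{F}(t_0+h)-\mathbf{F}(t_0)\bigr)\in F$. By hypothesis, for each $f'\in F'$ one has $\langle f',\mathbf{D}_h\rangle\to (d^+/dt)F_{f'}(t_0)=:L(f')$ as $h\to0^+$, so $\{\langle f',\mathbf{D}_h\rangle\mid 0<h<\delta\}$ is bounded once $\delta$ is small; hence $\{\mathbf{D}_h\mid 0<h<\delta\}$ is $\sigma(F,F')$-bounded, thus bounded in $F$ (Mackey), thus relatively compact in $F$ since $F$ is Montel. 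Any cluster point $\ell\in F$ of the net $(\mathbf{D}_h)_{h\to0^+}$ satisfies $\langle f',\ell\rangle=L(f')$ for all $f'\in F'$, and since $F'$ separates the points of $F$ this $\ell$ is unique; a relatively compact net with a unique cluster point converges, so $\mathbf{D}_h\to\ell$ in $F$. Therefore $\mathbf{F}$ has a right derivative at $t_0$, which is $\ell$, and by construction $\ell$ is the element of $F=(F'_b)'_b$ given by $f'\mapsto (d^+/dt)F_{f'}(t_0)$, as claimed.

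Next I would deduce $(ii)$ from $(i)$. Fix $e\in E$; since evaluation at $e$ is continuous from $\mathcal{L}_b(E,F)$ to $F$, the map $t\mapsto\mathbf{Q}(t)e$ lies in $\mathcal{C}([t_0,t_0+\varepsilon];F)$, and $t\mapsto\langle f',\mathbf{Q}(t)e\rangle=F_{e,f'}(t)$ has a right derivative at $t_0$ for each $f'$. Part $(i)$ then gives that $t\mapsto\mathbf{Q}(t)e$ has a right derivative $L_e\in F$ at $t_0$, namely $f'\mapsto(d^+/dt)F_{e,f'}(t_0)$; equivalently the operator difference quotients $\mathbf{Q}_h=h^{-1}\bigl(\mathbf{Q}(t_0+h)-\mathbf{Q}(t_0)\bigr)\in\mathcal{L}(E,F)$ satisfy $\mathbf{Q}_he\to L_e$ in $F$ for every $e$. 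In particular $\{\mathbf{Q}_he\mid 0<h<\delta\}$ is bounded in $F$ for each $e$, so, $E$ being Montel and hence barrelled, Banach--Steinhaus makes $\{\mathbf{Q}_h\mid 0<h<\delta\}$ equicontinuous in $\mathcal{L}(E,F)$. On an equicontinuous subset the topology of pointwise convergence equals that of uniform convergence on precompact subsets of $E$, and since $E$ is Montel its bounded and precompact subsets coincide, so this is precisely the topology of $\mathcal{L}_b(E,F)$; consequently $\mathbf{Q}_h\to L$ in $\mathcal{L}_b(E,F)$, where $L\colon e\mapsto L_e$ is linear and, as the pointwise limit of an equicontinuous family, belongs to $\mathcal{L}(E,F)=\mathcal{L}(E,(F'_b)'_b)$. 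Thus $\mathbf{Q}$ has at $t_0$ the asserted right derivative.

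There is no single difficult step; the work is in assembling the right classical results and in two bookkeeping points. The mild care needed is, first, to keep the three one-sided variants uniform by merely restricting the sign of $h$, and second, to check that the limit functionals actually land in $F$ and in $\mathcal{L}(E,F)$ — which above is obtained for free by identifying the limit with the unique cluster point (in $(i)$) and as a pointwise limit of an equicontinuous family (in $(ii)$), rather than through a separate continuity verification.
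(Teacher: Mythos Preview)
Your proof is correct and follows essentially the same strategy as the paper. The only notable variation is in part $(i)$: the paper identifies $F=(F'_b)'_b$, argues that the difference quotients form an equicontinuous subset of $\mathcal{L}(F'_b,\CC)$, and then invokes Banach--Steinhaus together with the Montel property of $F'_b$ to upgrade pointwise convergence to convergence in $\mathcal{L}_b(F'_b,\CC)=F$; you instead stay in $F$ and use the Montel property of $F$ directly via the compactness argument ``relatively compact net with a unique cluster point converges.'' Your route is a bit more economical and avoids the passage through the bidual, while the paper's dual-side formulation makes the identification of the derivative as an element of $(F'_b)'_b$ more explicit. For part $(ii)$ the two proofs are essentially identical: both apply $(i)$ pointwise, use barrelledness of $E$ for Banach--Steinhaus, and then the Montel property of $E$ to pass from $\mathcal{L}_p$ to $\mathcal{L}_b$.
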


\begin{proof} $(i)$ We prove the result only for the right derivative, the other two cases are analogous. Since, for each $f'\in F'$, the set $\{(t-t_0)^{-1}(F_{f'}(t)-F_{f'}(t_0))|\, t\in(t_0,t_0+\varepsilon]\}$ is bounded in $\CC$, the set $H=\{(t-t_0)^{-1}(\mathbf{F}(t)-\mathbf{F}(t_0))|\, t\in(t_0,t_0+\varepsilon]\}$ is weakly bounded in $F$, hence bounded in $F$. As $F$ is reflexive, $H$ is equicontinuous in $\mathcal{L}(F'_b,\CC)$. By \cite[Theorem 4.3, p. 84]{Sch}, its closure $H_1$ in $\CC^{F'_b}$ for the topology of simple convergence is in fact an equicontinuous subset of $\mathcal{L}(F'_b,\CC)$. This shows that the mapping $f'\mapsto (d^+/dt)F_{f'}(t_0)$ (which is in $H_1$) is an element of $\mathcal{L}(F'_b,\CC)$, i.e. it is a well defined element of $F$ which we denote by $(d^+/dt)\mathbf{F}(t_0)$. Moreover, by the Banach-Steinhaus theorem \cite[Theorem 4.5, p. 85]{Sch}, we have $(t-t_0)^{-1}(\mathbf{F}(t)-\mathbf{F}(t_0))\rightarrow (d^+/dt)\mathbf{F}(t_0)$ in $\mathcal{L}_p(F'_b,\CC)$. As $F$ is Montel, so is $F'_b$. Hence the convergence holds in $\mathcal{L}_b(F'_b,\CC)=F$.\\
\indent $(ii)$ Again, we give the prove only for the right derivative, the other two cases are analogous. By $(i)$, for each $e\in E$, the mapping $t\mapsto\mathbf{Q}(t)e$, $[t_0,t_0+\varepsilon]\rightarrow F$, has right derivative at $t_0$ which is the element of $\mathcal{L}_b(F'_b,\CC)=F$ given by $f'\mapsto (d^+/dt)F_{e,f'}(t_0)$. This implies that the set $H=\{e\mapsto (t-t_0)^{-1}(\mathbf{Q}(t)e-\mathbf{Q}(t_0)e)|\, t\in(t_0,t_0+\varepsilon]\}$ is weakly bounded in $\mathcal{L}(E,F)$ (for each $e\in E$, the set $\{\mathbf{Q}(t)e|\, t\in[t_0,t_0+\varepsilon]\}$ is bounded in $F$ by the continuity of $\mathbf{Q}$). Thus, $H$ is equicontinuous and if we take its closure $H_1$ in $F^E$ for the topology of simple convergence, \cite[Theorem 4.3, p. 84]{Sch} implies that $H_1$ is an equicontinuous subset of $\mathcal{L}(E,F)$. Hence, the mapping $e\mapsto (f'\mapsto (d^+/dt)F_{e,f'}(t_0))$ is well defined element of $\mathcal{L}(E,(F'_b)'_b)=\mathcal{L}(E,F)$ which we denote by $(d^+/dt)\mathbf{Q}(t_0)$. Moreover, the Banach-Steinhaus theorem \cite[Theorem 4.5, p. 85]{Sch} yields that
\beqs
(t-t_0)^{-1}(\mathbf{Q}(t)-\mathbf{Q}(t_0))\rightarrow (d^+/dt)\mathbf{Q}(t_0)\,\,\, \mbox{in}\,\, \mathcal{L}_p(E,F).
\eeqs
Since $E$ is Montel, the convergence also holds in $\mathcal{L}_b(E,F)$.
\end{proof}

Our immediate goal is to prove the smoothness of $t\mapsto T(t)$ as an $\mathcal{L}_b(\SSS^*(\RR^d),\SSS^*(\RR^d))$-valued mapping.

\begin{lemma}\label{lmmsk11}
For each $t\geq0$, $T(t)\in\mathcal{L}(\SSS^*(\RR^d),\SSS^*(\RR^d))$. Moreover, the mapping $t\mapsto T(t)$ belongs to $\mathcal{C}^{\infty}([0,\infty);\mathcal{L}_b(\SSS^*(\RR^d),\SSS^*(\RR^d)))$.
\end{lemma}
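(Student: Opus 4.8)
The plan is to compare the semigroup $T(t)$ with the heat parametrix $(\mathbf{u}(t))^w$ produced by Theorem~\ref{sol_heat_ker} for the symbol $b$ introduced in Subsection~\ref{sec_smgr_portk}, and to show that the correction $T(t)-(\mathbf{u}(t))^w$ is a smooth family of $*$-regularising operators; both assertions of the lemma are then immediate. Recall that, by the construction of $b$, one has $\overline{A}^{1/2}=\overline{b^w|_{\SSS^*(\RR^d)}}+S$ with $D(\overline{A}^{1/2})=\{v\in L^2(\RR^d)|\, b^wv\in L^2(\RR^d)\}$; in particular $\SSS^*(\RR^d)\subseteq D(\overline{A}^{1/2})$ and on $\SSS^*(\RR^d)$ the generator $-\overline{A}^{1/2}$ acts as the continuous operator $-(b^w+S)\colon\SSS^*(\RR^d)\to\SSS^*(\RR^d)$. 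By Theorem~\ref{sol_heat_ker} and the passage to (\ref{system3}), $t\mapsto(\mathbf{u}(t))^w$ is smooth with values in $\mathcal{L}_b(\SSS^*(\RR^d),\SSS^*(\RR^d))$ and in $\mathcal{L}_b(\SSS'^*(\RR^d),\SSS'^*(\RR^d))$, $(\mathbf{u}(0))^w=\mathrm{Id}$, and $(\partial_t+b^w+S)(\mathbf{u}(t))^w=\tilde{\mathbf{K}}(t)$ with $\tilde{\mathbf{K}}\in\mathcal{C}^{\infty}([0,\infty);\mathcal{L}_b(\SSS'^*(\RR^d),\SSS^*(\RR^d)))$.

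Fix $\varphi\in\SSS^*(\RR^d)$. Then $t\mapsto T(t)\varphi$ is $\mathcal{C}^1$ (even $\mathcal{C}^\infty$) with values in $L^2(\RR^d)$, since $(b^w+S)^n\varphi\in\SSS^*(\RR^d)\subseteq D(\overline{A}^{1/2})$ for all $n$, while $t\mapsto(\mathbf{u}(t))^w\varphi$ is $\mathcal{C}^\infty$ with values in $\SSS^*(\RR^d)\hookrightarrow L^2(\RR^d)$, hence also into $D(\overline{A}^{1/2})$ with the graph norm. Their difference $e(t):=T(t)\varphi-(\mathbf{u}(t))^w\varphi$ then solves in $L^2(\RR^d)$ the problem $e'(t)=-\overline{A}^{1/2}e(t)-\tilde{\mathbf{K}}(t)\varphi$, $e(0)=0$, where $s\mapsto\tilde{\mathbf{K}}(s)\varphi$ is continuous into $D(\overline{A}^{1/2})$ with the graph norm. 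By uniqueness of the classical solution of this abstract inhomogeneous Cauchy problem, $e(t)=-\int_0^tT(t-s)\tilde{\mathbf{K}}(s)\varphi\,ds$, so that
\begin{equation}\label{lmmsk_duh}
T(t)=(\mathbf{u}(t))^w+\mathbf{Q}(t),\qquad \mathbf{Q}(t)\varphi=-\int_0^tT(t-s)\tilde{\mathbf{K}}(s)\varphi\,ds ,
\end{equation}
a priori for $\varphi\in\SSS^*(\RR^d)$, but in fact for every $\varphi\in\SSS'^*(\RR^d)$, since $\tilde{\mathbf{K}}(s)$ already maps $\SSS'^*(\RR^d)$ into $\SSS^*(\RR^d)\subseteq L^2(\RR^d)$ and $s\mapsto T(t-s)\tilde{\mathbf{K}}(s)\varphi$ is continuous into $L^2(\RR^d)$ for such $\varphi$.

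Next I would iterate (\ref{lmmsk_duh}) to identify $\mathbf{Q}$. Substituting $T=(\mathbf{u})^w+\mathbf{Q}$ into the second formula of (\ref{lmmsk_duh}) yields $\mathbf{Q}=\mathbf{Q}_0-\mathcal{V}\mathbf{Q}$, where $\mathbf{Q}_0(t):=-\int_0^t(\mathbf{u}(t-s))^w\tilde{\mathbf{K}}(s)\,ds$ and $(\mathcal{V}\mathbf{F})(t):=\int_0^t\mathbf{F}(t-s)\tilde{\mathbf{K}}(s)\,ds$. Because $\tilde{\mathbf{K}}(s)\in\mathcal{L}_b(\SSS'^*(\RR^d),\SSS^*(\RR^d))$ and $(\mathbf{u}(r))^w\in\mathcal{L}_b(\SSS^*(\RR^d),\SSS^*(\RR^d))$ depend smoothly on their arguments, $\mathbf{Q}_0\in\mathcal{C}^{\infty}([0,\infty);\mathcal{L}_b(\SSS'^*(\RR^d),\SSS^*(\RR^d)))$, the operator $\mathcal{V}$ maps this space into itself, and every iterate $(-\mathcal{V})^n\mathbf{Q}_0$ again takes values in $\mathcal{L}_b(\SSS'^*(\RR^d),\SSS^*(\RR^d))$. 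Solving the Neumann series $\mathbf{Q}=\sum_{n\ge0}(-\mathcal{V})^n\mathbf{Q}_0$ and checking that it converges in $\mathcal{C}^{\infty}([0,T];\mathcal{L}_b(\SSS'^*(\RR^d),\SSS^*(\RR^d)))$ for every $T>0$ then gives $\mathbf{Q}\in\mathcal{C}^{\infty}([0,\infty);\mathcal{L}_b(\SSS'^*(\RR^d),\SSS^*(\RR^d)))$; combined with Theorem~\ref{sol_heat_ker} and (\ref{lmmsk_duh}), this produces $T(t)\in\mathcal{L}(\SSS^*(\RR^d),\SSS^*(\RR^d))$ for each $t\geq0$ and $t\mapsto T(t)\in\mathcal{C}^{\infty}([0,\infty);\mathcal{L}_b(\SSS^*(\RR^d),\SSS^*(\RR^d)))$ (using also that $\mathbf{Q}(t)$, precomposed with the bounded inclusion $\SSS^*(\RR^d)\hookrightarrow\SSS'^*(\RR^d)$, is smooth into $\mathcal{L}_b(\SSS^*(\RR^d),\SSS^*(\RR^d))$); alternatively the smoothness can be deduced from Lemma~\ref{ktn1793} once $T(t)$ is known to preserve $\SSS^*(\RR^d)$.

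The one genuine obstacle is the convergence of this Neumann series in the non-normable space $\mathcal{L}_b(\SSS'^*(\RR^d),\SSS^*(\RR^d))$: the factorial gain $t^n/n!$ from integrating over the simplex $\{(s_1,\ldots,s_n)\,:\,s_i\ge0,\ \sum_{i=1}^n s_i\le t\}$ must dominate the growth produced by composing $n$ of the operators $\tilde{\mathbf{K}}(s)$ with $\mathbf{Q}_0(r)$. The cleanest route is to pass to Schwartz kernels, identifying $\mathcal{L}_b(\SSS'^*(\RR^d),\SSS^*(\RR^d))$ with $\SSS^*(\RR^{2d})$ (kernel theorem/nuclearity), under which composition becomes the kernel product $(K_1,K_2)\mapsto\int_{\RR^d}K_1(\cdot,y)K_2(y,\cdot)\,dy$, and to verify that this product carries on the Gelfand--Shilov-type space $\SSS^*(\RR^{2d})$ a submultiplicative system of seminorms, so that $p\big(((-\mathcal{V})^n\mathbf{Q}_0)(t)\big)\le (Ct)^n M^{n+1}/n!$ for suitable $C,M$ depending on $T$ and on the seminorm $p$; a kernel-free variant instead keeps track of the (at most geometric per step, thanks to the sub-exponential weights defining $\SSS^*$) degradation of seminorms under composition, which is again beaten by $1/n!$. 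Once $\mathbf{Q}$ is in hand, nothing further is needed.
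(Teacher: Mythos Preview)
Your route is genuinely different from the paper's. The paper does \emph{not} touch the heat parametrix or Duhamel's formula at this stage: it proves Lemma~\ref{lmmsk11} directly from the resolvent representation (\ref{kks93}), rewriting $T(t)=\mathrm{Id}-\frac{2}{\pi}\int_0^\infty\lambda^{-1}\sin(t\lambda)\,\overline{A}(\overline{A}+\lambda^2\mathrm{Id})^{-1}\,d\lambda$ and feeding in the decomposition $\overline{A}(\overline{A}+\lambda^2\mathrm{Id})^{-1}=(b^{(1)}_{\lambda^2})^w+S^{(1)}_{\lambda^2}$ from Proposition~\ref{knoforcomp}; equicontinuity of $\{(1+\lambda^2)(b^{(1)}_{\lambda^2})^w\}$ and $\{(1+\lambda^2)S^{(1)}_{\lambda^2}\}$ then gives $T(t)\in\mathcal{L}(\SSS^*,\SSS^*)$ and continuity in $t$ by elementary estimates on the integrand, and smoothness follows from $\partial_tT(t)=-\overline{A}^{1/2}T(t)$ together with Lemma~\ref{ktn1793}. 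Only \emph{after} this is established does the paper write down (\ref{rkv7591}) and prove the $*$-regularising nature of $\mathbf{Q}(t)$ (Lemma~\ref{hks7951}). Your approach collapses these two steps into one, deriving Lemma~\ref{lmmsk11} and Lemma~\ref{hks7951} simultaneously via the Volterra equation $\mathbf{Q}=\mathbf{Q}_0-\mathcal{V}\mathbf{Q}$; that is an economical idea.

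There is, however, a real gap in your convergence argument for the Neumann series. Neither of your two proposed mechanisms works as stated. A genuinely submultiplicative system of seminorms on $\SSS^*(\RR^{2d})$ for the kernel product is not available: composition sends $\SSS^{M_p,m}\times\SSS^{M_p,m}$ into $\SSS^{M_p,m/c}$ for some $c>1$ (the weight $e^{-M(m|(x,z)|)}$ must be split between an $x$-decay and a $z$-integrability factor, costing a fixed fraction of $m$), so after $n$ steps the bound on $\|(-\mathcal{V})^n\mathbf{Q}_0(t)\|_{m_0}$ involves $\big(\sup_{[0,T]}\|\tilde{\mathbf{K}}\|_{c^n m_0}\big)^n$, and for a generic element of $\SSS^{(M_p)}$ (let alone $\SSS^{\{M_p\}}$, where higher-$m$ norms need not even be finite) there is no control on this growth that $1/n!$ would beat. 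Your ``geometric degradation beaten by factorial'' heuristic overlooks that the \emph{constants}, not just the indices, blow up.

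The clean fix is to factor the middle of the chain through $L^2$. For any bounded $B,B'\subseteq\SSS'^*$ and $f\in B$, $g\in B'$, write
\[
\big|\big\langle f,\mathbf{Q}_0(r)\tilde{\mathbf{K}}(s_n)\cdots\tilde{\mathbf{K}}(s_1)g\big\rangle\big|
\le \|{}^t\mathbf{Q}_0(r)f\|_{L^2}\,\Big(\prod_{j=2}^{n}\|\tilde{\mathbf{K}}(s_j)\|_{\mathcal{L}(L^2)}\Big)\,\|\tilde{\mathbf{K}}(s_1)g\|_{L^2},
\]
which is bounded by $C_B\,C_K^{n-1}\,C_{B'}$ uniformly on $[0,T]$ because $\tilde{\mathbf{K}}\in\mathcal{C}([0,T];\mathcal{L}_b(\SSS'^*,\SSS^*))$ gives equicontinuity and hence uniform $L^2$-operator bounds. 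Integrating over the $n$-simplex yields $(C_KT)^n/n!$ times a constant depending only on $B,B'$, so the series converges in $\mathcal{L}_b(\SSS'^*,\SSS^*)$; the same trick handles $t$-derivatives (Leibniz on the iterated integral introduces only finitely many boundary terms). Finally, you must also check that the remainder $(-\mathcal{V})^N\mathbf{Q}$ tends to zero; since $\mathbf{Q}(r)=T(r)-(\mathbf{u}(r))^w$ is uniformly bounded in $\mathcal{L}(L^2)$ on $[0,T]$ by Theorem~\ref{sol_heat_ker} and the semigroup bound, this follows in $L^2$ by the same simplex estimate, which suffices to identify the Neumann limit with $\mathbf{Q}$.
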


\begin{proof} Observe that
\beqs
\int_0^s\lambda\sin(t\lambda)(\overline{A}+\lambda^2\mathrm{Id})^{-1}d\lambda =\mathrm{Id}\int_0^s \frac{\sin(t\lambda)}{\lambda}d\lambda- \int_0^s\frac{\sin(t\lambda)}{\lambda}\overline{A} (\overline{A}+\lambda^2\mathrm{Id})^{-1}d\lambda,
\eeqs
where the last integral exists because $\overline{A}$ is non-negative. Taking into account that the limit in (\ref{kks93}) exists in $\mathcal{L}_b(L^2(\RR^d))$, we let $s\rightarrow\infty$ in the above equality and infer
\beqs
T(t)=\mathrm{Id}-\frac{2}{\pi}\lim_{s\rightarrow\infty} \int_0^s\frac{\sin(t\lambda)}{\lambda} \overline{A}(\overline{A}+\lambda^2\mathrm{Id})^{-1}d\lambda,\,\, t>0,
\eeqs
where the limit exists in $\mathcal{L}_b(L^2(\RR^d))$. Observe that the equality also holds for $t=0$. For each $t\in[0,\infty)$, we denote $Q(t)=\mathrm{Id}-T(t)\in\mathcal{L}(L^2(\RR^d))$. By (\ref{equforequikno}), for each $\varphi\in\SSS^*(\RR^d)$ we have $\overline{A}(\overline{A}+\lambda^2\mathrm{Id})^{-1}\varphi= (b^{(1)}_{\lambda^2})^w\varphi+S^{(1)}_{\lambda^2}\varphi$ (we take $k=1$ since $z=1/2$). Propositions \ref{knoforcomp} and \ref{rkpropo117} imply that for each $\varphi\in\SSS^*(\RR^d)$, the mapping $\lambda\mapsto (1+\lambda^2)S^{(1)}_{\lambda^2}\varphi$, $\RR_+\rightarrow L^2(\RR^d)$, is continuous and bounded. Hence $\lambda\mapsto \lambda^{-1}\sin(t\lambda)S^{(1)}_{\lambda^2}\varphi$, $\RR_+\rightarrow L^2(\RR^d)$, is Bochner integrable. Similarly, Propositions \ref{continuity}, \ref{knoforcomp} and \ref{rkpropo117} imply that the mapping $\lambda\mapsto (1+\lambda^2)(b^{(1)}_{\lambda^2})^w\varphi$, $\RR_+\rightarrow L^2(\RR^d)$, is continuous and bounded and hence the mapping $\lambda\mapsto \lambda^{-1}\sin(t\lambda)(b^{(1)}_{\lambda^2})^w\varphi$, $\RR_+\rightarrow L^2(\RR^d)$, is Bochner integrable. Thus, for each $t\in[0,\infty)$ and $\varphi\in\SSS^*(\RR^d)$,
\beqs
Q(t)\varphi&=&\frac{2}{\pi} \int_0^{\infty}\frac{\sin(t\lambda)}{\lambda}(b^{(1)}_{\lambda^2})^w\varphi d\lambda+\frac{2}{\pi} \int_0^{\infty}\frac{\sin(t\lambda)}{\lambda}S^{(1)}_{\lambda^2}\varphi d\lambda\\
&=&Q_1(t)\varphi+Q_2(t)\varphi.
\eeqs
Let us prove that for each $t\in[0,\infty)$ and $\varphi\in\SSS^*(\RR^d)$, $Q_1(t)\varphi\in\SSS^*(\RR^d)$. This is trivial for $t=0$. For the moment, for $t>0$, denote
\beqs
\tilde{C}_t=\frac{2}{\pi}\int_0^{\infty} (\lambda(1+\lambda^2))^{-1}|\sin(t\lambda)|d\lambda>0.
\eeqs
Let $\varepsilon>0$. Since $\{(1+\lambda^2)(b^{(1)}_{\lambda^2})^w|\, \lambda>0\}$ is equicontinuous in $\mathcal{L}(\SSS^*(\RR^d),\SSS^*(\RR^d))$ (by Propositions \ref{continuity} and \ref{knoforcomp}), the set $\{(1+\lambda^2)\,\,{}^t((b^{(1)}_{\lambda^2})^w)|\, \lambda>0\}$ is equicontinuous in $\mathcal{L}(\SSS'^*(\RR^d),\SSS'^*(\RR^d))$. Hence there exists a neighbourhood of zero $W$ in $\SSS'^*(\RR^d)$ such that $|\langle (1+\lambda^2)\,\,{}^t((b^{(1)}_{\lambda^2})^w)f,\varphi\rangle|\leq \varepsilon/\tilde{C}_t$ for all $f\in W$. Hence, for $v\in W\cap L^2(\RR^d)$, by the properties of the Bochner integral, we have $|\langle v,Q_1(t)\varphi\rangle|\leq \varepsilon$. Thus $Q_1(t)\varphi$ is a continuous functional on $L^2(\RR^d)$ when the latter is equipped with the topology induced by $\SSS'^*(\RR^d)$. Hence $Q_1(t)\varphi$ is a continuous functional on $\SSS'^*(\RR^d)$, i.e. $Q_1(t)\varphi\in \SSS^*(\RR^d)$. Similarly, $Q_2(t)\varphi\in\SSS^*(\RR^d)$. We claim that for each $f\in\SSS'^*(\RR^d)$
\beq
\langle f,Q_1(t)\varphi\rangle&=&\frac{2}{\pi} \int_0^{\infty}\frac{\sin(t\lambda)}{\lambda}\langle f,(b^{(1)}_{\lambda^2})^w\varphi\rangle d\lambda\,\,\,\, \mbox{and} \label{stk1794}\\
\langle f,Q_2(t)\varphi\rangle&=&\frac{2}{\pi} \int_0^{\infty}\frac{\sin(t\lambda)}{\lambda}\langle f,S^{(1)}_{\lambda^2}\varphi\rangle d\lambda.
\eeq
These trivially hold for $f\in L^2(\RR^d)$ (by the properties of the Bochner integral). The general case follows from the sequential denseness of $L^2(\RR^d)$ in $\SSS'^*(\RR^d)$ and the fact that the sets $\{(1+\lambda^2)(b^{(1)}_{\lambda^2})^w|\, \lambda>0\}$ and $\{(1+\lambda^2)S^{(1)}_{\lambda^2}|\, \lambda>0\}$ are equicontinuous in $\mathcal{L}(\SSS^*(\RR^d),\SSS^*(\RR^d))$. Let $U$ be a neighbourhood of zero in $\SSS^*(\RR^d)$, which, by reflexivity, we can assume to be the polar $B'^{\circ}$ of a convex circled closed bounded subset $B'$ of $\SSS'^*(\RR^d)$. Since the set $\{(1+\lambda^2)\,\,{}^t((b^{(1)}_{\lambda^2})^w)|\, \lambda>0\}$ is equicontinuous in $\mathcal{L}(\SSS'^*(\RR^d),\SSS'^*(\RR^d))$, we conclude that $\bigcup_{\lambda>0}(1+\lambda^2)\,\,{}^t((b^{(1)}_{\lambda^2})^w)(B')$ is bounded in $\SSS'^*(\RR^d)$. Let $\tilde{B}$ be its closed convex circled hull. Then $V=(\tilde{C}_t\tilde{B})^{\circ}$ (where $\tilde{C}_t$ is the constant defined above) is a neighbourhood of zero in $\SSS^*(\RR^d)$. By (\ref{stk1794}), $Q_1(t)\varphi\in U$, for all $\varphi\in V$. Hence, for each $t\in\RR_+$, $\varphi\mapsto Q_1(t)\varphi$ is a continuous mapping from $\SSS^*(\RR^d)$ into itself. This trivially holds for $t=0$, since $Q_1(0)\varphi=0$, $\forall\varphi\in\SSS^*(\RR^d)$. Similarly, for each $t\in[0,\infty)$, $\varphi\mapsto Q_2(t)\varphi$ also belongs to $\mathcal{L}(\SSS^*(\RR^d),\SSS^*(\RR^d))$. We conclude that $T(t)\in\mathcal{L}(\SSS^*(\RR^d),\SSS^*(\RR^d))$, $t\geq0$.\\
\indent Let $t_0\in[0,\infty)$, $B$ a bounded subset of $\SSS^*(\RR^d)$ and $V=B'^{\circ}$ a neighbourhood of zero in $\SSS^*(\RR^d)$ (where $B'$ is convex circled closed bounded subset of $\SSS'^*(\RR^d)$). Consider the neighbourhood of zero $M(B,V)=\{J\in\mathcal{L}(\SSS^*(\RR^d),\SSS^*(\RR^d))|\, J(B)\subseteq V\}$ in $\mathcal{L}_b(\SSS^*(\RR^d),\SSS^*(\RR^d))$. There exists $C'>0$ such that $(1+\lambda^2)|\langle f,(b^{(1)}_{\lambda^2})^w\varphi\rangle|\leq C'$, for all $f\in B'$, $\varphi\in B$ (as $\{(1+\lambda^2)(b^{(1)}_{\lambda^2})^w|\, \lambda>0\}$ is equicontinuous in $\mathcal{L}(\SSS^*(\RR^d),\SSS^*(\RR^d))$). Thus, by (\ref{stk1794}),
\beqs
\sup_{\substack{f\in B'\\ \varphi\in B}}|\langle f,Q_1(t)\varphi-Q_1(t_0)\varphi\rangle|\leq \frac{2C'}{\pi}\int_0^{\infty}\frac{|\sin(t\lambda)-\sin(t_0\lambda)|}{\lambda(1+\lambda^2)}d\lambda.
\eeqs
Hence, there is $\varepsilon>0$ such that $Q_1(t)-Q_1(t_0)\in M(B,V)$, for all $t\in[0,\infty)\cap (t_0-\varepsilon,t_0+\varepsilon)$. As $t_0$ is arbitrary, the mapping $t\mapsto Q_1(t)$ is in $\mathcal{C}([0,\infty);\mathcal{L}_b(\SSS^*(\RR^d)\SSS^*(\RR^d)))$. Analogously, one can prove the same thing for the mapping $t\mapsto Q_2(t)$. Thus, the mapping $t\mapsto T(t)$ is in $\mathcal{C}([0,\infty);\mathcal{L}_b(\SSS^*(\RR^d),\SSS^*(\RR^d)))$. As a direct consequence, we deduce that, for each $s>0$, the set $\{T(t)|\, t\in[0,s]\}$ is bounded in $\mathcal{L}_b(\SSS^*(\RR^d),\SSS^*(\RR^d))$, consequently equicontinuous.\\
\indent Since $-\overline{A}^{1/2}$ is the infinitesimal generator of the analytic semigroup $T(t)$, we have
\beq\label{trh1753}
(d/dt)T(t)\varphi=-\overline{A}^{1/2}T(t)\varphi=-T(t)\overline{A}^{1/2}\varphi,\,\, t\geq 0,\,\, \varphi\in\SSS^*(\RR^d).
\eeq
Moreover, the mapping $t\mapsto (d/dt)T(t)\varphi=-T(t)\overline{A}^{1/2}\varphi$ is continuous from $[0,\infty)$ into $L^2(\RR^d)$ and
\beq\label{stk1745}
T(t)\varphi-T(t')\varphi=-\int_{t'}^tT(s)\overline{A}^{1/2}\varphi ds=-\int_{t'}^t\overline{A}^{1/2}T(s)\varphi ds.
\eeq
For each $t\in[0,\infty)$, we denote by $T_1(t)$ the mapping $\varphi\mapsto -T(t)\overline{A}^{1/2}\varphi$. As $-\overline{A}^{1/2}\in\mathcal{L}(\SSS^*(\RR^d),\SSS^*(\RR^d))$ (cf. Theorem \ref{maint}), $T_1(t)\in\mathcal{L}(\SSS^*(\RR^d),\SSS^*(\RR^d))$ and the mapping $t\mapsto T_1(t)$ is in $\mathcal{C}([0,\infty);\mathcal{L}_b(\SSS^*(\RR^d),\SSS^*(\RR^d)))$ (as $t\mapsto T(t)$ belongs to this space). Moreover, for each $f\in\SSS'^*(\RR^d)$, we have
\beq\label{rst1759}
\langle f,T(t)\varphi-T(t')\varphi\rangle=-\int_{t'}^t\langle f,T(s)\overline{A}^{1/2}\varphi\rangle ds.
\eeq
This trivially holds for $f\in L^2(\RR^d)$ by (\ref{stk1745}) and the properties of the Bochner integral. The general case follows since $L^2(\RR^d)$ is sequentially dense in $\SSS'^*(\RR^d)$ and, for each $t>0$, the set $\{T(s)\overline{A}^{1/2}|\, s\in[0,t]\}$ is equicontinuous in $\mathcal{L}(\SSS^*(\RR^d),\SSS^*(\RR^d))$. Since $\SSS^*(\RR^d)$ is the strong dual of $\SSS'^*(\RR^d)$, the equicontinuity of this set together with (\ref{rst1759}) implies that for each fixed $t_0\in[0,\infty)$ and $\varphi\in\SSS^*(\RR^d)$ the set
\beqs
H_{t_0,\varphi}=\{(t-t_0)^{-1}(T(t)\varphi-T(t_0)\varphi)|\, t\in[0,\infty)\cap([t_0-1,t_0+1]\backslash\{t_0\})\}
\eeqs
is bounded in $\SSS^*(\RR^d)$. Thus $H_{t_0,\varphi}$ is equicontinuous in $\mathcal{L}(\SSS'^*(\RR^d),\CC)$. Denoting by $\mathfrak{G}_1$ the family of all finite subsets of $L^2(\RR^d)$, its union is total in $\SSS'^*(\RR^d)$ and (\ref{trh1753}) implies that $(t-t_0)^{-1}(T(t)\varphi-T(t_0)\varphi)\rightarrow T_1(t_0)\varphi$ in $\mathcal{L}_{\mathfrak{G}_1}(\SSS'^*(\RR^d),\CC)$. Since $H_{t_0,\varphi}\cup\{T_1(t_0)\varphi\}$ is equicontinuous in $\mathcal{L}(\SSS'^*(\RR^d),\CC)$, the Banach-Steinhaus theorem \cite[Theorem 4.5, p. 85]{Sch} implies that the convergence also holds in $\mathcal{L}_p(\SSS'^*(\RR^d),\CC)$. As $\SSS'^*(\RR^d)$ is Montel, the convergence holds in $\mathcal{L}_b(\SSS'^*(\RR^d),\CC)=\SSS^*(\RR^d)$. Since $t_0\in[0,\infty)$ is arbitrary, we can conclude that, for each $\varphi\in\SSS^*(\RR^d)$, the mapping $t\mapsto T(t)\varphi$, $[0,\infty)\rightarrow \SSS^*(\RR^d)$, is differentiable and $T_1(t)\varphi$ is its derivative. Hence, we can apply Lemma \ref{ktn1793} $(ii)$ to deduce that the mapping $t\mapsto T(t)$ is differentiable and $T_1(t)$ is its derivative. As $t\mapsto T_1(t)\in\mathcal{C}([0,\infty);\mathcal{L}_b(\SSS^*(\RR^d),\SSS^*(\RR^d)))$ we conclude that $t\mapsto T(t)$ belongs to $\mathcal{C}^1([0,\infty);\mathcal{L}_b(\SSS^*(\RR^d),\SSS^*(\RR^d)))$. But, as $\partial_t T(t)=T_1(t)=-\overline{A}^{1/2}T(t)$, $t\mapsto T(t)$ is in fact a $\mathcal{C}^{\infty}$ mapping from $[0,\infty)$ to $\mathcal{L}_b(\SSS^*(\RR^d),\SSS^*(\RR^d))$.
\end{proof}

Now, as $T(t)$ solves (\ref{system3}) with $\tilde{\mathbf{K}}=0$, we have
\beq\label{rkv7591}
(\mathbf{u}(t))^w\varphi-T(t)\varphi=\int_0^t T(t-s)\tilde{\mathbf{K}}(s)\varphi ds,\,\, \varphi\in\SSS^*(\RR^d).
\eeq
By Theorem \ref{sol_heat_ker} and Lemma \ref{lmmsk11}, for each $t>0$, the mapping $s\mapsto T(t-s)\tilde{\mathbf{K}}(s)$ belongs to $\mathcal{C}^{\infty}([0,t];\mathcal{L}_b(\SSS'^*(\RR^d),\SSS^*(\RR^d)))$. For $t\in[0,\infty)$, $f\in\SSS'^*(\RR^d)$, define
\beqs
\mathbf{Q}(t)f=\int_0^t T(t-s)\tilde{\mathbf{K}}(s)f ds\in L^2(\RR^d).
\eeqs
One easily verifies (by analogous techniques as those employed in the proof of Lemma \ref{lmmsk11}) that $\mathbf{Q}(t)f\in\SSS^*(\RR^d)$ and
\beq\label{fhttr15}
\langle g,\mathbf{Q}(t)f\rangle=\int_0^t \langle g,T(t-s)\tilde{\mathbf{K}}(s)f\rangle ds,\,\,\, g\in\SSS'^*(\RR^d).
\eeq
Now, again by using analogous techniques as in the proof of Lemma \ref{lmmsk11}, one verifies that $f\mapsto \mathbf{Q}(t)f$ belongs to $\mathcal{L}(\SSS'^*(\RR^d),\SSS^*(\RR^d))$, for each $t\in[0,\infty)$. As a consequence of (\ref{fhttr15}) and the semigroup property of $T(t)$, one can derive the continuity of $t\mapsto \mathbf{Q}(t)$, $[0,\infty)\rightarrow \mathcal{L}_b(\SSS'^*(\RR^d),\SSS^*(\RR^d))$. This immediately implies that for each $t>0$ the set $\{\mathbf{Q}(s)|\, s\in[0,t]\}$ is equicontinuous in $\mathcal{L}(\SSS'^*(\RR^d),\SSS^*(\RR^d))$. Moreover, we have the following lemma.

\begin{lemma}\label{hks7951}
The mapping $t\mapsto \mathbf{Q}(t)$ belongs to $\mathcal{C}^{\infty}([0,\infty);\mathcal{L}_b(\SSS'^*(\RR^d),\SSS^*(\RR^d)))$.
\end{lemma}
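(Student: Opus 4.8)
The plan is to establish, in $\mathcal{L}_b(\SSS'^*(\RR^d),\SSS^*(\RR^d))$, the Duhamel differentiation formula $\partial_t\mathbf{Q}(t)=\tilde{\mathbf{K}}(t)-\overline{A}^{1/2}\mathbf{Q}(t)$ and then bootstrap. Throughout I use: $\overline{A}^{1/2}$ restricts to a continuous operator on $\SSS^*(\RR^d)$ (Theorem \ref{maint} $(vi)$, Proposition \ref{continuity}); $T(\cdot)\in\mathcal{C}^{\infty}([0,\infty);\mathcal{L}_b(\SSS^*(\RR^d),\SSS^*(\RR^d)))$ with $(d/dt)T(t)\varphi=-\overline{A}^{1/2}T(t)\varphi$ for $\varphi\in\SSS^*(\RR^d)$ (Lemma \ref{lmmsk11}, eqn. (\ref{trh1753})); $\tilde{\mathbf{K}}\in\mathcal{C}^{\infty}([0,\infty);\mathcal{L}_b(\SSS'^*(\RR^d),\SSS^*(\RR^d)))$; and the already recorded facts that $t\mapsto\mathbf{Q}(t)$ is continuous into $\mathcal{L}_b(\SSS'^*(\RR^d),\SSS^*(\RR^d))$, that $\{\mathbf{Q}(s)\,|\,s\in[0,t]\}$ is equicontinuous for each $t$, and the pairing identity (\ref{fhttr15}).

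\emph{Step 1 (pointwise differentiability).} Fix $f\in\SSS'^*(\RR^d)$ and $g\in\SSS'^*(\RR^d)=(\SSS^*(\RR^d))'$. By (\ref{fhttr15}) and the substitution $s\mapsto t-s$ one has $\langle g,\mathbf{Q}(t)f\rangle=\int_0^t\psi(t,s)\,ds$ with $\psi(t,s)=\langle g,T(t-s)\tilde{\mathbf{K}}(s)f\rangle$. By Lemma \ref{lmmsk11} the $\SSS^*(\RR^d)$-valued map $(t,s)\mapsto T(t-s)\tilde{\mathbf{K}}(s)f$ is continuous and, for fixed $s$, continuously differentiable in $t$ with $t$-derivative $-\overline{A}^{1/2}T(t-s)\tilde{\mathbf{K}}(s)f$; the latter is again jointly continuous in $(t,s)$ on each triangle $\{0\le s\le t\le b\}$ (composition of continuous $\SSS^*(\RR^d)$-valued curves followed by the fixed operator $\overline{A}^{1/2}$, using that composition is continuous along curves). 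Hence $\psi$ is continuous and of class $\mathcal{C}^1$ in $t$ with jointly continuous $\partial_t\psi$, so the classical Leibniz rule applies; since $T(0)=\mathrm{Id}$ and, as in (\ref{fhttr15}), $\int_0^t\langle g,\overline{A}^{1/2}T(t-s)\tilde{\mathbf{K}}(s)f\rangle\,ds=\langle g,\overline{A}^{1/2}\mathbf{Q}(t)f\rangle$, we get $(d/dt)\langle g,\mathbf{Q}(t)f\rangle=\langle g,\tilde{\mathbf{K}}(t)f\rangle-\langle g,\overline{A}^{1/2}\mathbf{Q}(t)f\rangle$ (one-sided at $t_0=0$).

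\emph{Step 2 (lift and bootstrap).} Both $\SSS^*(\RR^d)$ and $\SSS'^*(\RR^d)$ are Montel. Since $t\mapsto\mathbf{Q}(t)$ is continuous into $\mathcal{L}_b(\SSS'^*(\RR^d),\SSS^*(\RR^d))$ and, by Step 1, $t\mapsto\langle g,\mathbf{Q}(t)f\rangle$ is differentiable for all $f,g$, Lemma \ref{ktn1793} $(ii)$ (with $E=\SSS'^*(\RR^d)$, $F=\SSS^*(\RR^d)$; one-sided version at $t_0=0$) yields that $t\mapsto\mathbf{Q}(t)$ is differentiable in $\mathcal{L}_b(\SSS'^*(\RR^d),\SSS^*(\RR^d))$ with derivative the operator $\tilde{\mathbf{K}}(t)-\overline{A}^{1/2}\mathbf{Q}(t)$. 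This derivative is continuous in $\mathcal{L}_b(\SSS'^*(\RR^d),\SSS^*(\RR^d))$, because $\tilde{\mathbf{K}}$ is (even smooth) and post-composition with the fixed operator $\overline{A}^{1/2}\in\mathcal{L}(\SSS^*(\RR^d),\SSS^*(\RR^d))$ is a continuous endomorphism of $\mathcal{L}_b(\SSS'^*(\RR^d),\SSS^*(\RR^d))$; hence $\mathbf{Q}\in\mathcal{C}^1$. Finally, if $\mathbf{Q}\in\mathcal{C}^k$ then $\overline{A}^{1/2}\mathbf{Q}\in\mathcal{C}^k$ by the same post-composition remark, and $\tilde{\mathbf{K}}\in\mathcal{C}^{\infty}$, so $\partial_t\mathbf{Q}=\tilde{\mathbf{K}}-\overline{A}^{1/2}\mathbf{Q}\in\mathcal{C}^k$, i.e. $\mathbf{Q}\in\mathcal{C}^{k+1}$. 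By induction $\mathbf{Q}\in\mathcal{C}^{\infty}([0,\infty);\mathcal{L}_b(\SSS'^*(\RR^d),\SSS^*(\RR^d)))$, as claimed.

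\emph{Main obstacle.} The technical heart is Step 1: justifying differentiation under the integral sign in the non-Banach, locally convex setting, i.e. verifying joint continuity on triangles of $(t,s)\mapsto\partial_t\psi(t,s)$ so that the scalar Leibniz rule applies. This rests essentially on the $\mathcal{C}^{\infty}$-regularity of $t\mapsto T(t)$ established in Lemma \ref{lmmsk11} and on continuity of the composition maps along curves; once Step 1 is in place, the passage to $\mathcal{L}_b$-valued differentiability is a routine application of Lemma \ref{ktn1793} and the $\mathcal{C}^{\infty}$ conclusion is a standard bootstrap.
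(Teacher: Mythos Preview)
Your proof is correct and follows essentially the same route as the paper: compute the scalar derivative of $\langle g,\mathbf{Q}(t)f\rangle$ via Leibniz, lift to $\mathcal{L}_b(\SSS'^*,\SSS^*)$ using Lemma~\ref{ktn1793}~$(ii)$, and induct. The paper writes the candidate derivative as $\mathbf{Q}_1(t_0)f=\int_0^{t_0}(\partial_tT)(t_0-s)\tilde{\mathbf{K}}(s)f\,ds+T(0)\tilde{\mathbf{K}}(t_0)f$, which by~(\ref{trh1753}) is exactly your $\tilde{\mathbf{K}}(t_0)f-\overline{A}^{1/2}\mathbf{Q}(t_0)f$; your closed-form ODE $\partial_t\mathbf{Q}=\tilde{\mathbf{K}}-\overline{A}^{1/2}\mathbf{Q}$ makes the bootstrap to $\mathcal{C}^\infty$ a shade more transparent than re-differentiating the integral representation, but the underlying argument is the same.
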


\begin{proof} We prove that the derivative of $\mathbf{Q}(t)$ at $t_0\in[0,\infty)$ is the mapping
\beqs
f\mapsto \mathbf{Q}_1(t_0)f= \int_0^{t_0}(\partial_tT)(t_0-s)\tilde{\mathbf{K}}(s)fds+T(0)\tilde{\mathbf{K}}(t_0)f.
\eeqs
By analogous techniques as for the mapping $f\mapsto\mathbf{Q}(t)f$, one verifies that this is indeed a well defined element of $\mathcal{L}(\SSS'^*(\RR^d),\SSS^*(\RR^d))$ and the mapping $t\mapsto \mathbf{Q}_1(t)$, $[0,\infty)\rightarrow\mathcal{L}_b(\SSS'^*(\RR^d),\SSS^*(\RR^d))$, is continuous (cf. (\ref{trh1753})). Moreover, for each $g\in\SSS'^*(\RR^d)$,
\beq\label{tvk7593}
\langle g,\mathbf{Q}_1(t)f\rangle=\int_0^t\langle g,(\partial_tT)(t-s)\tilde{\mathbf{K}}(s)f\rangle ds+\langle g,T(0)\tilde{\mathbf{K}}(t)f\rangle.
\eeq
By employing (\ref{fhttr15}) and (\ref{tvk7593}) one easily verifies that, for fixed $f,g\in\SSS'^*(\RR^d)$,
\beqs
(t-t_0)^{-1}\langle g,\mathbf{Q}(t)f-\mathbf{Q}(t_0)f\rangle\rightarrow \langle g,\mathbf{Q}_1(t_0)f\rangle,\,\,\, \mbox{as}\,\, t\rightarrow t_0.
\eeqs
Thus, by Lemma \ref{ktn1793} $(ii)$, $t\mapsto\mathbf{Q}(t)$ is differentiable at $t_0$ with $\mathbf{Q}_1(t_0)$ being its derivative. Since $t_0$ is arbitrary and $t\mapsto\mathbf{Q}_1(t)$ is continuous, we infer that $t\mapsto \mathbf{Q}(t)$ is a $\mathcal{C}^1$ mapping. By induction, one verifies the claim in the lemma.
\end{proof}

Observe that all claims of Theorem \ref{res_sqr_rtt1} with the exception of (\ref{est1453}) follow from Theorem \ref{sol_heat_ker}, Lemmas \ref{lmmsk11} and \ref{hks7951} and the equality (\ref{rkv7591}). The estimate (\ref{est1453}) follows from the definition of $b$ together with (\ref{est_heat_par}), (\ref{est_semigroup}) for $\alpha=0$, (\ref{est_semigroup1}) and the first estimate in $(v)$ of Theorem \ref{maint} particularised for $\alpha=0$ and $N=1$. With this, the proof of Theorem \ref{res_sqr_rtt1} is complete.

\end{document}